\documentclass[hidelinks]{article}
\usepackage[left=3cm,right=3cm,top=2cm,bottom=2cm]{geometry} 
\usepackage{amsmath}
\usepackage{amsthm}
\usepackage{amssymb}
\usepackage{dsfont}
\usepackage{stackengine}
\usepackage{graphicx}
\usepackage{appendix}
\stackMath
\usepackage{hyperref}
\usepackage[svgnames]{xcolor}
\usepackage[style=numeric,backend=bibtex,natbib=true,maxnames=5]{biblatex}
\numberwithin{equation}{section}


\hypersetup{
	colorlinks = true,
	citecolor = teal, 
	linkcolor=red,
	urlcolor=blue}


\DeclareMathOperator{\Tr}{Tr}

\newcommand{\norm}[1]{\left\lVert#1\right\rVert}
\newcommand{\opnormtwo}[1]{\norm{#1}_{\ell^2\to\ell^2}}
\newcommand{\opnorminf}[1]{\norm{#1}_{\ell^\infty\to\ell^\infty}}

\newcommand{\m}{\vect{m}}
\newcommand{\supp}[1]{\operatorname{supp}(#1)}
\DeclareMathOperator{\im}{Im}
\DeclareMathOperator{\re}{Re}
\DeclareMathOperator{\prob}{\mathbb{P}}
\DeclareMathOperator{\Expv}{\mathbb{E}}
\DeclareMathOperator{\dist}{dist}
\DeclareMathOperator{\sign}{sign}

\newcommand{\diag}[1]{\operatorname{diag}\left(#1\right)}
\newcommand{\bigO}[1]{\mathcal{O}\left(#1\right)}

\newcommand{\Oprec}{\mathcal{O}_\prec}

\newcommand{\E}[1]{\Expv\left[#1 \right]}
\newcommand{\Prob}[1]{\prob\left[#1 \right]}
\newcommand{\other}[1]{\widetilde{#1}}

\newcommand{\X}{X}
\newcommand{\Y}{Y}
\newcommand{\s}{\vect{s}}

\newcommand{\vv}{\vect{v}}

\newcommand{\eigB}{\beta}

\newcommand{\alp}{{\varepsilon_0/2}} 
\newcommand{\twoalp}{\varepsilon_0} 
\newcommand{\dom}{\Omega_0}
\newcommand{\vect}[1]{\mathbf{#1}}

\newcommand{\I}{i}
\newcommand{\stab}{B}

\newcommand{\Cmlnt}{\mathcal{C}}
\DeclareFontFamily{OT1}{pzc}{}
\DeclareFontShape{OT1}{pzc}{m}{it}{ <-> s*[1.1] pzcmi7t }{}
\DeclareMathAlphabet{\mathpzc}{OT1}{pzc}{m}{it}

\newcommand{\sng}{E_0}
\newcommand{\Sng}{\mathfrak{S}}
\newcommand{\D}{\mathbb{D}_{E_0,c_0}}
\newcommand{\dM}{\xi}

\newcommand{\dis}{\delta}
\newcommand{\M}{M}
\newcommand{\sol}{\mathpzc{h}}
\newcommand{\gapsol}{\mathpzc{k}}
\newcommand{\gapx}{\mathpzc{x}}

\newcommand{\diff}{\text{\scriptsize$\Delta$}}


\newtheorem{theorem}{Theorem} %
\numberwithin{theorem}{section} 
\newtheorem{lemma}[theorem]{Lemma} %
\newtheorem{Def}[theorem]{Definition} %
\newtheorem{prop}[theorem]{Proposition} %
\newtheorem{claim}[theorem]{Claim} %
\newtheorem{remark}[theorem]{Remark} %
 %


\DeclareFieldFormat*{title}{#1}
\renewbibmacro{in:}{}
\DeclareFieldFormat*{eprint}{\href{http://arxiv.org/abs/#1}{#1}}
\addbibresource{refs}

\begin{document}
	\begin{minipage}{0.85\textwidth}
		\vspace{2.5cm}
	\end{minipage}
	\begin{center}
		\large\bf Linear Eigenvalue Statistics at the Cusp
	\end{center}
	
	\vspace{0.5cm}
	
	\renewcommand{\thefootnote}{\fnsymbol{footnote}}	
	\begin{center}
		Volodymyr Riabov\footnotemark[1]\\
		\footnotesize 
		{Institute of Science and Technology Austria}\\
		{\it volodymyr.riabov@ist.ac.at}
	\end{center}
	
	\bigskip

	\footnotetext[1]{Supported by the ERC Advanced Grant "RMTBeyond" No.~101020331}
	\renewcommand*{\thefootnote}{\arabic{footnote}}
	\vspace{0.5cm}
	
	\begin{center}
		\begin{minipage}{0.91\textwidth}\footnotesize{
				{\bf Abstract.}}
			%
			We establish universal Gaussian fluctuations for the mesoscopic linear eigenvalue statistics in the vicinity of the cusp-like singularities of the limiting spectral density for Wigner-type random matrices.
			Prior to this work, the linear eigenvalue statistics at the cusp-like singularities were not studied in any ensemble. 
			Our analysis covers not only the exact cusps but the entire transitionary regime from the square-root singularity at a regular edge through the sharp cusp to the bulk. 
			We identify a new one-parameter family of functionals that govern the limiting bias and variance, continuously interpolating between the previously known formulas in the bulk and at a regular edge. 
			Since cusps are the only possible singularities besides the regular edges, our result gives a complete description of the linear eigenvalue statistics in all regimes.  
		\end{minipage}
	\end{center}
	
	\vspace{0.8cm}
	
	{\small
		\footnotesize{\noindent\textit{Date}: \today}\\
		\footnotesize{\noindent\textit{Keywords and phrases}: Wigner-type matrix, cusp, edge, mesoscopic eigenvalue statistics, central limit theorem}\\
		\footnotesize{\noindent\textit{2020 Mathematics Subject Classification}: 60B20, 15B52}
	}
	
	\vspace{10mm}
	
	\thispagestyle{headings}

\section{Introduction}
Linear eigenvalue statistics are an important tool in studying asymptotic properties of the joint eigenvalue distribution of random matrices. In particular, if $(\lambda_j)_{j=1}^N$ denote the eigenvalues of a standard Wigner matrix $H$, then for any sufficiently regular test function $f$, the centered linear statistics 
\begin{equation} \label{LES}
	\sum_{j=1}^N f(\lambda_j) - \Expv \bigl[\sum_{j=1}^N f(\lambda_j)\bigr]= \Tr f(H) - \Expv\bigl[\Tr f(H)\bigr]
\end{equation}
are asymptotically Gaussian. This result, referred to as the Central Limit Theorem (CLT) for linear eigenvalue statistics, first appeared in the work of Khorunzhy, Khoruzhenko, and Pastur \cite{Khorunzhy1996} in the special case of the resolvent, $f(x):= (x-z)^{-1}, \,\im z > 0 $, and later in a paper by Lytova and Pastur \cite{Lytova2009indCLT} for general $f$. The moment conditions on the matrix elements and the regularity assumptions on the test function $f$ were optimized in \cite{Bao2016, Landon2022, Shcherbina2011}. Remarkably, CLT holds without the
customary $N^{-1/2}$ normalization in \eqref{LES}, indicating a strong correlation among eigenvalues.    

For certain classes of test functions $f$, the variance of the limiting normal distribution can be explicitly calculated. 
In particular, 
one can introduce an $N$-dependent scaling around a fixed reference energy $\sng$ in the spectrum, and consider a \textit{scaled test function} $f(x) := g(\eta_0^{-1}(x-\sng))$, where $g$ is a sufficiently smooth compactly supported $N$-independent function, and $\eta_0\equiv\eta_0(N) \ll 1$ is a scaling parameter above the local eigenvalue spacing at $\sng$. For such \textit{scaled test functions}, the limiting variance is universal, i.e., it does not depend on the precise distribution of the entries but is influenced significantly by whether $\sng$ is located in the bulk of the spectrum or at the singular points, e.g., the spectral edges. 

To highlight this phenomenon, consider standard Wigner matrices with the limiting spectral density given by the semicircle law $\rho(x) =(2\pi)^{-1} \sqrt{(4-x^2)_+}$, the bulk corresponds to $(-2,2)$, and the spectral edges are located at $\pm2$. 
%
%
%
%
%
If the reference energy $\sng$ lies in $(-2,2)$, then in the \textit{mesoscopic} regime $N^{-1}\ll\eta_0\ll 1$, the limiting variance of the linear eigenvalue statistics for the scaled test functions is given by
\begin{equation} \label{bulkCLT}
	\frac{1}{2\boldsymbol{\beta}\pi^2}\iint_{\mathbb{R}^2}\frac{\bigl(g(x)-g(y)\bigr)^2}{(x-y)^2}\mathrm{d}x\mathrm{d}y.
\end{equation}
Here the symmetry parameter $\boldsymbol{\beta} = 1,2$ corresponds to $H$ being real symmetric and complex Hermitian, respectively. 
For standard Wigner matrices, the \textit{mesoscopic CLT} in the bulk was 
obtained in \cite{He2017WignerCLT}. 
For a more comprehensive account of mesoscopic CLT in the bulk of the spectrum, we refer to \cite{Landon2020applCLT, Li2021genWigner} and references therein.


CLT also holds for mesoscopic linear eigenvalue statistics at the spectral edges $\sng = \pm2$, where the density of states exhibits a regular square-root singularity. In contrast to the bulk regime, however,  the mesoscopic range is limited to $N^{-2/3}\ll \eta_0 \ll 1$, and the universal limiting variance is given by 
\begin{equation} \label{edgeCLT}
	\frac{1}{4\boldsymbol{\beta}\pi^2}\iint_{\mathbb{R}^2}\frac{\bigl(g(\mp x^2)-g(\mp y^2)\bigr)^2}{(x-y)^2}\mathrm{d}x\mathrm{d}y =  \frac{1}{4\boldsymbol{\beta}\pi^2}\iint_{x,y>0}\frac{\bigl(g(\mp x)-g(\mp y)\bigr)^2}{(x-y)^2}\frac{x+y}{\sqrt{xy}}\mathrm{d}x\mathrm{d}y,
\end{equation}
where the signs $+$ and $-$ correspond to the left and right edges of the spectrum, respectively. This result was first established by Basor and Widom in \cite{BasorWidom} for Gaussian Unitary matrices and was extended to the case of Gaussian Orthogonal Ensemble by Min and Chen in \cite{MIN2020114836}. In subsequent works, the mesoscopic CLT was obtained in the settings of Dyson Brownian motion \cite{AdhikariHuang}, and deformed Wigner\footnote{Deformed Wigner matrices are random matrices of the form $H=W+A$, where	$W$ is a standard Wigner matrix and $A$ is deterministic. } and sample covariance matrices \cite{Li2021deformed} at the regular edges. Note that the kernel function $\mathcal{K}_{\mathrm{edge}}(x,y) = \frac{x+y}{\sqrt{xy}}\mathds{1}_{x,y>0}$ is universal at the edge. 

While the spectral density of standard Wigner matrices exhibits only regular edge singularities,  more general ensembles, such as deformed Wigner matrices and Wigner-type matrices, possess a richer singularity structure. In particular, special \textit{cusp-like} singularities may occur. 
These singularities require much more delicate analysis; in fact, all prior research on linear eigenvalue statistics was limited to the bulk and regular edge regimes. In particular, cusp-like singularities were explicitly ruled out by the assumptions of \cite{Li2021deformed}. In the present paper, we give the first complete description of  the mesoscopic linear eigenvalue statistics at all types of local minima and singular points of the limiting eigenvalue density, including the cusps, in the setting of Wigner-type matrices. We note that for Wigner-type matrices, even the regular edge regime for the mesoscopic CLT has not been considered before. 

We chose the Wigner-type ensembles as our model for analyzing the linear eigenvalue statistics near the cusp-like points because of the rich and well-understood singularity structure of their associated spectral densities. Wigner-type matrices were introduced in \cite{Ajanki2016Univ}; they consist of centered entries $H_{jk}$ independent up to the symmetry constraint $H = H^*$, with the matrix of variances $S_{jk} := \Expv |H_{jk}|^2$ satisfying the \textit{flatness} assumption $S_{jk} \sim N^{-1}$. They are a natural generalization of the standard Wigner matrices that correspond to the case $S_{jk} = N^{-1}$,  while the so-called \textit{generalized Wigner matrices} correspond to the special case of  $S$ being stochastic.

The spectral density  $\rho$  of Wigner-type matrices is  typically  not semicircular and instead is recovered from the unique solution $\m:= (m_j)_{j=1}^N$ of the \textit{vector Dyson equation}
\begin{equation} \label{VDE}
	-\frac{1}{m_j(z)} = z + \sum\limits_{k=1}^N S_{jk} m_{k}(z), \quad \im m_j(z) \im z>0,
\end{equation}
via the Stieltjes inversion formula applied to $N^{-1}\sum_{j=1}^N\im m_j(E+\I\eta)$. 

The spectral densities which arise from \eqref{VDE} under the flatness condition were fully characterized in the works \cite{Ajanki2017SingularitiesQVE, Ajanki2019QVE} of Ajanki, Erd\H{o}s, and Kr\"{u}ger. In particular, the resulting measure may have multiple support intervals of order one length with square-root singularities at the edges, possible cubic-root cusp in the interior of the support intervals, and no other types of singularities occur. The gaps between support intervals can be small, leading to a non-regular behavior of density near the adjacent edges. The spectral density can also have small but non-zero local minima that exhibit cusp-like behavior. The structure of the set of small local minima was described by Alt, Erd\H{o}s, and Kr\"{u}ger in \cite{Alt2020energy}, and the universality of local eigenvalue statistics in the near-cusp regime was proved for the complex Hermitian case by Erd\H{o}s, Kr\"{u}ger, and Schr\"{o}der in \cite{Erdos2018CuspUF}, and for the real symmetric case by Cipolloni et al. in \cite{Cipolloni2018CuspU}.

In our main result, Theorems \ref{th_main} and \ref{th:rho>0}, we  prove the universal Gaussianity of the linear eigenvalue statistics and precisely identify a continuous one-parameter family of its variance and bias functionals at all singularities and local minima in the support of the spectral density in the large $N$ limit. 
The parameter $\alpha$ characterizing the bias and variance describes the deviation of the 
local density profile near $\sng$ 
from an exact cusp singularity, see Figure \ref{fig:rho_fig}. More precisely, $\alpha$ is given by the large $N$ limit of the ratio $\ell/\eta_0$, between the \textit{length-scale} $\ell$ of the density profile at $\sng$ and the mesoscopic scaling parameter $\eta_0$.
When $\sng$ is an endpoint of a small gap in the support, then the length-scale $\ell$ is proportional to the size of the gap $\Delta$ (see Definition \ref{def_sng} below), while for non-zero local minimum at $\sng$, the parameter $\ell$ is negative and proportional to $\rho(\sng)^3$.  
\begin{figure}
	\centering
	\includegraphics[width=.3\textwidth]{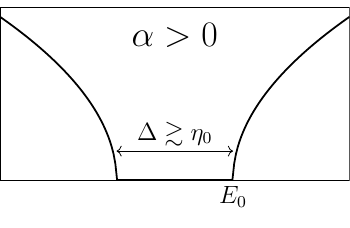}\hfill
	\includegraphics[width=.3\textwidth]{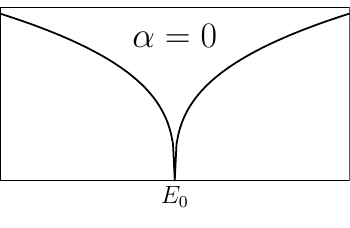}\hfill
	\includegraphics[width=.3\textwidth]{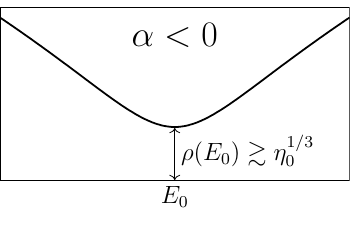}\hfill
	\caption{The three variants of a cusp-like singularity at the point $\sng$, and the corresponding ranges for the parameter $\alpha$.}
	\label{fig:rho_fig}
\end{figure}

We derive explicit formulas for the limiting variance and bias functionals in terms of universal kernel functions, which depend only on the  shape of the density $\rho$ near  $\sng$ resolved on the scale $\eta_0$. In particular, if the gap adjacent to $\sng$ is much larger than $\eta_0$, the limiting distribution of the linear eigenvalue statistics matches that near a regular edge, with the variance given by \eqref{edgeCLT}.  Similarly, if the density $\rho(\sng)$ is much larger than $\eta_0^{1/3}$, then the linear eigenvalue statistics exhibit bulk-like behavior, with the limiting variance given by \eqref{bulkCLT}. Therefore, all previously known formulas for the bias and variance of the linear eigenvalue statistics are only applicable in the regime where $|\ell|$ is much larger than the mesoscopic scale $\eta_0$.

The novel cusp-like statistics emerge in the complementary regime, when $|\ell|\lesssim \eta_0$,  and are characterized by three new universal kernel functions $\mathcal{K}_{\mathrm{gap}}$, $\mathcal{K}_{\mathrm{min}}$, and $\mathcal{K}_{\mathrm{cusp}}$, which play a universal role analogous to $\mathcal{K}_{\mathrm{edge}}$ in \eqref{edgeCLT}. In the small gap regime, if $\Delta$ and $\eta_0$ are comparable in size, the limiting variance is given by a weighted $\dot{H}^{1/2}$-norm of an $\alpha$-dependent rescaling of the test function $g$, with a universal kernel $\mathcal{K}_{\mathrm{gap}}$ that does not depend on $\alpha$ (see \eqref{main_gap} below for the bias and variance formulas).  
Similarly, if $\sng$ is a non-zero local minimum, and $\rho(\sng) \sim \eta_0^{1/3}$, then the variance is governed by an $\alpha$-independent kernel $\mathcal{K}_{\mathrm{min}}$ (defined in \eqref{main_min}). Finally, if $|\ell|$ is much smaller than $\eta_0$,  then the third kernel $\mathcal{K}_{\mathrm{cusp}}$, given in \eqref{main_cups} below, arises.   

These regimes fully describe the limiting distribution of the linear eigenvalue statistics at the singular points and the local minima in the self-consistent spectrum in the entire mesoscopic range above the local eigenvalue spacing at $\sng$.  
Under appropriate $\alpha$-dependent rescaling, the kernel functions $\mathcal{K}_{\mathrm{min}}$, $\mathcal{K}_{\mathrm{cusp}}$, and $\mathcal{K}_{\mathrm{gap}}$ interpolate non-trivially between their previously known bulk and regular-edge counterparts, $\mathcal{K}_{\mathrm{bulk}}(x,y) = 2$ and $\mathcal{K}_{\mathrm{edge}}(x,y) = \frac{x+y}{\sqrt{xy}}\mathds{1}_{x,y>0}$, see \eqref{bulkCLT} and \eqref{edgeCLT}, with the transitions recovered by taking the limits $\alpha \to \mp\infty$, respectively. 

Our proof strategy consists of two main steps: analyzing the characteristic function of the linear eigenvalue statistics \eqref{LES} for a Wigner-type matrix $H$, and identifying the novel kernels. 

Working with the characteristic function is a standard method, see, e.g., \cite{Landon2020applCLT, Lytova2009indCLT, R2023bulk}, and requires understanding a special two-point function of the resolvents 
\begin{equation} \label{Txy}
	T_{xy}(z,\zeta) := \sum_{a\neq y} S_{xa}G_{ay}(z)G_{ya}(\zeta),
\end{equation}
where $G(z) := (H-z)^{-1}$. Note that in the standard Wigner case, $S_{xa} = N^{-1}$, the sum \eqref{Txy} can be linearized using the resolvent identity. However, no such algebraic simplification is available for Wigner-type matrices. 
Therefore, the key probabilistic ingredient is a local law (Theorem \ref{th:Tlocallaw}) for the two-point function $T(z,\zeta)$, which we prove by analyzing the corresponding two-body \textit{stability operator} $\stab(z,\zeta):=1-\m(z)\m(\zeta)S$ (defined in \eqref{stab_def} below). The local law for $T(z,\zeta)$ was already obtained in \cite{R2023bulk} in this way, but only in the bulk of the spectrum.

The main difficulty compared to \cite{R2023bulk} arises from the decay of the smallest (in absolute value) eigenvalue  of the one-body stability operator $\stab(z,z)$ near singular points in the spectrum. As was established in \cite{Alt2020energy}, near the cusp-like points, the destabilizing eigenvalue vanishes like $\rho(z)^2$, in contrast to being order one in the bulk, and order $\rho(z)$ at a regular edge. To obtain error estimates that are sufficiently strong to prove the Central Limit Theorem in the full mesoscopic range, we need to counteract the instability of $\stab(z,z)^{-1}$. The improvement comes from the local symmetry of the density near its cusp-like points, which leads to a intricate  cancellation in the error term of the local law for $T(z,\zeta)$. We capture these cancellations by employing the additional directional information encoded in the single resolvent local law for Wigner-type matrices proved by Erd\H{o}s, Kr\"{u}ger, and Schr\"{o}der in \cite{Erdos2018CuspUF}. The same mechanism is crucial for obtaining effective error estimates on other quantities involving two resolvents throughout the proof.

In the second step of the proof, we focus on computing the variance and bias and analyzing the associated integral kernels, which emerge in the first probabilistic step. The main novel ingredient in this step is the asymptotic expansion for the smallest eigenvalue $\eigB(z,\zeta)$ of the stability operator $\stab(z,\zeta)$ (Propositions \ref{prop_eigval} and \ref{prop:eigB}). Understanding the precise $z-\zeta$ behavior of the eigenvalue $\eigB$ is essential because it characterizes the singularities of the variance kernel. In particular, we find that in the perturbative regime $|\eigB(z,\zeta)|\cdot\norm{\m(z)-\m(\zeta)} \sim |z-\zeta|$. For example, near an exact cusp point, this relation is a combination of two delicate fractional power scalings $|\eigB(z,\zeta)| \sim |z-\zeta|^{2/3}$ and $\norm{\m(z)-\m(\zeta)} \sim |z-\zeta|^{1/3}$.  The non-analytic nature of $\rho$ near the singularities prevents the use of standard techniques applicable in the bulk, and in prior work, such expansions near cusp-like points were obtained only for the special case of $\zeta = z, \bar{z}$ in the work of Alt, Erd\H{o}s, and Kr\"{u}ger \cite{Alt2020energy}. 
Further analysis relies on the perturbative expansion of the solution vector $\m$ near the local minima of the spectral density. The extensive analysis in \cite{Ajanki2019QVE} and \cite{Alt2020energy} shows that the leading term of this expansion is governed by an explicit shape function, which, up to scaling, depends only on the type of the singularity at $\sng$. Expressing the kernels in terms of the universal shape functions allows us to obtain the desired explicit integral formulas for the limiting bias and variance. 

The paper is organized in the following way. Section \ref{sec:main_results} contains the assumptions on the model and our main result, Theorems \ref{th_main} and \ref{th:rho>0}. In Section \ref{sec:prelim}, we collect the preliminary results we refer to throughout the paper. Section \ref{sec:proof_main} contains the proof of Theorem \ref{th_main} based on Propositions \ref{prop:main1} and \ref{prop:main2}. Proposition \ref{prop:main1} contains an effective estimate on the characteristic function of the linear eigenvalue statistics \eqref{LES}, which we prove in Subsections \ref{sec:main1proof}. We prove the local law for the two-point function $T(z,\zeta)$ and other technical inputs for Proposition \ref{prop:main1} in Subsection \ref{sec:curlyT_proof}. Proposition \ref{prop:main2} provides explicit formulas for the bias and the variance obtained from Proposition \ref{prop:main1} and establishes the continuity of the bias and variance functionals defined in Theorem \ref{th_main}. We prove Proposition \ref{prop:main2} in Sections \ref{sec:main2proof}. Section \ref{sec:pert} contains the necessary perturbative estimate on the stability operator and related quantities. In Sections \ref{sec:variance_lemma_proof} and \ref{sec:bias}, we analyze the variance and expectation of the linear eigenvalue statistics and establish the intermediate technical steps leading to the proof of Proposition \ref{prop:main2}. Finally, in Section \ref{sect:smallrho}, we outline the ingredients necessary for extending our results to the case of non-zero local minima, stated in Remark \ref{th:rho>0}.



\section{Main Result} \label{sec:main_results}
We begin with the definition of Wigner-type matrices originally introduced in Section 1.1 of \cite{Ajanki2016Univ}.
\begin{Def}[Wigner-type matrices] \label{WT_def}
	Let $H = \left(H_{jk} \right)_{j,k=1} ^N $ be an $N\times N$ matrix with independent entries up to the Hermitian symmetry condition $H = H^*$ satisfying $\E{H_{jk}} = 0$.
	
	We consider both real and complex Wigner-type matrices. In case the matrix $H$ is complex, we assume additionally that $\re H_{jk}$ and $\im H_{jk}$ are independent and $\Expv[H_{jk}^2] = 0$ for $k\neq j$.
	
	\textbf{Assumption (A).} Denote by $S$ the matrix of variances $S_{jk} := \Expv[|H_{jk}|^2]$. We assume that $S$ satisfies
	\begin{gather} \tag{A} \label{cond_A}
		\frac{c_{inf}}{N} \le S_{jk} \le \frac{C_{sup}}{N},
	\end{gather}
	for all $j,k \in \{1,\dots, N\}$ and some strictly positive constants $C_{sup}, c_{inf}$.
	
	\textbf{Assumption (B).} We assume a uniform bound on all other moments of $\sqrt{N}H_{jk}$, that is, for any $p \in \mathbb{N}$ there exists a positive constant $C_p$ such that
	\begin{gather} \tag{B} \label{cond_B}
		\E{|\sqrt{N}H_{jk}|^p} \le C_p
	\end{gather}
	holds for all $j,k \in \{1,\dots, N\}$.
	
	\textbf{Assumption (C).} We assume that the unique (Theorem 4.1 in \cite{Ajanki2016Univ}) solution $\m := (m_j)_{j=1}^N$ of the vector Dyson equation \eqref{VDE} satisfies the bound
	\begin{gather} \tag{C} \label{cond_C} 
		|m_j(z)| \le C_{\mathrm{m}}, \quad z\in \mathbb{C},\quad j \in \{1,\dots, N\}, 
	\end{gather}
	uniformly in $N$ for some positive constant $C_{\mathrm{m}}$. General sufficient conditions on the matrix of variances $S$ for the assumption \eqref{cond_C} to hold were obtained in Chapter 6 of \cite{Ajanki2019QVE}.
\end{Def}

The \textit{self-consistent density of states} $\rho(E)\equiv\rho_N(E)$ is defined by the Stieltjes inversion formula, 
\begin{equation} \label{rho_def}
	\rho(E) := \pi^{-1}\lim_{\eta\to+0}\im m(E+\I\eta),
\end{equation}
where $m(z) := N^{-1}\sum_{j=1}^N m_j(z)$.

\begin{Def}[Singularities in the self-consistent spectrum]	\label{def_sng}
	Let $\rho$ be the self-consistent density of states defined in \eqref{rho_def}. Let $\mathcal{I} \subset \mathbb{R}$ be the set on which $\rho$ is positive, then $\supp{\rho} = \overline{\mathcal{I}}$. A point $E\in \supp{\rho}$ is called a singularity in the self-consistent spectrum if and only if $\rho(E) = 0$. Let $\Sng \equiv \Sng_N$ be the set of all singularities, then $\Sng = \partial\mathcal{I}$.
	
	For any $E\in\mathfrak{S}$, define the size of the adjacent gap in the support $\Delta(E)$ by 
	\begin{equation}
		\Delta(E) := \sup\{|b-a| : E \in [a,b] \subset \mathbb{R}\backslash\mathcal{I}\}.
	\end{equation}
	A point $E\in \mathfrak{S}$ is called a cusp if $\Delta(E) = 0$, and an edge if $\Delta(E) > 0$. For all edge points $E$, we define the sign $\widehat{s}(E) \in \{-1,1\}$ in such a way that $E + \widehat{s}(E)x \in \supp{\rho}$ for all $x \in (0, \varepsilon)$ with some $\varepsilon>0$. For definiteness, let $\widehat{s}(E) := 0$ for all cusp points. 
\end{Def}

In addition to the singular points in the self-consistent spectrum, our methods allow us to study mesoscopic eigenvalue statistics at small but non-zero local minima of $\rho$.
\begin{Def}[Set of small local minima] 
	Let $\rho$ be the self-consistent density defined in \eqref{rho_def}. We consider points $\sng$, which lie in the set 
	\begin{equation}
		\mathcal{M}_{\rho_*} := \{E \in \supp{\rho}\backslash\partial\supp{\rho}\, :\, \rho(E) \le \rho_* \text{ and } E \text{ is a local minimum of } \rho\},
	\end{equation}
	where $\rho_*>0$ is a small positive threshold that depends only on the constants in Assumptions \eqref{cond_A}$-$\eqref{cond_C}.
\end{Def}
The structure of the set $\mathcal{M}_{\rho_*}$ was studied in \cite{Alt2020energy}. In particular, $\mathcal{M}_{\rho_*}$ consists of a finite number of points, which are an order one distance away from each other and from the end-points of the support intervals of $\rho$ (see Theorem 7.2 in \cite{Alt2020energy}).

We use the notion of the \textit{self-consistent fluctuation scale} introduced in \cite{Erdos2018CuspUF}, that corresponds to the typical eigenvalue spacing around the energy $E$. For a reference energy $E\in\supp{\rho}$ with $\rho(E) = 0$, the natural fluctuation scale $\eta_{\mathfrak{f}}(E)$ is given by
\begin{equation} \label{etaf}
	\eta_\mathfrak{f}(E) := \max\{N^{-3/4}, N^{-2/3}\min\{\Delta^{1/9}(E),1\}\}.
\end{equation}
For $E \in \supp{\rho}$ with $\rho(E)>0$, the natural fluctuation scale $\eta_{\mathfrak{f}}(E)$ is defined implicitly through
\begin{equation} \label{eq:etaf_supp}
	\int_{-\eta_\mathfrak{f}(E)}^{\eta_\mathfrak{f}(E)} \rho(E+x) \mathrm{d}x = \frac{1}{N}.
\end{equation}

We can now state our main  result; for convenience we separately formulate the small gap regime
and the small local minimum regime with all their transitionary behavior in the next two theorems.  
\begin{theorem}[Mesoscopic Central Limit Theorem at the Singularities of the Spectrum] \label{th_main}
	Let $g$ be a $C^2_c(\mathbb{R})$ test function, and let $E_0 \in \Sng$ be a singularity in the spectrum. Let $\varepsilon_0>0$ be a  fixed small constant, and let $\eta_0$ satisfy $N^{\varepsilon_0}\eta_{\mathfrak{f}}(E_0) \le \eta_0 \le N^{-\varepsilon_0}$. Assume that $\lim_{N\to\infty} \tfrac{1}{2}\eta_0^{-1}\Delta(\sng) = \alpha \in [0,+\infty]$, and if $\alpha \neq 0$, assume additionally that $\lim_{N\to\infty} \widehat{s}(\sng) = s$. Then the linear eigenvalue statistics on the scale $\eta_0$ around the point $\sng$ satisfy
	\begin{equation} \label{eq:main_clt}
		\Tr \bigl[g\bigl(\eta_0^{-1}(H-E_0)\bigr)\bigr] - N\int\limits_{\mathbb{R}}g\bigl(\eta_0^{-1}(x-E_0)\bigr)\rho(x)\mathrm{d}x \xrightarrow{d} \mathcal{N}\bigl((\tfrac{2}{\boldsymbol{\beta}}-1)\mathrm{Bias}_{\alpha}(g),\tfrac{1}{\boldsymbol{\beta}}\mathrm{Var}_{\alpha}(g)\bigr),
	\end{equation}
	where $\boldsymbol{\beta} = 1,2$ corresponds to real symmetric and complex Hermitian $H$, respectively, and the variance and bias functionals depend on $\alpha$ in the following way:
	\begin{itemize}
		\item[(i)] \textbf{Mesoscopically regular edge}. If $\alpha = \infty$, then
		\begin{equation} \label{main_edge}
			\mathrm{Var}_{\infty}(g) = \frac{1}{4\pi^2}\iint\limits_{x,y>0}\frac{\bigl(g(s\, x)-g(s\, y)\bigr)^2}{(x-y)^2}\mathcal{K}_{\mathrm{edge}}(x,y)\mathrm{d}x\mathrm{d}y,\quad \mathrm{Bias}_{\infty}(g) = \frac{g(0)}{4}, 
		\end{equation}
		where $s = +,-$ correspond to $\sng$ being the left and the right end-points of a support interval, respectively, and for all $x,y>0$, the kernel function $\mathcal{K}_{\mathrm{edge}}(x,y)$ is given by
		\begin{equation} \label{eq:K_edge}
			\mathcal{K}_{\mathrm{edge}}(x,y) := \frac{x+y}{\sqrt{xy}}\mathds{1}_{x,y>0}.
		\end{equation}
		More explicitly, $\mathrm{Var}_{\infty}$ admits the expression
		\begin{equation} \label{eq:norm_edge}
			\mathrm{Var}_{\infty}(g) = \frac{1}{4\pi^2}\norm{g(s\, x^2)}_{\dot {H}^{1/2}}^2.
		\end{equation}
		\item[(ii)] \textbf{Mesoscopic cusp}. If $\alpha = 0$, then
		\begin{equation}\label{main_cups}
			\begin{split}
				\mathrm{Var}_{0}(g) &= \frac{1}{4\pi^2}\iint\limits_{\mathbb{R}^2}\frac{\bigl(g(x^3)-g(y^3)\bigr)^2}{(x-y)^2}\mathcal{K}_{\mathrm{cusp}}(x,y)\mathrm{d}x\mathrm{d}y, \quad
				\mathrm{Bias}_{0}(g) = \frac{g(0)}{3}, 
			\end{split}
		\end{equation}
		where 
		\begin{equation} \label{eq:K_cusp}
			\mathcal{K}_{\mathrm{cusp}}(x,y) := 3\frac{\sign(xy)\bigl(x^{4}+y^{4}+2xy(x^{2}+y^{2})\bigr)}{(x^2+xy+y^2)^2}.
		\end{equation}
		Moreover, $\mathrm{Var}_{0}$ satisfies the relation
		\begin{equation} \label{eq:norm_cusp}
			\mathrm{Var}_{0}(g) \asymp \norm{g(|x|^3)-g(-|x|^3)}_{\dot{H}^{1/2}}^2 + \norm{g(|x|^{3/2})+g(-|x|^{3/2})}_{\dot{H}^{1/2}}^2,
		\end{equation}
		where $\asymp$ denotes equivalence up to universal constants.
		\item[(iii)] \textbf{Edge adjacent to a mesoscopic gap}. If $0 < \alpha < \infty$, then
		\begin{equation} \label{main_gap}
			\begin{split}
				\mathrm{Var}_{\alpha}(g) &= \frac{1}{4\pi^2}\iint\limits_{|x|,|y|>1}\frac{\bigl(g_{\alpha}(x)-g_{\alpha}(y)\bigr)^2}{(x-y)^2}\mathcal{K}_{\mathrm{gap}}(x,y)\mathrm{d}x\mathrm{d}y,\\
				\mathrm{Bias}_{\alpha}(g) &= \frac{g_{\alpha}(1)+g_{\alpha}(-1)}{4} - \frac{\sqrt{3}}{2\pi}\int\limits_{|x|>1} \frac{g_{\alpha}(x)}{(4x^2-1)\sqrt{x^2-1}}\mathrm{d}x,
			\end{split}
		\end{equation}
		where $g_{\alpha}(x) := g\bigl(s\,\alpha(4x^3-3x-1)\bigr)$, the value $s=+,-$ corresponds to the left and right end-points of a support interval, respectively, and the function $\mathcal{K}_{\mathrm{gap}}$ is defined as
		\begin{equation}
			\mathcal{K}_{\mathrm{gap}}(x,y) := 6\frac{(xy-1) (3 (4 x^2-1) (4 y^2-1)+8 (x^2 - y^2)^2) + 2xy (x - y)^2 (8xy+1)}{\sqrt{x^2-1}\sqrt{y^2-1}(4x^2+4xy+4y^2-3)^2}.
		\end{equation}
		Furthermore, $\mathrm{Var}_{\alpha}$ satisfies the relation
		\begin{equation} \label{eq:norm_gap}
			\mathrm{Var}_{\alpha}(g) \asymp \norm{g_{\alpha}^{\mathrm{o}}(\sqrt{x^2+1})}_{\dot{H}^{1/2}}^2 + \norm{g_{\alpha}^{\mathrm{e}}\bigl(\sqrt{\tfrac{1}{2}+\tfrac{1}{2}\sqrt{4x^2+1}}\bigr)\bigr)}_{\dot{H}^{1/2}}^2,
		\end{equation}
		where $g_{\alpha}^{\mathrm{o}}(x) := \tfrac{1}{2}g_{\alpha}(x)-\tfrac{1}{2}g_{\alpha}(-x)$, $g_{\alpha}^{\mathrm{e}}(x) := \tfrac{1}{2}g_{\alpha}(x)+\tfrac{1}{2}g_{\alpha}(-x)$.
	\end{itemize}
\end{theorem}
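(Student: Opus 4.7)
The plan is to follow the two-step strategy outlined in the introduction: control the characteristic function of the centered linear statistic on the probabilistic side, then identify the limiting kernels via a careful perturbative study of the two-body stability operator $\stab(z,\zeta) = 1 - \m(z)\m(\zeta)S$. Throughout, set $L_N(g) := \Tr g(\eta_0^{-1}(H-\sng)) - N\int g(\eta_0^{-1}(\cdot-\sng))\rho$ and $\phi_N(s) := \E{e^{\I s L_N(g)}}$.

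For the first step, encoded in Proposition \ref{prop:main1}, use the Helffer--Sj\"ostrand representation of $g(\eta_0^{-1}(H-\sng))$ as a contour integral of an almost-analytic extension against the resolvent $G(z) = (H-z)^{-1}$. This reduces the problem to controlling $\Tr[G(z)-\E{G(z)}]$ for spectral parameters in a complex neighborhood of $\sng$ of scale $\eta_0$. Differentiating $\phi_N$ in $s$ and applying the cumulant expansion yields, up to effective error terms, a first-order ODE $\phi_N'(s) = (\I \cdot \mathrm{Bias}(g) - s\cdot \mathrm{Var}(g))\phi_N(s)$, whose solution is the target Gaussian characteristic function. Closing the cumulant expansion requires a local law for the two-point function $T_{xy}(z,\zeta)$ from \eqref{Txy}, obtained by inverting $\stab(z,\zeta)$ against an explicit source. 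The key difficulty, absent in \cite{R2023bulk}, is that the smallest eigenvalue $\eigB(z,\zeta)$ of $\stab$ vanishes like $\rho(z)^2$ near cusp-like points, so naive bounds on $\stab^{-1}$ are insufficient. The remedy is to project the source onto the near-null direction of $\stab$ and exploit the local symmetry of $\rho$ near $\sng$: the directional information from the single resolvent local law of \cite{Erdos2018CuspUF} provides the cancellation that compensates for the instability, yielding error bounds uniform in the full mesoscopic window $\eta_0 \gtrsim \eta_\mathfrak{f}(\sng)$.

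For the second step, encoded in Proposition \ref{prop:main2}, extract the explicit bias and variance functionals. After rescaling $z = \sng + \eta_0 z'$, $\zeta = \sng + \eta_0 \zeta'$, the variance kernel is essentially $\eigB(z,\zeta)^{-1}$ weighted by inner products with the left/right near-null eigenvectors of $\stab$, integrated against $g$. This requires a precise asymptotic expansion of $\eigB(z,\zeta)$ and of $\m(z)-\m(\zeta)$ as $z,\zeta\to\sng$. The crucial relation $|\eigB(z,\zeta)| \cdot \norm{\m(z)-\m(\zeta)} \sim |z-\zeta|$ holds with singularity-dependent fractional exponents, e.g.\ $|\eigB| \sim |z-\zeta|^{2/3}$ and $\norm{\m(z)-\m(\zeta)} \sim |z-\zeta|^{1/3}$ at an exact cusp. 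The expansion is built from the universal shape function description of $\rho$ near $\sng$ from \cite{Ajanki2019QVE, Alt2020energy}, which depends continuously on $\alpha$ and interpolates among the three regimes. Passing to $N\to\infty$ produces the kernels $\mathcal{K}_{\mathrm{edge}}$, $\mathcal{K}_{\mathrm{cusp}}$, $\mathcal{K}_{\mathrm{gap}}$ by direct substitution, while the $\dot H^{1/2}$-norm equivalences \eqref{eq:norm_edge}, \eqref{eq:norm_cusp}, \eqref{eq:norm_gap} follow from elementary changes of variables that symmetrize the resulting double integrals.

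The main obstacle is the coupling in the second step between the fractional scaling of $\eigB$ and the non-analytic behavior of $\m$ near $\sng$: standard perturbative techniques available in the bulk break down because $\rho$ is not smooth at the singularity, and prior work \cite{Alt2020energy} only handled the diagonal case $\zeta \in \{z,\bar z\}$. Overcoming this requires a multi-parameter expansion of $\m$ and of the eigenvectors of $\stab(z,\zeta)$ directly in terms of the shape functions, keeping track of both the leading order and the subleading cancellations. The same fractional scaling forces the cancellation mechanism in the first step: without the improvement from the directional single-resolvent local law, the error bounds would blow up at the lower end of the mesoscopic range and preclude the sharp threshold $\eta_0 \gtrsim \eta_\mathfrak{f}(\sng)$ stated in the theorem.
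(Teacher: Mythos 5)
Your proposal correctly identifies the paper's two-step strategy: a cumulant-expansion analysis of the characteristic function backed by the two-point local law for $T(z,\zeta)$ (with the cancellation coming from the directional single-resolvent local law to counteract the near-cusp instability of $\stab$), followed by a perturbative expansion of $\eigB(z,\zeta)$ and $\m(z)-\m(\zeta)$ in terms of the universal shape functions to extract the explicit kernels and the $\dot H^{1/2}$-norm equivalences. The only bookkeeping difference is that you center by $N\int f\rho$ so the bias enters the ODE for $\phi_N$ directly, whereas the paper centers by $\E{\Tr f(H)}$ (so the ODE sees only the variance) and derives the bias as a separate expectation asymptotic; the substance is identical.
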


\begin{theorem}[Mesoscopic Central Limit Theorem at Small Local Minima] \label{th:rho>0} 
	Let $g$ be a $C^2_c(\mathbb{R})$ test function, let $\sng \in \mathcal{M}_{\rho_*}$ be a non-zero local minimum. Let $\varepsilon_0 > 0$ be a fixed small constant, and let $\eta_0$ satisfy $N^{\varepsilon_0}\eta_{\mathfrak{f}}(\sng) \le \eta_0 \le N^{-\varepsilon_0}$.
	Assume that $\lim_{N\to \infty} \frac{-2\psi(\sng)}{\sqrt{27}\pi}\eta_0^{-1}\rho(\sng)^3 = \alpha \in [-\infty, 0]$ (the auxiliary quantity $\psi(\sng)$ is defined in \eqref{psi_def} below), then the linear eigenvalue statistics satisfy the conclusion \eqref{eq:main_clt} of Theorem \ref{th_main} with bias and variance functionals defined for negative values $\alpha$ in the following way: 
	\begin{itemize}
		\item[(iv)] \textbf{Mesoscopic local minimum}. If $\alpha \in (-\infty,0)$, then
		\begin{equation} \label{main_min}
			\begin{split}
				\mathrm{Var}_{\alpha}(g) &= \frac{1}{4\pi^2}\iint\limits_{\mathbb{R}^2}\frac{\bigl(g\bigl(\alpha(4x^3+3x)\bigr)-g\bigl(\alpha(4y^3+3y)\bigr)\bigr)^2}{(x-y)^2}\mathcal{K}_{\mathrm{min}}(x,y)\mathrm{d}x\mathrm{d}y,\\
				\mathrm{Bias}_{\alpha}(g) &=
				\frac{\sqrt{3}}{2\pi}\int\limits_{\mathbb{R}} \frac{g\bigl(\alpha(4x^3+3x)\bigr)}{(4x^2+1)\sqrt{x^2+1}}\mathrm{d}x,
			\end{split}
		\end{equation}
		where
		\begin{equation}
			\mathcal{K}_{\mathrm{min}}(x,y) := 6\frac{(xy+1) (3 (4 x^2+1) (4 y^2+1)+8 (x^2 - y^2)^2)+2xy (x - y)^2 (8 x y-1)}{\sqrt{x^2+1}\sqrt{y^2+1}(4x^2+4xy+4y^2+3)^2}.
		\end{equation}
		Furthermore, $\mathrm{Var}_{\alpha}$ satisfies the relation
		\begin{equation} \label{eq:norm_min}
			\mathrm{Var}_{\alpha}(g) \asymp \norm{g^{\mathrm{o}}(\alpha(4x^3+3x))}_{\dot{H}^{1/2}}^2 + \norm{g^{\mathrm{e}}\biggl(\alpha(1+2\sqrt{4x^2+1})\sqrt{\tfrac{1}{2}\sqrt{4x^2+1}-\tfrac{1}{2}}\biggr)}_{\dot{H}^{1/2}}^2.
		\end{equation}
	\item[(v)] \textbf{Mesoscopically bulk-like local minimum}. If $\alpha = - \infty$, then
		\begin{equation} \label{main_bulk}
			\mathrm{Var}_{-\infty}(g) = \frac{1}{2\pi^2}\iint\limits_{\mathbb{R}^2}\frac{\bigl(g(x)-g(y)\bigr)^2}{(x-y)^2}\mathrm{d}x\mathrm{d}y = \frac{1}{2\pi^2}\norm{g}_{\dot{H}^{1/2}}^2, \quad \mathrm{Bias}_{-\infty}(g) = 0.
		\end{equation}	
	
	\item[(vi)] \textbf{Mesoscopic cusp}. If $\alpha = 0$, the bias and variance functionals coincide with $\mathrm{Bias}_0$ and $\mathrm{Var}_0$ defined in $(ii)$ of Theorem \ref{th_main}.
	\end{itemize}
\end{theorem}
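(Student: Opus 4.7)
The plan is to mirror the two-step strategy used in the proof of Theorem \ref{th_main}. First, I would establish an effective estimate on the characteristic function of the linear eigenvalue statistics \eqref{LES} via a local law for the two-point function $T(z,\zeta)$ defined in \eqref{Txy}, analogous to Proposition \ref{prop:main1}. Second, I would compute the limiting bias and variance by analyzing the smallest eigenvalue $\eigB(z,\zeta)$ of the two-body stability operator $\stab(z,\zeta)$ and identifying the resulting integral kernels, analogous to Proposition \ref{prop:main2}.

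For the probabilistic step, the arguments from Theorem \ref{th_main} transfer almost verbatim. The key structural input is that near a small local minimum $\sng \in \mathcal{M}_{\rho_*}$, the smallest eigenvalue of the one-body stability operator $\stab(z,z)$ still vanishes like $\rho(z)^2$ on the real axis, exactly as at a cusp-like singularity; this was established in \cite{Alt2020energy}. The shape of the local density profile near $\sng$ is also made explicit there, with the role previously played (for a cusp adjacent to a gap) by the cubic $4x^3-3x-1$ taken over by $4x^3+3x$. Consequently, the cancellation mechanism exploited in the proof of the two-point local law (Theorem \ref{th:Tlocallaw}), coming from the directional information in the single resolvent local law of \cite{Erdos2018CuspUF} together with the local symmetry of the shape, continues to yield the required estimates throughout the full mesoscopic range $N^{\varepsilon_0}\eta_\mathfrak{f}(\sng) \le \eta_0 \le N^{-\varepsilon_0}$.

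For the second step in case (iv), the central task is to derive a two-variable asymptotic expansion of $\eigB(z,\zeta)$ in the small-local-minimum regime, parametrizing $z = \sng + \eta_0\omega_1$, $\zeta = \sng + \eta_0\omega_2$. Substituting the perturbative expansion of $\m(z)$ from \cite{Alt2020energy} into the eigenvalue equation for $\stab(z,\zeta)$ reduces the computation to a scalar equation involving the Stieltjes-like transform associated with the shape $4x^3+3x$, yielding an explicit formula for $\eigB(z,\zeta)$ modulated by $\alpha$-dependent prefactors through the ratio $\psi(\sng)\rho(\sng)^3/\eta_0$. Computing $\im[\eigB^{-1}(z,\zeta)]$ and extracting residues then produces the kernel $\mathcal{K}_{\mathrm{min}}$ and the bias integral in \eqref{main_min}, in direct parallel to the derivations of $\mathcal{K}_{\mathrm{gap}}$ and $\mathcal{K}_{\mathrm{cusp}}$ in Theorem \ref{th_main}. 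Case (v) ($\alpha=-\infty$) and case (vi) ($\alpha=0$) follow by continuity of the kernel expressions: with the substitution $\widetilde x = \alpha(4x^3+3x)$, sending $\alpha \to -\infty$ collapses $\mathcal{K}_{\mathrm{min}}$ to the bulk kernel $\mathcal{K}_{\mathrm{bulk}} \equiv 2$ and kills the bias, recovering \eqref{bulkCLT}, while sending $\alpha \to 0^-$ matches the cusp formulas of Theorem \ref{th_main}(ii) after the appropriate rescaling of the test function.

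The hardest part will be the two-variable perturbative expansion of $\eigB(z,\zeta)$ near a small local minimum, with prefactors precise enough to produce the exact $\alpha$-dependence of $\mathcal{K}_{\mathrm{min}}$. The non-analyticity of $\m$ at $\sng$ (the shape function is only cubic-root smooth) precludes the standard implicit-function-theorem expansions used in the bulk case, and one must instead exploit the explicit cubic structure $4x^3+3x$ combined with the directional single resolvent estimates of \cite{Erdos2018CuspUF}. A secondary obstacle is keeping the error bounds in both the local law and the eigenvalue expansion uniform as $\alpha$ ranges over $[-\infty,0]$, which is essential so that the matching arguments in cases (v) and (vi) can be transferred from case (iv) by continuity rather than being proved independently.
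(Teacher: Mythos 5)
Your proposal matches the paper's approach in Section~\ref{sect:smallrho}: the probabilistic local-law estimates carry over with $\Delta_0$ replaced by $\rho(\sng)^3$, the limiting kernel is read off from a perturbative expansion of $\eigB(z,\zeta)$ governed by the cubic shape $4x^3+3x$ (equivalently the solution $\mathfrak{q}$ of $\mathfrak{q}^3+3\mathfrak{q}+2\lambda=0$, Lemma~\ref{lemma:sol_smallrho}), and the extremal cases $(v)$, $(vi)$ follow by continuity. One subtlety you should make explicit rather than fold into ``transfer almost verbatim'': because $\rho(\sng)>0$ here, the base eigenvalue $\eigB(\sng,\sng)\sim\rho(\sng)^2$ and $\im\m(\sng)\sim\rho(\sng)$ are both nonzero, so the expansions of $\eigB(z,\zeta)$ and $\eigB(\bar z,\zeta)$ (Proposition~\ref{prop:eigB_smallrho}) acquire distinct $\rho_0$-linear terms depending on whether $z,\zeta$ lie in the same or opposite half-planes; this is a genuine structural change relative to Proposition~\ref{prop_eigval}, where $\eigB(\sng,\sng)=0$, and tracking these extra terms is precisely what produces the $\alpha$-dependence of $\mathcal{K}_{\mathrm{min}}$.
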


Special cases of Theorems \ref{th_main} and \ref{th:rho>0} were obtained before. In particular, the case $(i)$ of Theorem \ref{th_main} includes all edge points $\sng$ with $\Delta(\sng) \gtrsim 1$, and the limiting bias and variance functionals in \eqref{main_edge} agree with regular edge asymptotics obtained in \cite{AdhikariHuang, Li2021genWigner}.
The case $(v)$ of Theorem \ref{th:rho>0} matches the limiting distribution of mesoscopic linear eigenvalue statistics in the bulk, see \cite{He2017WignerCLT,Li2021deformed, Li2021genWigner, Landon2021Wignertype, R2023bulk} for results on Wigner and related ensembles.
Note that for $\sng$ satisfying $\rho(\sng) \ge \rho_*$, the conclusion \eqref{eq:main_clt} is an immediate consequence of the Central Limit Theorem for mesoscopic linear eigenvalue statistics in the bulk, see Theorem 2.2 of \cite{R2023bulk}.

We further note that the cases $(ii)$ of Theorem \ref{th_main} and $(vi)$ of Theorem \ref{th:rho>0} include all physical cusps, which correspond to the points $\sng$ in the self-consistent spectrum that satisfy $0\le \Delta(\sng)\lesssim N^{-3/4}$, $0\le \rho(\sng) \lesssim N^{-1/4}$. Local eigenvalue statistics at such points were studied in \cite{Cipolloni2018CuspU} and \cite{Erdos2018CuspUF}.

\begin{remark}[Alternative variance formulas]	
	
	To highlight the analogy with \eqref{bulkCLT}, the integral formula for the variance in \eqref{main_gap} can be recast in a way that transfers the $\alpha$-dependence from the argument of the function $g$ in \eqref{main_gap} into the kernel. More precisely, performing the change of variable $x:=\gapx(t)$, $y:=\gapx(q)$, where $\gapx(t)$ is the unique real solution to $4\gapx^3-3\gapx-1 = t$ for $t \in \mathbb{R}\backslash[-2,0]$, yields the following expression
	\begin{equation}
		\begin{split}
			\mathrm{Var}_\alpha(g) &= 	\frac{1}{4\pi^2}\iint\limits_{(\mathbb{R}\backslash[-2\alpha,0])^2}\frac{\bigl(g(s\,x)-g(s\,y)\bigr)^2}{(x-y)^2}\widehat{\mathcal{K}}_{\mathrm{gap}}\bigl(\gapx(\alpha^{-1}x),\gapx(\alpha^{-1}y)\bigr)\mathrm{d}x\mathrm{d}y,\\ \widehat{\mathcal{K}}_{\mathrm{gap}}(x,y) &:= \frac{(4x^2+4xy+4y^2-3)^2}{(4x^2-1)(4y^2-1)}\mathcal{K}_{\mathrm{gap}}(x,y).
		\end{split}
	\end{equation}
	Similar expressions can be obtained for the variance in \eqref{main_cups} and \eqref{main_min} by changing the variable according to the unique real solutions to $4x^3+3x = t$ and $x^3=t$ for $t\in\mathbb{R}$, respectively. 
	
\end{remark}

\begin{remark}[Continuity of the variance]	
	The one-parameter families of functionals $\mathrm{Var}_{\alpha}$ and $\mathrm{Bias}_{\alpha}$ described in $(i)-(iii)$ of Theorem \ref{th_main} and $(iv)-(vi)$ of Theorem \ref{th:rho>0} are continuous in the parameter $\alpha$. The rigorous justification of this fact is contained in the proof of Proposition \ref{prop:main2}. However, for the variance formulas, the continuity can be seen via a simple heuristic argument. Indeed, performing a change of variable $x := (4\alpha)^{-1/3} t$, $y := (4\alpha)^{-1/3} q$ and taking the limit $\alpha \to 0$ in the expression for the variance in \eqref{main_gap} yields the expression in \eqref{main_cups}, while setting $x := 1 + (9\alpha)^{-1} t$, $y := 1 + (9\alpha)^{-1} q$ and taking the limit $\alpha \to +\infty$ yields the expression \eqref{main_edge}. Similar reasoning applies to variance formula in the transitionary regime \eqref{main_min} of a small local minimum and its two extremes in \eqref{main_cups} and \eqref{main_bulk}.   
\end{remark}

\begin{remark}[Additive deformation]
	The analysis laid out in \cite{Alt2020energy} applies to more general versions of \eqref{VDE}, in particular, the \textit{matrix Dyson equation}
	\begin{equation} \label{MDE}
		- M(z)^{-1} = z - A + S[M(z)],\quad A = A^*,\quad \im z \im M(z) > 0.
	\end{equation}
	The equation \eqref{MDE} can be used to study Wigner-type matrices with an additive Hermitian deformation $A$, which, in particular, generalize the deformed Wigner matrices considered in \cite{Li2021deformed}. Moreover, the behavior of the resulting self-consistent density of states near its singular points and local minima is described by the same universal shape functions as the density which arises from \eqref{VDE}. Therefore, the deterministic analysis presented in Sections \ref{sec:variance_lemma_proof} and \ref{sec:bias} applies to deformed Wigner-type matrices, for which the self-consistent resolvent satisfies the equation \eqref{MDE}. However, we do not pursue this generalization,  since in Section \ref{sec:proof_main} we rely on the local law for the resolvent of a Wigner-type matrix at a cusp \cite{Erdos2018CuspUF} that was only proved in the case of a diagonal deformation $A$. For such deformations, our results hold with no alteration to the proof.
\end{remark}

\section{Notations and Preliminaries} \label{sec:prelim}
\subsection{Notations}
For two vectors $\vect{x},\vect{y} \in \mathbb{C}^N$, we use the normalized definition of the scalar product and the corresponding $\ell^2$-norm, namely
\begin{equation}
	\langle\vect{x},\vect{y}\rangle := N^{-1}\sum_{j=1}^N \overline{x_j}y_j, \quad \norm{\vect{x}}_2 := \langle \vect{x}, \vect{x} \rangle^{1/2}.
\end{equation}

Let $\mathbb{H}$ denote the complex upper half-plane, $\mathbb{H} := \{z\in\mathbb{C}:\im z > 0\}$, and let $\overline{\mathbb{H}}$ denote its closure.

For a function $h : \mathbb{H} \to \mathbb{C}$, we define its continuation into the lower-half plane $\mathbb{H}^*$ by complex conjugation, i.e., $h(\overline{z}) := \overline{h(z)}$ for all $z\in\mathbb{H}$. Furthermore, if $h:\mathbb{H} \to \mathbb{C}$ admits a continuous extension to the closure of the complex upper-half plane, $\overline{\mathbb{H}}$, then we distinguish the two limiting values on $\mathbb{R}$ of its continuation to $\mathbb{H}\cup\mathbb{H}^*$ by defining 
\begin{equation} \label{up_dn_limit}
	h(E+\I0) := \lim\limits_{\eta \to +0} h(E+\I\eta), \quad h(E-\I0) := \lim\limits_{\eta \to-0} h(E+\I\eta) = \overline{h(E+\I0)},
\end{equation}
and if the two limiting values coincide, we write $h(E) := 	h(E+\I0) = 	h(E-\I0)$.

We use the following notion of stochastic domination.
\begin{Def} (Definition 2.1 in \cite{Erdos2013LocalLaw})  \label{stochdom_def} 
	Let $\mathcal{X} = \mathcal{X}^{(N)}(u)$ and $\mathcal{Y} = \mathcal{Y}^{(N)}(u)$ be two families of non-negative random variables possibly depending on a parameter $u \in U^{(N)}$. We say that $\mathcal{Y}$ stochastically dominates $\mathcal{X}$ uniformly in $u$ if for any $\varepsilon > 0$ and $D>0$ there exists $N_0(\varepsilon,D)$ such that for any $N \ge N_0(\varepsilon, D)$,
	\begin{equation*}
		\sup\limits_{u\in U^{(N)}}\Prob{\mathcal{X}^{(N)}(u) > N^\varepsilon \mathcal{Y}^{(N)}(u)} < N^{-D}.
	\end{equation*}
	We denote this relation by $\mathcal{X} \prec \mathcal{Y}$ or $\mathcal{X} = \Oprec(\mathcal{Y})$. For a complex-valued random variable $\mathcal{Z}$, the notation $\mathcal{Z} = \Oprec(\mathcal{Y})$ signifies that $|\mathcal{Z}|\prec\mathcal{Y}$.
\end{Def}

For two non-negative quantities $X, Y \in \mathbb{R}$ depending on $N$, we write $X \ll Y$ if there exist $\varepsilon, N_0 > 0$ such that $|X| \le N^{-\varepsilon}|Y|$ for all $N \ge N_0$.
Similarly, we write $X \lesssim Y$ if there exists a constant $C,N_0>0$ such that $|X| \le C|Y|$ for all $N \ge N_0$, and $X \sim Y$ if both $X\lesssim Y$ and $Y \lesssim X$ hold. The value of the constant $C$ can implicitly depend on the constants in Assumptions \eqref{cond_A}-\eqref{cond_C}.

We use $C$ and $c$ to denote constants, the precise value of which is irrelevant and may change from line to line.

\subsection{Properties of the Solution Vector and the Stability Operator}

Our work relies heavily on the various properties of the solution $\m$ of the vector Dyson equation \eqref{VDE}, and the corresponding stability operator. Here we summarize some of the results from \cite{Ajanki2016Univ, Alt2020energy} that we refer to throughout the proof.
\begin{lemma} (Proposition 4.1 in \cite{Ajanki2016Univ} and Lemma 5.4 in \cite{Alt2020energy}) \label{lemma_m}
	Let $\rho(z) := \pi^{-1}\langle \im \m(z) \rangle$ be the harmonic extension of $\rho$ into the upper-half plane $\mathbb{H}$.
	Under the assumptions \eqref{cond_A}-\eqref{cond_C}, the solution vector $\m$ admits a unique extension to $\overline{\mathbb{H}}$, and satisfies the following properties.
	\begin{enumerate}
		\item[(i)] Uniformly for all $z\in \overline{\mathbb{H}}$, and for all $j \in \{1,\dots,N\}$, the entries of $\m$ admit the estimate
		\begin{equation} \label{m_bound}
			|m_j(z)| \sim (1+|z|)^{-1},
		\end{equation}
		and the imaginary parts of the entries of $\m$ are comparable in size, that is,
		\begin{equation} \label{immrho}
		\im m_j(z) \sim \rho(z).
		\end{equation}
		\item[(ii)] For all $z,\zeta \in \overline{\mathbb{H}}$,
		\begin{equation} \label{dm_bound}
			\norm{\m(z) - \m(\zeta)}_\infty \lesssim |z -\zeta|^{1/3}.
		\end{equation}
		\item[(iii)] The functions $z \mapsto \rho(z)^{-1}\im z$ and $z\mapsto \rho(z)^{-1}\im\m(z)$ admit unique uniformly $1/3$-H\"{o}lder continuous extensions to $\{z\in\overline{\mathbb{H}} : |z|\le C \}$. In particular, 
		\begin{equation} \label{rhoeta_to0}
			\lim\limits_{\eta\to+0} \eta\rho(E+\I\eta)^{-1} = 0, \quad E\in \supp\rho.
		\end{equation}
		\item[(iv)] There exists a threshold $\rho_* \sim 1$, such that $\sign(\re\m(z))$ has a unique $1/3$-H\"{o}lder continuous extensions to $\{z\in\overline{\mathbb{H}} : |z|\le C, \rho(z) \le \rho_* \}$. 
	\end{enumerate}	
\end{lemma}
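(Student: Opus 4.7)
The four items are all structural consequences of the vector Dyson equation \eqref{VDE} combined with the flatness \eqref{cond_A} and uniform boundedness \eqref{cond_C} assumptions, and my plan is to unpack the mechanisms rather than redo the detailed regularity theory of \cite{Ajanki2016Univ, Ajanki2017SingularitiesQVE, Alt2020energy}. The common tool is the stability operator $\stab(z,z) = 1 - \m(z)\m(z)S$, whose smallest eigenvalue $\eigB$ controls how perturbations of $\m$ in the parameter $z$ are amplified.

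For part (i), I would start directly from \eqref{VDE}. For $|z|$ large, the linear term on the right forces $m_j(z) \approx -1/z$, yielding matching upper and lower bounds $|m_j(z)| \sim |z|^{-1}$. For $|z|$ bounded, the upper bound is literally \eqref{cond_C}, while the lower bound follows because $m_j(z_0) = 0$ would force $-1/m_j$ to blow up, contradicting boundedness of the right-hand side of \eqref{VDE} under \eqref{cond_A}. For the identity $\im m_j \sim \rho$, take imaginary parts in \eqref{VDE}:
\begin{equation*}
	\frac{\im m_j}{|m_j|^2} = \eta + (S\,\im\m)_j,
\end{equation*}
and combine positivity of $S$ (via \eqref{cond_A}) with a Perron--Frobenius style argument on this non-negative linear system to conclude that all entries of $\im\m$ are comparable. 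This gives $\im m_j(z) \sim N^{-1}\sum_k \im m_k(z) = \pi\rho(z)$.

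For part (ii), the exponent $1/3$ reflects the worst-case local regularity of $\m$, saturated exactly at the cubic-root cusp singularities classified in \cite{Ajanki2017SingularitiesQVE}. I would run the argument through the stability operator: differentiating \eqref{VDE} in $z$ gives, componentwise, $\stab(z,z)\partial_z\m = \m^2$, so that $\norm{\partial_z\m}_\infty \lesssim |\eigB(z,z)|^{-1}$. The key input from \cite{Alt2020energy} is the sharp bound $|\eigB(z,z)| \gtrsim \rho(z)^2$ near cusps (and $\gtrsim \rho(z)$ at a regular edge). Integrating the resulting blow-up $\norm{\partial_z\m}_\infty \lesssim \rho(z)^{-2}$ along a path in $\overline{\mathbb H}$, while exploiting the cubic-root scaling $\rho \sim \eta^{1/3}$ near a cusp, yields the uniform bound \eqref{dm_bound}. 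Parts (iii) and (iv) are refinements of the same analysis: writing $\im m_j(z) = \rho(z)\phi_j(z)$ with $\phi_j$ bounded above and below, both $\phi_j$ and $\eta/\rho$ inherit the H\"{o}lder regularity from the perturbative expansion of $\m$ near its singular points through the universal cubic-root shape functions of \cite{Alt2020energy}. The sign of $\re m_j$ changes only where $\m$ sweeps through specific configurations near a local minimum, and the threshold $\rho_*$ in (iv) is fixed small enough that on $\{\rho(z) \le \rho_*\}$ one remains in a region where a single consistent branch of $\sign(\re\m)$ is defined, giving the continuous extension.

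The main obstacle is the sharpness of the exponent $1/3$ in (ii) and, behind it, the size estimate $|\eigB(z,z)| \gtrsim \rho(z)^2$ at cusps. Both require the full singularity classification and the perturbative shape-function analysis of \cite{Ajanki2017SingularitiesQVE, Alt2020energy} and cannot be obtained by elementary resolvent identities, since they rely on how the destabilizing eigenvector of $\stab(z,z)$ aligns with the nonlinearity near cusp points. For this lemma I would therefore cite these works directly, and concentrate the new work on the two-variable extension of this analysis developed later in the present paper.
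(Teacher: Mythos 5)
Your ultimate conclusion is correct and matches the paper exactly: this lemma is not proved in the present paper but cited verbatim from Proposition 4.1 in \cite{Ajanki2016Univ} and Lemma 5.4 in \cite{Alt2020energy}, and you end by saying you would cite these works directly. Your sketch of the mechanism for part (i) (large-$z$ asymptotics, lower bound on $|m_j|$ from \eqref{cond_C} via the Dyson equation, and the Perron--Frobenius comparability argument for $\im\m$) is also accurate and reflects how the original proof works.

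However, your proposed route to part (ii) contains a circularity that makes it unviable as an actual proof strategy. You want to obtain $\norm{\m(z)-\m(\zeta)}_\infty \lesssim |z-\zeta|^{1/3}$ by integrating $\norm{\m'(z)}_\infty \lesssim |\eigB(z,z)|^{-1}$ and invoking the lower bound $|\eigB(z,z)| \gtrsim \rho(z)^2$ from \cite{Alt2020energy}. But the spectral analysis of $\stab(z,z)$ that yields $\norm{\stab(z,z)^{-1}Q(z,z)}_{\ell^\infty\to\ell^\infty} \lesssim 1$ and the eigenvalue scaling \eqref{beta_est} (i.e., Lemma \ref{lemma_stab} above, corresponding to Lemma 5.1 in \cite{Alt2020energy}) already presupposes control on the variation of $\m$ -- in particular, the $1/3$-H\"older continuity from Proposition 4.1 in \cite{Ajanki2016Univ} is an input to that analysis, not an output of it. In the original work the H\"older bound is established first, via a quantitative contraction/self-improvement argument applied to the Dyson equation using the a priori bound \eqref{cond_C}, and only afterwards can one decompose $\stab^{-1}$ as $\eigB^{-1}\Pi + \stab^{-1}Q$ and read off the derivative bound \eqref{eq:m'_bound}. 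Your sketch would reverse this logical order. Since you explicitly acknowledge that the full singularity classification is needed and defer to citation, the proposal is not wrong about where the difficulty lies; but the integration-of-$\m'$ heuristic should not be presented as the path one would actually take, because it reuses the conclusion as a hypothesis. Parts (iii) and (iv) are fairly characterized as refinements via the shape-function expansions of \cite{Alt2020energy}.
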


In the sequel, we use $\m$ (and functions of $\m$, such as $\im \m$, $|\m|$, $\m'$ etc) to denote both the vector $(m_j)_{j=1}^N$ and the corresponding multiplication operator $\diag{(m_j)_{j=1}^N}$.

An important ingredient of the proof is the \textit{stability operator}, originally introduced in \cite{Ajanki2019QVE}, defined for two spectral parameters $z,\zeta \in \mathbb{C}\backslash\mathbb{R}$ as
\begin{equation} \label{stab_def}
	\stab(z,\zeta) := 1-\m(z)\m(\zeta)S.
\end{equation}
We extend the definition \eqref{stab_def} to the real line, adhering to the convention \eqref{up_dn_limit}.

In the following lemma, we collect the necessary properties of the one-body saturated self-energy operator $F(z) := |\m(z)|S|\m(z)|$, introduced in \cite{Ajanki2019QVE}. Its two-body analogue $F(z,\zeta)$ was also studied in \cite{Landon2021Wignertype}. 
\begin{lemma} (Lemma 4.3 in \cite{Alt2020energy} ) \label{lemma_F} 
	Let $z:=E+\I\eta \in \mathbb{H}$ satisfy $|z| \le C$. The operator $F:=F(z)$ has a unique largest eigenvalue $\opnormtwo{F}$, let $\vv := \vv(z)$ be the corresponding unique $\ell^2$-normalized eigenvector of $F$ with positive entries, then under the assumption \eqref{cond_A},
	\begin{equation} \label{eigF_1}
		1- \opnormtwo{F} = \eta \frac{\langle\vv, |\m|\rangle}{\langle\vv, \im\m/|\m|\rangle} \sim \rho^{-1}\eta.
	\end{equation} 
	Furthermore, eigenvector $\vv$ admits the estimate
	\begin{equation} \label{v_def}
		\vv = \vect{f}\norm{\vect{f}}_2^{-1} + \mathcal{O}(\rho^{-1}\eta),
	\end{equation}
	where the vector $\vect{f}:=\vect{f}(z)$ is defined as
	\begin{equation} \label{f_def}
		\vect{f} := \rho^{-1}\im\m/|\m| \sim 1.
	\end{equation}	
	The operator $F$ has a spectral gap $\vartheta \sim 1$, that is
	\begin{equation}\label{gapF}
		\mathrm{spec}(F/\opnormtwo{F}) \subset [-1+\vartheta,1-\vartheta]\cup\{1\}.
	\end{equation}
	In particular, 
	\begin{equation}
		\opnorminf{(1-F)^{-1}(1-\vv\vv^*)} + \opnormtwo{(1-F)^{-1}(1-\vv\vv^*)} \lesssim 1.
	\end{equation}
\end{lemma}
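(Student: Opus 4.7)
The core algebraic input is a self-consistent identity relating $\vect{f}$ and $F$, derived from the vector Dyson equation \eqref{VDE}. Taking imaginary parts of $-1/m_j = z + (S\m)_j$ yields $\im m_j/|m_j|^2 = \eta + (S\im\m)_j$. Multiplying by $|m_j|$ and using $\im\m = \rho|\m|\vect{f}$ (which is precisely the definition \eqref{f_def} of $\vect{f}$), I obtain the identity
\begin{equation*}
(1 - F)\vect{f} = \eta \rho^{-1}|\m|.
\end{equation*}
This identity is the engine for everything else.

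Next, I would invoke Perron-Frobenius on $F = |\m|S|\m|$. Assumption \eqref{cond_A} together with the bound $|m_j| \sim 1$ from Lemma \ref{lemma_m} gives entrywise control $F_{jk} \sim N^{-1}$, so $F$ is a symmetric matrix with strictly positive entries. Perron-Frobenius therefore furnishes a unique largest eigenvalue $\opnormtwo{F}$ with a unique (up to sign) eigenvector $\vv$ having positive entries; one normalizes so that $\norm{\vv}_2 = 1$. Pairing the identity above with $\vv$ and using self-adjointness of $F$ produces
\begin{equation*}
(1 - \opnormtwo{F})\langle\vv, \vect{f}\rangle = \eta\rho^{-1}\langle\vv, |\m|\rangle,
\end{equation*}
which, upon substituting $\langle\vv,\vect{f}\rangle = \rho^{-1}\langle\vv,\im\m/|\m|\rangle$, is exactly \eqref{eigF_1}. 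The size estimate $1-\opnormtwo{F} \sim \rho^{-1}\eta$ follows from the fact that $\vv$ has positive entries of order one (a quantitative Perron-Frobenius input), $|\m|\sim 1$, and $\vect{f} \sim 1$, so both inner products in the ratio are of order unity.

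To upgrade this to the expansion \eqref{v_def}, I would decompose $\vect{f} = \langle\vv,\vect{f}\rangle\vv + \vect{f}^\perp$ with $\vect{f}^\perp \perp \vv$ in the normalized inner product. Substituting into the identity and isolating the orthogonal component yields
\begin{equation*}
(1 - F)\vect{f}^\perp = \eta\rho^{-1}(1 - \vv\vv^*)|\m|,
\end{equation*}
after which the spectral gap \eqref{gapF}, once established, gives $\norm{\vect{f}^\perp}_2 \lesssim \eta\rho^{-1}$. Combined with $\norm{\vect{f}}_2\sim 1$ and $\langle\vv,\vect{f}\rangle\sim 1$, this yields $\vv = \vect{f}/\norm{\vect{f}}_2 + \mathcal{O}(\rho^{-1}\eta)$.

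The main obstacle is the quantitative spectral gap \eqref{gapF}, because the other claims reduce to it. The plan is to compare $F$ (normalized by $\opnormtwo{F}$) to an idealized rank-one projector on the Perron direction and exploit the uniform lower bound $F_{jk} \geq cN^{-1}$. One convenient route is a Collatz--Wielandt / Dirichlet-form argument: for any $\vect{x}\perp\vv$ one shows $\langle\vect{x}, F\vect{x}\rangle \leq (1-\vartheta)\opnormtwo{F}\norm{\vect{x}}_2^2$ with $\vartheta\sim 1$, by using that $F_{jk} \geq c N^{-1}$ forces any unit vector orthogonal to the nearly-constant Perron vector $\vv$ to have significant entrywise cancellations picked up by the quadratic form. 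The $\ell^2\to\ell^2$ bound on $(1-F)^{-1}(1-\vv\vv^*)$ is then $\vartheta^{-1}\sim 1$. The $\ell^\infty\to\ell^\infty$ bound is subtler: expanding $(1-F)^{-1}(1-\vv\vv^*)$ as a Neumann-type series on the invariant subspace $\vv^\perp$ and using the entrywise bound $F_{jk}\lesssim N^{-1}$ (so that $F$ is a bounded operator on $\ell^\infty$ as well) together with the $\ell^2$ bound via Hilbert--Schmidt interpolation yields the claim. This is precisely the route carried out in the cited Lemma~4.3 of \cite{Alt2020energy}, building on the machinery of \cite{Ajanki2019QVE}.
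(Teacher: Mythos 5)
The paper does not give its own proof of this lemma: it is imported verbatim (with the citation ``Lemma 4.3 in \cite{Alt2020energy}''), so there is no in-paper argument to compare against. Your proposal is essentially a correct reconstruction of the route taken in the cited reference, and you say as much at the end.

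The two computational pivots are right. Taking imaginary parts of the VDE and multiplying by $|\m|$ does give $(1-F)\vect{f}=\eta\rho^{-1}|\m|$, and pairing with the Perron eigenvector $\vv$ together with $\im\m/|\m|=\rho\vect{f}$ reproduces \eqref{eigF_1} exactly. The scaling $1-\opnormtwo{F}\sim\rho^{-1}\eta$ then needs $\langle\vv,|\m|\rangle\sim1$ and $\langle\vv,\vect{f}\rangle\sim1$, which you correctly reduce to entrywise comparability of $\vv$; that comparability does follow from the fixed-point relation $\vv_j=\opnormtwo{F}^{-1}\sum_k F_{jk}\vv_k$ and $F_{jk}\sim N^{-1}$, so this part is sound, though your phrase ``both inner products in the ratio are of order unity'' should read ``$\langle\vv,|\m|\rangle\sim1$ and $\langle\vv,\vect{f}\rangle\sim1$'' since $\langle\vv,\im\m/|\m|\rangle\sim\rho$ itself. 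The orthogonal decomposition giving $(1-F)\vect{f}^\perp=\eta\rho^{-1}(1-\vv\vv^*)|\m|$ is also correct; note your argument delivers \eqref{v_def} a priori in $\ell^2$, and to get it in $\ell^\infty$ (the norm in which it is typically used downstream) you would simply apply the $\opnorminf{\cdot}$ bound on $(1-F)^{-1}(1-\vv\vv^*)$ to the same right-hand side rather than the $\ell^2$ one.

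The part you flag as ``the main obstacle''—the spectral gap \eqref{gapF} and the $\opnorminf{\cdot}$ resolvent bound—is indeed the technical crux, and you sketch it only heuristically. That is acceptable for a blind proposal, but be aware it is genuinely where the work lives: the Dirichlet-form argument for the gap and the $\ell^\infty\to\ell^\infty$ control of $(1-F)^{-1}(1-\vv\vv^*)$ are precisely what require the quantitative operator estimates developed in Chapter 4 of \cite{Ajanki2019QVE} and Lemma 4.3 of \cite{Alt2020energy}. As written your outline is a plan, not a proof, of that step; since the paper simply cites the result, that is the appropriate level of detail here.
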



For the remainder of this section, we assume that $z$ lies in the domain 
\begin{equation} \label{dom_DEc}
	\D := \{z\in \mathbb{C}: |z-\sng| \le c_0N^{-\varepsilon_0/3}\},
\end{equation}
where $\sng\in \Sng$ and $\varepsilon_0>0$ satisfy the assumptions of Theorem \ref{th_main}, and $c_0$ is a positive $N$-independent constant. Note that for any $\rho_*\sim 1$, the constant $c_0$ can be chosen such that all $z\in\D$ satisfy the condition $\rho(z)^{-1}|\im z| + \rho(z) \le \rho_*$.

We collect the properties of the stability operator $\stab(z,z)$ for $z$ in the vicinity of $\sng$, which were proved in \cite{Alt2020energy}, and serve as the starting point for our perturbative analysis of $\stab(z,\zeta)$.
\begin{lemma} (Lemma 5.1 in \cite{Alt2020energy}) \label{lemma_stab} Let $\stab(z,z)$ be the stability operator defined in \eqref{stab_def}, then there exists $\epsilon \sim 1$, such that for all $z := E+ \I\eta \in\D\cap\mathbb{H}$,
	\begin{equation} \label{stab0_gap}
		\opnormtwo{\bigl(w - \stab(z,z)\bigr)^{-1}} + \opnorminf{\bigl(w - \stab(z,z)\bigr)^{-1}} \lesssim 1,
	\end{equation}
	for all $w \in \mathbb{C}$ such that $|w| \ge \epsilon$ and $|1-w| \le 1 - 2\epsilon$. Furthermore, $\stab$ has a single simple eigenvalue $\eigB(z,z)$ with $|\eigB(z)| \le \epsilon$, which is isolated and  satisfies
	\begin{equation} \label{beta_est}
		|\eigB(z,z)| \sim \rho(z)^{-1}\eta + \rho(z)\bigl(\rho(z) + \sigma(z)\bigr),
	\end{equation}
	where quantity $\sigma(z)$ is defined by\
	\begin{equation} \label{sigma_def}
		\sigma(z) := \langle\vect{p}(z)\vect{f}(z)^2,\vect{f}(z) \rangle, \quad \vect{p}(z) := \sign\bigl(\re\m(z)\bigr).
	\end{equation}

	Let $\Pi(z,z)$ be the eigenprojector corresponding to the eigenvalue $\eigB(z,z)$ defined by
	\begin{equation}
		\Pi(z,z) := \frac{1}{2\pi\I}\oint\limits_{|w|=\epsilon} \bigl(w-\stab(z,z)\bigr)^{-1}\mathrm{d}w,
	\end{equation}
	then for $Q(z,z):=1-\Pi(z,z)$, we have
	\begin{equation} \label{eq:invstab0Q}
		\opnorminf{\stab(z,z)^{-1}Q(z,z)} + \opnormtwo{\stab(z,z)^{-1}Q(z,z)} \lesssim 1.
	\end{equation}

	Let $\vv := \vv(z)$ be the eigenvector of $F := F(z)$ defined in Lemma \ref{lemma_F}, and let $\vect{f}:=\vect{f}(z)$ be the vector defined in \eqref{f_def}, then the definitions
	\begin{equation} \label{b_def}
		\vect{b}(z):=\Pi(z,z)[|\m(z)|\vv(z)]\langle\vv(z),\vect{f}(z)\rangle\quad\text{and}\quad\vect{b}^{\ell}(z) := \Pi(z,z)^*[|\m(z)|^{-1}\vv(z)] \langle \vv(z),\vect{f}(z)\rangle
	\end{equation} 
	yield right and left eigenvectors of $\stab(z,z)$ corresponding to $\eigB(z,z)$ which satisfy
	\begin{equation} \label{b_vect}
		\begin{split}
			\vect{b}(z) &= \langle\vv,\vect{f}\rangle |\m|\vv  + 2\I\rho |\m| (1-F)^{-1}(1-\vv\vv^*)[\vect{p}\vect{f}^2] + \mathcal{O}(\rho^2),\\
			\vect{b}^\ell(z) &= \langle\vv,\vect{f}\rangle |\m|^{-1}\vv  - 2\I\rho|\m|^{-1}(1-F)^{-1}F(1-\vv\vv^*)[\vect{p}\vect{f}^2] + \mathcal{O}(\rho^2),
		\end{split}
	\end{equation}
	where $\rho, \m, \vv,\vect{f}, \vect{p}$, and $F$ are evaluated at $z$.
\end{lemma}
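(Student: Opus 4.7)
The plan is to reduce the spectral analysis of $\stab(z,z) = 1 - \m^2 S$ to that of the self-adjoint operator $F = |\m|S|\m|$, whose spectral data are supplied by Lemma \ref{lemma_F}. The key observation is the similarity
\begin{equation*}
|\m|^{-1}\stab(z,z)|\m| = 1 - \vect{u}^2 F, \qquad \vect{u} := \m/|\m|,
\end{equation*}
which realizes $\stab(z,z)$ as a small perturbation of $1 - F$, since $\vect{u} = \vect{p} + \I\rho\vect{f}/|\m| + \mathcal{O}(\rho^2)$ and $\vect{p}^2 = 1$ give $\vect{u}^2 - 1 = \mathcal{O}(\rho)$ by Lemma \ref{lemma_m}. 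The single small eigenvalue $\eigB(z,z)$ originates from the isolated Perron eigenvalue $\opnormtwo{F}$ of $F$, which satisfies $1 - \opnormtwo{F} \sim \rho^{-1}\eta$ by \eqref{eigF_1}, while all remaining eigenvalues of $\stab(z,z)$ are separated from $0$ by the spectral gap $\vartheta$ of $F$ from \eqref{gapF}.

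For the resolvent bound \eqref{stab0_gap}, I would first prove it directly for the self-adjoint reference operator $1 - F$ via the spectral theorem: on the orthogonal complement of $\vv$ the spectrum of $1 - F$ is contained in an interval of the form $[\vartheta', 2 - \vartheta']$ with $\vartheta' \sim 1$, while on the Perron direction it contributes the simple eigenvalue $1 - \opnormtwo{F} \sim \rho^{-1}\eta$; both sets are excluded from the annular region $\{|w| \ge \epsilon,\, |1 - w| \le 1 - 2\epsilon\}$ for a suitable $\epsilon \sim 1$. I would then transfer the bound to $\stab(z,z)$ by a Neumann expansion in the perturbation $(\vect{u}^2 - 1)F$, and obtain the $\ell^\infty$-version from the pointwise comparabilities $\vv \sim 1$ and $|\m| \sim 1$, combined with the $\ell^\infty$-resolvent estimate on $(1 - F)^{-1}(1 - \vv\vv^*)$ already supplied by Lemma \ref{lemma_F}. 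The bound \eqref{eq:invstab0Q} then follows by integrating $w^{-1}(w - \stab(z,z))^{-1}$ along the contour $|w| = \epsilon$.

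For the asymptotic formula \eqref{beta_est}, I would apply first-order perturbation theory using the approximate right and left eigenvectors $\vect{b} \approx |\m|\vv \langle\vv, \vect{f}\rangle$ and $\vect{b}^\ell \approx |\m|^{-1}\vv\langle\vv, \vect{f}\rangle$. A direct computation exploiting $F\vv = \opnormtwo{F}\vv$ yields
\begin{equation*}
\bigl\langle|\m|^{-1}\vv,\,\stab(z,z)|\m|\vv\bigr\rangle = \bigl\langle\vv^2,\,1 - \vect{u}^2 \opnormtwo{F}\bigr\rangle,
\end{equation*}
and the expansion $\vect{u}^2 = 1 + 2\I\vect{p}\rho\vect{f}/|\m| - \rho^2\vect{f}^2/|\m|^2 + \mathcal{O}(\rho^3)$ produces three contributions to $\eigB(z,z)$: the real $\rho^{-1}\eta$ term from the Perron gap $1 - \opnormtwo{F}$; a purely imaginary first-order term that reduces to $\I\rho\sigma$ up to normalization (using $\vv \approx \vect{f}/\norm{\vect{f}}_2$ and the definition $\sigma = \langle\vect{p}\vect{f}^2, \vect{f}\rangle$); and a real $\rho^2$ term from the second-order correction. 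Summing these gives the claimed asymptotic \eqref{beta_est}. The explicit formulas \eqref{b_vect} for the eigenvectors are then obtained by applying the Riesz projector $\Pi(z,z)$ to the leading approximations $|\m|\vv$ and $|\m|^{-1}\vv$ and inverting $(1 - F)$ on the orthogonal complement $(1 - \vv\vv^*)$ of the Perron direction.

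The principal technical obstacle is the careful bookkeeping of $\rho$-powers in the perturbative expansion: although $1 - \opnormtwo{F} \sim \rho^{-1}\eta$ is the naive leading order, the $\rho\sigma$ and $\rho^2$ corrections may be of comparable or larger size on the mesoscopic scales of interest, and they arise only from the $\mathcal{O}(\rho)$ imaginary and $\mathcal{O}(\rho^2)$ real perturbations of $\vect{u}^2$. Ensuring that no spurious intermediate terms appear relies on the exact structure $\im \m = \rho\vect{f}$, the sharp approximation $\vv = \vect{f}/\norm{\vect{f}}_2 + \mathcal{O}(\rho^{-1}\eta)$ from Lemma \ref{lemma_F}, and the orthogonality $\langle\vv, (1 - \vv\vv^*)[\vect{p}\vect{f}^2]\rangle = 0$; these are precisely the ingredients that close the expansion at the correct order.
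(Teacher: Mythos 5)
This lemma is not proved in the paper at all: it is stated as a direct citation of Lemma 5.1 in \cite{Alt2020energy}, so there is no ``paper's own proof'' to compare against. Your plan is a reasonable sketch of how one would reconstruct that cited proof, and the overall strategy --- conjugating $\stab(z,z)$ by $|\m|$ to reach $1-\vect{u}^2 F$ and then running non-self-adjoint perturbation theory off the Perron eigenvector of the self-adjoint operator $F$ --- is indeed the right one. A few points in the execution do not close, however.

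First, the expansion of $\vect{u}=\m/|\m|$ picks up spurious factors: since $\vect{f}=\rho^{-1}\im\m/|\m|$ and $|\vect{u}|=1$, one has $\vect{u}=\vect{p}+\I\rho\vect{f}+\mathcal{O}(\rho^2)$ and $\vect{u}^2=1+2\I\rho\vect{p}\vect{f}-2\rho^2\vect{f}^2+\mathcal{O}(\rho^3)$; your extra $/|\m|$ and $/|\m|^2$ factors should not be there. Second, and more substantively, the Rayleigh quotient $\langle|\m|^{-1}\vv,\stab(z,z)|\m|\vv\rangle=\langle\vv^2,1-\vect{u}^2\opnormtwo{F}\rangle$ alone does not determine $\eigB(z,z)$ to order $\rho^2$: from \eqref{b_vect} the genuine eigenvectors deviate from $|\m|^{\pm1}\vv$ by an $\mathcal{O}(\rho)$ term, so the bilinear form evaluated on the zeroth-order approximants misses a second-order eigenvalue correction of exactly the same size $\rho^2$ as the term you want to keep. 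The orthogonality $\langle\vv,(1-\vv\vv^*)[\vect{p}\vect{f}^2]\rangle=0$ you invoke is a tautology and does not by itself cancel that contribution; one must actually carry out second-order perturbation theory, as in the cited source. Third, the formula \eqref{b_vect} for $\vect{b}^\ell$ carries an extra factor of $F$ relative to $\vect{b}$. This comes from the fact that conjugating $\stab^*$ by $|\m|$ gives $1-F\overline{\vect{u}}^2$, with the perturbation acting from the opposite side of $F$ than in $1-\vect{u}^2F$; your plan treats the left eigenvector symmetrically and would not produce this asymmetry. Finally, the spectral-exclusion argument for \eqref{stab0_gap} is stated for the region $\{|w|\ge\epsilon,\ |1-w|\le1-2\epsilon\}$, but that region contains $[\vartheta',2-\vartheta']$ for small $\epsilon$; the intended region (cf.\ Claim \ref{claim:stab_res}) is $|1-w|\ge1-2\epsilon$, which genuinely avoids the bulk spectrum of $1-F$ and makes your Neumann-series transfer work --- you inherited a sign typo from the lemma statement and should flag it rather than argue around it.
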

In particular, the analysis of the stability operator $\stab(z,z)$ in \cite{Alt2020energy} yields the following estimate on the derivative of the solution vector $\m(z)$,
\begin{equation} \label{eq:m'_bound}
	\norm{\m'(z)}_{\infty} \lesssim 1+|\eigB(z,z)|^{-1}.
\end{equation}

We denote the other end of the adjacent gap in the support corresponding to $\sng$ by $\widehat{\sng}$. More explicitly, $\widehat{\sng} := \sng+\Delta(\sng)$, if $\sng$ is a right end-point of a support interval, and $\widehat{\sng} := \sng-\Delta(\sng)$ if $\sng$ is a left-end point. Note that $\widehat{\sng}=\sng$ if $\sng$ is a cusp.
\begin{prop} (Proposition 3.2 of \cite{Erdos2018CuspUF}) \label{prop_scaling}
	Define the quantity $\dis\equiv\dis(z) := \min\{ |z-E_0|, |z-\widehat{E_0}|\}$, and let $\Delta_0 := \min\{\Delta(\sng), 1\}$, then for all $z\in \D\cap\mathbb{H}$, the harmonic extension $\rho := \rho(z)$ of the self-consistent density of states satisfies
	\begin{equation}
		\begin{split} \label{eq:rho_comp}
			\rho &\sim \dis^{1/2}(\Delta_0 + \dis)^{-1/6}, \quad \re z \in \supp{\rho} \\
			\rho &\sim |\eta| \dis^{-1/2}(\Delta_0 + \dis)^{-1/6}, \quad \re z \notin \supp{\rho},
		\end{split}
	\end{equation}
	where $\eta := \im z$. 	The eigenvalue $\eigB:=\eigB(z,z)$ of $\stab(z,z)$ with smallest modulus is comparable with
	\begin{equation} \label{eq:eig_comp}
		|\eigB| \sim \rho^{-1}\eta +\rho(\rho + |\sigma|) \sim \dis^{1/2}(\Delta_0 + \dis)^{1/6},
	\end{equation}
	where $\sigma := \sigma(z)$.	Moreover, there is a positive threshold $c^*\sim 1$, such that for all $z\in\D\cap\mathbb{H}$,
	\begin{equation} \label{eq:rho+sigma_comp}
		\begin{split}
			\rho + |\sigma| &\sim (\Delta_0+\dis)^{1/3}, \quad \re z \in \supp{\rho} ~\text{or}~ \dis \le c^*\Delta_0,\\
			\rho + |\sigma| &\lesssim (\Delta_0+\dis)^{1/3}, \quad \re z\notin \supp{\rho} ~\text{and}~ \dis \ge c^*\Delta_0.
		\end{split}
	\end{equation}
\end{prop}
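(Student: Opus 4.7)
The plan is to reduce the vector Dyson equation \eqref{VDE} in a neighborhood of $\sng$ to a scalar cubic equation, following the shape-analysis program of \cite{Ajanki2017SingularitiesQVE, Alt2020energy}. Writing $\vect{w}(z) := \m(z)-\m(\sng)$, a second-order Taylor expansion of \eqref{VDE} around $\sng$ yields componentwise
\begin{equation*}
\stab(\sng,\sng)[\vect{w}(z)]_j = m_j(\sng)^2(z-\sng) + m_j(\sng)^{-1} w_j(z)^2 + O(\|\vect{w}(z)\|^3).
\end{equation*}
I would decompose $\vect{w}(z) = \Theta(z)\vect{b}(\sng) + \vect{r}(z)$ with $\vect{r}(z)$ in the range of $Q(\sng,\sng)$, recover $\vect{r}(z)$ perturbatively as a quantity quadratic in $\Theta(z)$ using the invertibility bound \eqref{eq:invstab0Q}, and then project along $\vect{b}^\ell(\sng)$ to obtain a scalar cubic of the form
\begin{equation*}
\mu_3\,\Theta(z)^3 + \mu_2(\sng)\,\Theta(z)^2 = \mu_0 \,(z-\sng) + O(|\Theta(z)|^4),
\end{equation*}
where $\mu_0,\mu_3 \sim 1$ are $N$-independent, and the decisive quadratic coefficient satisfies $|\mu_2(\sng)| \sim |\sigma(\sng)| \sim \Delta_0^{1/3}$ by the explicit expressions \eqref{b_vect} for the eigenvectors. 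This coefficient vanishes precisely at an exact cusp ($\Delta_0 = 0$, $\sigma(\sng) = 0$), which is what distinguishes the cusp and edge universality classes.

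Next I would solve the cubic via Cardano's formula, selecting the branch giving $\im\Theta(z) > 0$ for $\im z > 0$. Two regimes arise. When $\dis(z) \ll \Delta_0$, the quadratic term dominates, giving $\Theta(z) \sim \mu_2(\sng)^{-1/2}\sqrt{z-\sng}$; this reproduces the square-root edge profile $|\im\Theta| \sim \dis^{1/2}\Delta_0^{-1/6}$ for $\re z \in \supp\rho$, and $|\im\Theta| \sim |\eta|\dis^{-1/2}\Delta_0^{-1/6}$ outside the support. When $\dis(z) \gtrsim \Delta_0$, the cubic term dominates, giving $\Theta(z) \sim (z-\sng)^{1/3}$, which yields $|\im\Theta| \sim \dis^{1/3}$ inside the support and $|\im\Theta| \sim |\eta|\dis^{-2/3}$ outside. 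The elementary identity
$\dis^{1/2}(\Delta_0+\dis)^{-1/6} \sim \min\{\dis^{1/2}\Delta_0^{-1/6},\dis^{1/3}\}$
unifies both regimes, and since the entries of $\vect{b}(\sng)$ are comparable in size by \eqref{b_vect}, I conclude $\rho(z) \sim |\im\Theta(z)|$, giving \eqref{eq:rho_comp}.

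The scaling \eqref{eq:eig_comp} of $\eigB(z,z)$ then follows by substituting the scalings above into the identity \eqref{beta_est}: inside the support one has $\rho^{-1}\eta \ll \rho^2$ by \eqref{rhoeta_to0} so $|\eigB| \sim \rho(\rho+|\sigma|)$, while outside $\rho^{-1}\eta$ dominates, and in each case the answer collapses to $\dis^{1/2}(\Delta_0 + \dis)^{1/6}$. For the final estimate \eqref{eq:rho+sigma_comp}, the $1/3$-H\"older continuity of $\sigma$ from Lemma \ref{lemma_m}(iv) yields $|\sigma(z)-\sigma(\sng)| \lesssim \dis^{1/3}$; combined with $|\sigma(\sng)| \sim \Delta_0^{1/3}$ this gives the $\sim$ statement whenever $\re z \in \supp\rho$ or $\dis \le c^*\Delta_0$, while in the complementary regime $\sigma(z)$ may cross through small values, leaving only the upper bound $\lesssim$. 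The main obstacle is Step 1: the identification $|\mu_2(\sng)| \sim |\sigma(\sng)|$ requires a delicate computation using the explicit forms \eqref{b_vect} of $\vect{b}(\sng), \vect{b}^\ell(\sng)$ in terms of $\vv(\sng)$ and $\vect{f}(\sng)$, together with the spectral gap \eqref{gapF} of $F$ to control the orthogonal component $\vect{r}(z)$. This coefficient extraction is ultimately what encodes the delicate shape information distinguishing the cusp and edge regimes.
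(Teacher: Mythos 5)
This proposition is imported verbatim from Proposition 3.2 of \cite{Erdos2018CuspUF}; the present paper contains no proof of it, so there is no in-paper argument to compare against. Your sketch does follow the shape-analysis program of \cite{Ajanki2017SingularitiesQVE, Alt2020energy, Erdos2018CuspUF} — reducing the vector Dyson equation to a scalar cubic in the destabilizing direction, extracting the coefficient $\mu_2(\sng) \sim \sigma(\sng) \sim \Delta_0^{1/3}$, and running Cardano across the two regimes $\dis \lessgtr \Delta_0$ — and that is indeed how the cited works establish these scalings. The derivation of the cubic and the branch analysis you outline are consistent with Proposition~\ref{prop_mpert} and Lemma~\ref{lemma:sol}.

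There is, however, a concrete error in your treatment of the eigenvalue scaling \eqref{eq:eig_comp}. You assert that inside the support $\rho^{-1}\eta \ll \rho^2$, citing \eqref{rhoeta_to0}. That citation is inappropriate (it is a pointwise-in-$E$ limit as $\eta\to 0$, not a comparison of scales), and more importantly the claimed inequality is false. Take $\re z \in \supp\rho$ with $\dis \ll \Delta_0$ and $|\eta| \sim \dis$; then from \eqref{eq:rho_comp},
\begin{equation*}
\frac{\rho^{-1}\eta}{\rho^2} \sim \frac{\eta\,\dis^{-1/2}\Delta_0^{1/6}}{\dis\,\Delta_0^{-1/3}} \sim \Big(\frac{\Delta_0}{\dis}\Big)^{1/2},
\end{equation*}
which can be arbitrarily large near a small gap. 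The correct statement is $\rho^{-1}\eta \lesssim \rho(\rho+|\sigma|)$, with the ratio $\sim \eta/\dis \le 1$; the $|\sigma|$ term, carrying the $\Delta_0^{1/3}$ scaling, is precisely what dominates $\rho$ when $\dis \ll \Delta_0$ and is essential for the comparison $|\eigB| \sim \dis^{1/2}(\Delta_0+\dis)^{1/6}$ to hold. Your final conclusion is unaffected, but the route to it through $\rho^2$ alone fails. A smaller omission: the step $\rho \sim |\im\Theta|$ needs a remark that the quadratic correction $\vect{r}(z) = O(|\Theta|^2)$ has imaginary part $O(|\Theta|\,|\im\Theta|)$, which is subleading relative to $\im\Theta$ even outside the support where $\im\Theta \ll |\Theta|$.
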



The solution vector $\m$ satisfies the following perturbation formula.
\begin{prop} (Proposition 6.1 and (6.20), Proposition 10.1 in \cite{Alt2020energy}) \label{prop_mpert}
	There exists a threshold $c_0\sim 1$, such that for all $z,\zeta \in \D\cap\mathbb{H}$,
	\begin{equation} \label{eq:dM_dir}
		\m(\zeta) - \m(z) = \dM(z,\zeta)\vect{b}(z) + \dM(z,\zeta)^2\vect{r}(z) + \mathcal{O}(|\zeta-z|), 
	\end{equation}
	where $\vect{b}(z)$ is defined in \eqref{b_def}, the function $\dM(z,\zeta)$ and the vector $\vect{r}(z)$ are given by
	\begin{equation} \label{eq:dM_def}
		\dM(z,\zeta) := \frac{\langle\vect{b}^\ell(z), \m(\zeta) -\m(z) \rangle}{\langle\vect{b}^\ell(z), \vect{b}(z) \rangle}, \quad \vect{r}(z) := (1-\eigB(z,z))\frac{Q(z,z)}{\stab(z,z)}\biggl[\frac{\vect{b}^2(z)}{\m(z)}\biggr].
	\end{equation}
	For all $z,\zeta \in \D\cap\mathbb{H}$, the function $\dM(z,\zeta)$ satisfies the cubic equation
	\begin{equation} \label{dm_cubic}
		\mu_3\dM(z,\zeta)^3 + \mu_2\dM(z,\zeta)^2 + \mu_1\dM(z,\zeta) + (\zeta-z)\mu_0 = 0,
	\end{equation}
	with complex coefficients $\mu_3:=\mu_3(z),\mu_2:=\mu_2(z),\mu_1:=\mu_1(z)$, and $\mu_0:=\mu_0(z,\zeta)$ in \eqref{dm_cubic} admit the estimates 
	\begin{equation} \label{eq:cubic_mus}
		\begin{split}
			\mu_3 &= \psi + \mathcal{O}(\rho + \rho^{-1}\eta ), \quad \mu_2 = \sigma + \I\rho \biggl(3\psi + \frac{\sigma^2}{\langle\vect{f}^2\rangle}\biggr) + \mathcal{O}(\rho^2+\rho^{-1}\eta),\\
			\mu_1 &= 2\I\rho\sigma - 2\rho^2\biggl(\psi + \frac{\sigma^2}{\langle \vect{f}^2 \rangle}\biggr) + \mathcal{O}(\rho^3 + \eta + \rho^{-2}\eta^2), \quad
			\mu_0 = \pi+\mathcal{O}(\rho+\rho^{-1}\eta+|\dM(z,\zeta)|)
		\end{split} 
	\end{equation}
	where $\eta := \im z$,  $\rho:=\rho(z)$, $\sigma := \sigma(z)$ is defined in \eqref{sigma_def}, $\vect{f} := \vect{f}(z)$ is defined in \eqref{f_def}, and the quantity $\psi := \psi(z)$ is given by
	\begin{equation} \label{psi_def}
		\psi(z) := \biggl\langle \vect{p}(z)\vect{f}(z)^2, \frac{1+F(z)}{1-F(z)}\bigl(1-\vv(z)\vv(z)^*\bigr)[\vect{p}(z)\vect{f}(z)^2] \biggr\rangle.
	\end{equation}
	The function $\dM(z,\zeta)$ is bounded by
	\begin{equation} \label{dM_bound}
		|\dM(z,\zeta)| \lesssim \min\biggl\{|\zeta-z|^{1/3}, 
		\frac{|\zeta-z|}{\rho(z)^2}
		\biggr\}.
	\end{equation}
	Moreover, there exists a threshold $c_*$, such that for all $w \in [-c_*,c_*]$, the estimate \eqref{dM_bound} can be improved to 
	\begin{equation} \label{eq:better_dM_bound}
		|\dM(z,z+w)| \lesssim M(z,w) := \min\biggl\{|w|^{1/3}, \frac{|w|^{1/2}}{(\Delta_0+\dis(z))^{1/6}}, \frac{|w|}{\dis(z)^{1/2}(\Delta_0+\dis(z))^{1/3}}
		\biggr\}.
	\end{equation}
\end{prop}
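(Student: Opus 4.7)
The plan is to derive a self-consistent vector equation for $\vect{u} := \m(\zeta)-\m(z)$ and then decompose it along the near-null and complementary spectral subspaces of $\stab(z,z)$. Subtracting the vector Dyson equation \eqref{VDE} at $\zeta$ from the one at $z$ and multiplying through by $m_j(z)m_j(\zeta)$ yields the fundamental identity
\begin{equation*}
	\stab(z,z)\vect{u} = (\zeta-z)\,\m(z)\m(\zeta) + \m(z)\vect{u}\bigl(S\vect{u}\bigr),
\end{equation*}
in which all products of vectors are componentwise. Writing $\vect{u} = \Pi\vect{u} + Q\vect{u} = \dM\,\vect{b}(z) + Q\vect{u}$, with $\dM$ given by \eqref{eq:dM_def}, applying $\stab(z,z)^{-1}Q$---which is uniformly bounded by \eqref{eq:invstab0Q}---and using the eigenvalue relation $\m(z)^2 S\vect{b} = (1-\eigB)\vect{b}$ to simplify $\m(z)\vect{b}(S\vect{b}) = (1-\eigB)\vect{b}^2/\m(z)$, I obtain
\begin{equation*}
	Q\vect{u} = \dM^2\,\vect{r}(z) + \mathcal{O}(|\zeta-z|),
\end{equation*}
where all remainders---including the source term $(\zeta-z)\stab^{-1}Q[\m(z)^2]$ and the higher-order-in-$\dM$ contributions controlled by the a priori Hölder bound \eqref{dm_bound}---are absorbed into $\mathcal{O}(|\zeta-z|)$. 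This establishes the decomposition \eqref{eq:dM_dir}.

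The cubic equation \eqref{dm_cubic} is obtained by testing the self-consistent identity against the left eigenvector $\vect{b}^\ell$. Using $\langle\vect{b}^\ell, \stab\vect{v}\rangle = \eigB\langle\vect{b}^\ell,\vect{v}\rangle$ turns the left-hand side into $\eigB\,\dM\langle\vect{b}^\ell,\vect{b}\rangle$; plugging the just-derived formula for $\vect{u}$ into the quadratic right-hand side and collecting powers of $\dM$ yields a scalar equation of the stated cubic form in which each $\mu_i$ is an explicit inner product of $\vect{b}^\ell$ with a polynomial in $\m(z),\vect{b},\vect{r},S\vect{b},S\vect{r}$, divided by $\langle\vect{b}^\ell,\vect{b}\rangle$. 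The substantive work is to evaluate these inner products to the precision claimed in \eqref{eq:cubic_mus}. For this I would substitute the asymptotic formulas \eqref{b_vect} for $\vect{b},\vect{b}^\ell$ in terms of $\m,\vv,\vect{f},\vect{p}$, unfold $(1-F)^{-1}$ using the spectral gap \eqref{gapF} of $F$, and simplify using the definitions of $\sigma$ and $\psi$ in \eqref{sigma_def} and \eqref{psi_def} together with the identity $1-\opnormtwo{F} = \eta\langle\vv,|\m|\rangle/\langle\vv,\im\m/|\m|\rangle$ from \eqref{eigF_1}. I expect this bookkeeping to be the main obstacle: tracking simultaneous corrections of sizes $\rho$, $\rho^2$, and $\rho^{-1}\eta$ while the vectors $\vect{b},\vect{b}^\ell$ themselves are only expanded to $\mathcal{O}(\rho^2)$ forces several cancellations that are not manifest at the level of inner products, and the $\sigma^2/\langle\vect{f}^2\rangle$ correction inside $\mu_2$ is particularly delicate.

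Finally, the bounds on $\dM$ follow by a dominant-balance analysis of the cubic. Since $\mu_3\sim 1$ and $\mu_0\sim 1$, either the $\mu_3\dM^3$ or the $\mu_1\dM$ term must balance the source $(\zeta-z)\mu_0$, giving $|\dM| \lesssim |\zeta-z|^{1/3}$ or $|\dM| \lesssim |\zeta-z|/|\mu_1|$, respectively; combined with the lower bound $|\mu_1|\gtrsim \rho^2$ read off from \eqref{eq:cubic_mus} this proves \eqref{dM_bound}. The improved bound \eqref{eq:better_dM_bound} requires a finer regime analysis in which the intermediate term $\mu_2\dM^2$ may also dominate, producing the third scale $|w|^{1/2}(\Delta_0+\dis)^{-1/6}$; here I would use the scalings $|\eigB|\sim\dis^{1/2}(\Delta_0+\dis)^{1/6}$ and $\rho+|\sigma|\sim(\Delta_0+\dis)^{1/3}$ from Proposition \ref{prop_scaling} to determine, across the three regimes (small gap, exact cusp, and edge far from any gap), which of $\mu_3\dM^3$, $\mu_2\dM^2$, $\mu_1\dM$ balances the source.
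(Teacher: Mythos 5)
The paper does not prove this proposition; it is imported verbatim from \cite{Alt2020energy} (Propositions 6.1 and 10.1 there), so there is no in-paper proof to compare against. Your sketch is a plausible reconstruction of the source argument: the derived identity $\stab(z,z)\vect{u} = (\zeta-z)\m(z)\m(\zeta) + \m(z)\vect{u}(S\vect{u})$ is correct, the split $\vect{u} = \dM\vect{b} + Q\vect{u}$ with $\stab^{-1}Q$ bounded is the right decomposition, the relation $\m(z)^2 S\vect{b} = (1-\eigB)\vect{b}$ correctly produces the stated $\vect{r}(z)$, and testing against $\vect{b}^\ell$ is indeed how one isolates a scalar cubic. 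Your identification of where the $\zeta$-dependence and the higher-than-cubic corrections live (namely in $\mu_0$, via $(\zeta-z)\mathcal{O}(|\dM|)$ terms) is also right, since the a priori bound $|\dM|\lesssim|\zeta-z|^{1/3}$ lets $\dM^4$ and $\dM\cdot|\zeta-z|$ be absorbed as $(\zeta-z)\mathcal{O}(|\dM|)$.

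Two substantive gaps remain. First, the coefficient asymptotics \eqref{eq:cubic_mus} are not carried out; you correctly call this "the main obstacle," but it genuinely is the content of the proposition — in particular the identity $\mu_1 \approx -\eigB(z,z)$ (up to the normalization $\langle\vect{b}^\ell,\vect{b}\rangle$) must be reconciled term-by-term with the expression in \eqref{eq:cubic_mus}, the $\sigma^2/\langle\vect{f}^2\rangle$ corrections in $\mu_1,\mu_2$ arise from cancellations between the $\mathcal{O}(\rho)$ parts of $\vect{b}$ and $\vect{b}^\ell$ in \eqref{b_vect}, and the constant $\pi = \langle\vect{b}^\ell,\m^2\rangle + \dots$ requires \eqref{eigF_1}. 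Second, dominant balance on the cubic does not, by itself, bound the particular branch $\dM(z,\zeta)$: a cubic admits roots of several magnitudes, and one must rule out that $\dM$ jumps onto a large-modulus root. The argument in \cite{Alt2020energy} closes this via a continuity/connectedness argument from $\zeta=z$ (where $\dM=0$) together with a root-separation estimate; without that, the inequalities \eqref{dM_bound} and \eqref{eq:better_dM_bound} are not yet established, only motivated. The three-regime identification for \eqref{eq:better_dM_bound} via the scalings of Proposition \ref{prop_scaling} is the correct heuristic, but the same caveat applies.
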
 

\begin{lemma} (Lemmas 5.5, 7.15, and 7.16 in \cite{Alt2020energy})
	For all singular points $E\in \Sng\cap\D$, the quantity $\sigma(E)$ satisfies the following estimate 
	\begin{equation} \label{eq:sigma_comp}
		|\sigma(E)| \sim \min\{1, \Delta(E)^{1/3}\}.
	\end{equation}
	Uniformly for all $z\in \D$, we have
	\begin{equation} \label{eq:psi_sim_1}
		\psi(z) + \sigma(z)^2 \sim 1.
	\end{equation}
	Moreover, there exists a threshold $\Delta_*$, such that if $\Delta(E) \le \Delta_*$, then
	\begin{equation} \label{def:delta_hat}
		\Delta(E) = \widehat{\Delta}(E)\bigl(1+\mathcal{O}(|\sigma(E)|)\bigr), \quad \widehat{\Delta}(E) := \frac{4|\sigma(E)|^3}{27\pi \psi(E)^2},
	\end{equation}
	where $\psi(E)$ is defined in \eqref{psi_def}.
\end{lemma}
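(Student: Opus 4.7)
The three estimates form a package that I would derive from a single ingredient: the cubic equation \eqref{dm_cubic} for $\dM(z,\zeta)$ together with the coefficient asymptotics \eqref{eq:cubic_mus} and the spectral structure of $\stab(z,z)$ recorded in Lemma \ref{lemma_stab}. The key observation is that at a singular point $E\in\Sng$ one has $\rho(E)=0$ and $\eta=0$, so all the imaginary and $\rho$-corrections in \eqref{eq:cubic_mus} vanish, leaving the clean real cubic
\begin{equation*}
\psi\,\dM^3+\sigma\,\dM^2\;=\;-\pi(\zeta-E)\bigl(1+\mathcal{O}(\dM)\bigr),
\end{equation*}
whose structure controls all three claims.

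For the non-degeneracy $\psi(z)+\sigma(z)^2\sim 1$ — the quantitative input needed by the other two estimates — the upper bound follows from the definitions \eqref{sigma_def}, \eqref{psi_def} combined with $|\m|\lesssim 1$ and $\opnorminf{(1-F)^{-1}(1-\vv\vv^*)}\lesssim 1$ from Lemma \ref{lemma_F}. For the lower bound I would argue by contradiction: if $\psi$ and $\sigma$ were simultaneously $o(1)$, then evaluating \eqref{dm_cubic} at $\zeta=z+w$ for small real $w$ would force $|\dM(z,z+w)|\gg|w|^{1/3}$, contradicting the universal H\"older bound \eqref{dM_bound}. That $\psi$ cannot vanish away from exact cusps is a consequence of the positivity of $\vect{f}$, the sign structure of $\vect{p}=\sign(\re\m)$, and the order-one spectral gap \eqref{gapF} of $F(z)$, which together prevent $(1-F)^{-1}(1-\vv\vv^*)[\vect{p}\vect{f}^2]$ and $\vect{p}\vect{f}^2$ from being jointly small in the bilinear form \eqref{psi_def}.

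Granted $\psi\sim 1$ whenever $|\sigma|\ll 1$, both the cube-root scaling \eqref{eq:sigma_comp} of $|\sigma|$ and the explicit prefactor in \eqref{def:delta_hat} emerge from the specialized cubic by setting $z=E$ and $\zeta=\widehat{\sng}$, which gives $(\zeta-z)=\pm\Delta(E)$. The polynomial $x\mapsto\psi x^3+\sigma x^2$ has critical points $x=0$ and $x=-2\sigma/(3\psi)$ with critical values $0$ and $4\sigma^3/(27\psi^2)$; at the gap endpoint the real solution $\dM$ must coincide with the degenerate double root, since otherwise two distinct real roots would correspond to two boundary values of $\m$ on the same side of the gap, contradicting the uniqueness of the solution. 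Matching the critical value $|4\sigma^3/(27\psi^2)|=\pi\Delta(E)$ simultaneously identifies the leading scaling $|\sigma|\sim\Delta(E)^{1/3}$ and pins down $\widehat{\Delta}(E)=4|\sigma|^3/(27\pi\psi^2)$, with the relative error $\mathcal{O}(|\sigma|)$ produced by a standard perturbation around the double root. The complementary regime $\Delta(E)\gtrsim 1$ in \eqref{eq:sigma_comp} is immediate from Proposition \ref{prop_scaling} and \eqref{beta_est} evaluated at $\rho(E)=0$.

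The main obstacle in this plan is the quantitative lower bound on $\psi+\sigma^2$ in step two: controlling $\psi$ from below near cusps requires a careful resolvent analysis of the off-diagonal direction of $F(z)$ encoded in $(1-F)^{-1}(1-\vv\vv^*)$, and crucially uses the full flatness assumption \eqref{cond_A} — this is the step that cannot be reduced to elementary cubic manipulations, whereas once it is in hand the other two estimates are routine consequences of the scaling behavior of the reduced real cubic.
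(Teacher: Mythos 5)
The statement is not proved in this paper; it is quoted verbatim as Lemmas 5.5, 7.15, and 7.16 of \cite{Alt2020energy}, so there is no internal proof to compare your proposal against. Your reconstruction via the reduced real cubic at $z=E$ is indeed the route the cited source takes in spirit, and it correctly identifies the structural heart of \eqref{def:delta_hat} (the critical value $4\sigma^3/(27\psi^2)$ of $x\mapsto\psi x^3+\sigma x^2$). That said, a few steps as written are gaps rather than proofs.

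The double-root characterization at the gap endpoint is stated only heuristically (``otherwise two distinct real roots would correspond to two boundary values of $\m$, contradicting uniqueness''). This is the load-bearing step for both \eqref{eq:sigma_comp} in the small-gap regime and \eqref{def:delta_hat}, and the vague uniqueness appeal does not establish it. The actual mechanism is a bifurcation argument: for $\zeta$ inside the gap the branch $\dM(\sng,\zeta)$ is real, for $\zeta$ just outside it acquires an imaginary part (since $\rho(\zeta)>0$), so the real root must merge with a complex-conjugate pair exactly at $\widehat{\sng}$. Moreover the equation \eqref{dm_cubic} is not literally a cubic in $\dM$ — $\mu_0$ carries an $\mathcal{O}(|\dM|)$ correction depending on $\dM$ — so identifying the critical value only gives $\Delta$ up to a multiplicative error, and one has to show that this error is $\mathcal{O}(|\sigma|)$ rather than merely $o(1)$ to recover the stated precision in \eqref{def:delta_hat}.

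For the lower bound on $\psi+\sigma^2$, your contradiction sketch needs $\psi\ge 0$ (true, via the positivity of $(1+F)/(1-F)$ on $\vv^\perp$ coming from \eqref{gapF}) and a fixed choice of $|w|\sim 1$ to rule out the case where a small $|\sigma|$ is compensated by a tiny $|w|$; as written, the argument does not exclude the window $|w|^{1/3}\ll|\sigma|$. Even then, the contradiction only yields the dichotomy ``$\psi\gtrsim 1$ or $|\sigma|\gtrsim 1$'', and the Cauchy--Schwarz step needed to convert $\psi\gtrsim\norm{(1-\vv\vv^*)[\vect{p}\vect{f}^2]}_2^2=\langle\vect{f}^4\rangle-|\langle\vv,\vect{p}\vect{f}^2\rangle|^2$ into a lower bound on $\psi+\sigma^2$ must track the constant carefully — you acknowledge this step is the one that does not reduce to cubic algebra, which is correct. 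Finally, note that \eqref{eq:sigma_comp} in full generality follows directly from \eqref{eq:rho+sigma_comp} of Proposition \ref{prop_scaling} by letting $z\to E$ and using $\rho(E)=0$; your cubic argument is only needed for the sharper constant in \eqref{def:delta_hat}.
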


\begin{lemma} (Lemma C.1 in \cite{Alt2020energy}) \label{lemma_ncpt}
	Let $z_0,\zeta_0 $ be two spectral parameters in $\D$, and let $\eigB_0 := \eigB(z_0,\zeta_0)$.
	Let $\Pi_0$ be the eigenprojector corresponding to eigenvalue $\eigB_0$ of $\stab_0 := \stab(z_0,\zeta_0)$, and let $Q_0 := 1-\Pi_0$.
	Then for all $z,\zeta \in \D\backslash\mathbb{R}$, the eigenprojector $\Pi:=\Pi(z,\zeta)$ defined in \eqref{eq:rankPi} admits the estimate
	\begin{equation} \label{Pi_pert}
		\Pi = \Pi_0 + (1-\eigB_0)\frac{Q_0}{\stab_0-\eigB_0}\frac{\diff\m}{\m(z_0)\m(\zeta_0)}\Pi_0 + \Pi_0\frac{\diff\m}{\m(z_0)\m(\zeta_0)}\frac{1-\stab_0}{\stab_0-\eigB_0}Q_0 + \mathcal{O}(\norm{\diff\m}_\infty^2),
	\end{equation}
	\begin{equation} \label{Q/B_pert}
		\stab(z,\zeta)^{-1}Q(z,\zeta) = \stab_0^{-1}Q_0 + \mathcal{O}(\norm{\diff\m}_\infty),
	\end{equation}
	where $\diff\m := \m(z)\m(\zeta) - \m(z_0)\m(\zeta_0)$, and all $\diff\m/(\m(z_0)\m(\zeta_0))$ are interpreted as diagonal matrices.
	
	The eigenvalue $\eigB:=\eigB(z,\zeta)$ of $\stab(z,\zeta)$ with the smallest modulus satisfies 
	\begin{equation} \label{eig_pert}
		\eigB = \eigB_0 - \Tr\bigl[\frac{(1-\eigB_0)\diff\m}{\m(z_0)\m(\zeta_0)}\Pi_0\bigr] - \Tr\bigl[\frac{(1-\eigB_0)\diff\m}{\m(z_0)\m(\zeta_0)}\frac{1-\stab_0}{\stab_0-\eigB_0}Q_0\frac{\diff\m}{\m(z_0)\m(\zeta_0)}\Pi_0\bigr]
		+ \mathcal{O}(\norm{\diff\m}_\infty^3).
	\end{equation}
\end{lemma}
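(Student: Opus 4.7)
The plan is to derive all three expansions from Riesz projector contour integration, exploiting a clean algebraic identity that reduces the perturbation to a diagonal conjugation. The key observation is that from $\m(z_0)\m(\zeta_0)S = 1-\stab_0$ we may eliminate $S$ to obtain
\begin{equation*}
    \stab(z,\zeta) - \stab(z_0,\zeta_0) = -\bigl[\m(z)\m(\zeta)-\m(z_0)\m(\zeta_0)\bigr] S = -\diff\m \cdot S = V(\stab_0 - 1),
\end{equation*}
where $V := \diff\m/(\m(z_0)\m(\zeta_0))$ is a diagonal multiplication operator. By Lemma \ref{lemma_m}, $|m_j|\sim 1$ on $\D$, so $\opnorminf{V}+\opnormtwo{V} \lesssim \norm{\diff\m}_\infty$. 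First I would fix a small contour $C$ of radius $\sim \epsilon$ around $\eigB_0$ which, by the two-body analogue of the spectral gap estimate \eqref{stab0_gap} (proved analogously in Section \ref{sec:pert}), isolates $\eigB_0$ from the rest of $\spec(\stab_0)$; for $z,\zeta$ sufficiently close to $z_0,\zeta_0$ so that $\opnorminf{\delta\stab}$ is smaller than the gap, the Neumann series
\begin{equation*}
    (w-\stab)^{-1} = \sum_{n=0}^\infty \bigl[(w-\stab_0)^{-1}\delta\stab\bigr]^n (w-\stab_0)^{-1}
\end{equation*}
converges uniformly on $C$ and $\eigB(z,\zeta)$ remains the unique eigenvalue inside $C$.

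For \eqref{Pi_pert}, I substitute the Neumann series into $\Pi = (2\pi\I)^{-1}\oint_C (w-\stab)^{-1}\, dw$, split $(w-\stab_0)^{-1} = \Pi_0/(w-\eigB_0) + R_0(w)$ with $R_0(w) := Q_0(w-\stab_0)^{-1}Q_0$ analytic inside $C$, and evaluate the linear residue. Using $(\stab_0-1)\Pi_0 = (\eigB_0-1)\Pi_0$ and $\delta\stab = V(\stab_0-1)$, the cross terms collapse to the advertised expression, with $O(\opnorminf{V}^2) = O(\norm{\diff\m}_\infty^2)$ remainder. For \eqref{Q/B_pert}, I write $\stab^{-1}Q = (2\pi\I)^{-1}\oint_{C'} w^{-1}(w-\stab)^{-1}dw$ over a contour $C'$ enclosing $\spec(\stab)\setminus\{\eigB\}$; the smooth dependence on $\stab$ away from $\eigB$ gives $\stab^{-1}Q = \stab_0^{-1}Q_0 + O(\opnorminf{V}) = \stab_0^{-1}Q_0 + O(\norm{\diff\m}_\infty)$.

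For \eqref{eig_pert}, I use $\eigB = \Tr[\stab\Pi] = (2\pi\I)^{-1}\oint_C w\,\Tr[(w-\stab)^{-1}]\, dw$ and expand to third order in $\delta\stab$. The first-order residue yields $\Tr[\delta\stab\,\Pi_0] = -(1-\eigB_0)\Tr[V\Pi_0]$, producing the first correction. At second order, after splitting each factor of $(w-\stab_0)^{-1}$ and collecting contributions from the simple, double, and triple poles, most terms cancel: specifically, by cyclicity of the trace, the contribution from the triple pole $(w-\eigB_0)^{-3}$ vanishes on multiplication by $w$, while the spurious residue $\eigB_0\Tr[\Pi_0\delta\stab R_0'(\eigB_0)\delta\stab]$ exactly cancels $\eigB_0\Tr[R_0(\eigB_0)^2\delta\stab\Pi_0\delta\stab]$. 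What remains is $-\Tr[\delta\stab\,(\stab_0-\eigB_0)^{-1}Q_0\,\delta\stab\,\Pi_0]$, and substituting $\delta\stab = V(\stab_0-1)$ together with $(\stab_0-1)\Pi_0 = -(1-\eigB_0)\Pi_0$ recovers exactly the second-order term in \eqref{eig_pert}. The remainder is $O(\opnorminf{V}^3) = O(\norm{\diff\m}_\infty^3)$.

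The main obstacle is the careful bookkeeping at second order for the eigenvalue: a naive expansion produces three distinct residue contributions that individually do not match the Rayleigh--Schr\"odinger form, and the clean expression in \eqref{eig_pert} only emerges after the $(\stab_0-\eigB_0)^{-2}$-type terms cancel against each other via cyclicity. A secondary but important point is verifying that the contour radius $\epsilon$ and the associated resolvent bounds can be chosen uniformly in $z,\zeta\in\D$; this requires the spectral gap for $\stab_0$ in the two-body setting, which does not follow immediately from the one-body statement \eqref{stab0_gap} and must be established separately using the bounds on $\m$ from Lemma \ref{lemma_m} and the fact that the isolated eigenvalue $\eigB_0$ is small while the remaining spectrum stays a distance $\sim 1$ from it.
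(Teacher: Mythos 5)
Your proof is correct and follows essentially the same route as the paper: both hinge on the factorization $\stab(z,\zeta)-\stab_0 = V(\stab_0-1)$ with $V = \diff\m/(\m(z_0)\m(\zeta_0))$, combined with the Riesz-projector contour-integral expansion around the isolated eigenvalue, with \eqref{Q/B_pert} handled by a separate contour avoiding the destabilizing eigenvalue. The only difference is that the paper outsources the bookkeeping for \eqref{Pi_pert} and \eqref{eig_pert} to Lemma C.1 of Alt, Erd\H{o}s, and Kr\"uger, whereas you reproduce the residue computation (including the cancellation of the $\eigB_0 R_0'$ and $\eigB_0 R_0^2$ terms at second order, which you verify correctly); and a minor citation slip—the two-body spectral gap is established in Claim \ref{claim:stab_res}, not Section \ref{sec:pert}.
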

\begin{proof}
	Let $\stab := \stab(z,\zeta)$.
	Observe that by definition \eqref{stab_def},  $\stab-\stab_0 = (\m(z_0)\m(\zeta_0))^{-1}\diff\m(\stab_0-1)$, and hence $\opnorminf{\stab-\stab_0} \lesssim \norm{\diff\m}_\infty$. 
	
	Estimates \eqref{Pi_pert} and \eqref{eig_pert} are immediate consequences of the non-Hermitian perturbation theory of Appendix C in \cite{Alt2020energy}.
	The estimate \eqref{Q/B_pert} follows immediately from the contour integral representation
	\begin{equation}
		\stab^{-1}Q = \frac{1}{2\pi \I} \oint_{\Gamma} \stab^{-1}(w-\stab)^{-1}\mathrm{d}w = \frac{1}{2\pi\I}\oint_{\Gamma} w^{-1}(w-\stab)^{-1}\mathrm{d}w,
	\end{equation}
	where $\Gamma$ is the circle $\{w\in \mathbb{C} : |w-1| = 1 - 2\epsilon\}$, with $\epsilon$ defined in Lemma \ref{lemma_stab}.
\end{proof}

\subsection{Local Law for the Resolvent}
For a small constant $\kappa>0$, let  $\mathcal{D}_{\kappa}$ denote the spectral domain 
\begin{equation}
	\mathcal{D}_{\kappa} := \{z\in\mathbb{C} : \dist(z,\supp{\rho}) \in [N^\kappa \eta_\mathfrak{f}(\re z), N^{100}] \},
\end{equation}
where the natural fluctuation scale $\eta_\mathfrak{f}(E)$ is defined in \eqref{etaf}.

We now state the optimal \textit{averaged} and \textit{isotropic} local laws for the resolvent of a Wigner-type matrix.

\begin{theorem} (Theorem 2.5 in \cite{Erdos2018CuspUF}) \label{th:local_law}
	For any $\kappa>0$ and for all $z := E + \I\eta \in \mathcal{D}_\kappa$, the resolvent $G(z)$ satisfies
	\begin{equation}\label{eq:local_law_dir}
		G(z) = \diag{\m(z)} + \theta(z)\diag{\vect{b}(z)} + D(z),
	\end{equation}
	where $\vect{b}(z)$ is defined in \eqref{b_def}, $\theta(z)
	$ is a random variable that admits the estimate
	\begin{equation} \label{eq:theta_est}
		|\theta(z)| \prec \Theta(z),
	\end{equation}
	and $D(z)$ is a random matrix that satisfies
	\begin{equation} \label{eq:D_est}
		N^{-1} \bigl| \Tr[ \diag{\vect{w}} D(z)] \bigr| \prec \Psi(z)^2, \quad \bigl|\langle \vect{x}, D(z)\vect{y} \rangle \bigr| \prec \Psi(z),
	\end{equation}
	for all deterministic vectors $\vect{x},\vect{y},\vect{w} \in \mathbb{C}^N$ with $\norm{\vect{w}}_\infty, \norm{\vect{x}}_2,\norm{\vect{y}}_2 \lesssim 1$.
	The deterministic control parameters $\Theta(z)$ and $\Psi(z)$ are defined as 
	\begin{equation} \label{eq:control_parameters}
		\Psi(z) := \sqrt{\frac{\rho(z)}{N |\eta|}}, \quad \Theta(z) := \frac{1}{N \dist(z, \supp{\rho})}.
	\end{equation}
\end{theorem}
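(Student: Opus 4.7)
The plan is to establish this refined single-resolvent local law by the cumulant-expansion approach, combined with a delicate inversion of the stability operator that isolates its smallest eigenvalue. Starting from the resolvent identity $HG = 1 + zG$, I would apply a cumulant expansion to entries of the form $\mathbb{E}[H_{xa}G_{ay}]$ to sufficient order. This produces an approximate self-consistent equation for $G-\diag{\m}$ in which the linearized stability operator $\stab(z,z) = 1-\m(z)^2 S$ governs propagation, the nonlinear terms are of order $(G-\diag{\m})^2$, and the fluctuation source terms have typical size $\Psi$ entrywise and $\Psi^2$ on average. Since on $\mathcal{D}_\kappa$ the only obstruction to inverting $\stab(z,z)$ is its isolated eigenvalue $\eigB(z,z)$, I would decompose $G-\diag{\m} = \theta(z)\diag{\vect{b}(z)}+D(z)$ along the eigenprojector $\Pi(z,z)$ and its complement $Q(z,z)$, choosing $\theta$ so that $D$ has zero $\vect{b}^\ell$-trace.

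The residual $D$ would then be controlled by the good part of the stability operator. Using the uniform bound \eqref{eq:invstab0Q} together with standard large-deviation estimates for quadratic forms in $H$ and Ward-type identities such as $\sum_a |G_{ay}|^2 = \eta^{-1}\im G_{yy}$, one obtains the isotropic bound $|\langle \vect{x}, D\vect{y}\rangle|\prec \Psi$ directly, and the improved averaged bound $N^{-1}|\Tr[\diag{\vect{w}}D]|\prec \Psi^2$ via a fluctuation-averaging argument applied after the rank-one part has been extracted. The scalar $\theta(z)$ is obtained by projecting the self-consistent equation onto $\vect{b}^\ell$ and retaining the cubic nonlinear corrections. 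The resulting approximate scalar equation has the same deterministic coefficients $\mu_3,\mu_2,\mu_1$ that appear in Proposition \ref{prop_mpert}, with a random source term of order $\Psi^2$; the scaling estimates of Proposition \ref{prop_scaling} then force the stable root to satisfy $|\theta|\prec \Theta(z)=(N\dist(z,\supp\rho))^{-1}$, which beats the naive $|\eigB|^{-1}\Psi^2$ bound exactly in the regime $|\eigB|\ll 1$.

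The main obstacle is that the cubic equation for $\theta$ changes character as $z$ traverses $\mathcal{D}_\kappa$: near an exact cusp all three coefficients $\mu_1,\mu_2,\mu_3$ are comparable, whereas near a regular edge the quadratic or linear coefficient dominates. Selecting the correct root uniformly and propagating its bound through these regimes, together with closing a bootstrap in $\eta$ that descends from $\eta\sim 1$ down to the boundary of $\mathcal{D}_\kappa$ on a fine grid, requires careful monotonicity and continuity arguments. A secondary technical point is that the extraction of the $\theta\vect{b}$ rank-one component must be done consistently for every averaged observable: the test weight $\vect{w}$ must be projected off $\vect{b}$ before the fluctuation-averaging gain can be invoked, otherwise the $\Theta$-size contribution from $\theta\vect{b}$ contaminates the averaged error and destroys the $\Psi^2$ bound on $D$.
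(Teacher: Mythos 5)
This theorem is \emph{not proved in the paper}: it is Theorem 2.5 of \cite{Erdos2018CuspUF}, cited verbatim. The paper's entire ``proof'' content is the remark that follows, which supplies a dictionary between notations and records one additional observation — that $\Psi(z) \gtrsim \Theta(z)$ holds on $\mathcal{D}_\kappa$ — needed because the object called $D(z)$ here corresponds in the notation of \cite{Erdos2018CuspUF} to the composite $-\mathcal{B}^{-1}\mathcal{Q}[MD] + \Theta^2\mathcal{B}^{-1}\mathcal{Q}[M\mathcal{S}[V_r]V_r] + E$, whose middle $\theta^2$-order piece must be absorbed into the $\Psi^2$ averaged error budget.

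Your sketch attempts instead to re-derive the result from first principles, and its architecture — cumulant expansion yielding a quadratic self-consistent equation, decomposition of $G - \diag{\m}$ along the destabilizing direction of $\stab(z,z)$ via the eigenprojector $\Pi(z,z)$, a scalar cubic equation for $\theta$ with coefficients $\mu_1,\mu_2,\mu_3$, and an $\eta$-bootstrap — is indeed the strategy of \cite{Erdos2018CuspUF}. But it contains a concrete gap at exactly the point the paper flags. You claim the averaged bound $N^{-1}|\Tr[\diag{\vect{w}}D]|\prec\Psi^2$ comes from fluctuation averaging once the rank-one $\theta\vect{b}$ piece is removed. In the actual decomposition, $D$ retains a term quadratic in $\theta$ that is \emph{deterministic given $\theta$} and gains nothing from fluctuation averaging: it is bounded by $\Theta^2$, and only the inequality $\Theta\lesssim\Psi$ on $\mathcal{D}_\kappa$ (which follows from the definition of $\eta_{\mathfrak{f}}$ and the scaling relations \eqref{eq:rho_comp}) converts this into $\Psi^2$. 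Your sketch never invokes this, so as written the averaged estimate \eqref{eq:D_est} would not close. Beyond that, reproducing the full theorem — stochastic domination across all regimes from cusp to regular edge, the root-selection argument for the cubic through the degenerate regimes, and the self-improving multiplicative bootstrap — is essentially re-proving a substantial separate paper, and is neither what the present paper does nor something a two-paragraph sketch can substitute for. The appropriate treatment here is the citation the paper gives.
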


\begin{remark}
	The restatement of Theorem 2.5 in \cite{Erdos2018CuspUF} presented in Theorem \ref{th:local_law} above follows from the combination of Proposition 3.4, Lemma 3.8 in \cite{Erdos2018CuspUF}. In particular, estimates \eqref{eq:D_est} are an immediate consequence of (3.8a), (3.14), and (3.31). Note that the matrix $D$ in the notation of the present paper corresponds to $-\mathcal{B}^{-1}\mathcal{Q}[MD] + \Theta^2\mathcal{B}^{-1}\mathcal{Q}[M\mathcal{S}[V_r]V_r] + E$ in the notation of \cite{Erdos2018CuspUF}. We have additionally used the fact that for all $z\in \mathcal{D}_{\kappa}$, $\Psi(z) \gtrsim \Theta(z)$, which follows from the definition of the natural fluctuation scale \eqref{etaf} and \eqref{eq:etaf_supp}, the upper bound in \eqref{m_bound}, and the comparison relations for the harmonic extension of the density $\rho(z)$ \eqref{eq:rho_comp}.
\end{remark}

\section{Proof of the Main Result} \label{sec:proof_main}
To streamline the presentation, we only prove the more challenging Theorem \ref{th_main} in full detail. The specific alterations required for completing the proof of Theorem \ref{th:rho>0} are outlined in Section \ref{sect:smallrho} below.
\begin{proof}[Proof of Theorem \ref{th_main}]
	We deduce Theorem \ref{th_main} from the following two propositions.
	\begin{prop} \label{prop:main1}
		Let $\eta_0$, $\varepsilon_0 > 0$ and $E_0$ satisfy the assumptions of Theorem \ref{th_main}.
		Define the scaled test function $f$ to be
		\begin{equation} \label{scaled_f_def}
			f(x) := g\bigl(\eta_0^{-1}(x-E_0)\bigr),
		\end{equation}
		and	let $\phi(\lambda)$ be the characteristic function of $\Tr f(H) -\E{\Tr f(H)}$,
		\begin{equation} \label{phi_def}
			\phi(\lambda) := \E{\exp\{\I\lambda\left(\Tr f(H) - \E{\Tr f(H)}\right)  \}},\quad \lambda \in\mathbb{R}.
		\end{equation}
		Then its derivative $\phi'(\lambda)$ satisfies the following equation,
		\begin{equation} \label{main1_goal}
			\phi'(\lambda) = -\lambda\phi(\lambda)V(f) + \Oprec\bigl(N^{-1/2}\eta_0^{-3/4}(\Delta_0+\eta_0)^{1/12}(1+|\lambda|^4)+(1+|\lambda|)N^{-\alp}\bigr), \quad \lambda\in\mathbb{R}.
		\end{equation}
		
		Here the variance $V(f)$ for a scaled test function $f$ is defined by
		\begin{equation} \label{variance_V}
			V(f) := \frac{1}{\pi^2}\int\limits_{\dom}\int\limits_{\dom'} \frac{\partial \other f(\zeta)}{\partial \bar \zeta}\frac{\partial \other f(z)}{\partial \bar z} \mathcal{K}(z,\zeta)\mathrm{d}\bar\zeta\mathrm{d}\zeta \mathrm{d}\bar z\mathrm{d}z,
		\end{equation}
		where for $z,\zeta \in \mathbb{C}/\mathbb{R}$ the kernel $\mathcal{K}(z,\zeta)$ is defined by
		\begin{equation} \label{kernel_K}
			\begin{split}
				\mathcal{K}(z,\zeta) :=& ~\frac{2}{\boldsymbol{\beta}}\frac{\partial }{\partial\zeta}\Tr\biggl[\frac{\m'(z)}{\m(z)}\bigl(1-S\m(z)\m(\zeta)\bigr)^{-1}\biggr]
				\\
				&+\biggl(1-\frac{2}{\boldsymbol{\beta}} \biggr) \Tr\left[S\m'(z)\m'(\zeta)\right] + \frac{1}{2}\frac{\partial^2}{\partial z\partial\zeta}\left\langle\overline{\m(z)\m(\zeta)}, \Cmlnt^{(4)}\m(z)\m(\zeta) \right\rangle,
			\end{split}
		\end{equation}
		with the matrix $\Cmlnt^{(4)}$ defined by 
		\begin{equation}
			\Cmlnt^{(4)}_{jk} := c^{(4)}(\re H_{jk}) + c^{(4)}(\im H_{jk}),
		\end{equation}
		where $c^{(4)}(h) := \partial^4_{t}\log \Expv[\exp\{\I t h\}]\rvert_{t=0}$ denotes the fourth cumulant of a real random variable.
		The integration domains $\dom, \dom'$ in \eqref{variance_V} are defined as
		\begin{equation} \label{Omega_0_defs}
			\dom := \{z\in \mathbb{C} : |\im{z}| > N^{-\alp}\eta_0 \},\quad	\dom' := \{z\in \mathbb{C} : |\im{z}| > 2N^{-\alp}\eta_0 \},
		\end{equation}
		and $\other{f}$ is the quasi-analytic extension of $f$, defined by
		\begin{equation} \label{QA_f}
			\other{f}(x+\I\eta) = \chi(\eta) \left( f(x) + \I\eta f'(x) \right),
		\end{equation}
		where $\chi :\mathbb{R} \to [0,1]$ is an even $C_c^\infty(\mathbb{R})$ function supported on  $[-c_1,c_1]$, satisfying $\chi(\eta) = 1$ for $|\eta|<c_1/2$, where the constant $1\ge c_1 \sim 1$ is chosen such that $\supp{\other{f}} \subset \D$.
%
	\end{prop}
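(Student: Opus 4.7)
The plan is to follow the characteristic-function route to the CLT, in the spirit of Lytova--Pastur \cite{Lytova2009indCLT} and the Wigner-type bulk adaptation \cite{R2023bulk}, with all the sharp technical inputs tuned to the cusp. First, I would use the Helffer--Sj\"{o}strand representation
\begin{equation*}
	X \;:=\; \Tr f(H) - \E{\Tr f(H)} \;=\; \frac{1}{\pi}\int_{\mathbb{C}} \partial_{\bar z}\other f(z)\, L(z)\, \mathrm{d}^2 z,
	\qquad L(z) := \Tr G(z) - \E{\Tr G(z)},
\end{equation*}
and differentiate $\phi$ in $\lambda$ under the integral sign, reducing \eqref{main1_goal} to showing that, uniformly for $z\in\dom$,
\begin{equation*}
	\I\,\E{L(z)\, e^{\I\lambda X}} \;=\; -\lambda\,\phi(\lambda) \int\limits_{\dom'}\! \partial_{\bar\zeta}\other f(\zeta)\,\mathcal{K}(z,\zeta)\,\mathrm{d}^2\zeta \;+\; (\text{controlled error}).
\end{equation*}
Truncating the outer $z$-integral to $\dom$ and the inner $\zeta$-integral to $\dom'$ costs an error of order $(1+|\lambda|)N^{-\alp}$ because the removed slivers have width $N^{-\alp}\eta_0$, $|\partial_{\bar z}\other f(z)| \lesssim \eta_0^{-1}|\im z|$, and trivially $|L(z)|\lesssim N|\im z|^{-1}$.

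Next, I would run a cumulant expansion in the independent entries $H_{jk}$ of $H$ starting from the identity $(HG(z))_{jj} = 1 + z G_{jj}(z)$. The second (Gaussian) cumulant generates two contributions: (i) a term $\I\lambda\, \partial_{\bar\zeta}\other f(\zeta)\,\E{T(z,\zeta)\, e^{\I\lambda X}}$ integrated over $\zeta$, where $T(z,\zeta)$ is the two-point function in \eqref{Txy}; and (ii) a deterministic piece $\Tr[S\m'(z)\m'(\zeta)]$ weighted by the symmetry factor $1-2/\boldsymbol{\beta}$, which matches the second summand of $\mathcal{K}(z,\zeta)$ in \eqref{kernel_K}. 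The third cumulants vanish by the symmetry conditions in Definition \ref{WT_def}, the fourth cumulants produce exactly the $\Cmlnt^{(4)}$ summand of $\mathcal{K}$, and cumulants of order $\ge 5$ are subleading by Assumption (B). Since each additional cumulant expansion step brings in one extra factor of order $\lambda$ via a derivative of $e^{\I\lambda X}$, truncating at the fourth cumulant produces the $(1+|\lambda|^4)$ dependence of the error in \eqref{main1_goal}.

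The key probabilistic input that closes the equation and identifies the first summand of $\mathcal{K}$ is the local law for the two-point function $T(z,\zeta)$ stated as Theorem \ref{th:Tlocallaw} and proved in Subsection \ref{sec:curlyT_proof}; it replaces $T(z,\zeta)$ by the deterministic quantity $\frac{\partial}{\partial\zeta}\Tr\bigl[\frac{\m'(z)}{\m(z)}\stab(z,\zeta)^{-1}\bigr]$ modulo a stochastically dominated error. A standard decoupling step $\E{T(z,\zeta)\, e^{\I\lambda X}} = \mathcal{T}(z,\zeta)\phi(\lambda) + \mathrm{error}$, justified because the deterministic approximation is $H$-independent, then produces precisely the main $-\lambda V(f)\phi(\lambda)$ term. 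Throughout the expansion I would use the directional form \eqref{eq:local_law_dir} of the resolvent local law of \cite{Erdos2018CuspUF} rather than only the isotropic bound, since the correction $\theta(z)\vect{b}(z)$ carries the anisotropy responsible for the cancellations at cusp-like points.

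The main obstacle, and the reason the bulk argument of \cite{R2023bulk} does not carry over verbatim, is the degeneracy of the stability operator $\stab(z,\zeta)$ near the diagonal: by Lemma \ref{lemma_ncpt} together with Proposition \ref{prop_scaling}, the smallest eigenvalue satisfies $|\eigB(z,\zeta)|\sim \eta_0^{1/2}(\Delta_0+\eta_0)^{1/6}$ for $z,\zeta$ at distance $\sim\eta_0$ from $\sng$, and at an exact cusp $|\eigB(z,\zeta)|\sim|z-\zeta|^{2/3}$. A naive application of the local law for $T$ therefore produces error terms that blow up near $\zeta=z$, and the $N^{-\alp}\eta_0$ cutoff is not strong enough on its own. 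The remaining improvement must come from cancellations intrinsic to the model, arising from the local symmetry of $\rho$ at cusp-like points and captured quantitatively by the eigenprojector expansions of Lemma \ref{lemma_ncpt} combined with the directional part of \eqref{eq:local_law_dir}; these cancellations trade a factor of $\eigB(z,\zeta)^{-1}$ for an integrable singularity. Quantifying them and integrating against $\partial_{\bar\zeta}\other f(\zeta)$ is exactly what produces the quoted error $N^{-1/2}\eta_0^{-3/4}(\Delta_0+\eta_0)^{1/12}(1+|\lambda|^4)$, and I expect this to be by far the most delicate part of the proof.
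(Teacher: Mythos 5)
Your sketch follows the same overall route as the paper: Helffer--Sj\"ostrand representation for $\Tr f(H)$ and $\phi'(\lambda)$, truncation of the ultra-local scales to the domains $\dom,\dom'$ producing the $(1+|\lambda|)N^{-\alp}$ error, cumulant expansion in the matrix entries producing the second-, fourth-, and higher-cumulant contributions to $\mathcal{K}(z,\zeta)$, the local law for $\mathcal{T}(z,\zeta)$ as the key probabilistic input, and the use of the directional local law \eqref{eq:local_law_dir} to capture the cusp cancellations.

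However, there is a concrete gap in how the cumulant expansion is organized. You attribute all second-cumulant contributions to the two-point function $T(z,\zeta)$ and the deterministic $\Tr[S\m'\m']$ term, but you omit a third, structurally distinct term: the quadratic form in diagonal resolvent fluctuations
\begin{equation*}
\mathcal{P}(z) = \sum_{a,b}\Bigl[\frac{\m'(z)}{\m(z)}S\Bigr]_{ja}\bigl(G_{aa}(z)-m_a(z)\bigr)\bigl(G_{bb}(z)-m_b(z)\bigr),
\end{equation*}
which arises when expanding $\Expv\bigl[\other{e}(\lambda)\{1-\Expv\}[\Tr G(z)]\bigr]$ via the self-consistent equation for $G_{jj}$. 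In the bulk it is acceptable to bound $\mathcal{P}(z) = \Oprec(N\Psi(z)\Theta(z))$ crudely. At the cusp that bound fails: the divergence of $\Theta$ at small $\dist(z,\supp\rho)$ is not integrable against $\partial_{\bar z}\other f$ once weighted by $|\eigB(z,z)|^{-1}$. The paper devotes Lemma \ref{lemma:new_term_est} to this point and explicitly flags it as a departure from prior work. The fix uses \eqref{eq:local_law_dir} to decompose $\mathpzc{g}=\theta\vect{b}+\vect{d}$ and then exploits that $\langle \m'\m^{-1}\vect{b}S[\vect{b}]\rangle$ is $\mathcal{O}(|\eigB|^{-1}(\Delta_0+\dis)^{1/3})$ rather than merely $\mathcal{O}(|\eigB|^{-2})$; the gain of the factor $(\Delta_0+\dis)^{1/3}$ is exactly what makes the error integrable and yields the exponent $1/12$ in \eqref{main1_goal}. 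Your description of the cancellation mechanism as acting on $T(z,\zeta)$ alone (trading $\eigB(z,\zeta)^{-1}$ for an integrable singularity) is correct for the two-point estimate of Theorem \ref{th:Tlocallaw}, but an equally essential, and separate, cancellation of the same type must be established for the one-point quantity $\mathcal{P}(z)$; without it the expansion does not close. A minor secondary issue: third cumulants do not vanish under Definition \ref{WT_def} (nothing there forces $\E[|\sqrt{N}H_{jk}|^2 \sqrt{N}H_{jk}]=0$); they need to be shown to contribute a subleading error rather than to vanish identically.
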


	\begin{prop} \label{prop:main2}
		Let $\sng$, $\eta_0$ satisfy the conditions of Theorem \ref{th_main}, and let $f(x) := g(\eta_0^{-1}(x-\sng))$ be the scaled test function as in \eqref{scaled_f_def}. Then the variance $V(f)$ defined in \eqref{variance_V}, and the expected value of the linear eigenvalue statistics satisfy
		\begin{equation} \label{eq:main2}
			V(f) = \mathrm{Var}_{\widehat{\alpha}}(g) + \mathcal{O}(N^{-\varepsilon_0/9} + \eta_0^{1/9}),
		\end{equation}
		\begin{equation} \label{eq:main2_bias}
			\Expv\bigl[\Tr f(H)\bigr] - N\int_\mathbb{R} f(x)\rho(x)\mathrm{d}x = \mathrm{Bias}_{\widehat{\alpha}}(g) + \mathcal{O}(N^{-\varepsilon_0/6}),
		\end{equation}
		where $\mathrm{Var}_{\widehat{\alpha}}$ and $\mathrm{Bias}_{\widehat{\alpha}}$ are the variance and bias functionals given by $(i)-(iii)$ of Theorem \ref{th_main}, with $\widehat{\alpha} := \tfrac{\widehat{\Delta}}{2\eta_0}$ if $\Delta(\sng) < \Delta_*$, and $\widehat{\alpha}:=\infty$ if $\Delta(\sng) \ge \Delta_*$.
		
		Furthermore, the variance and bias functionals defined in Theorem \ref{th_main} are continuous in $\alpha$, i.e.,
		\begin{equation} \label{eq:Var_converge}
			\lim\limits_{N\to\infty}\mathrm{Var}_{\widehat{\alpha}}(g) = \mathrm{Var}_{\alpha}(g), \quad \text{ given that } \lim_{N\to\infty}\widehat{\alpha} = \alpha.
		\end{equation}
		\begin{equation} \label{eq:Bias_converge}
			\lim\limits_{N\to\infty}\mathrm{Bias}_{\widehat{\alpha}}(g) = \mathrm{Bias}_{\alpha}(g), \quad \text{ given that } \lim_{N\to\infty}\widehat{\alpha} = \alpha.
		\end{equation}
	\end{prop}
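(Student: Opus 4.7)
\noindent\textbf{Proof plan for Proposition \ref{prop:main2}.} The plan is to extract the leading behavior of the kernel $\mathcal{K}(z,\zeta)$ in \eqref{kernel_K} near the singularity $(E_0, E_0)$, rescale both integration variables in \eqref{variance_V} by $\eta_0$, and identify the limit with the $\widehat{\alpha}$-dependent functionals of Theorem~\ref{th_main}. The bias in \eqref{eq:main2_bias} is handled separately via the single-resolvent local law Theorem~\ref{th:local_law}, and continuity \eqref{eq:Var_converge}--\eqref{eq:Bias_converge} follows from the explicit change-of-variable arguments indicated in the remarks after Theorem~\ref{th_main}.

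For the variance I first decompose
\[
\stab(z,\zeta)^{-1} = \frac{\Pi(z,\zeta)}{\eigB(z,\zeta)} + \stab(z,\zeta)^{-1}Q(z,\zeta),
\]
noting that the second summand is uniformly bounded by \eqref{Q/B_pert} and hence yields only a subleading contribution to the first term of $\mathcal{K}$. The remaining two terms of $\mathcal{K}$ (the symmetry-class correction and the fourth-cumulant term) are regular in $(z,\zeta)$, and, using \eqref{eq:m'_bound} together with the smallness of the rescaled fourth cumulants of $\sqrt{N}H_{jk}$, also produce only subleading contributions. For the singular part I use the rank-one representation $\Pi = \vect{b}\vect{b}^{\ell*}/\langle\vect{b}^{\ell},\vect{b}\rangle$ together with the identity $\stab(z,z)\m'(z)=\m(z)^2$ obtained by differentiating the Dyson equation \eqref{VDE}; this lets me rewrite the relevant trace purely in terms of the scalar quantities governed by the cubic \eqref{dm_cubic}. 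The key structural fact driving the whole calculation is that $|\eigB(z,\zeta)|\cdot\norm{\m(z)-\m(\zeta)}\sim|z-\zeta|$ in the perturbative regime, which is precisely what produces the $\dot H^{1/2}$-type structure in the limiting variance.

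After these algebraic manipulations I substitute $z = E_0 + \eta_0 w$ and $\zeta = E_0 + \eta_0 u$. The quasi-analytic weight $\partial_{\bar z}\other{f}(z)$ carries a factor $\eta_0^{-1}g''$ by \eqref{QA_f}, whose singularity is compensated by the Jacobian of the change of variables. The coefficient asymptotics \eqref{eq:cubic_mus}, combined with the scaling $\rho+|\sigma|\sim(\Delta_0+\dis)^{1/3}$ from \eqref{eq:rho+sigma_comp} and the identification \eqref{def:delta_hat} of $\Delta$ with $\widehat{\Delta}$, convert the cubic \eqref{dm_cubic} into a universal cubic in the rescaled variable $\dM/\eta_0^{1/3}$ whose coefficients depend on $\widehat{\alpha}=\widehat{\Delta}/(2\eta_0)$ alone. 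Solving this limiting cubic yields the explicit shape function through which the limiting variance kernel is written down; collapsing the resulting four-dimensional contour integral to the two-dimensional integrals \eqref{main_edge}--\eqref{main_gap} is then accomplished by deforming the $z$- and $\zeta$-contours down to the real axis, using the boundary values \eqref{up_dn_limit} of $\m$, and applying Plemelj--Sokhotski jump relations to isolate the $\pm\I 0$ contributions. For the bias \eqref{eq:main2_bias} I use the Helffer--Sj\"ostrand representation $\Tr f(H) = -\pi^{-1}\int \partial_{\bar z}\other{f}(z)\Tr G(z)\,\mathrm{d}^2z$, insert \eqref{eq:local_law_dir}, and take expectations; the deterministic $\sum_j m_j(z)$ part reproduces $N\int f\rho$ by Stieltjes inversion, while $\Expv[\theta(z)]\sum_j b_j(z)$ supplies the leading bias after $\Expv[\theta(z)]$ is computed via the same cumulant expansion that drives Proposition~\ref{prop:main1}. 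The continuity statements are verified directly by the substitutions $x=(4\alpha)^{-1/3}t$, $y=(4\alpha)^{-1/3}q$ to recover $\mathrm{Var}_0$ as $\widehat{\alpha}\to 0$ and $x=1+(9\alpha)^{-1}t$, $y=1+(9\alpha)^{-1}q$ to recover $\mathrm{Var}_\infty$ as $\widehat{\alpha}\to\infty$, combined with a dominated-convergence argument in the regions where the kernels develop boundary singularities.

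I expect the main obstacle to be the \emph{uniform} perturbative expansion of $\eigB(z,\zeta)$ and $\Pi(z,\zeta)$ for generic $(z,\zeta)$ throughout $\dom\times\dom'$, rather than merely along the diagonal slices $\zeta=z,\bar z$ analyzed in \cite{Alt2020energy}. Matching the error terms against the Hilbert--Schmidt-type singular integrals in \eqref{variance_V} without incurring divergences requires exploiting the full local symmetry of $\rho$ near $E_0$ encoded in the coefficient structure \eqref{eq:cubic_mus}, and interpolating smoothly between the three sub-regimes (small gap, exact cusp, small local minimum); this is where the bulk of the technical work in the proof will reside.
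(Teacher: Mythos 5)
Your overall architecture is right — decompose $\stab(z,\zeta)^{-1}=\eigB^{-1}\Pi+\stab^{-1}Q$, express the leading kernel through the cubic \eqref{dm_cubic} and the shape function solving it, collapse the four-fold complex integral to a double real integral, and verify continuity by rescaling. This is essentially the route the paper takes, via Lemmas \ref{lemma:K_crude}, \ref{lemma:V_good}, \ref{lemma:K_dom}, \ref{lemma:domi} and Lemma \ref{lemma:sol}. However, two of your shortcuts do not survive closer inspection, and you should not expect them to merely be "technical work."

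\textbf{The cross terms $\Pi\cdots\stab^{-1}Q$ are not subleading.} After writing the first term of $\mathcal{K}$ in the symmetric form \eqref{K_def} and expanding both factors of $\stab^{-1}$, the trace splits into an $\eigB^{-2}\Tr[\cdots\Pi\cdots\Pi]$ part, an $\eigB^{-1}(\Tr[\cdots\Pi\cdots\stab^{-1}Q]+\Tr[\cdots\stab^{-1}Q\cdots\Pi])$ part, and a bounded part with two $Q$'s. You dismiss everything coming from $\stab^{-1}Q$ as subleading, but only the doubly-$Q$ piece is. The mixed $\eigB^{-1}$ terms give a contribution $\sim 2\pi^2\psi/(\langle\vect{f}^2\rangle^3\eigB(z)\eigB(\zeta)\eigB)$ which, after substituting the expansion \eqref{eig_estimate} for $\eigB(z,\zeta)$, has exactly the same order as the $\eigB^{-2}$ piece and reorganizes the numerator from $(\sigma+2\psi\dM+\psi\other\dM)(\sigma+\psi\dM+2\psi\other\dM)$ into the symmetric expression $(\sigma+\psi\dM+\psi\other\dM)^2+2\psi^2\dM\other\dM$ appearing in \eqref{eq:K_est} and \eqref{eq:K(h)_kernel}. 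Dropping the cross terms produces the wrong kernel.

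\textbf{The bias cannot be read off from $\Expv[\theta(z)]\sum_j b_j(z)$.} Inserting \eqref{eq:local_law_dir} into Helffer--Sj\"ostrand gives $\Expv[\Tr G]=Nm+\Expv[\theta]\sum_j b_j+\Expv[\Tr D]$, and you propose to compute $\Expv[\theta]$ by cumulant expansion and treat $\Expv[\Tr D]$ as negligible. But \eqref{eq:D_est} only gives $|\Tr D|\prec N\Psi^2=\rho/|\eta|$, which after integration against $\partial_{\bar z}\other f$ is not $o(1)$ — the expectation of the trace of the remainder carries order-one cancellations that the local law alone cannot see. The paper instead applies the cumulant expansion directly to $-zm_j(z)\Expv[G_{jj}(z)]$ and reuses the \emph{two-point} local law (Theorem \ref{th:Tlocallaw}, plus Lemma \ref{lemma:new_term_est}) to obtain the closed deterministic formula \eqref{eq:ExpvG}, in which the bias is carried by the explicit trace $\Tr[\frac{\m'}{\m}(1-\stab)^2\stab^{-1}]$ rather than by $\Expv[\theta]$. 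This is a genuinely different, and stronger, ingredient; without it the $\Expv[\Tr D]$ term blocks the computation.

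\textbf{Two smaller points.} (a) Deforming to the real axis and invoking Plemelj--Sokhotski is morally right, but the actual limit $\eta_*\to0$ is delicate near the cusp because the kernel behaves like $|\eigB(z,z)\eigB(\zeta,\zeta)\eigB(z,\zeta)^2|^{-1}$, which is borderline integrable; the paper has to introduce the log-det primitive $\mathcal{L}(z,\zeta)=-2\log\det\stab+\cdots$, integrate by parts twice (Lemma \ref{lemma:V_good}), and establish a delicate dominated-convergence majorant $\mathcal{K}_{\mathrm{bound}}$ (Lemmas \ref{lemma:K_dom}, \ref{lemma:domi}). (b) Your continuity substitutions $x=(4\alpha)^{-1/3}t$ and $x=1+(9\alpha)^{-1}t$ reproduce the formal degeneration from \eqref{main_gap} to \eqref{main_cups} and \eqref{main_edge}, but the Jacobians blow up at $x=\pm1$ and the resulting integrands are not uniformly dominated as $\alpha$ varies. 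The paper's proof introduces a parameter $\gamma$ solving $8\gamma^3+9\gamma^2=2\widehat\alpha^{-1}$ precisely so that the rescaled weights $\mathcal{W}_{\mathrm{o}},\mathcal{W}_{\mathrm{e}}$ stay comparable to $1$ and dominated convergence applies uniformly. You would need an analogous device.

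In short, the high-level strategy matches the paper, but the two claimed simplifications (discarding the $\stab^{-1}Q$ cross terms, and extracting the bias from $\Expv[\theta]$) are genuine gaps that change the answer and the estimates, respectively.
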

We defer the proofs of Propositions \ref{prop:main1} and \ref{prop:main2} to Section \ref{sec:main1proof}, and Sections \ref{sec:main2proof}, \ref{sec:bias}, respectively.

Integrating the estimate \eqref{main1_goal} of Proposition \ref{prop:main1}, and applying L\'evy's continuity theorem, we prove the universal Gaussian fluctuations of \eqref{LES}. The expressions for the limiting bias and variance functionals follow by Proposition \ref{prop:main2} and the assumption that $\widehat{\alpha} \to \alpha$ as $N \to \infty$. This concludes the proof of Theorem \ref{th_main}.
\end{proof}

\begin{remark}
	For the sake of presentation, we only consider real symmetric matrices ($\boldsymbol{\beta} = 1$) in the proof of Propositions \ref{prop:main1} and \ref{prop:main2}, as the complex Hermitian ($\boldsymbol{\beta} = 2$) case differs only in using the complex analogue of cumulant expansion formula (see Section II in \cite{Khorunzhy1999}, Lemma 3.1 in \cite{He2017WignerCLT}, Lemma A.1 in \cite{Li2021deformed} for the statement of the cumulant expansion formula in the real and complex cases).
\end{remark}

\subsection{Characteristic Function Method. Proof of Proposition \ref{prop:main1}} \label{sec:main1proof}
First, we collect the results necessary to establish Proposition \ref{prop:main1}.  
\begin{lemma} (c.f. Lemma 5.4 in \cite{R2023bulk}) \label{lemma:standard_estimates}
	Let $\phi(\lambda)$ be the characteristic function 
	of $\{1-\Expv\}[\Tr f(H)]$, then the following estimates hold.
	\begin{equation} \label{phi'_Omega_int}
		\begin{split}
			\phi(\lambda) &= \E{\other{e}(\lambda)} + \Oprec\bigl(N^{-\alp}\bigr),\\
			\phi'(\lambda) &= \frac{\I}{\pi}\int\limits_{\dom} \frac{\partial\other{f}}{\partial\bar{z}}
			\E{\other e(\lambda) \left\{1-\Expv\right\}\left[ \Tr G(z)\right] }\mathrm{d}\bar z \mathrm{d}z + \Oprec\bigl(|\lambda|N^{-\alp}\bigr),
		\end{split}
	\end{equation}
	where the integration domain $\dom$ is defined in \eqref{Omega_0_defs}, and the function $\other{e}(\lambda)$ is given by
	\begin{equation} \label{tilde e}
		\other{e}(\lambda) := \exp\biggl\{\frac{\I\lambda}{\pi} \int\limits_{\dom'}\frac{\partial \other{f}}{\partial \bar{z}}\{1-\Expv\}\left[\Tr G(z)\right] \mathrm{d}\bar z \mathrm{d}z \biggr\}.
	\end{equation}
	Furthermore, there exists a positive constant $c_0\sim 1$ such that for all $z\in \D\cap\mathcal{D}_\kappa$,
	\begin{subequations} \label{1/mj(z)}
		\begin{align}
			\Expv\bigl[\other e(\lambda)\{1-\Expv\}[\Tr G(z)]\bigr] =&~\Expv\bigl[\other e(\lambda)\left\{1-\Expv\right\}[\mathcal{P}(z)]\bigr] \label{eq:new_term} \\ 
			&+\Expv\bigl[\other e(\lambda)\left\{1-\Expv\right\}[\mathcal{T}(z,z)]\bigr]
			+\frac{2\I\lambda}{\pi} \Expv\biggl[\other e(\lambda) \int\limits_{\dom'} \frac{\partial\other{f}}{\partial\bar\zeta} \frac{\partial}{\partial\zeta}\mathcal{T}(z,\zeta) \mathrm{d}\bar \zeta \mathrm{d}\zeta \biggr] \label{eq:curlyT_terms} \\
			&+\frac{\I\lambda}{\pi} \E{\other e(\lambda)} \int\limits_{\dom'} \frac{\partial\other{f}}{\partial\bar\zeta} \Tr\left[S \m'(z)\m'(\zeta)\right] \mathrm{d}\bar \zeta \mathrm{d}\zeta\\
			&+\frac{\I\lambda}{2\pi}\E{\other e(\lambda) }\int\limits_{\dom'}\frac{\partial\other{f}}{\partial\bar\zeta}\frac{\partial^2 }{\partial  z\partial \zeta}\bigl\langle \overline{\m(z)\m(\zeta)}, \Cmlnt^{(4)}\m(z)\m(\zeta)\bigr\rangle \mathrm{d}\bar \zeta \mathrm{d}\zeta\\
			&+\mathcal{O}_{\prec}\bigl(|\eigB(z,z)|^{-1}(1+|\lambda|^4)\Psi(z)\eta_0^{-1/3}
			\bigr), 
		\end{align}
	\end{subequations}
	where the random functions $\mathcal{P}(z)$ and  $\mathcal{T}(z,\zeta)$ are defined as
	\begin{equation} \label{eq:Curly_P}
		\mathcal{P}(z) := \sum\limits_{a,b}\bigl[\frac{\m'(z)}{\m(z)}S\bigr]_{ja}\bigl(G_{aa}(z)-m_a(z)\bigr)\bigl(G_{bb}(z)-m_b(z)\bigl)
	\end{equation}
	\begin{equation} \label{Curly T def}
		\mathcal{T}(z,\zeta) := \sum_{b}\sum_{a\neq b} \bigl[\frac{\m'(z)}{\m(z)}S\bigr]_{ab} G_{ba}(z)G_{ab}(\zeta). 
	\end{equation} 
\end{lemma}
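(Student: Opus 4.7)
The plan is to combine the Helffer-Sjöstrand representation of $\Tr f(H)$ with a carefully weighted cumulant expansion, controlled throughout by the single-resolvent local law of Theorem \ref{th:local_law}. The first pair of identities in \eqref{phi'_Omega_int} follow from the standard Helffer-Sjöstrand formula
\[
\Tr f(H) = \frac{1}{\pi}\int_{\mathbb{C}} \frac{\partial \other f}{\partial \bar z}\,\Tr G(z)\,\mathrm{d}\bar z\,\mathrm{d}z,
\]
truncated to imaginary parts above $N^{-\alp}\eta_0$. On the discarded thin strip the integrand is bounded by combining $|\partial_{\bar z}\other f|\lesssim |\im z|\cdot \|f''\|_\infty \lesssim |\im z|\eta_0^{-2}$ with the bound on $\{1-\Expv\}[\Tr G(z)]$ following from the local law \eqref{eq:local_law_dir}; since $|\other e(\lambda)|\le 1$ and differentiating in $\lambda$ under the integral sign only produces an extra $|\lambda|$, this yields the error $\Oprec(N^{-\alp})$ in the first identity and $\Oprec(|\lambda|N^{-\alp})$ in the second.

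For the decomposition \eqref{1/mj(z)}, I would start from the resolvent identity $zG_{jj}(z)+1 = \sum_k H_{jk}G_{kj}(z)$, multiply by $m'_j(z)/m_j(z)$, sum in $j$, and subtract the analogous deterministic relation derived by differentiating \eqref{VDE} in $z$. This rewrites $\{1-\Expv\}[\Tr G(z)]$, up to a deterministic piece, as a combination of terms quadratic in $G-\m$ and an off-diagonal piece of the form $\sum_{j,k}[\frac{\m'}{\m}S]_{jk}\{1-\Expv\}[H_{jk}G_{kj}]$. Applying the real-symmetric cumulant expansion (as in Lemma 3.1 of \cite{He2017WignerCLT}) to $\Expv[\other e(\lambda)\cdot H_{jk}G_{kj}]$ up to order four, the second-cumulant derivatives split into two classes: (a) derivatives hitting $G$, which after the centering $G_{aa}=m_a+(G_{aa}-m_a)$ isolate the diagonal term $\mathcal P(z)$ together with the off-diagonal pairing $\mathcal T(z,z)$; and (b) derivatives hitting $\other e(\lambda)$, which via a second Helffer-Sjöstrand representation of $\partial_{H_{jk}}\Tr G(\zeta)$ produce the $\int\partial_\zeta\mathcal T(z,\zeta)$ contribution, together with its purely deterministic counterpart $\Tr[S\m'(z)\m'(\zeta)]$ obtained after replacing both resolvents by $\m$ and $\m(\zeta)$ using the local law. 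The fourth-cumulant contribution yields the $\Cmlnt^{(4)}$ term verbatim, while odd cumulants and the remainder beyond order four are absorbed into the error.

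The main obstacle, distinguishing this analysis from its bulk counterpart in \cite{R2023bulk}, is the degeneration of the stability operator near the cusp. Each substitution of a $G_{jj}-m_j$ factor by its deterministic leading order via \eqref{eq:local_law_dir} is weighted by $m'_j(z)/m_j(z)$, and the bound $\|\m'(z)\|_\infty \lesssim 1 + |\eigB(z,z)|^{-1}$ from \eqref{eq:m'_bound} consequently enters every such step; this is the source of the explicit $|\eigB(z,z)|^{-1}$ prefactor in the error. The $(1+|\lambda|^4)$ dependence records that up to four derivatives of $\other e(\lambda)$ may act through the cumulant expansion, each producing a factor of $|\lambda|$. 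Finally, the $\Psi(z)\eta_0^{-1/3}$ factor arises from combining one power of the local-law control parameter with the Hölder estimate $\|\m(z)-\m(\zeta)\|_\infty\lesssim |z-\zeta|^{1/3}$ from \eqref{dm_bound}, integrated against $|\partial_{\bar\zeta}\other f|$ over $\supp \other f$. It is essential to preserve $\mathcal P(z)$ and $\mathcal T(z,\zeta)$ as explicit terms in \eqref{1/mj(z)} rather than absorb them into the error, since their delicate cancellation structure near the cusp is extracted only later (in Subsection \ref{sec:curlyT_proof}) via the two-body stability operator $\stab(z,\zeta)$.
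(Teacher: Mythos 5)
Your proposal follows the same route as the paper: the paper itself proves Lemma \ref{lemma:standard_estimates} by reference to Lemma 5.4 of \cite{R2023bulk} (the bulk version of the same statement), adapted using the cusp comparison relations of Proposition \ref{prop_scaling} and the integral bound of Lemma \ref{lemma:int}. Your outline — Helffer--Sj\"ostrand truncation for the first two identities, then the resolvent identity combined with the fourth-order cumulant expansion weighted by $\m'/\m$ to produce $\mathcal{P}$, $\mathcal{T}$, the $\Tr[S\m'\m']$ term and the $\Cmlnt^{(4)}$ term, with $\|\m'\|_\infty\lesssim |\eigB(z,z)|^{-1}$ supplying the instability prefactor and up to four $\lambda$-derivatives of $\other e(\lambda)$ supplying the $(1+|\lambda|^4)$ — is exactly that adaptation spelled out.

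One point in your narrative is not right as stated, though it does not affect the structure of the argument: you attribute the factor $\eta_0^{-1/3}$ in the error to the H\"older bound $\|\m(z)-\m(\zeta)\|_\infty\lesssim|z-\zeta|^{1/3}$ ``integrated against $|\partial_{\bar\zeta}\other f|$.'' On $\supp\other f$ one has $|z-\zeta|\lesssim 1$, so this H\"older estimate produces a \emph{gain} (a factor bounded by a constant, or by $\eta_0^{1/3}$ if $|z-\zeta|\lesssim\eta_0$), not a loss of $\eta_0^{-1/3}$. The unfavorable power of $\eta_0$ in fact comes from the $\zeta$-integration of the error terms left over after applying the local law inside the cumulant expansion — one integrates quantities that grow like negative powers of $|\im\zeta|$ and $\dist(\zeta,\supp\rho)$ over $\dom'$ against $|\partial_{\bar\zeta}\other f|$, and Lemma \ref{lemma:int} together with the comparison relations \eqref{eq:rho_comp}, \eqref{eq:eig_comp} converts these into powers of $\eta_0$ and $\Delta_0+\eta_0$. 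You should track this bookkeeping explicitly rather than lean on the H\"older estimate, whose role in the proof is to control the \emph{difference} of stability operators at nearby spectral parameters, not to generate the $\eta_0^{-1/3}$.
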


Note the appearance of the term on the right-hand side of \eqref{eq:new_term}. In prior works this term was bounded by $\mathcal{O}_\prec(N\Psi(z)\Theta(z))$. However, such error is not admissible in the almost-cusp regime under consideration. Therefore we estimate the right-hand side of \eqref{eq:new_term} separately by making use of the local law in the form \eqref{eq:local_law_dir} in the following lemma.
\begin{lemma} \label{lemma:new_term_est}
	Let $\mathcal{P}(z)$ be the random function defined in \eqref{eq:Curly_P}.
	There exists a positive threshold $c_0\sim 1$, such that for all $z\in \D\cap\mathcal{D}_\kappa$,
	\begin{equation} \label{eq:new_term_est}
		\bigl\lvert\mathcal{P}(z)\bigr\rvert \prec N|\beta(z,z)|^{-1}\bigl(\Psi(z)^3+(\Delta_0 + \dis(z))^{1/3}\Theta^2(z)\bigr),
	\end{equation}
	where $\eigB(z,z)$ is the smallest eigenvalue of $\stab(z,z)$, and $\Delta_0$ is defined in Proposition \ref{prop_scaling}. 
\end{lemma}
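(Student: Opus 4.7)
The plan is to substitute the directional local law \eqref{eq:local_law_dir} in the form $G_{aa}(z)-m_a(z) = \theta(z)\vect{b}_a(z) + D_{aa}(z)$ into the definition of $\mathcal{P}(z)$ and expand the resulting product into three pieces. Abbreviating $\vect{b}:=\vect{b}(z)$ and $T := (\m'(z)/\m(z))S$, and noting both the row-sum bound $\opnorminf{T}\lesssim |\eigB(z,z)|^{-1}$ and the entrywise bound $|T_{ij}|\lesssim |\eigB(z,z)|^{-1}N^{-1}$ (from \eqref{eq:m'_bound} and \eqref{cond_A}),
\begin{equation*}
	\mathcal{P}(z) = \underbrace{\theta^2\sum_{i,j}T_{ij}\vect{b}_i\vect{b}_j}_{=:\mathcal{P}_1} + \underbrace{\theta\sum_{i,j}T_{ij}(\vect{b}_i D_{jj}+\vect{b}_j D_{ii})}_{=:\mathcal{P}_2} + \underbrace{\sum_{i,j}T_{ij}D_{ii}D_{jj}}_{=:\mathcal{P}_3}.
\end{equation*}

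For $\mathcal{P}_3$, I collapse one summation to obtain $\mathcal{P}_3 = \sum_j D_{jj}v_j$ with $v_j := \sum_i T_{ij}D_{ii}$. Since the weight $\{T_{ij}\}_i$ is deterministic with $\ell^\infty$-norm bounded by $|\eigB|^{-1}N^{-1}$, the averaged bound \eqref{eq:D_est} yields $|v_j|\prec \Psi^2/|\eigB|$ uniformly in $j$; combining with the isotropic estimate $|D_{jj}|\prec\Psi$ then gives $|\mathcal{P}_3|\prec N\Psi^3/|\eigB|$. For the cross term $\mathcal{P}_2$, an analogous reduction gives $\sum_{i,j}T_{ij}\vect{b}_i D_{jj} = \sum_j (T^T\vect{b})_j D_{jj}$ with the deterministic weight $\|T^T\vect{b}\|_\infty\lesssim |\eigB|^{-1}$; the averaged law then yields a bound of order $N\Psi^2/|\eigB|$, and combining with $|\theta|\prec\Theta\lesssim\Psi$ on $\mathcal{D}_\kappa$ (as noted after Theorem \ref{th:local_law}) gives $|\mathcal{P}_2|\prec N\Psi^3/|\eigB|$.

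The delicate piece is $\mathcal{P}_1$: the naive bound $|\theta^2|\prec\Theta^2$ combined with $|\sum_{i,j}T_{ij}\vect{b}_i\vect{b}_j|\lesssim N/|\eigB|$ falls short of the target by a factor of $(\Delta_0+\dis)^{1/3}$. To extract this smallness I exploit the eigenvector identity $\stab(z,z)\vect{b}=\eigB\vect{b}$, equivalently $S\vect{b} = (1-\eigB)\vect{b}/\m^2$, to rewrite $\sum_{i,j}T_{ij}\vect{b}_i\vect{b}_j = (1-\eigB)\sum_i m'_i\vect{b}_i^2/m_i^3$. Decomposing $\m' = \stab(z,z)^{-1}[\m^2] = \eigB^{-1}\kappa\vect{b} + \stab^{-1}Q[\m^2]$ with $\kappa := \langle\vect{b}^\ell,\m^2\rangle/\langle\vect{b}^\ell,\vect{b}\rangle$, and using the bound $\opnorminf{\stab^{-1}Q}\lesssim 1$ from \eqref{eq:invstab0Q}, the singular contribution reduces to $\eigB^{-1}\kappa N\langle\vect{b}^3/\m^3\rangle$. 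Substituting the leading-order expansions from \eqref{b_vect}, \eqref{v_def}, and Lemma \ref{lemma_m}---namely $\vv = \vect{f}/\norm{\vect{f}}_2+\mathcal{O}(\rho)$, $\vect{b}\approx |\m|\vect{f}$, $\vect{b}^\ell\approx |\m|^{-1}\vect{f}$, and $\m/|\m| = \vect{p}+\I\rho\vect{f}+\mathcal{O}(\rho^2)$---a direct computation yields $\kappa = \langle\vect{f},|\m|\rangle/\norm{\vect{f}}_2^2+\mathcal{O}(\rho)$ and, crucially, $\langle\vect{b}^3/\m^3\rangle = \langle\vect{p}\vect{f}^3\rangle + \mathcal{O}(\rho) = \sigma(z) + \mathcal{O}(\rho)$ by the very definition \eqref{sigma_def} of $\sigma$. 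Hence $|\kappa\langle\vect{b}^3/\m^3\rangle|\lesssim \rho(z)+|\sigma(z)|\sim (\Delta_0+\dis(z))^{1/3}$ by \eqref{eq:rho+sigma_comp}, and combined with $|\theta|^2\prec\Theta^2$, this yields $|\mathcal{P}_1|\prec N|\eigB|^{-1}(\Delta_0+\dis)^{1/3}\Theta^2$, completing the proof.

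The main obstacle is precisely the cancellation identified in the last step: the crude bound for $\mathcal{P}_1$ is off by a factor of $(\Delta_0+\dis)^{1/3}$, and recovering this smallness hinges on tracking the leading-order asymptotics of $\vect{b},\vect{b}^\ell,\m$, and $\vv$ near the singularity and recognizing that $\langle\vect{b}^3/\m^3\rangle$ reduces, at leading order, to the defining quantity $\sigma(z)$ whose vanishing characterizes the transition from the regular edge through the sharp cusp.
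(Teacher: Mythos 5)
Your proposal is correct and follows essentially the same route as the paper: substitute the directional local law to write $G_{jj}-m_j = \theta\,\vect{b}_j + D_{jj}$, bound the terms containing $D$ crudely by $N|\eigB|^{-1}\Psi^3$ using \eqref{eq:m'_bound} and \eqref{eq:D_est}, and for the pure $\theta^2$ term isolate the $\eigB^{-1}\Pi$ piece of $\m'=\stab(z,z)^{-1}[\m^2]$ and show that the remaining deterministic coefficient is $\lesssim (\Delta_0+\dis)^{1/3}$. The only cosmetic difference is in that last step: you substitute the eigenvector identity $S\vect{b}=(1-\eigB)\vect{b}/\m^2$ to land directly on $\langle\vect{b}^3/\m^3\rangle$, whereas the paper uses the adjoint relation $\stab(z,z)^*=\m(\bar z)^{-2}\stab(\bar z,\bar z)\m(\bar z)^2$ to identify $\overline{\m^{-2}\vect{b}}$ with $\vect{b}^{\ell}$ — but these manipulations are equivalent (indeed $\langle\overline{\m^{-2}\vect{b}},\vect{b}\m S[\vect{b}]\rangle=(1-\eigB)\langle\vect{b}^3/\m^3\rangle$), and both reduce the coefficient to $\sigma(z)+\mathcal{O}(\rho+\rho^{-1}|\eta|)$. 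One minor slip: the error in $\vv=\vect{f}/\norm{\vect{f}}_2+\mathcal{O}(\rho^{-1}\eta)$ from \eqref{v_def} is $\mathcal{O}(\rho^{-1}\eta)$, not $\mathcal{O}(\rho)$, so the final error in $\langle\vect{b}^3/\m^3\rangle$ is $\mathcal{O}(\rho+\rho^{-1}|\eta|)$; since $\rho^{-1}|\eta|\lesssim|\eigB|\lesssim(\Delta_0+\dis)^{2/3}$ this is harmless for the conclusion, and matches the bound the paper records in \eqref{eq:trace_est_final}.
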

We postpone the proof of Lemma \ref{lemma:new_term_est} until the end of the section.

To estimates the terms \eqref{eq:curlyT_terms}, we prove the following local law for the two-point function $\mathcal{T}(z,\zeta)$.
\begin{theorem} \label{th:Tlocallaw}
	For all $z,\zeta \in \D\cap\mathcal{D}_\kappa$, we have the estimate
	\begin{equation} \label{eq:curlyTlaw}
		\mathcal{T}(z,\zeta) = \Tr\biggl[ \frac{\m'(z)}{\m(z)} \bigl(1 - S\m(z)\m(\zeta) \bigr)^{-1} \bigl(S\m(z)\m(\zeta) \bigr)^2\biggr] + \mathcal{E}(z,\zeta),
	\end{equation}
	where the error term $\mathcal{E}(z,\zeta)$ is analytic in both variables and admits the bound
	\begin{equation} \label{eq:curlyT_error}
		\begin{split}
			\bigl\lvert\mathcal{E}(z,\zeta)\bigr\rvert \prec N|\eigB(z,z)|^{-1}\bigl( \Psi^2(z) \Psi(\zeta) + \Psi(z)\Psi^2(\zeta) + ( \Delta_0 + \dis(z) + \dis(\zeta))^{1/3}\Theta(z)\Theta(\zeta)\bigr).
		\end{split}
	\end{equation} 	
\end{theorem}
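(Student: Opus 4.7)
The plan is to prove the local law for the vector $T(z,\zeta)=(T_a(z,\zeta))_{a=1}^N$ with $T_a(z,\zeta) := \sum_{b \neq a} S_{ab} G_{ba}(z) G_{ab}(\zeta)$, from which $\mathcal{T}(z,\zeta) = \sum_a \tfrac{m_a'(z)}{m_a(z)} T_a(z,\zeta)$ follows by linearity. The approach mirrors that of \cite{R2023bulk}: derive a self-consistent vector equation of the form $\stab(z,\zeta) T(z,\zeta) = F(z,\zeta) + \mathcal{E}^{(1)}(z,\zeta)$ and invert $\stab(z,\zeta)$. The new difficulty is the decay of the smallest eigenvalue $\eigB(z,\zeta)$ near the singularity, and the cure is to keep the directional form \eqref{eq:local_law_dir} of the single-resolvent local law throughout, instead of replacing $G$ by $\diag{\m}$ at the outset.

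I would first derive the self-consistent equation by applying a real cumulant expansion to $\Expv[H_{ab} G_{ba}(z) G_{ab}(\zeta)]$ together with the algebraic identity $(HG(z))_{ba} = \delta_{ab} + z G_{ba}(z)$. This produces $\stab(z,\zeta)$ acting on $T$, an explicit forcing $F$ polynomial in $\m(z), \m(\zeta), S$, and an error $\mathcal{E}^{(1)}$ generated by higher cumulants and by fluctuation averaging of products of resolvent entries. Applying the isotropic and averaged local laws of Theorem \ref{th:local_law} and inserting the decomposition \eqref{eq:local_law_dir}, each component of $\mathcal{E}^{(1)}$ splits into a purely fluctuating piece bounded by $\Psi^2(z)\Psi(\zeta) + \Psi(z)\Psi^2(\zeta)$ and a bias piece of order $\Theta(z)\Theta(\zeta)$ whose direction is prescribed by the vectors $\vect{b}(z)$ and $\vect{b}(\zeta)$ from \eqref{eq:local_law_dir}.

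Next I would invert $\stab(z,\zeta)$ via the splitting $\stab^{-1} = \eigB(z,\zeta)^{-1}\Pi(z,\zeta) + \stab(z,\zeta)^{-1}Q(z,\zeta)$. The $Q$-part is bounded uniformly by \eqref{eq:invstab0Q} together with the perturbative transfer \eqref{Q/B_pert} from the base point $(z,z)$ to $(z,\zeta)$ in Lemma \ref{lemma_ncpt}, and contributes the required error without any $\eigB^{-1}$ factor. For the singular $\Pi$-part, pairing $T$ with $\m'(z)/\m(z)$ to form $\mathcal{T}$ reduces the contribution to the inner product of $\mathcal{E}^{(1)}$ against the left eigenvector of $\stab(z,\zeta)$, which by \eqref{Pi_pert} and \eqref{b_vect} is close to $|\m(z)|^{-1}\vv(z)\langle\vv(z),\vect{f}(z)\rangle$ up to corrections of order $\norm{\m(z)-\m(\zeta)}_\infty \lesssim |z-\zeta|^{1/3}$. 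This pairing, together with the directional structure of the bias piece of $\mathcal{E}^{(1)}$ and the bound $\norm{\m'(z)}_\infty \lesssim |\eigB(z,z)|^{-1}$ from \eqref{eq:m'_bound}, is what produces the H\"{o}lder-type suppression factor $(\Delta_0+\dis(z)+\dis(\zeta))^{1/3}$ multiplying $\Theta(z)\Theta(\zeta)$ in \eqref{eq:curlyT_error}. The deterministic principal part $\Tr\bigl[\tfrac{\m'(z)}{\m(z)}(1-S\m(z)\m(\zeta))^{-1}(S\m(z)\m(\zeta))^2\bigr]$ comes from $\langle \m'(z)/\m(z), \stab(z,\zeta)^{-1} F(z,\zeta)\rangle$ by a direct algebraic rearrangement, and analyticity of $\mathcal{E}(z,\zeta)$ in each variable on $\D\cap\mathcal{D}_\kappa$ is automatic from that of both sides.

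The main obstacle is the third step. Without the directional form of the single-resolvent local law, inverting $\eigB^{-1}\Pi$ only yields the naive bound $|\eigB(z,z)|^{-1}\sim\rho(z)^{-2}$ times the raw error, which is far too weak in the cusp regime. Recovering the additional $(\Delta_0+\dis(z)+\dis(\zeta))^{1/3}$ factor requires a careful bookkeeping that simultaneously uses the perturbation expansions of $\vect{b}^\ell$ from Lemmas \ref{lemma_stab} and \ref{lemma_ncpt}, the precise shape of the directional error term from \cite{Erdos2018CuspUF}, and the H\"{o}lder continuity \eqref{dm_bound} of $\m$.
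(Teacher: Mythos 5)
Your high-level strategy — derive a self-consistent vector equation for $T_a(z,\zeta)$ by cumulant expansion, invert the two-body stability operator, and extract an improved bound on the singular $\Pi$-channel from the directional local law — is a genuinely different route from the paper's. The paper does not invert $\stab(z,\zeta)$ on the equation for $T$ at all. Instead, it decomposes the deterministic \emph{observable matrix} $W := \m(z)^{-1}\m'(z)S$ as $W = Y + \vect{1}\s^*$ with $\Pi(z,\zeta)^t Y = 0$, which is possible because $\Pi^*\vect{1}\neq 0$ (Claim \ref{claim:stab_res}(iv)). The $Y$-piece is then handled directly by the ready-made stable-direction local law of Proposition \ref{prop:stable_local_law} (i.e.\ Lemma 6.1 of \cite{R2023bulk}), with no $\eigB^{-1}$ amplification, while the $\vect{1}\s^*$-piece is linearized by the resolvent identity $G(z)G(\zeta)(z-\zeta)=G(z)-G(\zeta)$, so that $\sum_b \overline{s_b}(G\other{G})_{bb}$ never touches $\stab^{-1}$. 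After that, the \emph{only} quantity needing the improved H\"older-type suppression is the scalar $\langle\s,\vect{b}(z)\rangle$ (and $\langle\s,\vect{b}(\zeta)\rangle$), and this is supplied in one line by the trace estimate $|\langle\m'\m^{-1}\vect{b}\,S[\vect{b}]\rangle|\lesssim |\eigB(z,z)|^{-1}(\Delta_0+\dis(z))^{1/3}$ from Lemma \ref{lemma:new_term_est} combined with the perturbative transfer $\Pi(z,\zeta)=\Pi(z,z)+\mathcal{O}(|z-\zeta|^{1/3}+\rho(z))$.

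In your route the crucial step is asserted rather than shown, and it is exactly there that I would worry. Writing $\mathcal{T}=\langle\overline{\m'/\m},\stab^{-1}(F+\mathcal{E}^{(1)})\rangle N$ and inserting $\stab^{-1}=\eigB^{-1}\Pi+\stab^{-1}Q$, the singular channel produces the coefficient $\eigB(z,\zeta)^{-1}\,\langle\overline{\m'/\m},\vect{b}(z,\zeta)\rangle\,\langle\vect{b}^\ell(z,\zeta),\mathcal{E}^{(1)}\rangle/\langle\vect{b}^\ell,\vect{b}\rangle$. Since $\m'(z)$ itself has a large component in the $\vect{b}$-direction, $\langle\overline{\m'/\m},\vect{b}\rangle\sim|\eigB(z,z)|^{-1}$; so to reach the target $N|\eigB(z,z)|^{-1}(\Delta_0+\dis(z)+\dis(\zeta))^{1/3}\Theta(z)\Theta(\zeta)$ you would need $\langle\vect{b}^\ell(z,\zeta),\mathcal{E}^{(1)}\rangle\lesssim |\eigB(z,\zeta)|\cdot N(\Delta_0+\dis)^{1/3}\Theta\other{\Theta}$, i.e.\ a smallness factor of $|\eigB(z,\zeta)|$ \emph{on top of} the cusp-improved bound. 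Simply invoking "directional structure of the bias piece" does not make that visible: you need a separate algebraic observation to kill the raw cumulant error in that single direction, and there is no guarantee it closes without rederiving, in essence, the paper's decomposition. The paper's choice — project the observable, not the solution vector — avoids the double $\eigB^{-1}$ amplification from the start, and lets the resolvent identity do the linearization for free.
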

The key improvement compared to the bulk counterpart of Theorem \ref{th:Tlocallaw} is the factor $(\Delta_0 + \dis(z)+\dis(\zeta))^{1/3}$ in front of $\Theta(z)\Theta(\zeta)$, which makes the error estimate \eqref{eq:curlyT_error} effective in the small $\Delta$ regime. We prove Theorem \ref{th:Tlocallaw} in Subsection \ref{sec:curlyT_proof}.

Finally, to bound the integrals of the error estimates against the partial derivative of the quasi-analytic extension $\partial_{\bar{z}}\other{f}(z)$, we use the following lemma.
\begin{lemma} (c.f. Lemma 5.6 in \cite{R2023bulk}, c.f. Lemma 4.4 in \cite{Landon2020applCLT}) \label{lemma:int}
	Let $f$ be the scaled test function defined in \eqref{scaled_f_def}. Let $\Omega$ be a domain of the form
	\begin{equation}
		\Omega := \{z\in\mathbb{C}: cN^{-\tau'}\eta_0 < |\im z| < 1, a < \re z  < b\},
	\end{equation} 
	such that $\supp{f} \subset (a,b)$ and $\tau',c$ are positive constants.
	Let $K(z)$ be a holomorphic function on $\Omega$ satisfying
	\begin{equation}
		|K(z)| \le C|\im z|^{-s}(\Delta_0+|\im z|)^{-q},\quad z \in \Omega, 
	\end{equation}
	for some $q,s\ge 0$ satisfying $0 \le s+q \le 2$. Then there exists a constant $C' > 0$ depending only on $g$
	, $\chi$
	, $s$, and $q$, such that
	\begin{equation} \label{int_bound}
		\biggl|\int\limits_{\Omega}\frac{ \partial\other{f}}{\partial\bar z}(x+\I \eta) K(x+\I \eta)\mathrm{d}x\mathrm{d}\eta \biggr| \le C C'  \frac{\eta_0^{1-s}\log N}{(\Delta_0+\eta_0)^q}.
	\end{equation}
\end{lemma}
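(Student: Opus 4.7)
The plan is to compute $\partial_{\bar z}\other f$ explicitly from its definition \eqref{QA_f} and then bound the resulting integral directly; the holomorphicity of $K$ plays essentially no role beyond ensuring that the assumed pointwise bound holds on all of $\Omega$. A direct computation gives
\begin{equation*}
	\partial_{\bar z}\other f(x+\I\eta) = \tfrac{\I}{2}\eta\chi(\eta)f''(x) + \tfrac{\I}{2}\chi'(\eta)\bigl(f(x)+\I\eta f'(x)\bigr),
\end{equation*}
and I would split the integral in \eqref{int_bound} into the two pieces $I_1, I_2$ corresponding to these two summands.

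The contribution $I_2$ from the $\chi'(\eta)$ term is supported on the annular region $c_1/2 \le |\eta| \le c_1$, where $|K(x+\I\eta)|$ is bounded by a constant depending only on $s, q, c_1$. Combined with $\|f\|_\infty \lesssim 1$, $\|f'\|_{L^1}\lesssim 1$, and $|\supp f|\lesssim \eta_0$, this yields $|I_2| \lesssim 1$, which is absorbed into the right-hand side of \eqref{int_bound} in the regime of applications relevant to the rest of the paper.

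The main contribution $I_1$ uses the chain-rule bound $|f''(x)|\lesssim \eta_0^{-2}\mathds{1}_{x\in\supp f}$, giving $\int|f''(x)|\,dx\lesssim \eta_0^{-1}$. By Fubini, the estimate reduces to showing
\begin{equation*}
	\int_{cN^{-\tau'}\eta_0}^{c_1}\eta^{1-s}(\Delta_0+\eta)^{-q}\,d\eta \;\lesssim\; \eta_0^{2-s}(\Delta_0+\eta_0)^{-q}\log N,
\end{equation*}
which I would verify via a case analysis obtained by splitting the $\eta$-interval at $\eta=\eta_0$ and, when relevant, at $\eta=\Delta_0$, and using the elementary inequality $(\Delta_0+\eta)^{-q}\le(\Delta_0+\eta_0)^{-q}(1+\eta_0/\eta)^{q}$ to extract the desired factor $(\Delta_0+\eta_0)^{-q}$ on the lower range.

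The main obstacle is the critical exponent case $s+q=2$, where the integrand is borderline non-integrable and the cut-off at $cN^{-\tau'}\eta_0$ generates the logarithmic factor $\log N$ from integrating $\eta^{-1}$ across the full range. In the subcritical regime $s+q<2$ the one-variable integral converges, and the bound follows by elementary comparison with the upper-endpoint value; the log factor in the conclusion is present only to uniformly cover the critical case.
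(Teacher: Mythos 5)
Your treatment of the inner region $|\eta|\le\eta_0$ is fine, and the elementary inequality you quote, $(\Delta_0+\eta)^{-q}\lesssim(\eta_0/\eta)^q(\Delta_0+\eta_0)^{-q}$ for $\eta\le\eta_0$, is precisely what the paper's proof invokes there. The gap is in the outer region $|\eta|>\eta_0$. The one-variable inequality your reduction hinges on,
\begin{equation*}
	\int_{cN^{-\tau'}\eta_0}^{c_1}\eta^{1-s}(\Delta_0+\eta)^{-q}\,d\eta \;\lesssim\; \eta_0^{2-s}(\Delta_0+\eta_0)^{-q}\log N,
\end{equation*}
is false whenever $s+q<2$: restricting to $\eta\in[c_1/2,c_1]$ and using $(\Delta_0+\eta)^{-q}\ge 2^{-q}$, the left-hand side is bounded below by a positive constant, whereas the right-hand side satisfies $\eta_0^{2-s}(\Delta_0+\eta_0)^{-q}\log N\le\eta_0^{2-s-q}\log N\to 0$. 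Already at $s=q=0$ your estimate gives $|I_1|\lesssim\eta_0^{-1}\int_{\eta_0}^{c_1}\eta\,d\eta\sim\eta_0^{-1}$ in place of the required $\eta_0\log N$, and your closing sentence ``the bound follows by elementary comparison with the upper-endpoint value'' is not compatible with the target being a power of $\eta_0$. So the blanket use of $\|f''\|_{L^1}\lesssim\eta_0^{-1}$ cannot be carried past the scale $\eta_0$.

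The missing ingredient is exactly the holomorphicity of $K$, which you dismiss at the outset. For $|\eta|>\eta_0$ one either (a) integrates by parts twice in $x$ (no boundary terms since $f\in C^2_c$), $\int f''(x)K(x+\I\eta)\,dx=\int f(x)K''(x+\I\eta)\,dx$, and applies Cauchy's estimate $|K''(x+\I\eta)|\lesssim|\eta|^{-2-s}(\Delta_0+|\eta|)^{-q}$ (shrink the disk to radius $|\eta|/2$, on which the assumed pointwise bound on $K$ persists up to constants); with $\|f\|_{L^1}\lesssim\eta_0$ this yields
\begin{equation*}
	\Bigl|\int_{\eta_0<|\eta|<c_1}\eta\chi(\eta)\!\int f''(x)K(x+\I\eta)\,dx\,d\eta\Bigr|\lesssim\eta_0\,(\Delta_0+\eta_0)^{-q}\!\int_{\eta_0}^{c_1}\!\eta^{-1-s}d\eta\lesssim\frac{\eta_0^{1-s}\log N}{(\Delta_0+\eta_0)^q};
\end{equation*}
or (b), more cleanly, applies Stokes' theorem to $\partial_{\bar z}(\other f K)=(\partial_{\bar z}\other f)K$ on $\{\eta_0<|\eta|<1\}$, which converts the outer integral into a boundary term on $\{|\eta|=\eta_0\}$ of size $\lesssim\int(|f|+\eta_0|f'|)\,dx\cdot\eta_0^{-s}(\Delta_0+\eta_0)^{-q}\lesssim\eta_0^{1-s}(\Delta_0+\eta_0)^{-q}$. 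Either route is standard in the cited Lemma 4.4 of \cite{Landon2020applCLT} and Lemma 5.6 of \cite{R2023bulk}. The same device also upgrades your $|I_2|\lesssim1$ to $|I_2|\lesssim\eta_0$ (integrate by parts once in the $\eta f'$ summand), which lies below the right-hand side of \eqref{int_bound} unconditionally rather than ``in the regime of applications.''
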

\begin{proof}[Proof of Lemma \ref{lemma:int}] 
	The proof is analogous to that of Lemma 5.6 in \cite{R2023bulk}, except in the regime $|\eta|\lesssim \eta_0$, we use the estimate $(\Delta_0+|\eta|)^{-1} \lesssim \eta^{-1}\eta_0(\Delta_0+\eta_0)^{-1}$, and in the regime $|\eta| \gtrsim \eta_0$, we use the trivial estimate $(\Delta_0+|\eta|)^{-1} \lesssim (\Delta_0+\eta_0)^{-1}$.
\end{proof}

\begin{proof}[Proof of Proposition \ref{prop:main1}]
	Differentiating \eqref{eq:curlyTlaw}, applying Cauchy's integral formula to the error estimate \eqref{eq:curlyT_error}, and using Lemma \ref{lemma:int} to integrate the error terms over $\zeta$, we deduce that 
	\begin{equation} \label{eq:curlyT_deriv}
		\begin{split}
			\int\limits_{\dom'} \frac{\partial\other{f}}{\partial\bar\zeta} \frac{\partial}{\partial\zeta}\mathcal{T}(z,\zeta) \mathrm{d}\bar \zeta \mathrm{d}\zeta 
			&= \int\limits_{\dom'} \frac{\partial\other{f}}{\partial\bar \zeta}\frac{\partial}{\partial\zeta}\Tr\biggl[\frac{\m'(z)}{\m(z)}\stab(z,\zeta)^{-1}(1-\stab(z,\zeta))^2\biggr]  \mathrm{d}\bar \zeta \mathrm{d}\zeta\\ 
			&+ \mathcal{O}_\prec\biggl(\frac{N^{-1/2}\eta_0^{-1/3}\Psi(z)^2 + \eta_0^{-2/3}\Psi(z) + (\Delta_0 + \dis(z)+\eta_0^{1/3})\eta_0^{-1}\Theta(z)}{|\eigB(z,z)|} \biggr).
		\end{split}
	\end{equation}
	Combining estimates \eqref{1/mj(z)}, \eqref{eq:new_term_est}, \eqref{eq:curlyT_error}, \eqref{eq:curlyT_deriv}, and using Lemma \ref{lemma:int} to integrate the error terms over $z$ concludes the proof of Proposition \ref{prop:main1}.
\end{proof}

%

In the remained of the section, we supply the proofs of Lemmas \ref{lemma:standard_estimates}, \ref{lemma:new_term_est}
, and Theorem \ref{th:Tlocallaw}. 
\begin{proof}[Proof of Lemma \ref{lemma:new_term_est}]
	We define $\mathpzc{g}_j := G_{jj}(z)-m_j(z)$, $\mathpzc{g} := (\mathpzc{g}_j)_{j=1}^N$, and $\m := \m(z)$. From the decomposition \eqref{eq:local_law_dir}, one can immediately deduce that 
	\begin{equation}
		\mathpzc{g} = \theta \vect{b} + \vect{d},
	\end{equation}
	where $\vect{b} := \vect{b}(z)$ is defined in \eqref{b_def}, $\theta :=\theta(z)$ is given in Theorem \ref{th:local_law}, and the vector $\vect{d} :=(D_{jj}(z))_{j=1}^N$ consists of the diagonal elements of $D(z)$, defined in Theorem \ref{th:local_law}. Therefore, the function $\mathcal{P}(z)$ defined in \eqref{eq:Curly_P} is given by
	\begin{equation} \label{eq:quad_est}
		\frac{1}{N}\sum\limits_{a,j}\frac{m_j'}{m_j} S_{ja}\mathpzc{g}_j\mathpzc{g}_a = \theta^2 \bigl\langle  \m'\m^{-1}\vect{b}S[\vect{b}]\bigr\rangle +  \theta\bigl\langle \m'\m^{-1}\vect{b}S[\vect{d}]\bigr\rangle  +  \theta\bigl\langle \m'\m^{-1}\vect{d}S[\vect{b}]\bigr\rangle + \bigl\langle \m'\m^{-1}\vect{d}S[\vect{d}]\bigr\rangle.
	\end{equation}

	It follows from \eqref{eq:m'_bound} and \eqref{eq:D_est} that the last three terms on the right-hand side of \eqref{eq:quad_est} are bounded by
	\begin{equation} \label{eq:last_three terms}
		\bigl\lvert\theta\bigl\langle \m'\m^{-1}\vect{b}S[\vect{d}]\bigr\rangle  +  \theta\bigl\langle \m'\m^{-1}\vect{d}S[\vect{b}]\bigr\rangle + \bigl\langle \m'\m^{-1}\vect{d}S[\vect{d}]\bigr\rangle \bigr\rvert \prec |\eigB|^{-1}\Psi^3 ,
	\end{equation}
	where $\eigB:=\eigB(z,z)$, $\Psi:=\Psi(z)$, and $\Theta:=\Theta(z)$, and we used that for all $z\in\mathcal{D}_\kappa$, $\Theta \lesssim \Psi$.

	To estimate the remaining term on the right-hand side of \eqref{eq:quad_est}, we show that 
	\begin{equation} \label{eq:trace_est_later}
		|\bigl\langle  \m'\m^{-1}\vect{b}S[\vect{b}]\bigr\rangle| \lesssim |\eigB|^{-1}(\Delta_0 + \dis(z) )^{1/3} ,
	\end{equation}
	which, together with \eqref{eq:theta_est} and \eqref{eq:last_three terms}, allows us to conclude \eqref{eq:new_term_est}.
	
	Therefore, it remains to establish \eqref{eq:trace_est_later}. Recall that differentiating the Dyson equation \eqref{VDE} yields the identity $\m'(z) = \stab(z,z)^{-1}[\m(z)^2]$. Owing to the decomposition $\stab^{-1} = \eigB^{-1}\Pi + \stab^{-1}Q$ and the bound \eqref{eq:invstab0Q}, we obtain
	\begin{equation}
		\bigl\langle  \m'\m^{-1}\vect{b}S[\vect{b}]\bigr\rangle = \frac{\bigl\langle\overline{\m^{-2}\vect{b}}, \vect{b}\m S[\vect{b}]\bigr\rangle \langle \vect{b}^\ell,\m^2 \rangle}{\eigB\langle\vect{b}^\ell,\vect{b}\rangle} + \mathcal{O}(1),
	\end{equation}
	where $\vect{b}^\ell := \vect{b}^\ell(z)$ is defined in \eqref{b_def}. Since $\stab(z,z)^* = \m(\bar{z})^{-2}\stab(\bar{z},\bar{z})\m(\bar{z})^2$, the vector $\overline{\m^{-2}\vect{b}}$ is parallel to $\vect{b}^\ell$, i.e. $\overline{\m^{-2}\vect{b}} = \vect{b}^\ell \langle \vect{b},\overline{\m^{-2}\vect{b}} \rangle \langle \vect{b}, \vect{b}^\ell\rangle$. It follows from the estimates \eqref{b_vect}, \eqref{eq:m'_bound} that 
	\begin{equation} \label{eq:trace_est_final}
		|\bigl\langle  \m'\m^{-1}\vect{b}S[\vect{b}]\bigr\rangle| \lesssim 1+ |\eigB|^{-1} |\langle \vect{b}^\ell, \m\vect{b}S[\vect{b}]\rangle| \lesssim 1+|\eigB|^{-1}\bigl(\rho + |\sigma| + \rho^{-1}|\eta|\bigr).
	\end{equation}
	Owing to Lemma \ref{lemma_m}, the functions $\rho(z),\sigma(z)$, and $\rho(z)^{-1}\im z$  are  $1/3$-H\"{o}lder continuous, hence \eqref{eq:trace_est_later} follows from  \eqref{eq:trace_est_final} and \eqref{eq:sigma_comp}. This concludes the proof of Lemma \ref{lemma:new_term_est}.
\end{proof}

\begin{proof} [Proof of Lemma \ref{lemma:standard_estimates}]
	Lemma \ref{lemma:standard_estimates} is proved analogously to Lemma 5.4 in \cite{R2023bulk}, but makes use of the comparison relations \eqref{eq:rho_comp} to yield improved estimate obtained using Lemma \ref{lemma:int}.
\end{proof}

\subsection{Two-point function local law} \label{sec:curlyT_proof}
The proof of Theorem \ref{th:Tlocallaw} relies on two key ingredients: the stable direction local law for two-point functions  of the resolvent (Proposition \ref{prop:stable_local_law} below), established in \cite{R2023bulk}, and the proximity of the vector of ones to the destabilizing direction of $\stab(z,\zeta)$, which is captured by Claim \ref{claim:stab_res} below. 
\begin{prop} (Lemma 6.1 in \cite{R2023bulk}) \label{prop:stable_local_law}
	For all $z, \zeta \in \D\cap\mathcal{D}_\kappa$, and for all deterministic $N\times N$ matrices $\X$ satisfying $\Pi(z,\zeta)^t\X = 0$,
	\begin{equation} \label{(1-Smm)T_law}
		\begin{split}
			\sum_{a\neq b}\X_{ab} G_{ba}(z)G_{ab}(\zeta) =&~ \bigl[\m(z)\m(\zeta)S\m(z)\m(\zeta)\bigl(1-S\m(z)\m(\zeta)\bigr)^{-1}\X\bigr]_{bb}\\ 
			&+ \Oprec\bigl(N \norm{\X}_{\max}\Psi(z)\Psi(\zeta)(\Psi(z)+\Psi(\zeta))\bigr),
		\end{split}
	\end{equation}
	where $\norm{\X}_{\max} := \max\limits_{j,k} |\X_{jk}|$.
\end{prop}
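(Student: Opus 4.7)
Proposal: I would prove Proposition \ref{prop:stable_local_law} by deriving an approximate self-consistent vector equation for the two-point function
\begin{equation*}
T^{\X}_b(z,\zeta) := \sum_{a\neq b}\X_{ab} G_{ba}(z)G_{ab}(\zeta),\qquad b \in \{1,\dots,N\},
\end{equation*}
and then inverting this equation using the boundedness of $\stab(z,\zeta)^{-1}Q(z,\zeta)$ on the stable subspace guaranteed by the assumption $\Pi(z,\zeta)^t\X = 0$.

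First, I would use the identity $G_{ba}(z) = m_b(z)\delta_{ab} - m_b(z)\underline{(HG)}_{ba}(z)$, where $\underline{(HG)}_{ba} := \sum_c H_{bc}G_{ca}(z) - S[\m(z)]_b G_{ba}(z)$ is the Gaussian-centered version arising from the Dyson equation \eqref{VDE}. Multiplying by $\X_{ab}G_{ab}(\zeta)$, summing over $a\neq b$, and taking expectation, I would apply the cumulant expansion to each $\Expv[H_{bc}(\cdots)]$ factor. The first-order cumulant produces derivative terms $\partial_{H_{bc}}(G_{ca}(z)G_{ab}(\zeta)) = -G_{cc}(z)G_{ba}(z) G_{ab}(\zeta) - G_{ca}(z)G_{ab}(\zeta)G_{cb}(\zeta)$ (with appropriate symmetrization). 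Using the single resolvent local law (Theorem \ref{th:local_law}) to replace the diagonal entries $G_{cc}(z)$ by $m_c(z)$ and applying the isotropic bound \eqref{eq:D_est} to the off-diagonal error, the first group of terms regenerates a factor $\m(z)\m(\zeta)S$ acting on $T^{\X}$, while the second group produces the inhomogeneous forcing term. Higher cumulants are routinely estimated by fluctuation averaging and contribute to the stated $N\norm{\X}_{\max}\Psi(z)\Psi(\zeta)(\Psi(z)+\Psi(\zeta))$ error.

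Second, collecting the contributions gives the vector equation
\begin{equation*}
\stab(z,\zeta)[T^{\X}]_b = \bigl[\m(z)\m(\zeta)S\m(z)\m(\zeta)\X\bigr]_{bb} + \mathcal{E}_b(z,\zeta),
\end{equation*}
where $\mathcal{E}_b$ is coordinate-wise dominated by $\norm{\X}_{\max}\Psi(z)\Psi(\zeta)(\Psi(z)+\Psi(\zeta))$. Under the hypothesis $\Pi(z,\zeta)^t\X = 0$, the forcing term is orthogonal to the unstable left eigendirection of $\stab(z,\zeta)$. Applying $\stab(z,\zeta)^{-1}Q(z,\zeta)$ to both sides and using the $\ell^\infty \to \ell^\infty$ bound obtained by combining Lemma \ref{lemma_stab} with the perturbative extension \eqref{Q/B_pert} in Lemma \ref{lemma_ncpt}, the leading contribution telescopes into the geometric series $\m(z)\m(\zeta)S\m(z)\m(\zeta)(1-S\m(z)\m(\zeta))^{-1}\X$ evaluated at $(b,b)$, establishing \eqref{(1-Smm)T_law}.

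The main obstacle I anticipate is controlling the projection of the error $\mathcal{E}$ onto the destabilizing direction of $\stab(z,\zeta)$, since $\stab(z,\zeta)^{-1}\Pi(z,\zeta)$ has norm of order $|\eigB(z,\zeta)|^{-1}$, which diverges at the cusp. The hypothesis $\Pi(z,\zeta)^t\X = 0$ removes this singularity for the deterministic forcing, but the random error terms arising from the cumulant expansion need not a priori lie in $\ker\Pi^t$. Resolving this requires using the single-resolvent local law in the directional form \eqref{eq:local_law_dir} to extract, from every potentially destabilizing contribution, an explicit deterministic piece proportional to $\vect{b}(z)\vect{b}(\zeta)^t$, which is then absorbed into the leading deterministic term via the same $\Pi$-projection identity that defines the stable subspace. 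After this cancellation, what remains in $\mathcal{E}$ satisfies the claimed stochastic domination bound, and applying $\stab^{-1}Q$ completes the proof.
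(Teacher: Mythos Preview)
The paper gives no independent proof; it simply observes that the argument of Lemma~6.1 in \cite{R2023bulk} uses only the single-resolvent local law (Theorem~\ref{th:local_law}) and hence carries over to $\D\cap\mathcal{D}_\kappa$ without modification.

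Your overall strategy---derive an approximate self-consistent equation via cumulant expansion and invert on the stable subspace---is the right template and presumably matches \cite{R2023bulk}. However, the way you set up the equation has an index mismatch. You write $\stab(z,\zeta)[T^\X]_b=[\m\other{\m} S\m\other{\m}\,\X]_{bb}+\mathcal{E}_b$ as a vector equation in the $b$-index, and then assert that the hypothesis $\Pi(z,\zeta)^t\X=0$ places the forcing in the stable subspace of $\stab$. It does not: $\Pi^t\X=0$ is a condition on the \emph{columns} of $\X$ (the $a$-index), and what it guarantees is that $(1-S\m\other{\m})^{-1}\X=(\stab^t)^{-1}\X$ is bounded, since $\Pi^t$ is the destabilizing projector of $\stab^t=1-S\m\other{\m}$. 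This is precisely the inversion that appears in the target formula, and it acts on the first index of $\X$, not on any $b$-indexed forcing vector. Indeed, if you expand $G_{ba}(z)$ at the $b$-vertex as you describe, the regenerated term has the form $m_b\other{m}_b\sum_c S_{bc}\sum_a\X_{ab}G_{ca}\other{G}_{ac}$, which couples to two-point quantities with $\X$ still anchored at column $b$; the equation does not close as a scalar equation in $b$, and inverting with $\stab^{-1}Q$ on the $b$-index does not reproduce $[\m\other{\m} S\m\other{\m}(1-S\m\other{\m})^{-1}\X]_{bb}$. The derivation must iterate in the $a$-direction (for each fixed $b$), or work at the matrix level, so that the stable inversion lands on $\X$ directly and the hypothesis $\Pi^t\X=0$ is what is actually used.

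Separately, your anticipated obstacle---that the random error may have a destabilizing component requiring the directional local law \eqref{eq:local_law_dir}---does not arise here. The paper emphasises that the bulk proof transfers verbatim, so no cusp-specific cancellation enters this proposition; the uniform bounds \eqref{eq:inbstabQ} and \eqref{Q/B_pert} on $\stab^{-1}Q$ already suffice. The directional information in \eqref{eq:local_law_dir} is exploited elsewhere (Lemma~\ref{lemma:new_term_est} and the proof of Theorem~\ref{th:Tlocallaw}), not in Proposition~\ref{prop:stable_local_law}.
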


Careful examination of the proof of Lemma 6.1 in \cite{R2023bulk} reveals that the real parts of the spectral parameters $z$ and $\zeta$ do not have to lie in the bulk of spectrum. The only necessary assumption is that $z$ and $\zeta$ belong to the domain where the single resolvent local laws of Theorem \ref{th:local_law} hold true. Therefore, the proof of Proposition \ref{prop:stable_local_law} can be directly imported from \cite{R2023bulk}.

\begin{claim} \label{claim:stab_res}
	The stability operator $\stab(z,\zeta)$ satisfies the following properties.
	\begin{enumerate}
		\item[(i)] There is a threshold $c_0\sim 1$, such that for all $z,\zeta \in \D$ the estimate
		\begin{equation} \label{eq:stab_gap}
			\opnormtwo{(w-\stab(z,\zeta))^{-1}} + \opnorminf{(w-\stab(z,\zeta))^{-1}} \lesssim 1
		\end{equation}
		holds uniformly for all $w \in\mathbb{C}$ satisfying $|w|\ge \epsilon$ and $|1-w| \ge 1- 2\epsilon$, where $\epsilon>0$ is the radius in Lemma \ref{lemma_stab}.
		\item[(ii)] Furthermore, $\stab(z,\zeta)$ has a single simple eigenvalue $\eigB(z,\zeta)$ with $|\eigB(z,\zeta)| \lesssim \epsilon$, that is 
		\begin{equation} \label{eq:rankPi}
			\mathrm{rank}\,\Pi(z,\zeta) = 1, \quad \text{where} \quad \Pi(z,\zeta) := \frac{1}{2\pi\I}\oint_{|w|=\epsilon} \bigl(w-\stab(z,\zeta)\bigr)^{-1}\mathrm{d}w.
		\end{equation}
		Moreover, we have 
		\begin{equation} \label{eq:Pi_norm}
			\opnormtwo{\Pi(z,\zeta)} + \opnorminf{\Pi(z,\zeta)} \lesssim 1.
		\end{equation}
		\item[(iii)] Define $Q(z,\zeta) :=1-\Pi(z,\zeta)$, then 
		\begin{equation} \label{eq:inbstabQ}
			\opnormtwo{\stab(z,\zeta)^{-1}Q(z,\zeta)} + \opnorminf{\stab(z,\zeta)^{-1}Q(z,\zeta)} \lesssim 1.
		\end{equation}
		\item[(iv)] We have the lower bound
		\begin{equation} \label{eq:Pi1_lowerbound}
			\norm{\Pi(z,\zeta)^*\vect{1}}_\infty \gtrsim \opnorminf{\Pi(z,\zeta)^*}.
		\end{equation} 
	\end{enumerate}
\end{claim}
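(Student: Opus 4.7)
The plan is to treat $\stab(z,\zeta)$ as a small perturbation of the one-body stability operator $\stab(z,z)$, for which the analogous properties are already established by Lemma \ref{lemma_stab}. Since both $z,\zeta$ lie in the small disc $\D$ of radius $c_0 N^{-\varepsilon_0/3}$, the H\"older estimate \eqref{dm_bound} gives $\|\m(z)-\m(\zeta)\|_\infty \lesssim |z-\zeta|^{1/3} \lesssim N^{-\varepsilon_0/9}$. Combined with the bound $\|\m\|_\infty\lesssim 1$ from \eqref{cond_C} and the uniform estimates $\opnormtwo{S}+\opnorminf{S}\lesssim 1$ coming from flatness \eqref{cond_A}, the algebraic identity
\[
\stab(z,\zeta)-\stab(z,z) = \m(z)\bigl(\m(z)-\m(\zeta)\bigr)S
\]
shows that the perturbation has operator norm of order $N^{-\varepsilon_0/9}$ in both $\ell^2$ and $\ell^\infty$.

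Part (i) then follows from a second-resolvent / Neumann series expansion
\[
(w-\stab(z,\zeta))^{-1} = (w-\stab(z,z))^{-1}\sum_{k\ge 0}\bigl[(\stab(z,\zeta)-\stab(z,z))(w-\stab(z,z))^{-1}\bigr]^k,
\]
which converges uniformly in both operator norms for $w$ in the prescribed annular region by \eqref{stab0_gap}. Integrating the resulting bound along the circle $|w|=\epsilon$ yields $\Pi(z,\zeta)=\Pi(z,z)+\mathcal{O}(N^{-\varepsilon_0/9})$ in both operator norms; the rank-one property of $\Pi(z,\zeta)$ then propagates by standard non-Hermitian spectral perturbation, since for small enough perturbation the eigenvalue $\eigB(z,\zeta)$ is the only element of $\spec(\stab(z,\zeta))$ inside $|w|=\epsilon$. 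This gives (ii), including the bound \eqref{eq:Pi_norm}. The estimate (iii) is then immediate from \eqref{Q/B_pert} of Lemma \ref{lemma_ncpt} applied with $z_0=\zeta_0 = z$ combined with the one-body bound \eqref{eq:invstab0Q}.

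The main difficulty is part (iv). Using the rank-one structure, one can write $\Pi(z,z)[\,\cdot\,] = \vect{b}(z)\langle \vect{b}^\ell(z),\,\cdot\,\rangle/\langle\vect{b}^\ell(z),\vect{b}(z)\rangle$, and a direct computation reduces the bound to
\[
\frac{\|\Pi(z,z)^*\vect{1}\|_\infty}{\opnorminf{\Pi(z,z)^*}} = \frac{|\langle\vect{b}(z),\vect{1}\rangle|}{N^{-1}\|\vect{b}(z)\|_1},
\]
so it suffices to prove that the entries of $\vect{b}(z)$ have uniformly aligned complex phases. This is supplied by the explicit expansion \eqref{b_vect}: the leading term $\langle\vv,\vect{f}\rangle|\m|\vv$ is a vector of real, strictly positive entries of order~$1$, while the subleading correction is purely imaginary to leading order and of size $\mathcal{O}(\rho(z))$. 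Since $\rho(z)\le\rho_*\ll 1$ in $\D$, the real parts of all entries of $\vect{b}(z)$ are uniformly bounded below by a positive constant, and hence $|\langle\vect{b}(z),\vect{1}\rangle| \sim N^{-1}\|\vect{b}(z)\|_1$. For general $\zeta\neq z$, the perturbative estimate \eqref{Pi_pert} of Lemma \ref{lemma_ncpt} with $z_0=\zeta_0 = z$ gives $\Pi(z,\zeta)=\Pi(z,z)+\mathcal{O}(N^{-\varepsilon_0/9})$ in $\opnorminf$, which shifts the rank-one factors componentwise by the same amount and thus preserves the ratio bound.

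The principal obstacle is part (iv): unlike (i)--(iii) it is not a purely perturbative statement, but relies on a structural positivity of the destabilizing direction $\vect{b}(z)$ that is invisible to generic spectral perturbation theory. This positivity is ultimately inherited from the Perron--Frobenius character of the top eigenvector $\vv$ of the entrywise non-negative symmetric operator $F(z)$, as summarized in Lemma \ref{lemma_F}, and its transfer from the one-body setting to the two-body stability operator requires the smallness $\rho(z)\le\rho_*$ available in the domain $\D$.
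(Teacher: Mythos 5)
Your proof is correct and follows essentially the same perturbative strategy as the paper, invoking the same key lemmas (Lemma~\ref{lemma_stab} for the one-body bounds and Lemma~\ref{lemma_ncpt} for the projector perturbation). The only substantive difference is the base point: you expand around the variable one-body operator $\stab(z,z)$, whereas the paper expands around the fixed operator $\stab_0 := \stab(\sng,\sng)$ anchored at the singularity; since both $z,\zeta \in \D$ have distance $\mathcal{O}(N^{-\varepsilon_0/3})$ from $\sng$, both choices yield perturbations of size $\mathcal{O}(N^{-\varepsilon_0/9})$ and the arguments go through either way. For part (iv) the paper's choice is marginally cleaner, since at $z=\sng$ one has $\rho(\sng)=0$ exactly and $\vect{b}(\sng) = \langle\vv,\vect{f}\rangle|\m|\vv$ is real with strictly positive entries, giving $\norm{\Pi_0^*\vect{1}}_\infty\gtrsim 1$ immediately; your version correctly supplies the extra observation that the $\mathcal{O}(\rho)$ correction in \eqref{b_vect} is purely imaginary and hence does not destroy the positivity of $\re\vect{b}(z)$, which is a valid but slightly longer route to the same conclusion.
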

We defer the proof of Claim \ref{claim:stab_res} until the end of the subsection. 

Armed with Proposition \ref{prop:stable_local_law} and Claim \ref{claim:stab_res}, we are ready to prove Theorem \ref{th:Tlocallaw}.
\begin{proof}[Proof of Theorem \ref{th:Tlocallaw}]
	Denote $G := G(z)$, $\other{G} := G(\zeta)$, $\m := \m(z)$, $\other{\m} := \m(\zeta)$, $\Pi := \Pi(z,\zeta)$, and let $W := \m^{-1}\m'S$. It follows immediately from Assumption \eqref{cond_A} and estimate \eqref{eq:m'_bound}, that $\norm{W}_{\mathrm{max}} \lesssim N^{-1}|\eigB(z,z)|^{-1}$.
	Estimate \eqref{eq:Pi1_lowerbound} asserts that $\Pi^*\vect{1} \neq 0$, hence there exists an $N\times N$ matrix $Y := Y(z,\zeta)$ and a vector $\s := \s(z,\zeta)$ such that 
	\begin{equation} \label{eq:Wdecomp}
		W = \Y + \vect{1}\s^*, \quad \Pi^t Y = 0.
	\end{equation}
	Under the decomposition \eqref{eq:Wdecomp}, the left-hand side of \eqref{eq:curlyTlaw} can be written as
	\begin{equation} \label{eq:curlyT_sums}
		\mathcal{T}(z,\zeta) = \sum\limits_{b}\sum\limits_{a\neq b} \Y_{ab}G_{ba}\other{G}_{ab} + \sum\limits_{b} \overline{s}_{b}\frac{G_{bb}-\other{G}_{bb}}{z-\zeta} - \sum\limits_{b}\overline{s}_{b}G_{bb}\other{G}_{bb},
	\end{equation}
	where we applied the resolvent identity in the form $G(z)G(\zeta)(z-\zeta) = G(z)-G(\zeta)$. We now estimate each sum in \eqref{eq:curlyT_sums} separately. 
	
	By the stable direction local law of Proposition \ref{prop:stable_local_law}, the first sum on the right-hand side of \eqref{eq:curlyT_sums} admits the estimate
	\begin{equation} \label{eq:curlyT_stable}
		\sum\limits_{b}\sum\limits_{a\neq b} \Y_{ab}G_{ba}\other{G}_{ab} = \Tr\bigl[\m\other{\m}S\m\other{\m}\bigl(1-S\m\other{\m}\bigr)^{-1}\Y\bigr] + \Oprec\bigl(N|\eigB|^{-1}\Psi\other{\Psi}(\Psi+\other{\Psi})\bigr),
	\end{equation}
	where $\eigB:=\eigB(z,z)$, $\Psi := \Psi(z)$, $\other{\Psi} := \Psi(\zeta)$.
	
	The third sum on the right-hand side of  \eqref{eq:curlyT_sums} is estimated using \eqref{eq:m'_bound}, \eqref{eq:theta_est} and \eqref{eq:D_est},
	\begin{equation} \label{eq:curlyT_GG}
		\sum\limits_{b}\overline{s}_{b}G_{bb}\other{G}_{bb} = \langle\s, \m\other{\m} \rangle + \mathcal{O}_\prec\bigl(|\eigB(z,z)|^{-1}(\Theta+\other{\Theta} + \Psi\other{\Psi})\bigr),
	\end{equation}
	where $\Theta := \Theta(z)$, $\other{\Theta} := \Theta(\zeta)$ are defined in \eqref{eq:control_parameters}. Furthermore, we have $\Psi\other{\Psi} \lesssim \Psi^2+\other{\Psi}^2 \lesssim \Theta +\other{\Theta}$ for all $z,\zeta\in\D\cap\mathcal{D}_\kappa$ as a result of the comparison relations \eqref{eq:rho_comp}. 
	
	To estimate the second sum in \eqref{eq:curlyT_sums}, we employ the decomposition \eqref{eq:local_law_dir} and the first bound in \eqref{eq:D_est} to deduce that 
	\begin{equation} \label{eq:curlyT_G-G}
		 \mathcal{E}_2(z,\zeta) := 
		 \sum\limits_{b} \overline{s}_{b}\frac{G_{bb}-\other{G}_{bb}}{z-\zeta} - \bigl\langle\s, \frac{\m-\other{\m}}{z-\zeta} \bigr\rangle  
		 =
		 \frac{\theta(z) }{z-\zeta} \bigl\langle\s, \vect{b}(z)\bigl\rangle +\frac{\theta(\zeta) }{z-\zeta} \bigl\langle\s, \vect{b}(\zeta)\bigr\rangle + \mathcal{O}_\prec\bigl(\frac{\Psi^2 + \other{\Psi}^2}{|\beta||z-\zeta|}\bigr).
	\end{equation}
	We now estimate the scalar products $\langle\s,\vect{b}(z)\rangle$ and $\langle\s,\vect{b}(\zeta)\rangle$. 
	Applying $\vect{b}(z)^t\Pi^t$ to both sides of \eqref{eq:Wdecomp}, multiplying by $\vect{b}(z)$ from the right, and dividing by $\langle\Pi[\vect{b}(z)]\rangle$ yields
	\begin{equation} \label{eq:sb}
		\langle\s,\vect{b}(z)\rangle = \langle\Pi[\vect{b}(z)]\rangle^{-1}\langle\Pi[\vect{b}(z)]\m^{-1}\m'S[\vect{b}(z)]\rangle = \mathcal{O}\bigl( |\eigB|^{-1}( \Delta_0 + \dis(z)+\dis(\zeta))^{1/3}\bigr),
	\end{equation}
	where we used the estimate $\Pi = \Pi(z,z) + |z-\zeta|^{1/3} + \rho(z)$ resulting from \eqref{Pi_pert}, together with the bounds $\norm{\m'}_\infty \lesssim |\eigB(z,z)|^{-1}$ and \eqref{eq:trace_est_later}. 
	By \eqref{Pi_pert}, $\vect{b}(z)$ is  $1/3$-H\"{o}lder continuous, hence the same bound holds for $\langle\s,\vect{b}(\zeta)\rangle$. 
	
	Therefore, the function $\eigB(z,z)\mathcal{E}_2(z,\zeta)$ is analytic and admits the bound  
	\begin{equation} \label{eq:curlyT_G-G-bound}
		|\eigB(z,z)\mathcal{E}_2(z,\zeta)|
		\prec 		N( \Delta + \dis(z)+\dis(\zeta))^{1/3}\Theta\other{\Theta}+N\Psi^2\other{\Theta} + N\Theta\other{\Psi}^2.
	\end{equation}
	In the regime $|z-\zeta| \ge \max\{\dist(z, \supp{\rho}), \dist(\zeta, \supp{\rho})\}/2$, the estimate \eqref{eq:curlyT_G-G-bound} follows immediately from \eqref{eq:curlyT_G-G}, \eqref{eq:sb} and \eqref{eq:theta_est}. In the complementary regime, \eqref{eq:curlyT_G-G-bound} follows from Cauchy's integral formula.
	
	Combining estimate \eqref{eq:curlyT_stable}, \eqref{eq:curlyT_GG}, \eqref{eq:curlyT_G-G-bound} with identities $\m\other{\m}(1-S\m\other{\m})^{-1}[\vect{1}] = (z-\zeta)^{-1}(\m-\other{\m})$ and $\m\other{\m}S\m\other{\m}(1-S\m\other{\m})^{-1} = \m\other{\m}(1-S\m\other{\m})^{-1} - \m\other{\m}$ concludes the proof of Theorem \ref{th:Tlocallaw}.
\end{proof}

%
%
%
	We close the subsection by presenting the proof of Claim \ref{claim:stab_res}.
	
	\begin{proof}[Proof of Claim \ref{claim:stab_res}]
		Let $\stab:=\stab(z,\zeta)$, $\stab_0 := \stab(\sng,\sng)$.	
		
		First, we prove \eqref{eq:stab_gap}. From the uniform bound \eqref{dm_bound}, we have $\opnormtwo{\stab-\stab_0} \lesssim |z-\sng|^{1/3} + |\zeta-\sng|^{1/3}$, hence there exists a constant $c>0$ such that for all $z,\zeta\in\D$, and for all $w$ satisfying $|w|>\epsilon$ and $|1-w| \ge 1-2\epsilon$, we have
		\begin{equation} \label{eq:diff_norm_small}
			\opnormtwo{(w-\stab_0)^{-1}}\opnormtwo{\stab-\stab_0} \le 1/2.
		\end{equation} 
		We conclude the proof of \eqref{eq:stab_gap} via the following chain of inequalities
		\begin{equation}
			\norm{(w-\stab)^{-1}} \lesssim \opnormtwo{(w-\stab_0)^{-1}}\opnormtwo{\bigl(1-(w-\stab_0)^{-1}(\stab-\stab_0)\bigr)^{-1}} \lesssim 1,
		\end{equation}
		where in the last step we used \eqref{stab0_gap} and \eqref{eq:diff_norm_small}. The proof of the $\opnorminf{\cdot}$ bound is analogous. 
		
		We turn to prove \eqref{eq:rankPi}. Let $\Pi := \Pi(z,\zeta)$ be defined as in \eqref{eq:rankPi}, and let $\Omega:= \{w\in\mathbb{C}:|w|\ge\epsilon, |1-w|\ge 1-2\epsilon\}$. Observe that the estimate \eqref{eq:stab_gap} and the analyticity of $\Tr[(w-\stab)^{-1}]$ in $w$ imply  
		\begin{equation} \label{eq:trPi}
			\frac{1}{2\pi\I}\oint_{\partial\Omega}\Tr\bigl[(w-\stab)^{-1}\bigr]\mathrm{d}w = 0, \quad z,\zeta \in \D,
		\end{equation}
		and hence $\stab$ has no eigenvalues in $\Omega$ for any $z,\zeta \in \D$. Furthermore, since the function $\m$ is continuous on the line segments $[\sng,z]$ and $[\sng,\zeta]$, so are the eigenvalues of $\stab(z,\zeta)$. Therefore, no eigenvalue moves between the connected components of complement of $\Omega$, and by Lemma \ref{lemma_stab}, $\mathrm{rank}\,\Pi(\sng,\sng) = 1$, hence \eqref{eq:rankPi} holds.
		
		The estimate $\eqref{eq:Pi_norm}$ follows form \eqref{m_bound}, \eqref{v_def}, \eqref{f_def} and \eqref{Pi_pert} with $z_0=\zeta_0:=\sng$.	
		Similarly, to prove estimate $\eqref{eq:inbstabQ}$, we combine \eqref{eq:invstab0Q} and \eqref{Q/B_pert} with $z_0=\zeta_0 := \sng$.
		
		Finally, we prove the lower bound \eqref{eq:Pi1_lowerbound}. Similarly to \eqref{eq:Pi_norm}, we have $\opnorminf{\Pi^*} \lesssim 1$. From \eqref{Pi_pert}, we deduce that $\opnorminf{\Pi-\Pi_0} \lesssim |z-\sng|^{1/3}+|\zeta-\sng|^{1/3}$. It follows from \eqref{m_bound}, \eqref{v_def}, and \eqref{f_def}, that  $\norm{\Pi_0^*\vect{1}}_\infty \gtrsim 1$, hence \eqref{eq:Pi1_lowerbound} holds for all $z,\zeta \in \D$ with sufficiently small $c\sim 1$. This concludes the proof of Claim \ref{claim:stab_res}.		
	\end{proof}

\section{Computation of the Variance} \label{sec:main2proof}
As was established in \cite{Alt2020energy}, the function $\dM(\sng, \cdot)$ can be estimated by an explicit function $\sol$, given by the appropriate solution to cubic equation associated with \eqref{dm_cubic}.
\begin{lemma} (Proposition 7.10 in \cite{Alt2020energy}) \label{lemma:sol}
	There exists a threshold $c_* \sim 1$, such that for all $w \in [-c_*,c_*]$, the function $\dM(\sng,\sng+w)$ admits the estimate 
	\begin{equation} \label{eq:dM_approx}
		\dM(\sng,\sng+w) := \dM(\sng+\I0, w) = \sol(w) + \mathcal{O}\bigl(\min\{|w||\Delta_0|^{-1/3}, |w|^{2/3}\}\bigr).
	\end{equation}
	\begin{enumerate}
		\item[(i)] If $\sng$ is an exact cusp (recall Definition \ref{def_sng}), then the function $\sol(w)$ is given by
		\begin{equation} \label{eq:sol_cusp}
			\sol(w) := \frac{\pi^{1/3}}{\psi^{1/3}} |w|^{1/3}\begin{cases}
				e^{\I\pi/3}, \quad &w\ge 0,\\
				e^{2\I\pi/3}, \quad &w < 0,
			\end{cases}
		\end{equation}
		where $\psi :=\psi(\sng)$ is defined in \eqref{psi_def}.
		\item[(ii)] If $\sng$ a simple edge point, i.e., $\Delta(\sng) \ge \Delta_*$ for some positive threshold $\Delta_* \sim 1$, then the function $\sol(w)$ is given by
		\begin{equation} \label{eq:sol_edge}
			\sol(w) := \frac{\pi^{1/2}}{|\sigma|^{1/2}}|w|^{1/2}\begin{cases}
				\I  , \quad &\sign(w) = \sign(\sigma),\\
				-\sign(\sigma), \quad &\sign(w) = - \sign(\sigma),
			\end{cases}
		\end{equation}
		where $\sigma := \sigma(\sng)$ is defined in \eqref{sigma_def}.
		\item [(iii)] If $\sng$ is an edge point adjacent to a gap of size $\Delta(\sng) < \Delta_*$, then the function $\sol(w)$ is given by
		\begin{equation} \label{eq:sol_gap}
			\sol(w) := \frac{|\sigma|}{3\psi} \bigl(\gapsol(\sign(\sigma)+2\widehat{\Delta}^{-1}w) - \sign(\sigma)\bigr),
		\end{equation}
		where $\widehat{\Delta}:=\widehat{\Delta}(\sng)$ is defined in \eqref{def:delta_hat} and
		\begin{equation}
			\gapsol(\lambda) := \begin{cases}
				e^{\I\pi/3}(\lambda + \sqrt{\lambda^2-1})^{1/3} + e^{-\I\pi/3}(\lambda - \sqrt{\lambda^2-1})^{1/3}, \quad &\lambda \ge 1,\\
				e^{\I\pi/3}(\lambda + \I\sqrt{1-\lambda^2})^{1/3} + e^{-\I\pi/3}(\lambda - \I\sqrt{1-\lambda^2})^{1/3}, \quad &|\lambda| < 1,\\
				-e^{\I\pi/3}(-\lambda - \sqrt{\lambda^2-1})^{1/3} - e^{-\I\pi/3}(-\lambda + \sqrt{\lambda^2-1})^{1/3}, \quad &\lambda \le 1.\\
			\end{cases}
		\end{equation}
		Furthermore, the function $\gapsol(\lambda)$ satisfies the equation
		\begin{equation} \label{eq:gapsol_eq}
			\gapsol(\lambda)^3 - 3\gapsol(\lambda) + 2 \lambda = 0.
		\end{equation}
	\end{enumerate}
\end{lemma}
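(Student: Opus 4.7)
The plan is to start from the cubic equation \eqref{dm_cubic} at $z = \sng + \I 0$ and solve it asymptotically in each regime. At this boundary value, $\rho(\sng) = 0$ and $\im z = 0$, so the error terms in the coefficients \eqref{eq:cubic_mus} collapse and \eqref{dm_cubic} reduces to
\begin{equation}
\psi\, \dM^3 + \sigma\, \dM^2 + \pi w\bigl(1 + \mathcal{O}(|\dM|)\bigr) = 0,
\end{equation}
with $\psi := \psi(\sng)$ and $\sigma := \sigma(\sng)$. In each regime the appropriate branch is selected by the requirements that $\dM \to 0$ as $w \to 0$ and that $\im \dM > 0$ on the portion of the real axis lying in $\supp{\rho}$; the a priori bound \eqref{eq:better_dM_bound} from Proposition \ref{prop_mpert} will serve as the smallness input throughout.

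In the exact cusp case \eqref{eq:sigma_comp} forces $\sigma = 0$, so the leading equation is $\psi \dM^3 = -\pi w$. Since the spectrum sits on both sides of $w = 0$, the branch with positive imaginary part on each side selects \eqref{eq:sol_cusp} among the three cube roots $(\pi|w|/\psi)^{1/3} \{1, e^{2\I\pi/3}, e^{-2\I\pi/3}\}$. In the simple-edge case $|\sigma| \sim 1$, the ansatz $|\dM| \lesssim |w|^{1/2}$ makes the cubic term subleading, and the dominant balance $\sigma \dM^2 = -\pi w$ yields \eqref{eq:sol_edge} with the sign convention $\sign(\sigma) = \widehat{s}(\sng)$. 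In the small-gap regime neither term dominates, so I would perform the Tschirnhaus substitution $\dM = \tfrac{|\sigma|}{3\psi}(X - \sign(\sigma))$ that kills the quadratic term: a direct expansion combined with $\widehat\Delta = 4|\sigma|^3/(27\pi\psi^2)$ from \eqref{def:delta_hat} transforms the leading cubic into
\begin{equation}
X^3 - 3X + 2\bigl(\sign(\sigma) + 2\widehat\Delta^{-1} w\bigr) = 0,
\end{equation}
which is precisely \eqref{eq:gapsol_eq} with $\lambda = \sign(\sigma) + 2\widehat\Delta^{-1}w$. The Cardano branches of $\gapsol$ are then pinned down by requiring $\im X > 0$ on $|\lambda|<1$ (the spectral side of $\sng$), giving \eqref{eq:sol_gap}.

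The main obstacle is producing the uniform error bound $\mathcal{O}(\min\{|w|\Delta_0^{-1/3}, |w|^{2/3}\})$, which interpolates between the cusp and simple-edge scalings. The $|w|^{2/3}$ bound is natural in the cusp-dominated regime $|\sigma|^3 \ll |w|$ and comes from dividing the $\mathcal{O}(|w|\cdot |\dM|)$ perturbation in $\mu_0$ by $|\psi \dM^2| \sim |w|^{2/3}$; the $|w|\Delta_0^{-1/3}$ bound is adapted to the quadratic-dominated regime, where one divides by $|\sigma \dM| \sim \Delta_0^{1/3}|w|^{1/2}$. A clean uniform argument rescales the equation by the natural size $|\sigma|/(3\psi)$ of $\dM$, recasting it as a small $\mathcal{O}(|\dM|)$ perturbation of the canonical cubic \eqref{eq:gapsol_eq} on a compact domain in $X$, and then invokes the implicit function theorem on each smooth branch of $\gapsol$. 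Because the branches of $\gapsol$ degenerate at $\lambda = \pm 1$, this argument must be run with care uniformly across the transition $|\lambda| = 1$, which corresponds precisely to $w = 0$ and $w = -\widehat\Delta \sign(\sigma)$; handling the degeneracy is what couples the two error scales and produces the stated minimum.
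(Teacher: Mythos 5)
This lemma is imported verbatim from Proposition 7.10 of \cite{Alt2020energy}; the paper never re-derives it, so your proposal is reconstructing an external result rather than being comparable to a proof in this paper. That said, the overall route --- take the $z\to\sng+\I0$ boundary limit of the cubic \eqref{dm_cubic}, solve by dominant balance, and eliminate the quadratic term by the shift $\dM = \tfrac{|\sigma|}{3\psi}(X-\sign\sigma)$ --- is the right one, and your algebra is correct: with $\widehat{\Delta} = 4|\sigma|^3/(27\pi\psi^2)$ the shift reduces the limiting cubic exactly to $X^3-3X+2\lambda = 0$ with $\lambda = \sign(\sigma)+2\widehat{\Delta}^{-1}w$, and your heuristic for where the two error scales $|w|^{2/3}$ and $|w|\Delta_0^{-1/3}$ originate is consistent with what one gets by dividing the $\mathcal{O}(|w|\,|\dM|)$ perturbation of $\mu_0$ by the derivative of the cubic in the cusp- and edge-dominated regimes respectively.

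There is, however, a concrete error in the branch selection for case~(iii). You identify $|\lambda|<1$ as ``the spectral side of $\sng$,'' but with $\sign\sigma = \widehat{s}(\sng)$ the gap adjacent to $\sng$ is $\{\sng+w : \sign w = -\sign\sigma,\, |w|<\Delta\}$, and $\lambda = \sign(\sigma)+2\widehat{\Delta}^{-1}w$ traverses $(-1,1)$ precisely as $w$ runs over the gap; the spectral side is $|\lambda|>1$. This matters because on $|\lambda|<1$ the cubic $X^3-3X+2\lambda=0$ has three \emph{real} roots (casus irreducibilis; the paper uses $\im\gapsol=0$ for $|\lambda|\le1$ in \eqref{eq:gapsom_im_re_eq}), so ``requiring $\im X>0$ on $|\lambda|<1$'' cannot select a branch. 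The correct criterion is the opposite: require $\im\gapsol>0$ on $|\lambda|>1$ (which you can verify directly from the Cardano formula, $\im\gapsol=\tfrac{\sqrt3}{2}(a-b)$ with $a>b>0$) and extend to $|\lambda|\le1$ by continuity, where the relevant real root is $2\cos(\pi/3+\theta/3)$ with $\cos\theta=\lambda$. The same gap appears in case~(ii): for $\sign w = -\sign\sigma$ both square roots of $-\pi w/\sigma$ are real, and you give no mechanism for singling out the sign $-\sign\sigma$ in \eqref{eq:sol_edge}; this again requires continuity from $\im z>0$ or the monotonicity built into the Dyson equation, not the $\im\dM>0$ criterion alone. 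Finally, you correctly observe that the implicit-function-theorem step degenerates at $\lambda=\pm1$ (double roots; $\gapsol'$ blows up like a cube-root), and you acknowledge that ``handling the degeneracy'' is what produces the stated minimum --- but you leave exactly that step open. Since the transition points $w=0$ and $w=-\sign(\sigma)\widehat{\Delta}$ are the regime where the two error scales in \eqref{eq:dM_approx} compete, this is not a cosmetic omission; it is the technical heart of Proposition 7.10 in \cite{Alt2020energy}, which handles it by explicit discriminant estimates rather than by IFT.
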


We show that the the variance $V(f)$ can be estimated by an integral with a kernel expressed in terms of the explicit function $\sol(w)$.
\begin{lemma} \label{lemma:variance} Under the conditions of Theorem \ref{th_main}, there exists an interval $\mathcal{R}$ of length $|\mathcal{R}| \sim N^{-\varepsilon_0/3}$, with $\dist\{\sng,\mathbb{R}\backslash\mathcal{R}\}\sim N^{-\varepsilon_0/3}$, such that the variance $V(f)$ defined in \eqref{variance_V} satisfies
	\begin{equation} \label{eq:V_sol}
		V(f) = \frac{1}{4\pi^2}\iint\limits_{\mathcal{R}^2} \bigl(f(\sng+w)-f(\sng+\other{w})\bigr)^2\other{\mathcal{K}}(w,\other{w}) \mathrm{d}w\mathrm{d}\other{w} + \mathcal{O}(N^{-\varepsilon_0/9}+\eta_0^{1/9}),
	\end{equation}
	where the kernel $\other{\mathcal{K}}(w,\other{w})$ is given by 
	\begin{equation} \label{eq:K(h)_kernel}
		\other{\mathcal{K}}(w,\other{w}) =2\re\biggl[ \frac{\bigl((\sigma + \psi (\sol + \other{\sol}) )^2 + 2\psi^2\sol\other{\sol}\bigr)(\sol-\other{\sol})^2}{(2\sigma\sol + 3\psi\sol^2)(2\sigma\other{\sol} + 3\psi\other{\sol}^2)(w-\other{w})^2} 
		- \frac{\bigl((\sigma + \psi (\bar{\sol} + \other{\sol}) )^2 + 2\psi^2\bar{\sol}\other{\sol}\bigr)(\bar{\sol}-\other{\sol})^2}{(2\sigma\bar{\sol} + 3\psi\bar{\sol}^2)(2\sigma\other{\sol} + 3\psi\other{\sol}^2)(w-\other{w})^2} \biggr],
	\end{equation}
	and $\sol := \sol(w)$, $\other{\sol} := \sol(\other{w})$ are defined in Lemma \ref{lemma:sol}.
\end{lemma}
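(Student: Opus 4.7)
My plan is to reduce the double complex integral \eqref{variance_V} to the real integral \eqref{eq:V_sol} in three stages: first I would discard the non-singular contributions to the kernel $\mathcal{K}(z,\zeta)$, then express the singular part explicitly through the shape function $\sol$, and finally transfer the $\bar z,\bar\zeta$-derivatives of $\other{f}$ onto the real axis via a Stokes-type identity. The interval $\mathcal{R}$ will be chosen as $(\sng-cN^{-\varepsilon_0/3},\sng+cN^{-\varepsilon_0/3})$ for a small constant $c\sim 1$; since $\supp{\other{f}}\subset\D$, integration outside $\mathcal{R}$ is negligible. The terms $\Tr[S\m'(z)\m'(\zeta)]$ and the fourth-cumulant piece in \eqref{kernel_K} are $N^{-1}$ times functions analytic in both variables with singularities on $\mathbb{R}$ at most of order $|\eigB(z,z)|^{-2}\lesssim \dis(z)^{-1}(\Delta_0+\dis(z))^{-1/3}$ by \eqref{eq:eig_comp} and \eqref{eq:m'_bound}; iterating Lemma \ref{lemma:int} in $\zeta$ and then $z$ absorbs them into the stated $\mathcal{O}(N^{-\varepsilon_0/9}+\eta_0^{1/9})$ error.

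The principal contribution comes from $\Tr[\m'(z)\m(z)^{-1}(1-S\m(z)\m(\zeta))^{-1}]$, which I would analyse via the spectral decomposition $\stab(z,\zeta)^{-1}=\eigB(z,\zeta)^{-1}\Pi(z,\zeta)+\stab(z,\zeta)^{-1}Q(z,\zeta)$ from Claim \ref{claim:stab_res}. The $Q$-part is uniformly bounded and contributes a non-singular remainder to be discarded, while the rank-one projector yields $\eigB(z,\zeta)^{-1}$ times a scalar built from the perturbative eigenvectors of \eqref{b_vect}. Using $\m'(z)=\stab(z,z)^{-1}[\m(z)^2]$ together with Lemma \ref{lemma_ncpt}, both this scalar and $\eigB(z,\zeta)$ itself can be written in terms of $\sigma,\psi,$ and $\dM(z,\zeta)$. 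Differentiating in $\zeta$ via the cubic equation \eqref{dm_cubic}, whose coefficients at $z=\sng$ reduce by \eqref{eq:cubic_mus} to $\mu_3=\psi$, $\mu_2=\sigma$, $\mu_1=0$, $\mu_0=\pi$, gives by implicit differentiation $\partial_\zeta\dM=-\pi/(3\psi\dM^2+2\sigma\dM)$, reproducing the denominator $2\sigma\sol+3\psi\sol^2$ in \eqref{eq:K(h)_kernel}; the matching numerator emerges after combining this with $\eigB(z,\zeta)$ expanded via \eqref{eig_pert}. Lemma \ref{lemma:sol} then replaces $\dM(\sng+\I 0,w)$ by $\sol(w)$ with error bounded by \eqref{eq:dM_approx}, and the H\"older estimate \eqref{dm_bound} propagates the replacement to $z=\sng+w+\I\eta,\zeta=\sng+\other{w}+\I\other{\eta}$ with $|\eta|,|\other{\eta}|\ll\eta_0$.

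The final Stokes step uses $\partial_{\bar z}\other{f}\,d\bar z\wedge dz=d(\other{f}\,dz)$ and the analogous identity in $\zeta$; since the reduced integrand depends on $\dM$ only through $\sol$ and is meromorphic in $z$ and $\zeta$ on $\mathbb{C}\setminus\mathbb{R}$ separately, two integrations by parts collapse the integral onto $\mathbb{R}^2$. The upper and lower half-plane boundary values of $\sol$ produce the two summands (with $\sol,\other{\sol}$ versus $\bar{\sol},\other{\sol}$) in \eqref{eq:K(h)_kernel}, and the $(2\I)^{-1}$ factors of the complex Stokes formula explain the prefactor change from $\frac{1}{\pi^2}$ to $\frac{1}{4\pi^2}$. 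The main obstacle is the bookkeeping at the diagonal $w=\other{w}$: each of the two summands individually has a bare $(w-\other{w})^{-2}$ singularity which is non-integrable, and only after combining them as $2\re[\cdot]$ and exploiting $\sol(w)-\sol(\other{w})=O(|w-\other{w}|^{1/3})$ does an integrable kernel paired with $(f(\sng+w)-f(\sng+\other{w}))^2$ emerge. A secondary difficulty is that the computation must proceed uniformly across the three regimes of Lemma \ref{lemma:sol} (cusp, simple edge, small-gap edge), since the leading coefficients $\sigma,\psi$ of the cubic vary in relative magnitude and the form of $\sol$ itself changes across the transition.
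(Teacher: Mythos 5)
Your overall strategy is aligned with the paper's: decompose $\mathcal{K}(z,\zeta)$ via the rank-one projector, express the singular part through $\dM(\sng,\cdot)$ and the cubic equation (the denominator $2\sigma\sol+3\psi\sol^2$ via implicit differentiation is exactly right), replace $\dM$ by $\sol$ via Lemma \ref{lemma:sol}, and transfer to the real axis. However, there are two genuine gaps in the final step.

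First, you never explain how the pairing $\partial_{\bar\zeta}\other{f}\,\partial_{\bar z}\other{f}$ turns into $(f(\sng+w)-f(\sng+\other{w}))^2$. A direct double Stokes gives boundary integrals against $f(x)f(y)$, and the boundary-value kernel $\mathcal{K}_0(x,y)$ is not integrable against a bare $f(x)f(y)$: near the diagonal $\mathcal{K}_0$ scales like $|x-y|^{-4/3}$ (from $\eigB(z,\zeta)^{-2}\sim|z-\zeta|^{-4/3}$ in the cusp regime), which is nonintegrable in $(x,y)$. You attribute the emergence of the $(f-f)^2$ factor to ``combining them as $2\re[\cdot]$'', but this is a confusion: the $2\re[\cdot]$ yields the kernel difference $\re[\mathcal{K}(x+\I\eta_*,y+\I\eta_*)-\mathcal{K}(x+\I\eta_*,y-\I\eta_*)]$, not the test-function subtraction. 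In the paper the $(f-f)^2$ arises from a different mechanism: one introduces the antiderivative $\mathcal{L}(z,\zeta)$ with $\partial_z\partial_\zeta\mathcal{L}=\mathcal{K}$ (essentially $-2\log\det\stab$ plus lower-order terms), integrates by parts in the vertical directions $\eta,\other\eta$ to land on $f'(x)f'(y)\other{\mathcal{L}}(x,y)$, then writes $f'(y)=\partial_y\bigl(f(y)-f(x)\bigr)$, integrates by parts once in $y$, and symmetrizes in $x\leftrightarrow y$ to produce $2V(f)=\frac{1}{2\pi^2}\iint(f(x)-f(y))^2\partial_x\partial_y\other{\mathcal{L}}$. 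The logarithmic antiderivative is essential because $\mathcal{L}$ has mild singularities, making the boundary terms tractable (and providing the correct $\tfrac{1}{4\pi^2}$ prefactor through an explicit accounting, not the generic ``$(2\I)^{-1}$'' factors you invoke). Without some such subtraction device your Stokes-only plan breaks down.

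Second, you implicitly send $\eta_*\to0$ and substitute $\sol$ for $\dM$ without a domination argument. The kernel $\mathcal{K}_{\eta_*}(x,y)$ has competing singularities at $x=y$, at $x,y\in\{\sng,\widehat{\sng}\}$, and across the gap; the paper devotes Lemmas \ref{lemma:K_dom} and \ref{lemma:domi} to constructing an $\eta_*$-uniform integrable majorant $\mathcal{K}_{\mathrm{bound}}$ using the sharp control parameter $M(z,w)$ of \eqref{eq:better_dM_bound}, splitting the regime $x,y$ on the same versus opposite sides of the gap and near versus far from the support of $f$. This dominated-convergence step is nontrivial — it is in fact where most of the $\mathcal{O}(N^{-\varepsilon_0/9}+\eta_0^{1/9})$ error in \eqref{eq:V_sol} is incurred — and cannot be dispensed with by appealing to H\"older continuity alone. (A minor point: the $\Tr[S\m'\other{\m}']$ and fourth-cumulant terms are not ``$N^{-1}$ times'' anything; they are $\mathcal{O}\bigl(|\eigB(z,z)\eigB(\zeta,\zeta)|^{-1}\bigr)$ outright, which is already enough to absorb them via Lemma \ref{lemma:int}.)
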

The proof of Lemma \ref{lemma:variance} is deferred to Section \ref{sec:variance_lemma_proof}.
Equipped with Lemmas \ref{lemma:sol} and \ref{lemma:variance}, we proceed to prove Proposition \ref{prop:main2}.
\begin{proof}[Proof of Proposition \ref{prop:main2}]
	We focus on proving \eqref{eq:main2} and the continuity of the variance functional $\mathrm{Var}_{\alpha}$ in \eqref{eq:Var_converge}. The proof of \eqref{eq:main2_bias} and the continuity of the bias functional $\mathrm{Bias}_{\alpha}$ are deferred to Section \ref{sec:bias}.
	
	First, assume that $\sng$ is a simple edge point, that is $\Delta(\sng) \ge \Delta_*$, and hence $\sigma(\sng) \sim 1$. Plugging \eqref{eq:sol_edge} into \eqref{eq:K(h)_kernel} reveals that for all $x:=\sng +w,y :=\sng+\other{w} \in \mathcal{R}\cap\supp{\rho}$,
	\begin{equation}
		\other{\mathcal{K}}(w,\other{w}) 
		= \frac{|w|+|\other{w}|}{|w|^{1/2}|\other{w}|^{1/2} (w-\other{w})^2 }\bigl(1 + \mathcal{O}(N^{-\varepsilon_0/3})\bigr)=\frac{\mathcal{K}_{\mathrm{edge}}(|w|,|\other{w}|)}{(w-\other{w})^2}\bigl(1 + \mathcal{O}(N^{-\varepsilon_0/3})\bigr),
	\end{equation}
	where we used that $\sigma(\sng)\sim 1$, $\psi(\sng)\lesssim 1$, and $|w|,|\other{w}|\lesssim N^{-\varepsilon_0/3}$. Similarly, if either $x:=\sng +w$ or $y :=\sng+\other{w}$  lie in $\mathcal{R}\backslash\supp{\rho}$, then $\other{K}(w,\other{w}) = 0$. Furthermore, 
	\begin{equation}
		\int\limits_{0}^{+\infty}\int\limits_{N^{-\varepsilon_0/3}}^{+\infty}\frac{\bigl(f(\sng+\sign(\sigma)w)-f(\sng+\sign(\sigma)\other{w})\bigr)^2}{(w-\other{w})^2}\mathcal{K}_{\mathrm{edge}}(w,\other{w})\mathrm{d}w\mathrm{d}\other{w} = \mathcal{O}\bigl( \eta_0^{1/2}N^{\varepsilon_0/3}\bigr).
	\end{equation}
	Therefore, if $\sng$ is a simple edge, then $V(f)$ satisfies \eqref{eq:main2} with $\mathrm{Var}_{\infty}$ given by \eqref{main_edge}. The explicit expression \eqref{eq:norm_edge} follows from \eqref{main_edge} and \eqref{eq:K_edge} by changing the integration variables to $x^{1/2}$, and $y^{1/2}$.
	
	Next, we consider the case of $\sng$ being an exact cusp point, i.e. $\sigma(\sng)=0$. Note that in this case the estimate \eqref{eq:psi_sim_1} implies that $\psi(\sng) \sim 1$. It follows from \eqref{eq:K(h)_kernel} and \eqref{eq:sol_cusp}, that 
	\begin{equation}
		\other{\mathcal{K}}(w,\other{w}) 
		=\frac{1}{3}\cdot\frac{\sign(w\other{w})(|w|^{4/3}+|\other{w}|^{4/3})+2|w|^{1/3}|\other{w}|^{1/3}(|w|^{2/3}+|\other{w}|^{2/3})}{(w-\other{w})^2|w|^{2/3}|\other{w}|^{2/3}} = \frac{\mathcal{K}_{\mathrm{cusp}}(\sqrt[3]{w},\sqrt[3]{\other{w}})}{9 |w|^{2/3}|\other{w}|^{2/3}},
	\end{equation}
	where we adhere to the convention $\sqrt[3]{x} := |x|^{1/3}\sign(x)$.
	Observe that 
	\begin{equation} \label{eq:tail_est}
		\int\limits_{-\infty}^{+\infty}\int\limits_{N^{-\varepsilon_0/3}}^{+\infty}\frac{\bigl(g(w)-g(\other{w})\bigr)^2}{(w-\other{w})^2|w|^{2/3}|\other{w}|^{2/3}}\mathcal{K}_{\mathrm{cusp}}(w,\other{w})\mathrm{d}w\mathrm{d}\other{w} = \mathcal{O}\bigl(\eta_0^{1/3}N^{\varepsilon_0/9}\bigr),
	\end{equation}
	which follows from the fact that $f$ is supported on $[\sng -C\eta_0, \sng +C\eta_0]$, and the function $\mathcal{K}_{\mathrm{cusp}}$ satisfies
	\begin{equation} \label{eq:cusp_K_antidev}
		\frac{\mathcal{K}_{\mathrm{cusp}}(x,y)}{(x^3-y^3)^2} = \frac{\partial}{\partial x} \frac{3\sign(xy)(x^2+xy)}{(x^3-y^3)}.
	\end{equation} 
	We conclude that if $\sng$ is an exact cusp point, \eqref{eq:main2} holds with $\mathrm{Var}_{0}$ given by \eqref{main_cups}. 
	
	To prove the equivalence \eqref{eq:norm_cusp}, it suffices to consider even and odd test functions $g$. Indeed, for any $g\in C^2_c$, let $g^{\mathrm{e}}(x):= \tfrac{1}{2}(g(x)+g(-x))$ and $g^{\mathrm{o}}(x):=\tfrac{1}{2}(g(x)-g(-x))$ be the even and odd parts of the function $g$, respectively. Then, since $\mathcal{K}_{\mathrm{cusp}}(-x,-y) = \mathcal{K}_{\mathrm{cusp}}(x,y)$, we have 
	\begin{equation}
		\mathrm{Var}_{0}(g) = \mathrm{Var}_{0}(g^{\mathrm{e}})+\mathrm{Var}_{0}(g^{\mathrm{o}}).
	\end{equation}
	For odd test functions, performing the change of variables $w\mapsto t^3$, $\other{w}\mapsto \other{t}^3$, we find
	\begin{equation}	\label{eq:cusp_odd_V}
		\mathrm{Var}_{0}(g^{\mathrm{o}}) 
		= \frac{3}{\pi^2}\int\limits_{0}^{+\infty}\int\limits_{0}^{+\infty}\frac{\bigl(g^o(x^3)-g^o(y^3)\bigr)^2}{(x^2-y^2)^2}\frac{x^8-x^6y^2+6t^4y^4-x^2y^6+y^8}{(x^4+x^2y^2+y^4)^2}(x^2+y^2)\mathrm{d}x\mathrm{d}y,
	\end{equation}
	where we used \eqref{eq:cusp_K_antidev} to assert that for all $g\in C^2_c(\mathbb{R})$,
	\begin{equation}
		\int\limits_{0}^{+\infty}\int\limits_{0}^{+\infty}\bigl(g^o(x)\bigr)^2\frac{x^{4/3}-2xy^{1/3} -2x^{1/3}y+y^{4/3}}{(x+y)^2x^{2/3}y^{2/3}}\mathrm{d}x\mathrm{d}y=0.
	\end{equation}
	Identity \eqref{eq:cusp_odd_V} implies that 
	\begin{equation}
		\mathrm{Var}_{0}(g^{\mathrm{o}}) \sim \norm{g^o(|x|^3)}_{\dot{H}^{1/2}}^2, \quad \mathrm{Var}_{0}(g^{\mathrm{o}}) = \frac{3}{4\pi^{2}}\norm{g^o(|x|^3)}_{\dot{H}^{1/2}}^2 -\frac{1}{4\pi^{2}}\norm{g^o(|x|)}_{\dot{H}^{1/2}}^2.
	\end{equation}
	On the other hand, for even test functions we have
	\begin{equation}	\label{eq:cusp_even_V}
		\mathrm{Var}_{0}(g^{\mathrm{e}})
		= \frac{3}{2\pi^2}\int\limits_{0}^{+\infty}\int\limits_{0}^{+\infty}\frac{\bigl(g^e(x^{3/2})-g^e(y^{3/2})\bigr)^2}{(x-y)^2}\frac{x^4+2x^3y+2xy^3 +y^4}{(x^2+xy+y^2)^2}\mathrm{d}x\mathrm{d}y,
	\end{equation}
	which immediately implies that 
	\begin{equation}
		\mathrm{Var}_{0}(g^{\mathrm{e}}) \asymp \norm{g^e(|x|^{3/2})}_{\dot{H}^{1/2}}^2, \quad \mathrm{Var}_{0}(g^{\mathrm{e}}) = \frac{3}{2\pi^2} \norm{g^e(|x|^{3/2})}_{\dot{H}^{1/2}(\mathbb{R}_+)}^2 - \frac{1}{2\pi^2} \norm{g^e(|x|^{1/2})}_{\dot{H}^{1/2}(\mathbb{R}_+)}^2.
	\end{equation}
	
	Finally, we consider the case of $\sng$ being an edge-point adjacent to gap of size $\Delta(\sng) < \Delta_*$. We choose the threshold $\Delta_* \sim 1$ in small enough, such that $\psi(\sng) \sim 1$ by  \eqref{eq:psi_sim_1}. In this case, estimates \eqref{eq:sol_gap} and \eqref{eq:K(h)_kernel} imply that
	\begin{equation} \label{eq:gapsol_ker}
		\other{\mathcal{K}}(w,\other{w}) =\frac{2}{9}\re\biggl[ 
		\frac{\bigl(3 + \gapsol^2 + 4 \gapsol\other{\gapsol} +\other{\gapsol}^2\bigr)(\gapsol-\other{\gapsol})^2}
		{(\gapsol^2 - 1)(\other{\gapsol}^2 - 1)(w-\other{w})^2} 
		- \frac{\bigl(3 + \bar{\gapsol}^2 + 4 \bar{\gapsol}\other{\gapsol} +\other{\gapsol}^2\bigr)(\bar{\gapsol}-\other{\gapsol})^2}
		{(\bar{\gapsol}^2 - 1)(\other{\gapsol}^2 - 1)(w-\other{w})^2}
		\biggr],
	\end{equation}
	where $\gapsol := \gapsol(\sign(\sigma)+2\widehat{\Delta}^{-1}w)$, $\other{\gapsol} := \gapsol(\sign(\sigma)+2\widehat{\Delta}^{-1}\other{w})$, and $\widehat{\Delta} := \widehat{\Delta}(\sng)$ is defined in \eqref{eq:bias_edge}.
	Since for all $\lambda \notin [-1,1]$, $\im \gapsol(\lambda) > 0$, taking the imaginary part of \eqref{eq:gapsol_eq} yields
	\begin{equation} \label{eq:gapsom_im_re_eq}
		-\bigl(\im\gapsol(\lambda)\bigr)^2 + 3\bigl(\re\gapsol(\lambda)\bigr)^2 - 3 = 0, \quad |\lambda| > 1.
	\end{equation}
	On the other hand, for all $\lambda \in [-1,1]$, $\im\gapsol(\lambda) =0$, hence the right-hand side of \eqref{eq:gapsol_ker} is zero.
	
	We now perform a change of variable $\sign(\sigma)\re\gapsol \mapsto t$, $\sign(\sigma)\re\other{\gapsol}\mapsto \other{t}$. It follows from \eqref{eq:gapsom_im_re_eq}, that $\im\gapsol = (3t^2 -3)^{1/2}$, and $\im\other{\gapsol} = (3\other{t}^2 -3)^{1/2}$. Furthermore, by examining the real part of \eqref{eq:gapsol_eq}, we deduce that 
	$(w-\other{w})^2 = \frac{1}{4}\widehat{\Delta}^{2}(4t^3-3t-4\other{t}^3+3\other{t})^2$, and $\mathrm{d}w\mathrm{d}\other{w} = \tfrac{9}{4}\widehat{\Delta}^{2}(4t^2-1)(4\other{t}^2-1)\mathrm{d}t\mathrm{d}\other{t}$. Therefore, \eqref{eq:V_sol} yields \eqref{eq:main2} with $\widehat{\alpha} = \tfrac{\widehat{\Delta}}{2\eta_0}$, and $g_{\widehat{\alpha}}(t) := g\bigl(\sign(\sigma)\widehat{\alpha}(4t^3-3t-1)\bigr)$. Here we used the estimate
	\begin{equation}
		\int\limits_{|t|>1}\bigl(g_{\widehat{\alpha}}(t)\bigr)^2\int\limits_{|\other{t}|>\re\gapsol(N^{-\varepsilon_0/3})}\frac{\mathcal{K}_{\mathrm{gap}}(t,\other{t})}{(t-\other{t})^2}\mathrm{d}\other{t}\mathrm{d}t = \mathcal{O}\biggl( \frac{\re\gapsol(\widehat{\Delta}^{-1}\eta_0)}{\re\gapsol(\widehat{\Delta}^{-1}N^{-\varepsilon_0/3})}\biggr) = \mathcal{O}\bigl( \eta_0^{1/3}N^{\varepsilon_0/9}\bigr),
	\end{equation}  
	which is proved similarly to \eqref{eq:tail_est}.

	To prove the equivalence \eqref{eq:norm_gap}, and the continuity of the variance functional $\mathrm{Var}_{\alp}$  \eqref{eq:Var_converge}, we consider even and odd parts of the function $g_{\widehat{\alpha}}$ separately. Define $\other{V}(g_{\widehat{\alpha}})$ to be the integral on the right-hand side of \eqref{main_gap}. Since $\mathcal{K}_{\mathrm{gap}}(-x,-y)=\mathcal{K}_{\mathrm{gap}}(x,y)$, $\other{V}(g_{\widehat{\alpha}}) = \other{V}(g^{\mathrm{o}}_{\widehat{\alpha}})+\other{V}(g^\mathrm{e}_{\widehat{\alpha}})$.
	
	For the odd part of $g_{\alpha}$, we change the variable $x \mapsto -x$ for $x<-1$, and $y\mapsto -y$ for $y<-1$, to obtain that 
	\begin{equation} \label{eq:gap_odd}
		\other{V}(g^{\mathrm{o}}_{\widehat{\alpha}}) = \frac{3}{2\pi^2}\int\limits_{0}^{+\infty}\int\limits_{0}^{+\infty}\bigl(g^{\mathrm{o}}_{\widehat{\alpha}}(x)-g^{\mathrm{o}}_{\widehat{\alpha}}(y)\bigr)^2 \frac{(x+y)^2 \mathcal{K}_{\mathrm{gap}}(x,y)-(x-y)^2 \mathcal{K}_{\mathrm{gap}}(x,-y)}{(x^2-y^2)^2}\mathrm{d}x\mathrm{d}y,
	\end{equation}
	where we used the fact that 
	\begin{equation}
		\int\limits_{0}^{+\infty}\int\limits_{0}^{+\infty}\frac{\bigl(g^{\mathrm{o}}_{\widehat{\alpha}}(x)\bigr)^2}{(x-y)^2} \mathcal{K}_{\mathrm{gap}}(x,-y)\mathrm{d}x\mathrm{d}y = 0.
	\end{equation}
	Performing the change of variable $x \mapsto \sqrt{\gamma^2 t^2+1}$, $y\mapsto \sqrt{\gamma^2 q^2+1}$ reduces the integral on the right-hand side of \eqref{eq:gap_odd} to 
	\begin{equation}
		\frac{3}{\pi^2}\int\limits_{0}^{\infty}\int\limits_{0}^{\infty} \bigl(g_{\widehat{\alpha}}^\mathrm{o}(\sqrt{\gamma^2t^2+1})-g_{\widehat{\alpha}}^\mathrm{o}(\sqrt{\gamma^2q^2+1})\bigr)^2 \frac{ t^2+q^2}{(t^2-q^2)^2} \mathcal{W}_\mathrm{o}(\gamma t,\gamma q)\mathrm{d}t\mathrm{d}q,
	\end{equation}
	where 
	\begin{equation} \label{eq:odd_gap_P}
		\mathcal{W}_\mathrm{o}(t,q) = 1 -  \frac{3(1 + 4 t^2) (1 + 4 q^2) \bigl( (t (3 + 4 t^2))^2 + (q (3 + 4 q^2))^2\bigr)}{(t^2 + q^2) (3 + 4 t^2 - 4 t q + 4 q^2)^2 (3 + 4 t^2 + 4t q + 4 q^2)^2}.
	\end{equation}
	It is straightforward to check that $\mathcal{W}_\mathrm{o}(q,\other{q}) \sim 1$, which yields \eqref{eq:norm_gap} for the odd part of $g_{\alpha}$. Next, we set the value $\gamma > 0$ to satisfy the equation $8\gamma^3+9\gamma^2 = 2\widehat{\alpha}^{-1}$ (note that for $0 < \Delta <\Delta_*$, $\widehat{\alpha} \in (0, +\infty)$). Under this choice, 
	\begin{equation}
		g_{\widehat{\alpha}}^\mathrm{o}(\sqrt{\gamma^2t^2+1}) = \frac{g\bigl(u_{\gamma}(t)\bigr) - g\bigl(-2\sign(\sigma)\widehat{\alpha} - u_{\gamma}(t)\bigr)}{2}, \quad u_{\gamma}(t) := 2\frac{(1 + 4 x^2 \gamma^2) \sqrt{1 + x^2 \gamma^2} - 1}{\sign(\sigma)(8\gamma^3+9\gamma^2)}.
	\end{equation}
	The function $u(t)$ satisfies $0\le u_{\gamma}(t) \lesssim \max\{t^2,t^3\}$, $|\partial_t u_{\gamma}(t)| \lesssim \max\{t,t^2\}$ for all $t \in [0,+\infty)$, uniformly in $\gamma\in(0,+\infty)$. Therefore, since the support of $g$ is compact, we can use the dominated convergence to take the limit $\gamma \to \gamma_0$, where $\gamma_0\in[0,+\infty]$ satisfies $8\gamma_0^3+9\gamma_0^2 = 2\alpha^{-1}$, and $\alpha = \lim_{N\to \infty}\widehat{\alpha}$.
	
	If $\gamma_0\in (0,+\infty)$, we deduce that $\lim_{N\to\infty}\other{V}(g^{\mathrm{o}}_{\widehat{\alpha}}) = \other{V}(g^{\mathrm{o}}_{\alpha})$. 
	
	If $\gamma_0 = 0$, then 
	\begin{equation}
		\lim\limits_{\gamma\to 0} \mathcal{W}_\mathrm{o}(\gamma t,\gamma q) = \frac{1}{3}, \quad \lim\limits_{\gamma\to 0}u_{\gamma}(t) = \sign(\sigma)t^2,
	\end{equation}
	hence we recover $\lim_{N\to\infty}\other{V}(g^{\mathrm{o}}_{\widehat{\alpha}}) = \tfrac{1}{2}\mathrm{Var}_{\infty}(g)$.
	
	If $\gamma_0 = +\infty$, then 
	\begin{equation}
		\lim\limits_{\gamma\to +\infty} \mathcal{W}_\mathrm{o}(\gamma t,\gamma q) = \frac{3 t^2 q^2 (t^4 - t^2 q^2 + q^4)}{(t^4 + t^2 q^2 + q^4)^2}, \quad \lim\limits_{\gamma\to 0}u_{\gamma}(t) = \sign(\sigma)t^3,
	\end{equation}
	hence we recover $\lim_{N\to\infty}\other{V}(g^{\mathrm{o}}_{\widehat{\alpha}}) = \mathrm{Var}_{0}(g^{\mathrm{o}})$. This proves \eqref{eq:Var_converge} for the odd part of $g_{\widehat{\alpha}}$.

	
	For the even part of $g_{\widehat{\alpha}}$, we perform the change of variable $x\sqrt{x^2-1}\mapsto \gamma t$, $y\sqrt{y^2-1}\mapsto \gamma s$ to obtain
	\begin{equation}
		\other{V}(g^{\mathrm{\mathrm{e}}}_{\widehat{\alpha}}) = \frac{3}{4\pi^2}\int\limits_{0}^{\infty}\int\limits_{0}^{\infty} \biggl(g_{\alpha}^\mathrm{e}\bigl(\sqrt{\tfrac{1}{2}+\tfrac{1}{2}\sqrt{4(\gamma t)^2+1}}\bigr)-g_{\alpha}^e\bigl(\sqrt{\tfrac{1}{2}+\tfrac{1}{2}\sqrt{4(\gamma q)^2+1}}\bigr)\biggr)^2 \frac{ \mathcal{W}_\mathrm{e}(\gamma t,\gamma q)}{(t-q)^2} \mathrm{d}t\mathrm{d}q,
	\end{equation}
	where 
	\begin{equation}
		\begin{split}
			\mathcal{W}_\mathrm{e}(t,q) =&~ \frac{(\sqrt{1 + 4 t^2} + \sqrt{1 + 4 q^2})^2\bigl(2(1+8t^2)^2 +2(1+8q^2)^2+64 t^2 q^2 (7 + 16 t^2 + 16 q^2)-3 \bigr)}{\sqrt{1 + 4 t^2} \sqrt{1 + 4 q^2} (t+q)^2\bigl(1 + 8 t^2 + 8 q^2 + 
				2 (\sqrt{1 + 4 t^2} + \sqrt{1 + 4 q^2})^2\bigr)^2}\\
			&+\frac{(\sqrt{1 + 4 t^2} + \sqrt{1 + 4 q^2})^2 (-1 + 24 t^2 + 128 t^4 + 24 q^2 + 128 q^4)}{(t+q)^2\bigl(1 + 8 t^2 + 8 q^2 + 
				2 (\sqrt{1 + 4 t^2} + \sqrt{1 + 4 q^2})^2\bigr)^2}.
		\end{split}
	\end{equation}
	It can be checked, that $\mathcal{W}_\mathrm{e}(t,q) \sim 1$, establishing the equivalence \eqref{eq:norm_gap}.
	We now set $\gamma$ to be the unique positive solution of the equation $36\gamma^2(8 + 9\gamma^{1/2})= \widehat{\alpha}^{-1}$.
	Similarly to the treatment of the odd part of $g_{\widehat{\alpha}}$, we defined
	\begin{equation}
		v_\gamma(t) := \frac{(8+9\gamma^{1/2})\bigl((2\sqrt{1+4\gamma^2t^2}-1)\sqrt{\tfrac{1}{2}+\tfrac{1}{2}\sqrt{1+4\gamma^2t^2}}-1\bigr)}{36\gamma^2},
	\end{equation}
	and observe that 
	\begin{equation}
		\begin{split}
			\lim\limits_{\gamma\to 0}\mathcal{W}_\mathrm{e}(\gamma t,\gamma q) &= \frac{8}{3}\frac{t^2+q^2}{(q+t)^2}, \quad \lim\limits_{\gamma\to 0} v_\gamma(t) = t^2,\\
			 \lim\limits_{\gamma\to +\infty}\mathcal{W}_\mathrm{e}(\gamma t,\gamma q) &= 2\frac{t^4+2tq^3+2qt^3+q^4}{(t^2+tq+q^2)^2}, \quad\lim\limits_{\gamma\to 0} v_\gamma(t) = t^{3/2}.
		\end{split}
	\end{equation}
	By using dominated convergence, we deduce that $\lim\limits_{N\to\infty} \other{V}(g^{\mathrm{e}}_{\widehat{\alpha}}) = \frac{1}{2}\mathrm{Var}_{\infty}(g)$ if $\alpha = \infty$, and for $\alpha<\infty$,  $\lim\limits_{N\to\infty} \other{V}(g^{\mathrm{e}}_{\widehat{\alpha}}) = \mathrm{Var}_{\alpha}(g^{\mathrm{e}})$. 
	 This concludes the proof of Proposition \ref{prop:main2}.
\end{proof}
%
%
%
\section{Perturbation Estimates} \label{sec:pert} 
In this section we collect the perturbative estimates which are used throughout the proof. For convenience of notation in the sequel, the quantities $\Delta$, $\widehat{\Delta}$, $\sigma$, $\psi$, the vectors  $\vv$, $\vect{f}$, $\vect{p}$, $\vect{b}$, $\vect{b}^{\ell}$, $\vect{r}$, and the operator $F$ are assumed to be evaluated at the point $\sng$ unless explicitly stated otherwise. 

In the following claim we collect estimates on $\dM(z,\zeta)$ used throughout the proof.
\begin{lemma} 
	Uniformly for all $z,\zeta \in \D\cap\mathbb{H}$, the function $\dM(z,\zeta)$ satisfies the bound 
	\begin{equation} \label{eq:dM_lower_bound}
		|\dM(z,\zeta)| \gtrsim \M(z,|\zeta-z|),
	\end{equation}
	where $M(z,w)$ is the control parameter defined in \eqref{eq:better_dM_bound}.
	Consequently, for all $z,\zeta \in \D$, we have the upper bound
	\begin{equation} \label{eq:dm_dM_bound}
		\norm{\m(\zeta)-\m(z)}_{\infty} \lesssim |\dM(z,\zeta)|.
	\end{equation}
	In particular, for all $z\in\D$,
	\begin{equation} \label{eq:dM_sng_bound}
		|\dM(\sng,z)| \gtrsim |z-\sng|, \quad \text{and}\quad |\dM(\sng,z)|^2 \gtrsim |z-\sng|.
	\end{equation}
\end{lemma}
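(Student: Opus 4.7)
The plan is to derive the main lower bound (1) from the cubic equation \eqref{dm_cubic} and then obtain (2), (3), and (4) as direct consequences.

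For (1), I would rearrange the cubic as $\mu_0(\zeta-z) = -\mu_3\dM^3 - \mu_2\dM^2 - \mu_1\dM$ and apply the triangle inequality, which forces at least one of the three terms on the right to have magnitude $\gtrsim|\zeta-z|$. A preliminary reduction handles the trivial case $|\dM|\gtrsim 1$: since the upper bound \eqref{eq:better_dM_bound} gives $M(z,|\zeta-z|)\leq|\zeta-z|^{1/3}$ which is small throughout $\D$, we may assume $|\dM|$ is small, which makes $|\mu_0|\sim 1$ by \eqref{eq:cubic_mus}. The coefficient bounds from \eqref{eq:cubic_mus}, combined with the scaling relations \eqref{eq:rho_comp}--\eqref{eq:rho+sigma_comp}, give estimates on $|\mu_2|$ and $|\mu_1|$ in terms of $\dis(z)$ and $\Delta_0$. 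Solving each of the three resulting inequalities for $|\dM|$ yields three alternatives, one for each term of $M(z,|\zeta-z|)$, so that $|\dM|\gtrsim M(z,|\zeta-z|)$.

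For (2), I would invoke the perturbation formula \eqref{eq:dM_dir}, $\m(\zeta)-\m(z) = \dM\,\vect{b}(z) + \dM^2\,\vect{r}(z) + \mathcal{O}(|\zeta-z|)$. The bounds $\|\vect{b}(z)\|_\infty,\|\vect{r}(z)\|_\infty \lesssim 1$ follow from \eqref{b_vect}, \eqref{m_bound}, \eqref{immrho}, and \eqref{eq:invstab0Q}. The quadratic contribution is controlled by the upper bound \eqref{dM_bound}, giving $|\dM|^2 \leq |\dM|\cdot|\zeta-z|^{1/3}\lesssim|\dM|$ in $\D$. The additive error $\mathcal{O}(|\zeta-z|)$ is absorbed using (1): since $|\zeta-z|\leq|\zeta-z|^{1/3}\lesssim M(z,|\zeta-z|)\lesssim|\dM(z,\zeta)|$ throughout $\D$. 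The extension from $\D\cap\mathbb{H}$ to $\D$ relies on the continuous extensions of $\m$ and hence of $\dM$ to $\overline{\mathbb{H}}$ provided by Lemma \ref{lemma_m}.

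For (3) and (4), I would specialize (1) to $z_0 = \sng$ (interpreted as $\sng + \I 0$). Since $\dis(\sng)=0$, the third term of $M(\sng,|z-\sng|)$ is vacuous, leaving $M(\sng,|z-\sng|) = \min\{|z-\sng|^{1/3},\,|z-\sng|^{1/2}/\Delta_0^{1/6}\}$. Both terms are bounded below by a constant multiple of $|z-\sng|^{1/2}$ for $z\in\D$ (since $\Delta_0\lesssim 1$ and $|z-\sng|$ is small in $\D$), so $|\dM(\sng,z)|\gtrsim |z-\sng|^{1/2}$, which simultaneously gives both $|\dM(\sng,z)|\gtrsim|z-\sng|$ and $|\dM(\sng,z)|^2\gtrsim|z-\sng|$.

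The main technical obstacle is accurately matching each term of $M(z,|\zeta-z|)$ with the corresponding alternative emerging from the cubic, which requires separately handling the regimes $\re z\in\supp\rho$ versus $\re z\notin\supp\rho$ and the cusp case $\Delta_0 = 0$ versus the edge case $\Delta_0 > 0$, across which the scalings of $\rho$ and $\sigma$ differ. Rewriting the coefficient bounds using \eqref{eq:eig_comp} and \eqref{eq:rho+sigma_comp} is the key step that aligns the cubic-based estimates with the form of $M(z,w)$.
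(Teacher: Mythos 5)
Your proof takes essentially the same route as the paper's: both rely on the cubic equation \eqref{dm_cubic} for $\dM$, the coefficient bounds \eqref{eq:cubic_mus}, and the comparison relations of Proposition \ref{prop_scaling}. The presentation differs in one cosmetic respect: the paper argues by contradiction (it defines $C_{\sng,c_0}:=\inf|\dM|/M$ over $z,\zeta\in\D$, assumes $C_{\sng,c_0}$ is small, and derives a contradiction with $|\mu_0|\gtrsim 1$), whereas you apply the triangle inequality directly, concluding that one of $|\mu_3\dM^3|,|\mu_2\dM^2|,|\mu_1\dM|$ must be $\gtrsim|\zeta-z|$. These are two ways of organizing the same estimate. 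Likewise, your derivations of \eqref{eq:dm_dM_bound} from \eqref{eq:dM_dir} and of \eqref{eq:dM_sng_bound} agree with the paper's, except that the paper obtains \eqref{eq:dM_sng_bound} directly by plugging $z=\sng$ into the cubic and using $\mu_1(\sng)=0$, $\mu_2(\sng)=\sigma\sim\Delta_0^{1/3}$, $\mu_3(\sng)\sim 1$, whereas you specialize \eqref{eq:dM_lower_bound} to $z=\sng$; both work.

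Two small points warrant caution. First, in the step where you absorb the $\mathcal{O}(|\zeta-z|)$ error in \eqref{eq:dM_dir}, the chain ``$|\zeta-z|\le|\zeta-z|^{1/3}\lesssim M(z,|\zeta-z|)$'' has the middle inequality backwards: $M(z,w)$ is a minimum one of whose terms is $|w|^{1/3}$, so $M(z,|\zeta-z|)\le|\zeta-z|^{1/3}$. What is actually true (and what the paper uses) is $M(z,|\zeta-z|)\gtrsim|\zeta-z|$ directly, since each of the three terms of $M$ dominates $|\zeta-z|$ on $\D$. Second, your claim that the three triangle-inequality alternatives correspond cleanly ``one for each term'' of $M(z,|\zeta-z|)$ deserves a closer look in the third case: the natural bound from \eqref{eq:cubic_mus} and Proposition \ref{prop_scaling} is $|\mu_1|\lesssim|\eigB(z,z)|\sim\dis(z)^{1/2}(\Delta_0+\dis(z))^{1/6}$, so the $\mu_1$-alternative yields $|\dM|\gtrsim|\zeta-z|/\bigl(\dis(z)^{1/2}(\Delta_0+\dis(z))^{1/6}\bigr)$, whose denominator carries a sixth power of $(\Delta_0+\dis)$ rather than the third power appearing in the last term of \eqref{eq:better_dM_bound}. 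Verifying that this still dominates the relevant minimum, or that the alternatives in fact interlock so that this case never controls the bound, is exactly the place where, as you anticipated, the regimes $\re z\in\supp\rho$ versus $\re z\notin\supp\rho$ and $\Delta_0=0$ versus $\Delta_0>0$ must be treated with care.
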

\begin{proof}
	First, we prove \eqref{eq:dM_lower_bound}.
	Let $\mu_k(z)$ be the coefficients of the cubic equation \eqref{dm_cubic}. It follows from \eqref{eq:cubic_mus}, that there exists an $N$-independent constant $c_{\mu} > 0$, such that $|\mu_0(z,\zeta)| \ge c_{\mu}$ for all $z,\zeta \in \D$. 
	
	Define $C_{\sng,c_0} := \inf\{ |\dM(z,\zeta)|/M(z,|\zeta-z|):\, z,\zeta \in \D, \,z\neq\zeta \}$. We can assume that $C_{\sng,c_0} < \frac{1}{3}\min\{1,c_{\mu}\}$, otherwise the result is trivial. Therefore, there exists a pair of points $z\neq \zeta \in \D$, such that $|\dM(z,\zeta)|< \frac{1}{2}C_{\sng,c_0} M(z,|\zeta-z|)$. It follows readily from the comparison relations of Proposition \ref{prop_scaling}, that for such $z$ and $\zeta$,
	\begin{equation}
		|\mu_3(z)\dM^3+\mu_2(z)\dM^2+\mu_1(z)\dM| < \tfrac{1}{2}c_{\mu}|z-\zeta|.
	\end{equation}
	On the other hand, $|\mu_3(z)\dM^3+\mu_2(z)\dM^2+\mu_1(z)\dM| = |\mu_0||z-\zeta| \ge c_{\mu}|z-\zeta|$, which is a contradiction. Therefore, $C_{\sng,c_0} \ge \min\{1/3,c_{\mu}/3\} \sim 1$, which implies \eqref{eq:dM_lower_bound}.
	
	Estimate \eqref{eq:dm_dM_bound} follows trivially from the expansion \eqref{eq:dM_dir}, the lower bounds \eqref{eq:dM_lower_bound}, and the fact that $M(z,|\zeta-z|) \gtrsim |\zeta-z|$ for all $z\in \D$ by the definition \eqref{eq:better_dM_bound}.
	Estimates \eqref{eq:dM_sng_bound} follow from the fact that $\mu_3(\sng) \sim 1$, $\mu_2(\sng) = \sigma \sim \Delta_0^{1/3}$, and $\mu_1(\sng) = 0$.
\end{proof}

\begin{prop} \label{prop_eigval}
	Let $z, \zeta \in \D\backslash\mathbb{R}$ be two spectral parameters, then the eigenvalue $\eigB(z,\zeta)$ of $\stab(z,\zeta)$ with the smallest modulus admits the following estimates
	\begin{equation} \label{eig_estimate}
		\begin{split}
			\eigB(z,\zeta) =& - \frac{\sigma}{\langle\vect{f}^2\rangle}\bigl(\dM(z)+\dM(\zeta)\bigr) - \frac{\psi}{\langle\vect{f}^2\rangle} \bigl(\dM(z)^2+\dM(z)\dM(\zeta)+\dM(\zeta)^2\bigr) -\frac{\sigma^2}{\langle\vect{f}^2\rangle^2}\dM(z)\dM(\zeta) \\
			&+ \mathcal{O}\bigl(|z-\sng| + |\zeta-\sng| 
			\bigr),
		\end{split}
	\end{equation}
	where $\dM(z) := \dM(\sng,z)$ is defined in \eqref{eq:dM_def}.
\end{prop}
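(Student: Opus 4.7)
The plan is to apply Lemma~\ref{lemma_ncpt} at the reference point $z_0=\zeta_0=\sng$. Since $\dis(\sng)=0$, Proposition~\ref{prop_scaling} gives $\eigB(\sng,\sng)=0$, so \eqref{eig_pert} reduces to
\[
\eigB(z,\zeta) = -\Tr\!\Bigl[\frac{\diff\m}{\m(\sng)^2}\Pi_0\Bigr] - \Tr\!\Bigl[\frac{\diff\m}{\m(\sng)^2}\frac{1-\stab_0}{\stab_0}Q_0\frac{\diff\m}{\m(\sng)^2}\Pi_0\Bigr] + \mathcal{O}(\norm{\diff\m}_\infty^3),
\]
with $\diff\m = \m(z)\m(\zeta)-\m(\sng)^2$. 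The directional formula \eqref{eq:dM_dir} lets me write $\m(z)-\m(\sng) = \dM(z)\vect{b} + \dM(z)^2\vect{r} + \mathcal{O}(|z-\sng|)$ and analogously for $\zeta$, so $\diff\m/\m(\sng)^2$ becomes a polynomial in $(\dM(z),\dM(\zeta))$ of total degree at most $2$ with vector-valued coefficients, plus a residue of order $\mathcal{O}(|z-\sng|+|\zeta-\sng|)$. At $\sng$, $\rho(\sng)=0$ forces $\m(\sng)$ to be real, so $\m(\sng)=\vect{p}|\m(\sng)|$ and crucially $\m(\sng)^2=|\m(\sng)|^2$ as diagonal matrices. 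The leading terms in \eqref{b_vect} simplify to $\vect{b}=|\m|\vect{f}$, $\vect{b}^{\ell}=|\m|^{-1}\vect{f}$, which yields $\langle\vect{b}^\ell,\vect{b}\rangle=\langle\vect{f}^2\rangle$, $\vect{b}/\m(\sng)=\vect{p}\vect{f}$, and $\Tr[A\,\Pi_0]=\langle\vect{f},A\vect{f}\rangle/\langle\vect{f}^2\rangle$ for any diagonal $A$. In particular, $\Tr[\vect{p}\vect{f}\,\Pi_0]=\langle\vect{p}\vect{f}^3\rangle/\langle\vect{f}^2\rangle=\sigma/\langle\vect{f}^2\rangle$ by \eqref{sigma_def}, which produces the linear contribution $-\sigma(\dM(z)+\dM(\zeta))/\langle\vect{f}^2\rangle$ of \eqref{eig_estimate}.

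The quadratic terms require more care. Using $\m(\sng)^2=|\m(\sng)|^2$, one obtains the clean conjugation $|\m|^{-1}\stab_0|\m| = 1-F$, whence $\stab_0^{-1}Q_0 = |\m|(1-F)^{-1}(1-\vv\vv^*)|\m|^{-1}$ and $\Pi_0 = |\m|(\vv\vv^*)|\m|^{-1}$. Four sources contribute at bilinear order in $(\dM(z),\dM(\zeta))$: the $\vect{r}$-terms and the product $\Delta_z\Delta_\zeta/\m(\sng)^2$ from the first trace, together with the diagonal and cross terms of the second trace under the leading approximation $\diff\m/\m(\sng)^2 \approx (\dM(z)+\dM(\zeta))\vect{p}\vect{f}$. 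After conjugation and cyclic trace manipulation, each piece reduces to an inner product of the form $\langle\vect{p}\vect{f}^2, R(1-\vv\vv^*)[\vect{p}\vect{f}^2]\rangle/\langle\vect{f}^2\rangle$ with $R\in\{1,\,(1-F)^{-1},\,F(1-F)^{-1}\}$. Using the operator identity $(1+F)(1-F)^{-1}=2(1-F)^{-1}-1$ to recognize $\psi$ in the sense of \eqref{psi_def}, together with the Perron-direction identities $F\vect{f}=\vect{f}$ and $\langle\vect{p}\vect{f}^2,\vv\rangle^2 = \sigma^2/\langle\vect{f}^2\rangle$, the spurious $\langle\vect{f}^4\rangle/\langle\vect{f}^2\rangle$ terms appearing in the four sources cancel pairwise, leaving the symmetric coefficient $-\psi/\langle\vect{f}^2\rangle$ of $\dM(z)^2+\dM(z)\dM(\zeta)+\dM(\zeta)^2$ and the extra cross coefficient $-\sigma^2/\langle\vect{f}^2\rangle^2$ stated in \eqref{eig_estimate}.

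Finally, the cubic residue $\mathcal{O}(\norm{\diff\m}_\infty^3)$ in \eqref{eig_pert} is absorbed using $\norm{\diff\m}_\infty \lesssim |\dM(z)|+|\dM(\zeta)|$ from \eqref{eq:dm_dM_bound} together with $|\dM(\cdot)|^3\lesssim |\cdot-\sng|$ from \eqref{dM_bound}, yielding the stated error $\mathcal{O}(|z-\sng|+|\zeta-\sng|)$; the higher-order truncation errors from expanding $\diff\m/\m(\sng)^2$ are handled analogously. The main technical hurdle is the quadratic computation described above — correctly matching the inner-product combinations arising from the second trace with the definition \eqref{psi_def} of $\psi$ and cleanly isolating the $\sigma^2/\langle\vect{f}^2\rangle^2$ piece that originates from the Perron projection — which parallels and extends the near-cusp analysis of \cite{Alt2020energy}, previously carried out only in the diagonal cases $\zeta\in\{z,\bar z\}$.
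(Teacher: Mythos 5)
Your proposal is correct and follows essentially the same route as the paper's proof: apply Lemma~\ref{lemma_ncpt} with $z_0=\zeta_0=\sng$ where $\eigB(\sng,\sng)=0$, expand $\diff\m=\m(z)\m(\zeta)-\m(\sng)^2$ via the directional formula \eqref{eq:dM_dir}, and evaluate the resulting traces against $\Pi_0$ using the exact identities $\vect{b}(\sng)/\m(\sng)=\vect{p}\vect{f}$, $\Pi_0=|\m_0|\vv\vv^*|\m_0|^{-1}$, and $\stab_0^{-1}Q_0=|\m_0|(1-F)^{-1}(1-\vv\vv^*)|\m_0|^{-1}$ that hold at the real point $\sng$ where $\rho=0$. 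Your bookkeeping for the cancellation of the spurious $\langle\vect{f}^4\rangle/\langle\vect{f}^2\rangle$ contributions and the identification of $\psi$ via the split of $(1+F)(1-F)^{-1}$ is algebraically equivalent to the paper's direct evaluation of the three quadratic traces in \eqref{traces1}--\eqref{traces2}, and the absorption of the cubic residue by $\norm{\diff\m}_\infty^3\lesssim|z-\sng|+|\zeta-\sng|$ matches the paper's use of the H\"older bound.
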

\begin{proof}
	We apply Lemma \ref{lemma_ncpt} with $z_0 =\zeta_0 = \sng$. It follows from \eqref{beta_est} that $\eigB_0 := \eigB(\sng,\sng) = 0$. Define $\diff\m := \m(z)\m(\zeta) - \m_0^{2}$, where $\m_0 := \m(\sng)$, then \eqref{eq:dM_dir} implies
	\begin{equation} \label{d2m}
		\diff\m = (\dM +\other{\dM})\m_0\vect{b} + \dM\other{\dM}\vect{b}^2 + (\dM^2+\other{\dM}^2)\m_0\vect{r} + \mathcal{O}\bigl(|z-\sng| + |\zeta-\sng| 
		\bigr),
	\end{equation}
	where  $\dM :=  \dM(\sng,z)$, $\other{\dM} :=\dM(\sng,\zeta)$, $\vect{r} := \vect{r}(\sng)$ are defined in \eqref{eq:dM_def}, and $\vect{b}:=\vect{b}(\sng)$ is given in \eqref{b_def}.
	
	Estimate \eqref{dm_bound} implies that $\norm{\diff\m}_\infty \lesssim (|z-\sng|+|\zeta-\sng|)^{1/3}$, hence plugging \eqref{d2m} into \eqref{eig_pert} with $z_0 = \zeta_0 := \sng$ yields
	\begin{equation} \label{eig_tr_expr}
		\begin{split}
			\eigB(z,\zeta) =& -(\dM +\other{\dM})\Tr\bigl[\frac{\vect{b}}{\m_0}\Pi_0\bigr]
			-\dM\other{\dM}\Tr\bigl[\frac{\vect{b}^2}{\m_0^2}\Pi_0\bigr]
			- (\dM + \other{\dM})^2\Tr\bigl[ \frac{\vect{b}}{\m_0}\frac{Q_0}{\stab_0}(1-\stab_0)\frac{\vect{b}}{\m_0}\Pi_0\bigr] \\
			&-(\dM^2+\other{\dM}^2)\Tr\bigl[\frac{\vect{r}}{\m_0}\Pi_0\bigr]
			+ \mathcal{O}\bigl(|z-\sng| + |\zeta-\sng|\bigr),
		\end{split}
	\end{equation}
	where $\Pi_0$, $Q_0$ are the spectral projections corresponding to $\stab_0 := \stab(\sng,\sng)$, and all $\vect{b},\vect{r},\m_0$ are interpreted as diagonal matrices. We now compute the traces in \eqref{eig_tr_expr} individually.	
	
	First, if follows from \eqref{b_vect} that $\Pi_0 = \langle \vect{f}^2\rangle^{-1}(|\m_0|\vect{f})(|\m_0|^{-1}\vect{f})^*$, where $\vect{f} := \vect{f}(\sng)$ is defined in \eqref{f_def}, hence
	\begin{equation} \label{sigma_tr}
		\Tr\bigl[\frac{\vect{b}}{\m_0}\Pi_0\bigr] = \frac{\langle \vect{f}^2\vect{p},|\m_0|^{-1}\vect{b} \rangle}{\langle\vect{f}^2\rangle} = \frac{\sigma}{\langle\vect{f}^2\rangle},
	\end{equation}
	where $\sigma := \sigma(\sng)$, $\vect{p} := \vect{p}(\sng)$ are defined in \eqref{sigma_def}.
	
	Similarly, we compute
	\begin{equation} \label{traces1}
		\Tr\bigl[\frac{\vect{b}^2}{\m_0^2}\Pi_0\bigr] = \frac{\langle \vect{f}^4\rangle}{\langle\vect{f}^2\rangle}, \quad \Tr\bigl[\frac{\vect{r}}{\m_0}\Pi_0\bigr] = 
		\frac{\langle \vect{f}^2\vect{p}, \frac{(1-\vv\vv^*)}{1-F}[\vect{f}^2\vect{p}] \rangle}{\langle\vect{f}^2\rangle} = 
		\frac{\psi + \langle \vect{f}^4\rangle - \frac{\sigma^2}{\langle \vect{f}^2\rangle}}{2\langle\vect{f}^2\rangle}, 
	\end{equation}
	\begin{equation} \label{traces2}
		\Tr\bigl[\frac{\vect{b}}{\m_0}\frac{Q_0}{\stab_0}(\stab_0-1)\frac{\vect{b}}{\m_0}\Pi_0\bigr] =
		-\frac{\langle \vect{f}^2\vect{p}, \frac{(1-\vv\vv^*)F}{1-F}[\vect{f}^2\vect{p}] \rangle}{\langle\vect{f}^2\rangle} = 
		\frac{-\psi + \langle \vect{f}^4\rangle - \frac{\sigma^2}{\langle \vect{f}^2\rangle}}{2\langle\vect{f}^2\rangle}.
	\end{equation}
	Plugging \eqref{traces1} and \eqref{traces2} into \eqref{eig_tr_expr} concludes the proof of Proposition \ref{prop_eigval}.
\end{proof}

\begin{lemma}  \label{lemma_m'}
	Let $z$ be a spectral parameter in $\D\backslash\mathbb{R}$, and let $\beta(z)$ be as defined in \eqref{beta_est}, then 
	\begin{equation}
		\begin{split}
			\langle \vect{f}^2\rangle\eigB(z)\frac{\m'(z)}{\m(z)} =& \frac{\vect{b}}{\m_0}\bigl( \pi + 2\dM(\sng,z) \bigl\langle|\m_0|, \frac{1-\vv\vv^*}{1-F}[\vect{f}^2\vect{p}]  \bigr\rangle \bigr)\\ &+ \dM(\sng,z)\bigl(2\pi\vect{p} \frac{1-\vv\vv^*}{1-F}[\vect{f}^2\vect{p}] - \frac{\pi\vect{b}^2}{|\m_0|^2} \bigr) +\mathcal{O}\bigl(\Delta_0^{1/3}|\dM(\sng,z)| + |\dM(\sng,z)|^2 
			\bigr),
		\end{split}
	\end{equation}
	where $\dM(\sng,z)$ is defined in \eqref{eq:dM_def}, and $\vect{b},\vect{p},\vv, F$, and $\sigma$ are evaluated at $\sng$.
\end{lemma}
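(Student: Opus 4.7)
The plan is to start from $\m'(z) = \stab(z,z)^{-1}[\m(z)^2]$, which follows by differentiating \eqref{VDE}; split $\stab(z,z)^{-1} = \eigB(z,z)^{-1}\Pi(z,z) + \stab(z,z)^{-1}Q(z,z)$ via Lemma \ref{lemma_stab}, and expand the projected part $\Pi(z,z)[\m(z)^2]/\m(z)$ around the singular point $\sng$. Writing $\dM \equiv \dM(\sng,z)$ throughout, multiplying the identity by $\langle\vect{f}^2\rangle\eigB(z,z)/\m(z)$ together with the bounds \eqref{eq:invstab0Q}, \eqref{m_bound}, Proposition \ref{prop_eigval}, and $|\sigma|\lesssim\Delta_0^{1/3}$ from \eqref{eq:sigma_comp}, shows that the $\stab^{-1}Q$ contribution is $\mathcal{O}(|\eigB(z,z)|) = \mathcal{O}(\Delta_0^{1/3}|\dM|+|\dM|^2)$, absorbed into the stated error.

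Two structural identities drive the expansion of the $\Pi$ piece. First, since $\rho(\sng)=0$, expressions \eqref{v_def}--\eqref{b_vect} collapse at $\sng$ to $\vect{b} = |\m_0|\vect{f}$, $\vect{b}^\ell = |\m_0|^{-1}\vect{f}$, and $\m_0^2 = |\m_0|^2$ (where $\m_0 := \m(\sng)$), so that $\Pi_0[\cdot] = \vect{b}\langle\vect{b}^\ell,\cdot\rangle/\langle\vect{f}^2\rangle$; moreover, the conjugations
\[
\stab_0 = |\m_0|(1-F)|\m_0|^{-1},\quad Q_0 = |\m_0|(1-\vv\vv^*)|\m_0|^{-1},\quad \stab_0^{-1}Q_0 = |\m_0|(1-F)^{-1}(1-\vv\vv^*)|\m_0|^{-1},
\]
with $(1-F)^{-1}$ understood as the pseudoinverse on $\mathrm{range}(1-\vv\vv^*)$, follow directly from $\m_0 = \vect{p}|\m_0|$ and $\vect{p}^2 = 1$. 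Second, Stieltjes inversion \eqref{rho_def} combined with \eqref{f_def} gives $\rho = \pi^{-1}\rho\langle\vect{f}|\m|\rangle$, hence the identification $\pi = \langle\vect{f}|\m_0|\rangle$, in agreement with the leading coefficient of the cubic \eqref{eq:cubic_mus}; in particular $\Pi_0[\m_0^2] = (\pi/\langle\vect{f}^2\rangle)\vect{b}$.

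Next, I apply Lemma \ref{lemma_ncpt} at $z_0=\zeta_0=\sng$ (so that $\eigB_0=0$) and Proposition \ref{prop_mpert} at $\zeta=z$ to get
\[
\Pi(z,z) = \Pi_0 + \tfrac{Q_0}{\stab_0}\tfrac{\diff\m}{\m_0^2}\Pi_0 + \Pi_0\tfrac{\diff\m}{\m_0^2}\tfrac{1-\stab_0}{\stab_0}Q_0 + \mathcal{O}(|\dM|^2),\quad \diff\m = 2\dM\m_0\vect{b} + \mathcal{O}(|\dM|^2),
\]
where the $\mathcal{O}(|z-\sng|)$ remainder of \eqref{eq:dM_dir} is absorbed via $|z-\sng|\lesssim|\dM|^2$ from \eqref{eq:dM_sng_bound}. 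Combined with $\m(z)^{-1} = \m_0^{-1}(1 - \m_0^{-1}\dM\vect{b} + \mathcal{O}(|\dM|^2))$, the quantity $\langle\vect{f}^2\rangle\m(z)^{-1}\Pi(z,z)[\m(z)^2]$ decomposes into four contributions matching the four terms of the lemma: (a) the leading $\pi\vect{b}/\m_0$ from $\Pi_0[\m_0^2]$; (b) the $-\pi\dM\vect{b}^2/|\m_0|^2$ from the subleading part of $\m(z)^{-1}$, using $\vect{b}^2/\m_0^2 = \vect{b}^2/|\m_0|^2$; (c) the $2\pi\dM\vect{p}(1-\vv\vv^*)/(1-F)[\vect{f}^2\vect{p}]$ from the $(Q_0/\stab_0)(\diff\m/\m_0^2)\Pi_0$ correction, where the conjugation above yields $\stab_0^{-1}Q_0[|\m_0|\vect{p}\vect{f}^2] = |\m_0|(1-F)^{-1}(1-\vv\vv^*)[\vect{p}\vect{f}^2]$ and $\m_0^{-1}|\m_0|=\vect{p}$ strips the prefactor; and (d) the combined $\vect{b}/\m_0$-direction correction coming from $\Pi_0[\diff\m]$ together with the $\Pi_0(\diff\m/\m_0^2)((1-\stab_0)/\stab_0)Q_0$ correction to $\Pi$.

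The main obstacle is the cancellation in (d). One observes that $Q_0[\m_0^2] = |\m_0|(|\m_0| - (\pi/\langle\vect{f}^2\rangle)\vect{f})$ already lies in $\mathrm{range}(1-\vv\vv^*)$, so $\stab_0^{-1}Q_0[\m_0^2] = |\m_0|(1-F)^{-1}(1-\vv\vv^*)[|\m_0|]$; pairing with $\Pi_0(\diff\m/\m_0^2)$ and using self-adjointness of $F$ and $\vv\vv^*$ at the real point $\sng$ to transfer $(1-F)^{-1}(1-\vv\vv^*)$ onto $\vect{f}^2\vect{p}$ produces the clean scalar $\langle|\m_0|,(1-\vv\vv^*)/(1-F)[\vect{f}^2\vect{p}]\rangle$. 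However, expanding $(1-\stab_0)/\stab_0 = \stab_0^{-1}-1$ generates an additional $-\langle\vect{p}|\m_0|\vect{f}^2\rangle$ scalar that must cancel exactly against the matching contribution from $\Pi_0[\diff\m]$, leaving only a residual of size $\pi\sigma\dM/\langle\vect{f}^2\rangle^2 = \mathcal{O}(\Delta_0^{1/3}|\dM|)$ absorbed into the error. Verifying that the two independently generated $\vect{b}/\m_0$-direction pieces combine into the single advertised inner product modulo this allowed residual is the crux of the argument.
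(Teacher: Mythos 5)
Your proposal is correct and follows essentially the same route as the paper's proof: differentiate the Dyson equation to get $\m'=\stab^{-1}[\m^2]$, split $\stab^{-1}=\eigB^{-1}\Pi+\stab^{-1}Q$, absorb the regular part into the error via $|\eigB(z,z)|\lesssim\Delta_0^{1/3}|\dM|+|\dM|^2$, and expand the singular part using \eqref{Pi_pert} together with $\m(z)^2=\m_0^2+2\dM\m_0\vect{b}+\mathcal{O}(|\dM|^2)$ and the structural identities $\vect{b}=|\m_0|\vect{f}$, $\vect{b}^\ell=|\m_0|^{-1}\vect{f}$, $\stab_0^{-1}Q_0=|\m_0|(1-F)^{-1}(1-\vv\vv^*)|\m_0|^{-1}$, $\pi=\langle\vect{f}|\m_0|\rangle$. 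The only presentational difference is that you expand $\m(z)^{-1}$ before multiplying by $\eigB$ and organize the terms into contributions (a)–(d), whereas the paper computes $\eigB\m'$ first and divides by $\m$ at the end, and the paper keeps $\frac{1-\stab_0}{\stab_0}Q_0$ as $-|\m_0|\frac{F(1-\vv\vv^*)}{1-F}|\m_0|^{-1}$ rather than expanding it as $\stab_0^{-1}-1$; both routes produce the same residual $\tfrac{2\pi\sigma\dM}{\langle\vect{f}^2\rangle}\vect{b}$ of order $\Delta_0^{1/3}|\dM|$, and the cancellation of the two $\langle\vect{p}\vect{f}^2|\m_0|\rangle$ terms you describe is exactly what makes the paper's two traces $\Pi_0[\m_0\vect{b}]$ and $\Pi_0\tfrac{\vect{b}}{\m_0}\tfrac{\stab_0-1}{\stab_0}Q_0[\m_0^2]$ combine into the single inner product $\langle|\m_0|,\tfrac{1-\vv\vv^*}{1-F}[\vect{f}^2\vect{p}]\rangle$ modulo the $\sigma$ residual.
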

\begin{proof}
	Differentiating the Dyson equation \eqref{VDE} yields the identity $\m'(z) = \stab(z,z)^{-1}[\m(z)^2]$.
	It follows form \eqref{eq:dM_dir} and \eqref{eq:dM_sng_bound}, that 
	\begin{equation}
		\m(z)^2 = \m_0^2 + 2\dM\m_0\vect{b} + \mathcal{O}\bigl(|\dM|^2 
		\bigr),
	\end{equation}
	where $\dM := \dM(\sng,z)$. 
	Therefore, by decomposing $\stab^{-1} = \eigB^{-1}\Pi + \stab^{-1}Q$, and using estimates \eqref{Pi_pert}, \eqref{Q/B_pert}, \eqref{eq:inbstabQ} we obtain
	\begin{equation} \label{m'_est}
		\begin{split}
			\eigB(z)\m'(z) =&\, \Pi_0[\m_0^2] + 2\dM\Pi_0[\m_0\vect{b}] - 2\dM\Pi_0\frac{\vect{b}}{\m_0}\frac{\stab_0-1}{\stab_0}Q_0[\m_0^2]\\
			&+ 2\dM\frac{Q_0}{\stab_0}\frac{\vect{b}}{\m_0}\Pi_0[\m_0^2]  + \mathcal{O}(|\beta(z)|+|\dM|^2
			),
		\end{split}
	\end{equation}
	where $\stab_0 := \stab(\sng, \sng)$, $\Pi_0 := \Pi(\sng,\sng)$, and $Q_0 := 1 -\Pi_0$.
	Observe that \eqref{eig_estimate} and \eqref{eq:sigma_comp} imply $|\beta(z)| \lesssim \Delta_0^{1/3}|\dM| + |\dM|^2
	$. We compute 
	\begin{equation} \label{Pim^2}
		\Pi_0[\m_0^2] = \frac{\pi}{\langle \vect{f}^2\rangle}\vect{b}, \quad \Pi_0[\vect{b}\m_0] = \frac{\langle \vect{f}^2\vect{p},|\m_0|\rangle}{\langle \vect{f}^2\rangle}\vect{b}.
	\end{equation}
	Furthermore, we have $\stab_0^{-1}Q_0 = |\m_0|(1-F)^{-1}(1-\vv\vv^*)|\m_0|^{-1}$ and $\stab_0-1 = -|\m_0|F|\m_0|^{-1}$, therefore
	\begin{equation} \label{Qm^2}
		\frac{Q_0}{\stab_0}\frac{\vect{b}^2}{\m_0} = |\m_0|\frac{1-\vv\vv^*}{1-F}[\vect{f}^2\vect{p}], \quad \Pi_0\frac{\vect{b}}{\m_0}\frac{\stab_0-1}{\stab_0}Q_0[\m_0^2] = - \langle \vect{f}^2\rangle^{-1}\langle |\m_0|, \frac{1-\vv\vv^*}{1-F}F[\vect{f}^2\vect{p}]\rangle \vect{b}
	\end{equation}
	Plugging \eqref{Pim^2} and \eqref{Qm^2} into \eqref{m'_est} yields
	\begin{equation} \label{eq:m'_est}
		\begin{split}
			\langle \vect{f}^2\rangle\eigB(z)\m'(z) =& \,\vect{b}\bigl(\pi + 2\dM\langle |\m_0|, \frac{1-\vv\vv^*}{1-F}[\vect{f}^2\vect{p}]\rangle + \frac{2\pi\sigma \dM}{\langle \vect{f}^2\rangle} \bigr)\\ 
			&+ 2\pi\dM|\m_0|\frac{1-\vv\vv^*}{1-F}[\vect{f}^2\vect{p}] + \mathcal{O}\bigl( \Delta_0^{1/3}|\dM| + |\dM|^2
			\bigr),
		\end{split}
	\end{equation}
	which, after dividing by $\m(z) = \m_0 + \dM\vect{b} + \mathcal{O}(|\dM|^2)$, concludes the proof of Lemma \ref{lemma_m'}.
	
\end{proof}

\begin{prop} \label{prop:eigB} Let $z,\zeta \in \D\cap\mathbb{H}$, then the eigenvalues $\eigB(z,\zeta)$ and $\eigB(\bar z,\zeta)$ satisfy
	\begin{equation} \label{eq:eigB}
		\eigB(z,\zeta) = \frac{\pi}{\langle \vect{f}(z)^2\rangle}\frac{\zeta-z}{\dM(z,\zeta)}\bigl(1+\mathcal{O}(|\dM(z,\zeta)|+ \rho(z) +\rho(z)^{-1}|\im z|)\bigr),
	\end{equation}
	\begin{equation} \label{eq:eigBbar}
		\eigB(\bar z,\zeta) = \frac{\pi}{\langle \vect{f}(z)^2\rangle}\frac{\zeta-\bar z}{\dM( z,\zeta)+2\I\rho(z)}\bigl(1+\mathcal{O}(|\dM(z,\zeta)|+\rho(z) +\rho(z)^{-1}|\im z|)\bigr).
	\end{equation}
\end{prop}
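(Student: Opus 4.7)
The plan is to deduce Proposition \ref{prop:eigB} from Proposition \ref{prop_eigval} via a telescoping identity tied to the cubic equation \eqref{dm_cubic}. Abbreviate $a := \dM(\sng, z)$ and $b := \dM(\sng,\zeta)$, so that \eqref{eig_estimate} reads
\begin{equation*}
\eigB(z,\zeta) = -\frac{\sigma(a+b) + \psi(a^2+ab+b^2)}{\langle \vect{f}^2\rangle} - \frac{\sigma^2}{\langle \vect{f}^2\rangle^2}ab + \mathcal{O}(|z-\sng|+|\zeta-\sng|).
\end{equation*}
The quadratic form factorizes as $\sigma(a+b) + \psi(a^2+ab+b^2) = [\psi(a^3-b^3) + \sigma(a^2-b^2)]/(a-b)$. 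Evaluating \eqref{dm_cubic} at the reference point $\sng$, where $\mu_1(\sng)=0$, $\mu_2(\sng)=\sigma$, $\mu_3(\sng)=\psi$ and $\mu_0 = \pi + \mathcal{O}(|\dM|)$, yields $\psi a^3 + \sigma a^2 = -\pi(z-\sng) + \mathcal{O}(|a|^4)$ and similarly for $b$. Subtracting gives $\psi(a^3-b^3) + \sigma(a^2-b^2) = \pi(\zeta-z) + \mathcal{O}(|a|^4+|b|^4)$, whence
\begin{equation*}
\eigB(z,\zeta) = \frac{\pi(\zeta-z)}{\langle \vect{f}^2\rangle (b-a)}\bigl(1 + \text{relative error}\bigr),
\end{equation*}
the discarded $-\sigma^2ab/\langle\vect{f}^2\rangle^2$ and the additive $\mathcal{O}(|z-\sng|+|\zeta-\sng|)$ each dividing by $|b-a|$ into the relative error.

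The next step is to identify $b-a$ with $\dM(z,\zeta)$ up to the advertised relative error. Applying \eqref{eq:dM_dir} at the reference point $\sng$ gives $\m(\zeta)-\m(z) = (b-a)\vect{b}(\sng) + (b^2-a^2)\vect{r}(\sng) + \mathcal{O}(|z-\sng|+|\zeta-\sng|)$, while by \eqref{eq:dM_def} we have $\dM(z,\zeta)\langle\vect{b}^\ell(z),\vect{b}(z)\rangle = \langle\vect{b}^\ell(z),\m(\zeta)-\m(z)\rangle$. Using the $1/3$-H\"older continuity of $\m$, $\vect{b}$ and $\vect{b}^\ell$ supplied by \eqref{Pi_pert} and \eqref{b_def} to swap $\vect{b}^\ell(z),\vect{b}(z)$ for $\vect{b}^\ell(\sng),\vect{b}(\sng)$, together with the leading identity $\langle\vect{b}^\ell(\sng),\vect{b}(\sng)\rangle = \langle \vect{f}^2\rangle$ (immediate from \eqref{b_vect} and $\rho(\sng)=0$), I conclude $\dM(z,\zeta) = (b-a)\bigl(1 + \mathcal{O}(|a|+|b|+\rho(z)+\rho(z)^{-1}|\im z|)\bigr)$. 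Combining with the previous step yields \eqref{eq:eigB}.

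For \eqref{eq:eigBbar}, the same procedure applies after replacing $z$ by $\bar z$: since the coefficients $\mu_k(\sng)$ are real (because $\rho(\sng)=0$) and $\m(\bar w)=\overline{\m(w)}$, we have $\dM(\sng,\bar z)=\bar a$. Running the factorization with $\bar a$ and $b$ gives $\eigB(\bar z,\zeta)=\pi(\zeta-\bar z)/[\langle \vect{f}^2\rangle(b-\bar a)]\bigl(1+\text{error}\bigr)$. Writing $b-\bar a = (b-a)+2\I\,\im a$, I must verify that $\im a = \rho(z)$ to leading order. Taking imaginary parts of \eqref{eq:dM_dir} at $\sng$ (where $\m(\sng)$, $\vect{b}(\sng)$ and $\vect{r}(\sng)$ are all real) and testing against $\vect{b}^\ell(\sng)$, then substituting $\im\m(z) = \rho(z)|\m(z)|\vect{f}(z)$ along with the leading identities $\vect{b}(\sng) \approx \langle \vect{f}^2\rangle^{1/2}|\m(\sng)|\vv \approx |\m(\sng)|\vect{f}$ and $\vect{b}^\ell(\sng) \approx |\m(\sng)|^{-1}\vect{f}$ from \eqref{b_vect}, gives $\im\dM(\sng,z) = \rho(z)\bigl(1+\mathcal{O}(|a|+\rho(z))\bigr)$. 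Hence $b-\bar a = \dM(z,\zeta) + 2\I\rho(z)$ modulo the admissible error, and \eqref{eq:eigBbar} follows.

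The principal obstacle will be bookkeeping the various error contributions: since the tolerated error $\mathcal{O}(|\dM(z,\zeta)|+\rho(z)+\rho(z)^{-1}|\im z|)$ is sharp, one must verify that the discarded $-\sigma^2 ab/\langle\vect{f}^2\rangle^2$ in \eqref{eig_estimate}, the $\mathcal{O}(|a|^4+|b|^4)$ tail from the cubic equation, the residuals $\mathcal{O}(|z-\sng|+|\zeta-\sng|)$, and the H\"older increment between $\langle \vect{f}(z)^2\rangle$ and $\langle \vect{f}(\sng)^2\rangle$ all fit the claimed budget once divided by $|b-a|$. The comparison relations in Proposition \ref{prop_scaling} — in particular $|\dM(\sng,w)|^2 \gtrsim |w-\sng|$ and $|\sigma|+\rho \sim (\Delta_0+\dis)^{1/3}$ — together with the sharper upper bound \eqref{eq:better_dM_bound} are the essential inputs for this bookkeeping.
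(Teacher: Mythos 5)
Your route is genuinely different from the paper's: the paper manipulates the Dyson-equation difference $\stab(z,\zeta)[\m(\zeta)-\m(z)] = (\zeta-z)\m(z)\m(\zeta)$ directly, applies the decomposition $\stab^{-1} = \eigB^{-1}\Pi + \stab^{-1}Q$, and tests against the left eigenvector $\vect{b}^\ell(z)$, so that every quantity encountered is evaluated at the spectral parameter $z$; you instead detour through the reference point $\sng$ via Proposition~\ref{prop_eigval} and the cubic equation \eqref{dm_cubic}. The factorization and telescoping are a nice observation, but the detour through $\sng$ is exactly where the argument breaks.

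The issue is that every swap between a quantity at $\sng$ and its counterpart at $z$ — replacing $\langle\vect{f}(\sng)^2\rangle$ by $\langle\vect{f}(z)^2\rangle$, replacing $\vect{b}^\ell(\sng)$ by $\vect{b}^\ell(z)$ in the definition of $\dM(z,\zeta)$, and replacing $\mu_0(\sng,\cdot)$ by $\pi$ in the cubic equation — costs a relative error of order $|\dM(\sng,z)|$ (or $|\dM(\sng,\zeta)|$), by \eqref{Pi_pert} and \eqref{eq:dm_dM_bound}. These are \emph{not} controlled by the claimed budget $\mathcal{O}(|\dM(z,\zeta)| + \rho(z) + \rho(z)^{-1}|\im z|)$. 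Concretely, take $z$ in the gap at distance $\dis = |z-\sng| \lesssim \Delta_0$ from $\sng$, with $\eta := \im z \to 0^+$. Then \eqref{eq:better_dM_bound} and \eqref{eq:dM_lower_bound} (with $\dis(\sng)=0$) give $|\dM(\sng,z)| \sim \dis^{1/2}\Delta_0^{-1/6}$, whereas \eqref{eq:rho_comp} gives $\rho(z) \to 0$ and $\rho(z)^{-1}|\im z| \to \dis^{1/2}\Delta_0^{1/6}$; and $|\dM(z,\zeta)|$ can be made arbitrarily small by choosing $\zeta$ close to $z$. Hence $|\dM(\sng,z)| / \bigl(\rho(z) + \rho(z)^{-1}|\im z| + |\dM(z,\zeta)|\bigr) \sim \Delta_0^{-1/3} \gg 1$ for small gaps, so your conversion errors overwhelm the target relative error. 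Your approach would prove a weaker version of the proposition with relative error $\mathcal{O}\bigl(|\dM(\sng,z)| + |\dM(\sng,\zeta)| + \rho(z) + \rho(z)^{-1}|\im z|\bigr)$, not the statement as written; the paper's proof avoids exactly this loss by never leaving the base point $z$ — the estimates $\langle\vect{b}^\ell(z),\vect{b}(z)\rangle = \langle\vect{f}(z)^2\rangle\bigl(1+\mathcal{O}(\rho(z)+\rho(z)^{-1}|\im z|)\bigr)$ and $\langle\vect{b}^\ell(z),\m(z)\m(\zeta)\rangle = \pi + \mathcal{O}(|\dM(z,\zeta)|+\rho(z)+\rho(z)^{-1}|\im z|)$ only produce errors in the allowed control parameters.
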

\begin{proof}
	First, we focus on $\eigB(z,\zeta)$. Taking the difference of \eqref{VDE} at $z$ and $\zeta$, we obtain
	\begin{equation} \label{eq_diffm}
		\stab(z,\zeta)\bigl[\m(\zeta)-\m(z)\bigr]  = (\zeta-z)\m(z)\m(\zeta).
	\end{equation}
	Let $\eta := \im z$.
	Applying $\stab(z,\zeta)^{-1}$ and multiplying both sides of \eqref{eq_diffm} by $\vect{b}^\ell(z)$ yields
	\begin{equation}
		\dM(z,\zeta)\langle\vect{b}^\ell(z),\vect{b}(z)\rangle = (\zeta-z)\eigB(z,\zeta)^{-1}\bigl\langle\vect{b}^\ell(z),\Pi(z,\zeta)[\m(z)\m(\zeta)]\bigr\rangle \bigl( 1 + \mathcal{O}(|\eigB(z,\zeta)|)\bigr),
	\end{equation}
	where we used the decomposition $\stab^{-1} = \eigB^{-1}\Pi + \stab^{-1}Q$, and the estimate \eqref{eq:inbstabQ}. 
	Furthermore, it follows from \eqref{Pi_pert} and \eqref{eq:dm_dM_bound} that $\Pi(z,\zeta) = \Pi(z,z) + \mathcal{O}(|\dM(z,\zeta)|)$, and $\langle \vect{b}^\ell(z), \m(z)\m(\zeta)\rangle = \pi + \mathcal{O}(|\dM(z,\zeta)|+\rho(z) +\rho(z)^{-1}|\eta| )$ therefore
	\begin{equation}
		\eigB(z,\zeta)^{-1} = \frac{\langle \vect{f}(z)^2\rangle}{\pi}\frac{\dM(z,\zeta)}{\zeta-z}\bigl(1+\mathcal{O}(|\eigB(z,\zeta)|+|\dM(z,\zeta)|+\rho(z) +\rho(z)^{-1}|\eta|)\bigr),
	\end{equation}
	where we additionally used $\langle\vect{b}^\ell(z),\vect{b}(z)\rangle = \langle \vect{f}(z)^2\rangle + \mathcal{O}(\rho(z) + \rho(z)^{-1}|\eta|)$. Note that by estimates \eqref{eq:eig_comp} and \eqref{eig_pert}, $|\eigB(z,\zeta)| \lesssim \rho(z) +\rho(z)^{-1}|\eta| + |\dM(z,\zeta)|$. Therefore, \eqref{eq:eigB} is established. 
	
	We turn to analyze $\eigB(\bar{z},\zeta)$. We substitute $z:=\bar z$ in  equation \eqref{eq_diffm}, apply $\stab(\bar z,\zeta)^{-1}$ to both sides, and multiply by $\vect{b}^\ell(z)$, to obtain
	\begin{equation} \label{eq:eigbbar_expr}
		\langle\vect{b}^\ell( z), \m(\zeta)-\m(\bar{z}) \rangle  = (\zeta-\bar z)\eigB(\bar z,\zeta)^{-1}\bigl\langle\vect{b}^\ell(z),\Pi(\bar z,\zeta)[\m(\bar z)\m(\zeta)]\bigr\rangle \bigl( 1 + \mathcal{O}(|\eigB(\bar z,\zeta)|)\bigr).
	\end{equation}
	Again, it follows from \eqref{eig_pert} and \eqref{eigF_1}, that $\eigB(\bar{z},\zeta) \lesssim \rho(z) + \rho(z)^{-1}|\eta| + |\dM(z,\zeta)|$.
	
	Observe that $\m(z)-\m(\bar{z}) = 2\I \rho(z) |\m(z)|\vect{f}(z)$, and $\langle\vect{b}^\ell(z),|\m(z)|\vect{f}(z) \rangle = \langle\vect{b}^\ell(z),\vect{b}(z)\rangle + \mathcal{O}(\rho(z)+\rho(z)^{-1}|\eta|)$, hence 
	\begin{equation}
		\langle\vect{b}^\ell( z), \m(\zeta)-\m(\bar{z}) \rangle = \langle\vect{b}^\ell(z),\vect{b}(z)\rangle\bigl(\dM(z,\zeta) + 2\I\rho(z) \bigr) + \mathcal{O}(\rho(z)^2+|\eta|).
	\end{equation}
	Moreover, by \eqref{v_def}, \eqref{f_def}, and \eqref{b_vect}, one readily obtains that $|\im \langle\vect{b}^\ell( z), \m(\zeta)-\m(\bar{z}) \rangle| \gtrsim \rho(z)$. Therefore, using $\langle\vect{b}^\ell(z),\vect{b}(z)\rangle = \langle \vect{f}(z)^2\rangle(1 + \mathcal{O}(\rho(z)+\rho(z)^{-1}|\eta|))$, we conclude
	\begin{equation} \label{eq:dM+rho}
		\langle\vect{b}^\ell( z), \m(\zeta)-\m(\bar{z})\rangle = \langle\vect{f}(z)^2\rangle\bigl(\dM(z,\zeta) + 2\I\rho(z) \bigr) \bigl(1 + \mathcal{O}(\rho(z)+\rho(z)^{-1}|\eta|)\bigr).
	\end{equation}
	Combining estimates \eqref{eq:eigbbar_expr}, \eqref{eq:dM+rho}, and $\Pi(\bar z, \zeta) = \Pi(z,z) + \mathcal{O}(\rho(z) + |\dM(z,\zeta)|)$, we arrive at \eqref{eq:eigBbar}, thus concluding the proof of Proposition \ref{prop:eigB}.
\end{proof}

\section{Proof of Lemma \ref{lemma:variance}} \label{sec:variance_lemma_proof}
The proof of Lemma \ref{lemma:variance} consists of three technical steps. First, we use the perturbative formulas obtained in Section \ref{sec:pert} to estimate the kernel $\mathcal{K}(z,\zeta)$ defined in \eqref{kernel_K} in terms of the function $\dM(\sng, z)$ given in \eqref{eq:dM_def}. In the second step, we use the bounds on the kernel obtained in the first step to express the variance $V(f)$ as a double integral over a subset of the real line via integration by parts and dominated convergence. The third and final step consists of using the approximation \eqref{eq:dM_approx} to obtain the formula \eqref{eq:V_sol} for $V(f)$. We now present the steps in detail.

\subsection{Integral Kernel $\mathcal{K}(z,\zeta)$}
\begin{lemma} \label{lemma:K_crude} 
	For all $z,\zeta \in \D$, the kernel $\mathcal{K}(z,\zeta)$ satisfies the estimate
	\begin{equation}
		\begin{split} \label{eq:K_est}
			\mathcal{K}(z,\zeta) =&~ 2\pi^2\frac{(\sigma + \psi\dM + \psi\other{\dM} )^2 + 2\psi^2\dM\other{\dM}}{\langle\vect{f}^2\rangle^4\beta(z)\beta(\zeta)\beta(z,\zeta)^2} +\mathcal{O}\bigl(\frac{(\Delta_0^{1/3}+|\dM|+|\other{\dM}|)^2(|\dM|+|\other{\dM}|)}{\beta(z)\beta(\zeta)\beta(z,\zeta)^2} \bigr),
		\end{split}
	\end{equation}
	where $\dM := \dM(\sng,z)$, $\other{\dM} := \dM(\sng,\zeta)$ are as defined in \eqref{eq:dM_def}.
	Moreover, the bound
	\begin{equation} \label{eq:K_crude_bound}
		\bigl\lvert\mathcal{K}(z,\zeta)\bigr\rvert \lesssim |\eigB(z,z)\eigB(\zeta,\zeta) \eigB(z,\zeta)^2|^{-1}\bigl( (\Delta_0 + \dis(z) +\dis(\zeta) )^{2/3} 
		\bigr),
	\end{equation}
	holds uniformly in $z,\zeta \in \D$.	
\end{lemma}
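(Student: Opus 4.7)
The plan is to analyze the three contributions to $\mathcal{K}(z,\zeta)$ in \eqref{kernel_K} separately. The first trace produces the leading singular behavior proportional to $\eigB(z,\zeta)^{-2}$, while the other two are absorbed into the error.

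First, using $\partial_\zeta X^{-1} = -X^{-1}(\partial_\zeta X)X^{-1}$ and the identity $(1-S\m(z)\m(\zeta))^T = \stab(z,\zeta)$ (which follows from $S = S^T$ and the $\m$'s being diagonal), together with cyclicity of the trace, we rewrite
\[ \partial_\zeta \Tr\Bigl[\frac{\m'(z)}{\m(z)}(1-S\m(z)\m(\zeta))^{-1}\Bigr] = \Tr\Bigl[\frac{\m'(z)}{\m(z)}\,\stab(z,\zeta)^{-1}\,\m(z)\m'(\zeta)S\,\stab(z,\zeta)^{-1}\Bigr]. \]
Inserting the spectral decomposition $\stab(z,\zeta)^{-1} = \eigB(z,\zeta)^{-1}\Pi(z,\zeta) + \stab(z,\zeta)^{-1}Q(z,\zeta)$ into both resolvent factors yields four pieces. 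The doubly singular piece supplies the main term of \eqref{eq:K_est}; the mixed and regular pieces are bounded by $O(|\eigB(z,z)\eigB(\zeta,\zeta)\eigB(z,\zeta)|^{-1})$ and $O(|\eigB(z,z)\eigB(\zeta,\zeta)|^{-1})$ respectively, using \eqref{eq:m'_bound}, \eqref{eq:inbstabQ}, and \eqref{eq:Pi_norm}.

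To extract the main term, the rank-one structure of $\Pi(z,\zeta)$ factorizes the singular piece into a product of two scalar products against the left and right eigenvectors of $\stab(z,\zeta)$. We perturb $\Pi(z,\zeta) = \Pi(\sng,\sng) + O(|\dM|+|\other{\dM}|)$ via Lemma \ref{lemma_ncpt} (writing $\dM := \dM(\sng,z)$, $\other{\dM} := \dM(\sng,\zeta)$), and expand $\m'(z)/\m(z)$, $\m'(\zeta)$ using Lemma \ref{lemma_m'}. Applying the explicit rank-one form of $\Pi(\sng,\sng)$ from \eqref{b_def} and the trace identities \eqref{sigma_tr}--\eqref{traces2} derived in the proof of Proposition \ref{prop_eigval}, the two scalar products combine to yield exactly $2\pi^2[(\sigma+\psi\dM+\psi\other{\dM})^2+2\psi^2\dM\other{\dM}]/[\langle\vect{f}^2\rangle^4\eigB(z,z)\eigB(\zeta,\zeta)]$, which multiplied by $\eigB(z,\zeta)^{-2}$ is the leading term in \eqref{eq:K_est}.

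The two remaining contributions to $\mathcal{K}(z,\zeta)$ are subleading: $\Tr[S\m'(z)\m'(\zeta)]$ is bounded by $O(|\eigB(z,z)\eigB(\zeta,\zeta)|^{-1})$ via \eqref{eq:m'_bound} and \eqref{cond_A}, while the fourth-cumulant term is bounded similarly using the moment bound \eqref{cond_B} on $\Cmlnt^{(4)}$. Both are strictly smaller than the $\eigB(z,\zeta)^{-2}$ singularity and are absorbed into the error. For the crude bound \eqref{eq:K_crude_bound}, we estimate the numerator of \eqref{eq:K_est} by $|\sigma|^2 + |\dM|^2 + |\other{\dM}|^2$, using \eqref{eq:sigma_comp} to bound $|\sigma|^2\lesssim \Delta_0^{2/3}$ and \eqref{eq:better_dM_bound} together with \eqref{dm_cubic}--\eqref{eq:cubic_mus} and \eqref{eq:rho+sigma_comp} to bound $|\dM|^2 \lesssim (\Delta_0+\dis(z))^{2/3}$ and similarly for $|\other{\dM}|^2$.

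The hardest part will be the algebraic bookkeeping in the rank-one reduction: the perturbative errors in $\Pi(z,\zeta)$, in $\m'(z)/\m(z)$, and in the eigenvectors $\vect{b}^\ell, \vect{b}$ each produce linear and quadratic cross-terms in $\sigma, \dM, \other{\dM}$ that must combine exactly to reproduce the precise numerator $(\sigma+\psi\dM+\psi\other{\dM})^2+2\psi^2\dM\other{\dM}$, with all residual contributions fitting within the stated error $O\bigl((\Delta_0^{1/3}+|\dM|+|\other{\dM}|)^2(|\dM|+|\other{\dM}|)\bigr)/[|\eigB(z,z)\eigB(\zeta,\zeta)\eigB(z,\zeta)^2|]$.
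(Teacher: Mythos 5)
Your overall strategy is the same as the paper's (insert the spectral decomposition $\stab^{-1} = \eigB^{-1}\Pi + \stab^{-1}Q$ into the kernel, isolate the most singular piece, and expand the remaining scalar traces via Lemma \ref{lemma_m'}, the rank-one form of $\Pi$, and the identities \eqref{sigma_tr}--\eqref{traces2}), but there is a genuine gap at the central step.

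You claim that the doubly-singular piece $\eigB(z,\zeta)^{-2}\Pi\otimes\Pi$ alone produces the stated numerator $(\sigma+\psi\dM+\psi\other{\dM})^2 + 2\psi^2\dM\other{\dM}$, and that the mixed pieces of order $\eigB(z,\zeta)^{-1}$ are bounded by $O(|\eigB(z,z)\eigB(\zeta,\zeta)\eigB(z,\zeta)|^{-1})$ and absorbed in the error. Neither is correct. Since $\Pi=\Pi(z,\zeta)$ depends on both spectral parameters, the two scalar factors $\eigB(z,z)\langle\vect{f}^2\rangle\Tr[\Pi\,\m'/\m]$ and $\eigB(\zeta,\zeta)\langle\vect{f}^2\rangle\Tr[\Pi\,\other{\m}'/\other{\m}]$ are \emph{asymmetric} in $(\dM,\other{\dM})$: they equal $\pi\langle\vect{f}^2\rangle^{-1}(\sigma+2\psi\dM+\psi\other{\dM})$ and $\pi\langle\vect{f}^2\rangle^{-1}(\sigma+\psi\dM+2\psi\other{\dM})$ up to error (cf.\ \eqref{K_term1}). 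Their product is $(\sigma+2\psi\dM+\psi\other{\dM})(\sigma+\psi\dM+2\psi\other{\dM})$, which differs from the stated numerator by $\psi\bigl[\sigma(\dM+\other{\dM})+\psi(\dM^2+\dM\other{\dM}+\other{\dM}^2)\bigr]$. By \eqref{eig_estimate} this difference is $-\psi\langle\vect{f}^2\rangle\eigB(z,\zeta)+O(\sigma^2\dM\other{\dM})$, i.e.\ of size $\sim\eigB(z,\zeta)$. In the cusp regime $\Delta_0\to 0$ with $|\dM|\sim|\other{\dM}|$, one has $\eigB(z,\zeta)\sim(\Delta_0^{1/3}+|\dM|+|\other{\dM}|)^2$, which is \emph{not} dominated by the allowed error $(\Delta_0^{1/3}+|\dM|+|\other{\dM}|)^2(|\dM|+|\other{\dM}|)$. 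Consequently the $\eigB(z,\zeta)^{-1}$ piece, whose leading coefficient is the $O(1)$ quantity $2\pi^2\psi/\langle\vect{f}^2\rangle^3$, is comparable to the main term and cannot be discarded; it must be computed explicitly and then recombined with the doubly-singular piece (rewriting $\eigB(z,\zeta)^{-1} = \eigB(z,\zeta)\cdot\eigB(z,\zeta)^{-2}$ and substituting \eqref{eig_estimate}). Only after this cancellation does the symmetric numerator $(\sigma+\psi\dM+\psi\other{\dM})^2+2\psi^2\dM\other{\dM}$ emerge. As written, your plan misidentifies which pieces are negligible and asserts an exact identity that does not hold term-by-term.
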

\begin{proof}[Proof of Lemma \ref{lemma:K_crude}]
Recall from \eqref{kernel_K} that the kernel $\mathcal{K}(z,\zeta)$ is defined as
\begin{equation} \label{K_def}
	\mathcal{K}(z,\zeta) = \Tr\bigl[2\frac{\m'}{\m}\stab^{-1}S\m\other{\m}'\stab^{-1}  - S \m'\other{\m}'
	\bigr] + \frac{1}{2}\frac{\partial^2}{\partial z\partial\zeta}\left\langle\overline{\m\other{\m}}, \Cmlnt^{(4)}\m\other{\m}\right\rangle,
\end{equation}
where $\stab:= \stab(z,\zeta)$ is the stability operator given in \eqref{stab_def}, and $\m := \m(z)$, $\other{\m} := \m(\zeta)$. We adhere to this notation throughout the rest of the section.

Plugging the decomposition $\stab^{-1} = \eigB^{-1}\Pi + \stab^{-1}(1-\Pi)$ into \eqref{K_def}, we obtain
\begin{equation} \label{K_traces}
	\begin{split}
		\mathcal{K}(z,\zeta) =&~ 
		2\eigB^{-2} \Tr\bigl[ \frac{\m'}{\m}\Pi\bigr]\Tr\bigl[\frac{\other{\m}'}{\other{\m}}\Pi\bigr] 
		+ 2\eigB^{-1} \bigl( \Tr\bigl[\frac{\m'}{\m}\Pi\frac{\other{\m}'}{\other{\m}}\frac{Q}{\stab} + \frac{\m'}{\m}\frac{Q}{\stab}\frac{\other{\m}'}{\other{\m}}\Pi - \frac{\m'\other{\m}'}{\m\other{\m}}\Pi\bigr] \bigr)\\
		&+ \Tr\bigl[2\frac{\m'}{\m}\frac{Q}{\stab}\frac{\other{\m}'}{\other{\m}}\frac{Q}{\stab} - 2\frac{\m'\other{\m}'}{\m\other{\m}}\frac{Q}{\stab} - S \m'\other{\m}'\bigr] + \frac{1}{2}\frac{\partial^2}{\partial z\partial\zeta}\left\langle\overline{\m\other{\m}}, \Cmlnt^{(4)}\m\other{\m} \right\rangle,
	\end{split}
\end{equation}
where $\eigB := \eigB(z,\zeta)$ is the destabilizing eigenvalue of $\stab(z,\zeta)$, $\Pi:= \Pi(z,\zeta)$ is the corresponding eigenprojector, and $Q := Q(z,\zeta) = 1-\Pi(z,\zeta)$.

We begin by computing the $\eigB^{-2}$ term. Combining \eqref{Pi_pert}, \eqref{sigma_tr}, \eqref{traces1}, \eqref{traces2} and Lemma \ref{lemma_m'}, we deduce that 
\begin{equation} \label{K_term1}
	\begin{split}
		\langle\vect{f}^2\rangle\eigB(z)\Tr\bigl[\Pi\frac{\m'}{\m}\bigr] =& \frac{\pi\sigma}{\langle\vect{f}^2\rangle}
		+ 2\pi\dM\frac{\psi }{\langle \vect{f}^2\rangle} + \pi\other{\dM}\frac{\psi}{\langle\vect{f}^2\rangle} +\mathcal{O}\bigl(\Delta_0^{1/3}|\dM| + |\dM|^2 + |\other{\dM}|^2
		\bigr),
	\end{split}
\end{equation}
with $\dM := \dM(\sng, z)$, $\other{\dM} := \dM(\sng, \zeta)$.
Therefore, 
\begin{equation}
	\begin{split}
		\eigB(z)\eigB(\zeta)\Tr\bigl[ \frac{\m'}{\m}\Pi\frac{\other{\m}'}{\other{\m}}\Pi\bigr] =&~ \frac{\pi^2}{\langle\vect{f}^2\rangle^4}(\sigma + 2\psi\dM + \psi\other{\dM} )(\sigma + \psi\dM + 2\psi\other{\dM} )\\ &+ \mathcal{O}\bigl(\Delta_0^{2/3}(|\dM|+|\other{\dM}|) + \Delta_0^{1/3}(|\dM|^2+|\other{\dM}|^2)
		\bigr).
	\end{split}
\end{equation}

We now compute the $\beta^{-1}$ term in \eqref{K_traces}. It follows by Lemma \ref{lemma_m'} and estimates \eqref{Pi_pert}, \eqref{Q/B_pert}, that 
\begin{equation}
	\begin{split}
		\eigB(z)\eigB(\zeta)\Tr\bigl[\frac{\m'}{\m}\Pi\frac{\other{\m}'}{\other{\m}}\frac{Q}{\stab} + \frac{\m'}{\m}\frac{Q}{\stab}\frac{\other{\m}'}{\other{\m}}\Pi - \frac{\m'\other{\m}'}{\m\other{\m}}\Pi\bigr] = \frac{\pi^2\psi}{\langle\vect{f}^2\rangle^3} + \mathcal{O}\bigl(\Delta_0^{2/3}+|\dM|+|\other{\dM}|\bigr).
	\end{split}
\end{equation}

By Assumptions \eqref{cond_A}, \eqref{cond_B}, and estimate \eqref{m_bound}, the terms in the second line of \eqref{K_traces} are bounded by $\mathcal{O}(|\eigB(z)\eigB(\zeta)|^{-1})$, and it follows from \eqref{eig_estimate} that $|\eigB(z,\zeta)| \lesssim \Delta_0^{1/3}(|\dM| +|\other{\dM}|) + |\dM|^2 + |\other{\dM}|^2$, hence \eqref{eq:K_est} is established. 

The bound \eqref{eq:K_crude_bound} follows from \eqref{dM_bound}, \eqref{eq:sigma_comp}, and the estimate \eqref{eq:K_est}.
\end{proof}
\subsection{Integration by Parts}
For the remainder of Section \ref{sec:variance_lemma_proof}, some implicit constants in the $\lesssim$ and $\sim$ relations may depend on the $L^2$ norms of the test function $g$ and its derivatives, as well as on the choice of the function $\chi$ in \eqref{QA_f}.
\begin{lemma} \label{lemma:V*} Let $0 < \eta_* \le N^{-\varepsilon_0/2}\eta_0$, and define $\Omega_* := \{z\in\mathbb{C} : \eta_* < |\im z| < 1\}$, then 
	\begin{equation} \label{eq:V*estimate}
		V(f) = \frac{1}{\pi^2}\int\limits_{\Omega_*}\int\limits_{\Omega_*}\frac{\partial\other{f}(\zeta)}{\partial\bar\zeta}\frac{\partial\other{f}(z)}{\partial\bar{z}}\mathcal{K}(z,\zeta)\mathrm{d}\bar\zeta\mathrm{d}\zeta\mathrm{d}\bar{z}\mathrm{d}z + \mathcal{O}(N^{-\varepsilon_0}).
	\end{equation}
\end{lemma}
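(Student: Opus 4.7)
Since $\Omega_0' \subset \dom \subset \Omega_*$, the discrepancy between the left- and right-hand sides of \eqref{eq:V*estimate} splits as
\[
-\frac{1}{\pi^2}\iint\limits_{(\Omega_*\setminus\dom)\times\Omega_*}(\cdots)\;-\;\frac{1}{\pi^2}\iint\limits_{\dom\times(\Omega_*\setminus\Omega_0')}(\cdots),
\]
with integrand $\partial_{\bar\zeta}\other f(\zeta)\,\partial_{\bar z}\other f(z)\,\mathcal{K}(z,\zeta)$, in each of which one of the two variables is restricted to a slab of thickness at most $2N^{-\alp}\eta_0$ adjacent to the real axis. The plan is to show each such slab contribution is $O(N^{-\varepsilon_0})$ by exploiting the first-order vanishing of $\partial_{\bar z}\other f$ on $\mathbb{R}$ together with the crude kernel bound \eqref{eq:K_crude_bound} and Lemma \ref{lemma:int}.

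\textbf{Key ingredients.}
First, a direct computation from \eqref{QA_f}, using that $\chi'\equiv 0$ on $[-c_1/2,c_1/2]$, yields $2\,\partial_{\bar z}\other f(z) = \I(\im z)\,\chi(\im z)\,f''(\re z)$ for $|\im z|<c_1/2$. Since $f(x)=g(\eta_0^{-1}(x-\sng))$ has support of length $O(\eta_0)$ with $|f''|\lesssim \eta_0^{-2}$, this gives the pointwise bound $|\partial_{\bar z}\other f(z)|\lesssim |\im z|\,\eta_0^{-2}\,\mathds{1}_{|\re z-\sng|\lesssim\eta_0}$, whose $L^1$-mass over the slab $\{\eta_*<|\im z|\le 2N^{-\alp}\eta_0\}\cap\supp{\other f}$ is of order $N^{-\varepsilon_0}\eta_0$. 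Second, Proposition \ref{prop:eigB} together with the uniform bound \eqref{dM_bound} on $\dM$ converts
\[
|\eigB(z,\zeta)|^{-2}\lesssim \frac{(\Delta_0+\dis(z)+\dis(\zeta))^{2/3}}{|z-\zeta|^2};
\]
combined with \eqref{eq:K_crude_bound} and \eqref{eq:eig_comp}, this produces a pointwise bound on $|\mathcal{K}(z,\zeta)|$ with explicit power-law factors in $\dis(z),\dis(\zeta),\Delta_0$ and an integrable diagonal singularity $|z-\zeta|^{-2}$.

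\textbf{Assembly and main obstacle.}
For the slab contribution in $z$, I integrate in $\zeta$ first: holding $z$ in the slab at distance at least $\eta_*$ from $\mathbb{R}$, the function $\mathcal{K}(z,\cdot)$ is holomorphic in $\zeta$ on each half-plane, and its pointwise bound matches the hypothesis of Lemma \ref{lemma:int} in the variable $\zeta$ with exponents summing to at most $2$. Applying that lemma produces an estimate whose pairing with the slab $L^1$-mass $\lesssim N^{-\varepsilon_0}\eta_0$ of $\partial_{\bar z}\other f$ and with the $\eta_0$-dependent factors extracted from the kernel delivers the required $O(N^{-\varepsilon_0})$; the symmetric slab in $\zeta$ is treated identically. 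The principal difficulty is reconciling the near-diagonal singularity $\eigB(z,\zeta)^{-2}$ of $\mathcal{K}$ with the near-cusp blow-up of $\eigB(z,z)^{-1}$ and $\eigB(\zeta,\zeta)^{-1}$ as $|\im z|,|\im\zeta|\to 0$. This is precisely resolved by Proposition \ref{prop:eigB}, which converts the $\eigB(z,\zeta)^{-2}$ blow-up into the integrable $|z-\zeta|^{-2}$ singularity via the uniform boundedness of $\dM(z,\zeta)$ on $\D$.
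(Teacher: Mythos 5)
Your decomposition into two slab contributions and the observation that the $L^1$-mass of $\partial_{\bar z}\other f$ over a slab of height $\lesssim N^{-\alp}\eta_0$ is of order $N^{-\varepsilon_0}\eta_0$ are both correct, and this is a genuinely different route from the paper's. The paper instead passes to polar coordinates around $\sng$ and $\widehat{\sng}$ and performs the integrals explicitly using the \emph{uniform} lower bound \eqref{eq:eigB_crude_bound}, $|\eigB(z,\zeta)|\gtrsim\rho(z)^{-1}|\im z|+\rho(\zeta)^{-1}|\im\zeta|$, imported from Lemma~4.4 of \cite{Landon2021Wignertype}; it does not invoke Proposition~\ref{prop:eigB} or Lemma~\ref{lemma:int} for this step at all.

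The gap in your argument is the treatment of the kernel near the diagonal. Combining Proposition~\ref{prop:eigB} with the weak bound $|\dM(z,\zeta)|\lesssim|z-\zeta|^{1/3}$ from \eqref{dM_bound} yields $|\eigB(z,\zeta)|^{-2}\lesssim(\Delta_0+\dis(z)+\dis(\zeta))^{2/3}|z-\zeta|^{-2}$, but $|z-\zeta|^{-2}$ is \emph{not} an integrable singularity in two dimensions: for $z$ in the slab at height $\sim\eta_*$, the point $\zeta=z$ belongs to the closure of the $\zeta$-domain $\Omega_*$, and $\partial_{\bar\zeta}\other f$ is of size $\sim\eta_*\eta_0^{-2}\neq 0$ there, so $\int_{\zeta\approx z}|z-\zeta|^{-2}\,\mathrm{d}\bar\zeta\,\mathrm{d}\zeta$ is log-divergent with no uniform cutoff separating $z$ from the $\zeta$-domain. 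The true kernel is in fact bounded near $\zeta=z$ (since $\eigB(z,\zeta)\to\eigB(z,z)\neq 0$), so the conversion via the $|z-\zeta|^{1/3}$-bound on $\dM$ is far from tight there; one would need to combine it with the alternative bound $|\dM(z,\zeta)|\lesssim|z-\zeta|/\rho(z)^2$ from \eqref{dM_bound}, which tames the singularity but is not part of your argument. Relatedly, the resulting pointwise bound on $\mathcal{K}(z,\cdot)$ depends on $\dis(\zeta)$ and $|z-\zeta|$, not on $|\im\zeta|$ alone, so it does not have the form $C|\im\zeta|^{-s}(\Delta_0+|\im\zeta|)^{-q}$ required by Lemma~\ref{lemma:int}, and the inequality $|z-\zeta|\gtrsim|\im\zeta|$ you would need to reduce to that form fails when $|\im\zeta|$ is comparable to $|\im z|$. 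The paper's crude bound \eqref{eq:eigB_crude_bound} sidesteps both issues because it contains no diagonal singularity whatsoever.
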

\begin{proof}[Proof of Lemma \ref{lemma:V*}]
	Let $V_*(f)$ denote the integral on the right-hand side of \eqref{eq:V*estimate}. For all $z\in\supp{f}$, we have $|\re z - \sng| \le C\eta_0$. If $\widehat{\sng}\in \D$, we split the domains of integration over $z$ and $\zeta$ by the perpendicular bisector of $[\sng, \widehat{\sng}]$ to facilitate a polar change of coordinates around $\sng$ and $\widehat{\sng}$ in their respective half-planes. In each part of the domain we consider two separate regimes: $\re z\in\supp{\rho}$ and $\re z \notin \supp{\rho}$, therefore $|V(f)-V_*(f)| = I_{\mathrm{in,in}} + I_{\mathrm{in,out}} + I_{\mathrm{out,in}}+ I_{\mathrm{out,out}}$.
	
	To bound the eigenvalue $\eigB(z,\zeta)$ we use the following technical estimate, which is an immediate consequence of \eqref{eigF_1} and Lemma 4.4 in \cite{Landon2021Wignertype}.
	\begin{equation} \label{eq:eigB_crude_bound}
		|\eigB(z,\zeta)| \gtrsim \rho(z)^{-1}|\im z| + \rho(\zeta)^{-1}|\im\zeta|.
	\end{equation}
	
	It follows from estimates \eqref{eq:K_crude_bound}, \eqref{eq:eigB_crude_bound} and the scaling relations \eqref{eq:eig_comp} that
	\begin{equation}
		\begin{split}
			|I_{\mathrm{in,in}}| &\lesssim \eta_0^{-4} \int\limits_{\Omega_{\mathrm{in}}}\int\limits_{\Omega_{\mathrm{in}}}  \frac{(\dis^{3/2}\sin\omega) (\other{\dis}^{3/2} \sin\other{\omega}) (\Delta_0+\dis)^{-1/6}(\Delta_0 + \other{\dis})^{-1/6}(\Delta_0 + \dis + \other{\dis} )^{2/3}}{ \bigl((\Delta_0+\dis)^{1/6}\dis^{1/2}\sin\omega + (\Delta_0+\other{\dis})^{1/6}\other{\dis}^{1/2}\sin\other{\omega})^2}\mathrm{d}\dis\mathrm{d}\omega\mathrm{d}\other{\dis}\mathrm{d}\other{\omega}\\ 
			&\lesssim \int\limits_0^{C\eta_0}\int\limits_0^{C\eta_0} \frac{(\Delta_0 + \dis)^{1/3}}{(\Delta_0+\other{\dis})^{1/3}}\dis\other{\dis}\min\{1,\dis^{-1}\eta_0N^{-\varepsilon_0/2}\}\min\{1,\other{\dis}^{-1}\eta_0N^{-\varepsilon_0/2}\}\mathrm{d}\dis\mathrm{d}\other{\dis} \lesssim N^{-\varepsilon_0},
		\end{split}
	\end{equation}
	where $\Omega_{\mathrm{out}} := \{(\dis,\omega) : 0 \le \dis \le C\eta_0, 0 \le \omega \le \pi/2, \sin\omega \le 2\eta_0N^{-\varepsilon_0/2} \}$, and we used the uniform bound $|f''(x)| \lesssim \eta_0^{-2}$. The last inequality follows by direct computation.
	
	Similarly, in the case  $z\in\supp{\rho}$, $\zeta\notin\supp{\rho}$, we obtain the bound
	\begin{equation}
		\begin{split}
			|I_{\mathrm{in,out}}| & \le \frac{C}{\eta_0^4}\int\limits_{\Omega_{\mathrm{out}}}\int\limits_{\Omega_{\mathrm{in}}}  \frac{\dis^{3/2}\sin\omega\,\other{\dis}^{3/2}(\Delta_0+\dis)^{-1/6} (\Delta_0 + \dis + \other{\dis} )^{2/3} \, \mathrm{d}\dis\mathrm{d}\omega\mathrm{d}\other{\dis}\mathrm{d}\other{\omega} }{(\Delta_0 + \other{\dis})^{1/6} \bigl((\Delta_0+\dis)^{1/6}\dis^{1/2}\sin\omega + (\Delta_0+\other{\dis})^{1/6}\other{\dis}^{1/2})^2} \lesssim N^{-\varepsilon_0},
		\end{split}
	\end{equation}
	where $\Omega_{\mathrm{in}} := \{(\dis,\omega) : 0 \le \dis \le \min\{\Delta/2, C\eta_0\}, 0 \le \omega \le \pi/2, \sin\omega \le 2\eta_0N^{-\varepsilon_0/2} \}$. By exchanging $z$ and $\zeta$, the same estimate holds for $I_{\mathrm{out,in}}$. The case $z,\zeta\notin\supp{\rho}$ follows analogously. This concludes the proof of Lemma \ref{lemma:V*}.
\end{proof}
\begin{lemma}  \label{lemma:V_good}
	Let $\eta_*\in (0,N^{-100})$.
	There exists and interval $\mathcal{R}$ with length $|\mathcal{R}|\sim N^{-\varepsilon_0/3}$, which satisfies $\sng\in\mathcal{R}$, and $\dist(\partial\,\mathcal{R}, \Sng) \sim N^{-\varepsilon_0/3}$, such that 
	\begin{equation} \label{eq:V_eta*}
		V(f) = \frac{1}{4\pi^2} \iint\limits_{\mathcal{R}^2} \bigl(f(y)-f(x)\bigr)^2\mathcal{K}_{\eta_*}(x,y)\mathrm{d}x\mathrm{d}y + \mathcal{O}(\eta_0^{1/3}+N^{-\varepsilon_0}),
	\end{equation}
	where the kernel $\mathcal{K}_{\eta_*}(x,y)$ is defined as
	\begin{equation} \label{eq:K_eta}
		\mathcal{K}_{\eta_*}(x,y) := \re\bigl[\mathcal{K}(x+\I\eta_*,y+\I\eta_*) - \mathcal{K}(x+\I\eta_*,y-\I\eta_*)\bigr].
	\end{equation}
\end{lemma}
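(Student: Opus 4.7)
The plan proceeds in three steps: a double application of Stokes' theorem, a polarization combined with a vanishing-diagonal identity, and a tail truncation to $\mathcal{R}^2$.

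For the first step, I would observe that $\mathcal{K}(z,\zeta)$ from \eqref{kernel_K} is holomorphic in each variable on $\mathbb{C}\setminus\mathbb{R}$: this is manifest for the first two terms, and for the cumulant term the outer conjugation cancels against the conjugation built into the normalized inner product $\langle\cdot,\cdot\rangle$, reducing it to the holomorphic expression $\tfrac{1}{2N}\sum_{jk}m_j(z)m_j(\zeta)\Cmlnt^{(4)}_{jk}m_k'(z)m_k'(\zeta)$. Applying Stokes' theorem first in $\zeta$ (with boundary $\{\im\zeta = \pm\eta_*\}$) and then in $z$ produces four boundary-boundary integrals indexed by $(\pm\eta_*,\pm\eta_*)$. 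The symmetry $\mathcal{K}(\bar z,\bar\zeta) = \overline{\mathcal{K}(z,\zeta)}$, inherited from $m_j(\bar z) = \overline{m_j(z)}$, pairs these into two complex-conjugate pairs, and approximating $\other{f}(x\pm\I\eta_*) = f(x) + O(\eta_*)$ collapses the result to $c\iint_{\mathbb{R}^2} f(x)f(y)\mathcal{K}_{\eta_*}(x,y)dxdy$ up to an error of order $\eta_*\|f\|_{C^1} + N^{-\varepsilon_0}$, for an explicit constant $c$ coming from the Stokes prefactors.

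For the second step, the symmetry $\mathcal{K}_{\eta_*}(x,y) = \mathcal{K}_{\eta_*}(y,x)$ (visible from the manifestly symmetric leading expression in Lemma \ref{lemma:K_crude}) together with the polarization $2f(x)f(y) = f(x)^2 + f(y)^2 - (f(x)-f(y))^2$ splits the above into a $(f(y)-f(x))^2$ integral and a residual diagonal piece proportional to $\int_{\mathbb{R}} f(x)^2\bigl(\int_{\mathbb{R}}\mathcal{K}_{\eta_*}(x,y)dy\bigr)dx$. I would then show $\int_{\mathbb{R}}\mathcal{K}_{\eta_*}(x,y)dy = 0$ exactly: the first and cumulant pieces of $\mathcal{K}(x+\I\eta_*,\zeta)$ are $\partial_\zeta$ of expressions whose limits at $\zeta = \pm\infty$ coincide (since $\m(\zeta)\to 0$); the middle term $(1-2/\boldsymbol{\beta})\Tr[S\m'(z)\m'(\zeta)]$ integrates to $\Tr[S\m'(z)(\m(+\infty\pm\I\eta_*)-\m(-\infty\pm\I\eta_*))]=0$; and the cumulant kernel's $\zeta$-integral vanishes by contour deformation within the half-plane to $\im\zeta\to\pm\infty$, where $m_j(\zeta)m_k'(\zeta) = O(|\zeta|^{-3})$. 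This exact cancellation matches the Stokes constant $c$ against the claimed prefactor $1/(4\pi^2)$.

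Finally, to restrict the $\mathbb{R}^2$-integration to $\mathcal{R}^2$, I would choose $\mathcal{R}\subset\D$ of length $\sim N^{-\varepsilon_0/3}$ centered on $\sng$ with $\dist(\partial\mathcal{R},\Sng)\sim N^{-\varepsilon_0/3}$, so that $\supp{f} \subset \mathcal{R}$. On $\mathbb{R}^2\setminus\mathcal{R}^2$ the integrand is supported where exactly one of $x,y$ lies in $\supp{f}$ and the other is separated from $\sng$ by at least $N^{-\varepsilon_0/3}\gg\eta_0$; in this regime, Lemma \ref{lemma:K_crude} combined with Propositions \ref{prop_scaling} and \ref{prop:eigB} gives $|\mathcal{K}_{\eta_*}(x,y)| \lesssim (x-y)^{-2}$ up to tame polynomial factors in $\dist(\cdot,\Sng)$, and the tail integral against $\int f(x)^2 dx \lesssim \eta_0$ contributes $O(\eta_0^{1/3}+N^{-\varepsilon_0})$.

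The main obstacle is the careful sign and orientation bookkeeping through the double Stokes manipulation, together with verifying the exact (not just up-to-error) vanishing of $\int\mathcal{K}_{\eta_*}(x,y)dy$: this exact cancellation is what pins down the specific constant $1/(4\pi^2)$ and distinguishes the present lemma from a routine change of variables. The cumulant term requires particular care, since its form in \eqref{kernel_K} involves factors of $\overline{\m(z)\m(\zeta)}$ that must first be reinterpreted as holomorphic before Stokes applies.
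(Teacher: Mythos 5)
Your proposal is a genuinely different route from the paper's. The paper never works with the boundary integral $\iint f(x)f(y)\mathcal{K}_{\eta_*}(x,y)\,\mathrm{d}x\,\mathrm{d}y$ that you propose as the intermediate object. Instead, it introduces the explicit antiderivative $\mathcal{L}(z,\zeta) := -2\log\det\stab(z,\zeta) + \Tr[\m\other{\m}S] + \tfrac12\langle\overline{\m\other{\m}},\mathcal{C}^{(4)}[\m\other{\m}]\rangle$ with $\partial_z\partial_\zeta\mathcal{L}=\mathcal{K}$ (equation \eqref{eq:L''=K}), transfers derivatives to $\other{f}$ by repeated integration by parts in $\eta,\other{\eta}$, applies Stokes to land on $-\tfrac{1}{2\pi^2}\iint f'(x)f'(y)\other{\mathcal{L}}(x,y)\,\mathrm{d}x\,\mathrm{d}y$, and then integrates by parts in $x,y$ over $\mathcal{R}$ to recreate $(f(y)-f(x))^2\partial_x\partial_y\other{\mathcal{L}}$. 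Because $\mathcal{L}$ blows up only logarithmically (first bound of \eqref{eq:L_bounds}) and $\partial_\zeta\mathcal{L}$ blows up only like $|\im\zeta|^{-4/3}$, every intermediate quantity in the paper's chain is uniformly controlled. Your polarization-plus-cancellation route is attractive because it isolates the conceptually important identity $\int_{\mathbb{R}}\mathcal{K}_{\eta_*}(x,y)\,\mathrm{d}y = 0$, but note that the two arguments are not really independent: that identity is exactly the fundamental theorem of calculus for the same antiderivative ($\mathcal{K}(z,\cdot)=\partial_\zeta(\cdot)$ with boundary values coinciding at $\pm\infty$), so you are implicitly invoking the structure \eqref{eq:L''=K} anyway, just not naming it.

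There are, however, several genuine gaps in your plan. First, your intermediate quantity $\iint f(x)f(y)\mathcal{K}_{\eta_*}(x,y)\,\mathrm{d}x\,\mathrm{d}y$ is not uniformly bounded in $\eta_*$: the term $\mathcal{K}(x+\I\eta_*,y-\I\eta_*)$ carries a $|\eigB(\bar z,\zeta)|^{-2}\sim\rho^2/((y-x)^2+\eta_*^2)$ singularity whose $L^1$-norm in $y$ is of order $\eta_*^{-1}$, so the leading $f(x)f(y)$ piece is of order $\eta_*^{-1}$ and you are engineering a cancellation of two divergent quantities; consequently the stated error ``$\eta_*\|f\|_{C^1}$'' from replacing $\other{f}(x\pm\I\eta_*)$ by $f(x)$ is not of the right form — the true error involves $\eta_*$ multiplied by $\eta_*^{-1}$- or $\eta_0^{-1}$-type factors, and is negligible only because $\eta_*\le N^{-100}$ is super-small, which you should make explicit. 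Second, the polarization step needs the exact symmetry $\mathcal{K}_{\eta_*}(x,y)=\mathcal{K}_{\eta_*}(y,x)$; citing the leading-order expression of Lemma \ref{lemma:K_crude} does not establish this, whereas the exact symmetry of $\mathcal{L}(z,\zeta)$ gives it for free. Third — and this is the most substantive gap — both your cancellation identity and your tail bound require estimates on $\mathcal{K}(z,\zeta)$ for $\zeta$ ranging over all of $\mathbb{R}\pm\I\eta_*$, including far outside $\D$; Lemma \ref{lemma:K_crude} and Propositions \ref{prop_scaling}, \ref{prop:eigB} are all local to $\D$ and do not cover this regime. The paper sidesteps this entirely because after the first integration by parts the weight $f'(x)f'(y)$ is supported inside $\mathcal{R}\subset\D$, so $\zeta$ never leaves the disk where the perturbative analysis applies. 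Your route would require an additional argument that for $z\in\D$ and $\zeta$ in the bulk far from $\sng$, $\stab(z,\zeta)$ is well-conditioned and $\mathcal{K}(z,\zeta)$ decays; this is believable but is not in the paper.
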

\begin{proof}[Proof of Lemma \ref{lemma:V_good}]
	We introduce an auxiliary functions $\mathcal{L}(z,\zeta)$, defined by
	\begin{equation}
		\begin{split}
			\mathcal{L}(z,\zeta) &:= \mathcal{L}_{\log}(z,\zeta) + \mathcal{L}_1(z,\zeta),\\
			\mathcal{L}_{\log}(z,\zeta) &:= -2\log\det\{\stab(z,\zeta)\}, \quad \mathcal{L}_1(z,\zeta) := \Tr[\m\other{\m}S] +\frac{1}{2}\bigl\langle\overline{\m\other{\m}},\mathcal{C}^{(4)}[\m\other{\m}]\bigr\rangle
		\end{split}
	\end{equation}
	Note that 
	\begin{equation} \label{eq:L''=K}
		\frac{\partial^2}{\partial z\partial\zeta} \mathcal{L}(z,\zeta) = \mathcal{K}(z,\zeta)
	\end{equation}
	Furthermore,  the following estimates hold uniformly in $z,\zeta \in \D$,
	\begin{equation} \label{eq:L_bounds}
		|\mathcal{L}(z,\zeta)| \lesssim N\bigl(1+|\log (|\im z|)| + |\log(|\im\zeta|)|\bigr) ,\quad|\partial_\zeta{L}(z,\zeta)| \lesssim  \frac{(\Delta_0+\dis+\other{\dis})^{1/3}}{|\eigB(\zeta,\zeta)\eigB(z,\zeta)|}.
	\end{equation}
	We write $z:= x+\I\eta, \zeta := y+\I\other{\eta}$ and plug \eqref{eq:L''=K} into the expression \eqref{eq:V*estimate} for $V(f)$. Using the fact that $\partial_z u = -i\partial_\eta u$ for any holomorphic function $u(z)$, and integrating by parts in $\eta$, we obtain
	\begin{equation} \label{by_parts_in_eta}
		\begin{split}
			V(f) =& \frac{\I}{\pi^2}\iint\limits_{\mathbb{R}^2}\mathrm{d}x\mathrm{d}y\int\limits_{|\other{\eta}|>\eta_*} \frac{\partial \other f(\zeta)}{\partial \bar \zeta}\int\limits_{|\eta|>\eta_*} \frac{\partial^2 \other f(z)}{\partial \eta\partial \bar z} \frac{\partial}{\partial\zeta} \mathcal{L}(z,\zeta) \mathrm{d}\other{\eta}\mathrm{d}\eta\\
			&-\frac{\eta_*}{2\pi^2}\iint\limits_{\mathbb{R}^2}\mathrm{d}x\mathrm{d}yf''(x)\int\limits_{|\other{\eta}|>\eta_*} \frac{\partial \other f(\zeta)}{\partial \bar \zeta}\frac{\partial}{\partial\zeta}\bigl( \mathcal{L}(x+\I\eta_*,\zeta) + \mathcal{L}(x-\I\eta_*,\zeta) \bigr)\mathrm{d}\other{\eta}+ \bigO{N^{-\twoalp}}.
		\end{split}
	\end{equation}
	Here we used the fact that $\partial_{\bar{z}}\other{f}(x+\I\eta) = \I\eta f''(x)/2$, for $|\eta| \ll 1$. Combining the second estimate in \eqref{eq:L_bounds} with \eqref{eq:eigB_crude_bound}, we assert that $|\partial_{\zeta}\mathcal{L}(z,\zeta)| \lesssim |\other{\eta}|^{-4/3}$, therefore, invoking Lemma \ref{lemma:int}, we estimate the absolute value of the boundary term (the second line in \eqref{by_parts_in_eta}) by $\mathcal{O}(\eta_*\eta_0^{-4/3})$, which is smaller than $\bigO{N^{-\twoalp}}$.
	
	Similarly, integrating the first term on the right hand side of \eqref{by_parts_in_eta} by parts in $\other{\eta}$ yields
	\begin{equation} \label{by_parts_in_eta'}
		\begin{split}
			V(f) =& -\frac{1}{\pi^2}\int\limits_{\Omega_*}\int\limits_{\Omega_*}\frac{\partial^2 \other f(z)}{\partial \bar z\partial \eta} \frac{\partial^2 \other f(\zeta)}{\partial \bar \zeta\partial\other{\eta}} \mathcal{L}(z,\zeta) \mathrm{d}\bar\zeta\mathrm{d}\zeta \mathrm{d}\bar z\mathrm{d}z\\
			&+\frac{\I\eta_*}{2\pi^2}\iint\limits_{\mathbb{R}^2}\mathrm{d}x\mathrm{d}yf''(y)\int\limits_{|\eta|>\eta_*}  \frac{\partial^2 \other f(z)}{\partial \eta\partial \bar z}\bigl(\mathcal{L}(z,y+\I\eta_*)+\mathcal{L}(z,y-\I\eta_*)\bigr) \mathrm{d}\eta+ \bigO{N^{-\twoalp}}.
		\end{split}
	\end{equation}
	It follows from the first estimate in  \eqref{eq:L_bounds}, and the uniform bound $|\partial_\eta\partial_{\bar{z}}\other{f}| \lesssim \eta_0^{-2}$, 
	that the boundary term (the second line of \eqref{by_parts_in_eta'}) is again dominated by $\mathcal{O}(N^{-\twoalp})$.
	
	We apply Stokes' theorem to \eqref{by_parts_in_eta'} twice: once in $z$ and once in $\zeta$. Considering that $\partial_\eta \other{f}(z)$ vanishes on the boundary of $\Omega_*$ except for the lines $\{\im z = \pm\eta_*\}$, this results in
	\begin{equation} \label{V(f)_f'f'_form}
		\begin{split}
			V(f) =& \frac{1}{4\pi^2}\iint\limits_{\mathbb{R}^2}\sum\limits_{\eta,\other{\eta} = \pm\eta_*}\sign\left(\eta\other{\eta}\right)\frac{\partial \other f(x+\I\eta)}{\partial \eta} \frac{\partial \other f(y+\I\other{\eta})}{\partial\other{\eta}} \mathcal{L}(x+\I\eta,y+\I\other{\eta}) \mathrm{d}x\mathrm{d}y + \bigO{N^{-\twoalp}}\\
			=&-\frac{1}{2\pi^2}\iint\limits_{\mathbb{R}^2}f'(x)f'(y)\other{\mathcal{L}}(x,y) \mathrm{d}x\mathrm{d}y + \bigO{N^{-\twoalp}},
		\end{split}
	\end{equation}  
	where 
	\begin{equation}
		\other{\mathcal{L}}(x,y):=\re\left[\mathcal{L}(x+\I\eta_*,y+\I\eta_*)-\mathcal{L}(x+\I\eta_*,y-\I\eta_*)\right]
	\end{equation}
	
	It follows from (4.4) of Theorem 4.1 in \cite{Ajanki2016Univ}, that the length of the support intervals of $\rho$ is order one, and the spacing between the endpoints of the intervals and the cusp points is order one. Therefore, by possibly shrinking the size $c\sim 1$ of $\D$, we can assume that $\Sng\cap\D \subset \{\sng, \widehat{\sng}\}$. If $\Delta < \frac{1}{2}c_0N^{-\varepsilon_0/3}$, we set $E_\pm := \sng \pm \frac{3}{4}c_0N^{-\varepsilon_0/3}$, and if $\Delta > \frac{1}{2}c_0N^{-\varepsilon_0/3}$, we set $E_\pm := \sng \pm \frac{1}{4}c_0N^{-\varepsilon_0/3}$. Then $\mathcal{R}:=[E_-,E_+]$ has length $|\mathcal{R}| \sim N^{-\varepsilon_0/3}$, and satisfies $\sng\in \mathcal{R}$, $\dist(\partial\mathcal{R}, \Sng)\sim N^{-\varepsilon_0/3}$.
	
	For all $y\in\supp{f}$, $y-E_0\lesssim \eta_0$, hence $\supp{f}\subset \mathcal{R}$, and $|y-E_\pm|\sim N^{-\varepsilon_0/3}$. By symmetry of the function $\mathcal{L}(z,\zeta)$ and the second estimate in \eqref{eq:L_bounds}, it follows that  
	\begin{equation} \label{boundary_L'_bound}
		\frac{\partial}{\partial y}\other{\mathcal{L}}(E_\pm,y) \lesssim N^{\varepsilon_0/2}, \quad y \in \supp{f}.
	\end{equation} 
	We write $f'(y) = \partial_y\left(f(y)-f(x)\right)$, perform integration by parts in $y$ and integrate the boundary term by parts in $x$ to obtain
	\begin{equation} \label{by_parts_in_y}
		\begin{split}
			V(f)
			=&\frac{1}{2\pi^2}\iint\limits_{\mathcal{R}^2}f'(x)\left(f(y)-f(x)\right)\frac{\partial}{\partial y} 
			\other{\mathcal{L}}(x,y)
			\mathrm{d}x\mathrm{d}y\\
			&+\frac{1}{4\pi^2}\int\limits_{\mathcal{R}} f(x)^2 \frac{\partial}{\partial x}\bigl(\other{\mathcal{L}}(x,E_+) -\other{\mathcal{L}}(x,E_-) \bigr) \mathrm{d}x + \bigO{N^{-\twoalp}}.
		\end{split}
	\end{equation}
	Since $\norm{f}_2^2 \lesssim \eta_0$,  it follows from the first estimate in \eqref{eq:L_bounds} that the second integral in \eqref{by_parts_in_y} is $\mathcal{O}(N^{\varepsilon_0/2}\eta_0)$. 
	Similarly, integrating \eqref{by_parts_in_y} by parts in $x$ and using \eqref{by_parts_in_y} to substitute one of the emerging integrals for $-V(f) + \mathcal{O}(N^{-\twoalp}+N^{\varepsilon_0/2}\eta_0)$, we get
	\begin{equation} \label{by_parts_in_x}
		\begin{split}
			2V(f)
			=&\frac{1}{2\pi^2}\iint\limits_{\mathcal{R}^2}\bigl(f(y)-f(x)\bigr)^2\frac{\partial^2}{\partial x\partial y} 
			\other{\mathcal{L}}(x,y)
			\mathrm{d}x\mathrm{d}y
			+ \bigO{N^{\varepsilon_0/2}\eta_0+N^{-\twoalp }},
		\end{split}
	\end{equation}
	where we used \eqref{boundary_L'_bound} to estimate the boundary term. 
	For any holomorphic function $u(z)$ of $z = x + i\eta$, we have $\partial_x u = \re[\partial_z u]$, hence $\partial_x\partial_y\other{\mathcal{L}}(x,y) = \re\left[\mathcal{K}(x+\I\eta_*,y+\I\eta_*)-\mathcal{K}(x+\I\eta_*,y-\I\eta_*)\right]$. This concludes the proof of Lemma \ref{lemma:V_good}.
\end{proof}

\subsection{Dominated Convergence} \label{sec:dom_conv}
\begin{lemma} \label{lemma:K_dom}
	For all $x,y \in \mathcal{R}$, the estimate
	\begin{equation} \label{eq:Kdomi}
		|\mathcal{K}_{\eta_*}(x,y)| \lesssim \mathcal{K}_{\mathrm{bound}}(x,y) := \frac{\bigl(M(x,y-x)^2+\rho(x)^2\bigr)\bigl(\Delta_0 + \dis(x) + \dis(y) \bigr)^{2/3} + N^{-100/3}
		}{|x-y|^2\dis(x)^{1/2}(\Delta_0+\dis(x))^{1/6}\dis(y)^{1/2}(\Delta_0+\dis(y))^{1/6}}
	\end{equation}
	holds uniformly in $\eta_* \in [0, N^{-100}]$, where the control quantity $M(z,w)$ is defined in \eqref{eq:better_dM_bound}.
\end{lemma}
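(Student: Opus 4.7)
The plan is to insert into formula \eqref{eq:K_est} for $\mathcal K(z,\zeta)$ the explicit asymptotics for the destabilizing eigenvalues $\eigB(z,\zeta)$ and $\eigB(z,\bar\zeta)$ provided by Proposition \ref{prop:eigB}, and then to exploit the cancellation between $\mathcal K(z,\zeta)$ and $\mathcal K(z,\bar\zeta)$ that defines $\mathcal K_{\eta_*}$. Writing $z:=x+\I\eta_*$ and $\zeta:=y+\I\eta_*$, the one-body denominator $|\eigB(z)\eigB(\zeta)|$ appearing in \eqref{eq:K_est} already supplies, via \eqref{eq:eig_comp}, exactly the factor $\dis(x)^{1/2}(\Delta_0+\dis(x))^{1/6}\dis(y)^{1/2}(\Delta_0+\dis(y))^{1/6}$ of $\mathcal K_{\mathrm{bound}}$. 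Substituting \eqref{eq:eigB} into $\eigB(z,\zeta)^{-2}$ yields $\langle\vect f^2\rangle^2|\dM(z,\zeta)|^2/(\pi^2(y-x)^2)$ up to a lower-order multiplicative error, while \eqref{eq:eigBbar} gives $\eigB(z,\bar\zeta)^{-2}\approx \langle\vect f^2\rangle^2|\dM(z,\zeta)+2\I\rho(z)|^2/(\pi^2((y-x)-2\I\eta_*)^2)$, using the symmetry $\dM(z,\bar\zeta)=\overline{\dM(\bar z,\zeta)}$ that follows from the complex-conjugation convention \eqref{up_dn_limit}.

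Having made these substitutions, the numerator $\mathcal N(z,\zeta):=(\sigma+\psi\dM+\psi\other{\dM})^2+2\psi^2\dM\other{\dM}$ in \eqref{eq:K_est} is uniformly bounded by $(\Delta_0+\dis(x)+\dis(y))^{2/3}$ through \eqref{dM_bound}, \eqref{eq:sigma_comp}, and \eqref{eq:psi_sim_1}, contributing the $(\Delta_0+\dis(x)+\dis(y))^{2/3}$ factor of $\mathcal K_{\mathrm{bound}}$. The factor $|\dM(z,\zeta)|^2$ arising from $\eigB(z,\zeta)^{-2}$ is controlled by $M(x,y-x)^2$ via \eqref{eq:better_dM_bound} applied with real increment $w:=y-x$; expanding $|\dM+2\I\rho(z)|^2=|\dM|^2+4\rho(z)\im\dM+4\rho(z)^2$ contributes an additional $\rho(x)^2$ summand at worst. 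The error term on the right-hand side of \eqref{eq:K_est} carries an extra factor $|\dM|+|\other{\dM}|$ and produces, after the same substitutions, a strictly lower-order contribution. The additive $N^{-100/3}$ term in $\mathcal K_{\mathrm{bound}}$ arises from replacing $\eta_*>0$ by $0$ in $\dM(z,\zeta)$ and in $((y-x)-2\I\eta_*)^{-2}$, where the worst case $M(x,y-x)\sim\eta_*^{1/3}$ produces exactly the claimed error after using $\eta_*\le N^{-100}$.

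The main obstacle I anticipate is the transition regime $\rho(x)\sim M(x,y-x)$ with $|y-x|$ small: individually neither $\mathcal K(z,\zeta)$ nor $\mathcal K(z,\bar\zeta)$ is bounded by $\mathcal K_{\mathrm{bound}}(x,y)$, and the argument must track all cross terms in the real part of the difference to verify that spurious contributions of order $\rho(x)/(y-x)^3$ or $\eta_*^{-1}$ cancel. The structural inputs making the cancellation explicit are the symmetry $\dM(z,\bar\zeta)=\overline{\dM(\bar z,\zeta)}$, the $1/3$-H\"older continuity of $\vect f$, $\sigma$, and $\m$ at the real axis from Lemma \ref{lemma_m}, and the analyticity of the coefficients $\mu_k$ of the cubic equation \eqref{dm_cubic} in $z$. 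Once the algebra is carried out, the four bounded factors identified above multiply to yield precisely the right-hand side of \eqref{eq:Kdomi}, uniformly in $\eta_*\in[0,N^{-100}]$.
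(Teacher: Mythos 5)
Your middle paragraph essentially reproduces the paper's argument and is correct: apply the kernel estimate (in the paper, this is packaged as the crude bound \eqref{eq:K_crude_bound}), substitute the asymptotics \eqref{eq:eigB} and \eqref{eq:eigBbar} for $\eigB(z,\zeta)$ and $\eigB(z,\bar\zeta)$, bound $|\dM(z,\zeta)|$ via \eqref{eq:better_dM_bound} using that $M(x+\I\eta,y-x)$ is non-increasing in $\eta$, bound $\rho(z)$ via $1/3$-H\"older continuity, and absorb all $\eta_*$-dependent remnants into the $N^{-100/3}$ term using $\eta_*\le N^{-100}$.

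However, your opening and closing paragraphs misidentify the structure of the argument. No cancellation between $\mathcal{K}(z,\zeta)$ and $\mathcal{K}(z,\bar\zeta)$ is used or needed. By \eqref{eq:K_crude_bound}, each summand is separately controlled by $(\Delta_0+\dis(z)+\dis(\zeta))^{2/3}\,|\eigB(z,z)\eigB(\zeta,\zeta)|^{-1}|\eigB(z,\cdot)|^{-2}$. For the first term, $|\eigB(z,\zeta)|^{-1}\sim|\dM(z,\zeta)|/|y-x|\lesssim M(x,y-x)/|y-x|$; for the second, $|\eigB(z,\bar\zeta)|^{-1}\sim|\dM(z,\zeta)+2\I\rho(z)|/|z-\bar\zeta|\lesssim\bigl(M(x,y-x)+\rho(x)+\eta_*^{1/3}\bigr)/|y-x|$ since $|z-\bar\zeta|\ge|x-y|$. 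Combined with \eqref{eq:eig_comp} for $|\eigB(z,z)|$ and $|\eigB(\zeta,\zeta)|$, each term is individually $\lesssim\mathcal{K}_{\mathrm{bound}}(x,y)$, so the triangle inequality finishes the proof. In particular, in the regime $\rho(x)\sim M(x,y-x)$ with $|y-x|$ small no "spurious" $\rho(x)/(y-x)^3$ or $\eta_*^{-1}$ terms appear; the $|y-x|^{-2}$ singularity is already built into $\mathcal{K}_{\mathrm{bound}}$, and the $\eta_*^{-1}$ factor that could arise from $|z-\bar\zeta|\sim\eta_*$ only occurs when $|x-y|\lesssim\eta_*$, where $\mathcal{K}_{\mathrm{bound}}\gtrsim N^{-100/3}|x-y|^{-2}\gtrsim N^{-100/3}\eta_*^{-2}$ dominates. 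Drop the cancellation framing, and the middle paragraph is a complete proof.

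One minor point of hygiene: you write $\eigB(z,\zeta)^{-2}\approx\langle\vect f^2\rangle^2|\dM(z,\zeta)|^2/(\pi^2(y-x)^2)$, mixing a modulus in the numerator with a signed quantity in the denominator. Since only the modulus $|\eigB(z,\zeta)|^{-2}$ is needed, state the bound on the modulus directly; the complex phase of $\dM^2$ plays no role here.
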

\begin{proof}[Proof of Lemma \ref{lemma:K_dom}]
	It follows from \eqref{eq:eigB}, \eqref{eq:eigBbar}, and \eqref{eq:K_crude_bound} that for all $z:=x+\I \eta$, $\zeta:=y+\I\eta \in \D\cap\mathbb{H}$, we have 
	\begin{equation}
		|\mathcal{K}_{\eta_*}(x,y)| \lesssim \frac{\bigl(M(x+\I\eta_*,y-x)+\rho(x)+\eta_*^{1/3}\bigr)^2\bigl(\Delta_0 + \dis(x) + \dis(y) + \eta_* \bigr)^{2/3}}{|x-y|^2\dis(x)^{1/2}(\Delta_0+\dis(x))^{1/6}\dis(y)^{1/2}(\Delta_0+\dis(y))^{1/6}},
	\end{equation}
	where we used $\dis(z) \gtrsim \dis(\re z)$, $|\dM(z,\zeta)| \lesssim M(x+\I\eta,y-x)$ by \eqref{eq:better_dM_bound}, and $\rho(x+\I\eta) \lesssim \rho(x)+\eta^{1/3}$, which follows from $1/3$-H\"{o}lder continuity of $\rho$. Observe that by definition of $M(z,y-x)$ in \eqref{eq:better_dM_bound}, the quantity $M(x+\I\eta,y-x)$ is monotonically non-increasing in $\eta \ge 0$. The estimate \eqref{eq:Kdomi} now follows from the fact that both factors in the numerator are $\mathcal{O}(1)$.
\end{proof}

\begin{lemma} \label{lemma:domi}
	The quantity $\mathcal{K}_{\mathrm{bound}}(x,y)$ defined in \eqref{eq:Kdomi} satisfies
	\begin{equation} \label{eq:Kdom_int}
		\iint\limits_{\mathcal{R}^2}\bigl(f(x)-f(y)\bigr)^2\mathcal{K}_{\mathrm{bound}}(x,y)\mathrm{d}x\mathrm{d}y \lesssim 1.
	\end{equation}
\end{lemma}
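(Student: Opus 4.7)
The plan is to reduce the double integral to a finite sum of elementary integrals, each of which is handled by elementary changes of variables. The key inputs are the two Lipschitz-type bounds
\begin{equation*}
|f(x)-f(y)|^2 \lesssim \min\{1,\,\eta_0^{-2}|x-y|^2\}
\end{equation*}
(which follow from $\|f\|_\infty \lesssim 1$ and $\|f'\|_\infty \lesssim \eta_0^{-1}$), together with the three alternative bounds in the definition \eqref{eq:better_dM_bound} of $M(x,y-x)$ and the comparison $\rho(x)^2 \lesssim \dis(x)(\Delta_0+\dis(x))^{-1/3}$ from \eqref{eq:rho_comp}. By the symmetry $\mathcal{K}_{\mathrm{bound}}(x,y) = \mathcal{K}_{\mathrm{bound}}(y,x)$ and $\supp f \subset [\sng-C\eta_0,\sng+C\eta_0]$, it suffices to bound the integral over the region $x \in \supp f$.

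I would treat the three summands in the numerator of $\mathcal{K}_{\mathrm{bound}}$ separately. The $N^{-100/3}$ contribution is immediate: inserting $(f(x)-f(y))^2 \lesssim \eta_0^{-2}(x-y)^2$ cancels the $|x-y|^{-2}$, after which one integrates the residual $\dis(x)^{-1/2}(\Delta_0+\dis(x))^{-1/6}\dis(y)^{-1/2}(\Delta_0+\dis(y))^{-1/6}$ over $\mathcal{R}^2$ to get a polynomially bounded (in $N$) factor, which is crushed by $N^{-100/3}\eta_0^{-2}$. For the $\rho(x)^2$ contribution, substituting $\rho(x)^2 \lesssim \dis(x)(\Delta_0+\dis(x))^{-1/3}$ and the subadditivity $(\Delta_0+\dis(x)+\dis(y))^{2/3} \le (\Delta_0+\dis(x))^{2/3}+(\Delta_0+\dis(y))^{2/3}$ reduces matters to a Lipschitz-weighted Hilbert-type integral that converges after splitting $|x-y| \le \eta_0$ (use $\eta_0^{-2}(x-y)^2$) versus $|x-y| > \eta_0$ (use $1$).

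The $M(x,y-x)^2$ contribution is the delicate one. The idea is to split the domain $\mathcal{R}^2$ according to the three regimes in \eqref{eq:better_dM_bound}: (a) $|x-y| \lesssim \dis(x)(\Delta_0+\dis(x))^{1/2}/(\Delta_0+\dis(x))^{1/2}$, where the cubic bound $M \lesssim |x-y|\dis(x)^{-1/2}(\Delta_0+\dis(x))^{-1/3}$ is active; (b) the intermediate regime where the square-root bound is active; (c) the far regime where $M \lesssim |x-y|^{1/3}$. In each regime a careful choice of the Lipschitz bound on $(f(x)-f(y))^2$ turns the integrand into a product of factors of the form $\dis(x)^{a}(\Delta_0+\dis(x))^{b}$ times an integrable function of $|x-y|$; after changing variables $u = \dis(x)$, $v = \dis(y)$ on each branch of $\mathcal{R}\setminus\{\sng,\widehat{\sng}\}$ the integral is explicit.

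The main obstacle will be the bookkeeping of powers near the cusp singularity $\dis(x) \to 0$: the factor $\dis(x)^{-1/2}$ in the denominator of $\mathcal{K}_{\mathrm{bound}}$ is only borderline integrable, so in each sub-region one must pick exactly the bound on $M(x,y-x)$ that produces enough positive power of $\dis(x)$ (or of $|x-y|$ that can be traded against $\dis(x)$ via the regime inequalities) to render the $\dis(x)$-integral convergent. The $\eta_0^{-2}$ blow-up from the Lipschitz bound is compensated by the restriction $|x|,|y| \lesssim \eta_0$ coming from $\supp f$, yielding a bound uniform in $N$.
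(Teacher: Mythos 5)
Your overall strategy does coincide with the paper's: bound $(f(x)-f(y))^2$ by $\min\{1,\eta_0^{-2}|x-y|^2\}$, deal with the three summands in the numerator of $\mathcal{K}_{\mathrm{bound}}$, and change variables to $\dis(\cdot)$ on each branch of $\mathcal{R}\setminus\{\sng,\widehat{\sng}\}$; the $N^{-100/3}$ and $\rho(x)^2$ pieces are handled exactly as in the paper. However, there is a concrete error in the very first reduction: $\mathcal{K}_{\mathrm{bound}}$ is \emph{not} symmetric in $x,y$ --- its numerator contains $M(x,y-x)^2+\rho(x)^2$, and neither $M(x,y-x)$ nor $\rho(x)$ equals its $x\leftrightarrow y$ counterpart. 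The reduction to $x\in\supp f$ therefore does not follow from ``symmetry of $\mathcal{K}_{\mathrm{bound}}$'' and must instead go through a bound on the numerator that \emph{is} symmetric. This is precisely what the paper's \eqref{eq:M_same_side} and \eqref{eq:M_opp_sides} accomplish: for $x,y$ in the same $\mathcal{R}_E$ one has $M(x,y-x)^2+\rho(x)^2\lesssim(\dis(x)+\dis(y))(\Delta_0+\dis(x)+\dis(y))^{-1/3}$, and for $x,y$ in opposite $\mathcal{R}_E$'s one has $M(x,y-x)^2+\rho(x)^2\lesssim(\Delta_0+\dis(x)+\dis(y))^{2/3}$; both right-hand sides are symmetric in $x,y$, after which ``WLOG $x\in\mathcal{S}$'' is legitimate. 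You need to insert such a symmetrization before restricting the domain.

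A second, smaller issue: your regime boundary for the linear branch of $M$, stated as $|x-y|\lesssim\dis(x)(\Delta_0+\dis(x))^{1/2}/(\Delta_0+\dis(x))^{1/2}$, simplifies to $|x-y|\lesssim\dis(x)$, which is not the correct threshold; the linear bound $M\lesssim|x-y|\,\dis(x)^{-1/2}(\Delta_0+\dis(x))^{-1/3}$ is the smallest of the three branches of \eqref{eq:better_dM_bound} precisely when $|x-y|\lesssim\dis(x)(\Delta_0+\dis(x))^{1/3}$. Beyond these corrections, your plan of splitting $M^2$ by regime is a workable alternative to the paper's two combined bounds, but it generates a noticeably finer case analysis (three regimes $\times$ same/opposite side of the gap $\times$ in/out of $\supp\rho$), whereas the paper's \eqref{eq:M_same_side}--\eqref{eq:M_opp_sides} absorb the regime distinction and keep only the geometric split, which is needed in any case because it governs whether $|x-y|$ is of order $|\dis(x)-\dis(y)|$ or of order $\Delta_0+\dis(x)+\dis(y)$. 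Once the symmetrization step and the threshold are repaired and the branch geometry is tracked explicitly, the power counting you outline does close.
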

\begin{proof}[Proof of Lemma \ref{lemma:domi}]
	Observe that $|f(x) - f(y)|^2 \lesssim \eta_0^{-2} |x-y|^2$, hence the $N^{-100/3}$ term contributes at most $N^{-100/3}\eta_0^{-2}$ to the integral, and is therefore negligible.
	
	We divide the interval on integration $\mathcal{R}$ into parts in the following way:
	$\mathcal{R}_{\mathrm{supp}} := \mathcal{R}\cap \supp{\rho}$, $\mathcal{R}_{\mathrm{gap}} := \mathcal{R}\backslash \supp{\rho}$, $\mathcal{R}_{\sng} := \{x \in \mathcal{R} : \dis(x) = |x-\sng|\}$, $\mathcal{R}_{\widehat{\sng}} := \{x \in \mathcal{R} : \dis(x) = |x-\widehat{\sng}|\}$. 
	For $E \in \{\sng, \widehat{\sng}\}$, in each of the segments $\mathcal{R}_{\mathrm{supp}}\cap\mathcal{R}_{E}$, $\mathcal{R}_{\mathrm{gap}}\cap\mathcal{R}_{E}$, we can perform a change of variable $w := \dis(x) = |E + w|$.
	If $x,y \in \mathcal{R}_{E}$, then we have 
	\begin{equation} \label{eq:M_same_side}
		M(x,y-x)^2 +\rho(x)^2\lesssim (\dis(x)+\dis(y))(\Delta_0+\dis(x)+\dis(y))^{-1/3}.
	\end{equation}
	In the opposite case, for $x \in \mathcal{R}_{E}$, $y \in \mathcal{R}_{\widehat{E}}$, we have the estimate
	\begin{equation} \label{eq:M_opp_sides}
		M(x,y-x)^2 +\rho(x)^2 \lesssim (\Delta_0 + \dis(x)+\dis(y))^{2/3}.
	\end{equation}
	
	Furthermore, define $\mathcal{S} := \bigcap_{[a,b]\supset \supp{f}}[a-c\eta_0,b+c\eta_0] \subset \mathcal{R}$. 
	If both $x,y \notin \mathcal{S}$, then $f(x)=f(y) =0$, and the estimate is trivial. Therefore, without loss of generality, we always assume that $x\in \mathcal{S}$.
	
	Let $I(\mathcal{R}_1,\mathcal{R}_2)$ denote the integral of $\mathcal{K}_{\mathrm{bound}}(x,y)$ against $(f(x)-f(y))^2$ over the domain $\mathcal{R}_1\times \mathcal{R}_2 \subset \mathcal{R}^2$.  
	
	First, we tackle the case $y \in \mathcal{S}$. In this regime we use the estimate $|f(x) - f(y)|^2 \lesssim \eta_0^{-2} |x-y|^2$, which is a consequence of $g\in C^2_0(\mathbb{R})$. Furthermore, we only consider $x\in \mathcal{R}_{\sng}\cap \mathcal{S}$, since the other case is analogous due to symmetry.
	First, using \eqref{eq:M_same_side} we estimate
	\begin{equation} \label{eq:in_same}
		I(\mathcal{R}_{\sng}\cap\mathcal{S},\mathcal{R}_{\sng}\cap\mathcal{S}) \lesssim 
		\eta_0^{-2}\int\limits_{0}^{\eta_0}\int\limits_{0}^{\eta_0} \frac{(w+\other{w})\bigl((\Delta_0 + w)^{1/3}+(\Delta_0 + \other{w})^{1/3}\bigr)}{w^{1/2}(\Delta_0+w)^{1/6}\other{w}^{1/2}(\Delta_0+\other{w})^{1/6}}\mathrm{d}w\mathrm{d}\other{w} \lesssim 1,
	\end{equation}
	where the final inequality is obtained by expanding the brackets and using the fact that $(\Delta_0+w)^{-1} \lesssim \eta_0(\Delta_0+\eta_0)^{-1}w^{-1}$ for all $w \in [0,\eta_0]$. Similarly, using \eqref{eq:M_opp_sides}, we obtain 
	\begin{equation}
		I(\mathcal{R}_{\sng}\cap\mathcal{S},\mathcal{R}_{\widehat{\sng}}\cap\mathcal{S}) \lesssim 
		\eta_0^{-1}(\Delta_0 + \eta_0) \lesssim 1,
	\end{equation}
	since $\mathcal{R}_{\widehat{\sng}}\cap\mathcal{S}$ is not empty only if $\Delta \lesssim \eta_0$.
	
	Next, we consider the case $y\notin \mathcal{S}$, then $x\in \supp{\rho}$, hence $|x-y|\gtrsim \eta_0$. In this regime, we use the estimate $|f(x)-f(y)|^2 \lesssim 1$. First, if $x,y$ belong to the same $\mathcal{R}_E$, for $E\in\{\sng, \widehat{\sng}\}$, then $|x-y| \gtrsim |\dis(x) - \dis(y)|$, and using the estimate \eqref{eq:M_same_side}, we deduce that 
	\begin{equation} \label{eq:out_same}
		I(\mathcal{R}_{\sng}\cap\mathcal{S},\mathcal{R}_{\sng}\backslash\mathcal{S}) \lesssim 
		\int\limits_{0}^{\eta_0}\frac{\mathrm{d}w}{w^{1/2}(\Delta_0+w)^{1/6}}\int\limits_{\eta_0}^{1} (\Delta_0 + q)^{1/6}q^{-3/2}\mathrm{d}q \lesssim 1,
	\end{equation}
	where we performed a change of variable $q:=\other{w}-w$, and used the fact that $q \gtrsim \eta_0 \gtrsim w$.	Similarly, we obtain the estimate $I(\mathcal{R}_{\widehat{\sng}}\cap\mathcal{S},\mathcal{R}_{\widehat{\sng}}\backslash\mathcal{S}) \lesssim 1$.
	
	It remains to estimate the contribution of $x\in \mathcal{R}_{E}\cap\mathcal{S}$, $y \in \mathcal{R}_{\widehat{E}}\backslash\mathcal{S}$, for $E \in \{\sng, \widehat{\sng}\}$. We split the proof into two cases: $\widehat{\sng} \in \mathcal{S}$, and $\widehat{\sng} \notin \mathcal{S}$.
	
	If $\widehat{\sng} \in \mathcal{S}$, then $\Delta \lesssim \eta_0$, and $\mathcal{R}\backslash\mathcal{S} \subset \mathcal{R}_{\mathrm{supp}}$. Therefore, for all $x \in \mathcal{R}_{\sng}\cap \mathcal{S}$, $y\in \mathcal{R}_{\widehat{\sng}}\backslash\mathcal{S}$, we have $|x-y| \gtrsim \eta_0+\dis(y)$, hence \eqref{eq:M_opp_sides} yields
	\begin{equation}
		I(\mathcal{R}_{\sng}\cap\mathcal{S},\mathcal{R}_{\widehat{\sng}}\backslash\mathcal{S}) \lesssim (\Delta_0+\eta_0)^{1/3}\int\limits_{0}^{\eta_0}\frac{\mathrm{d}w}{w^{1/2}(\Delta_0+w)^{1/6}}\int\limits_{\other{c}\eta_0}^{1}\frac{\mathrm{d}\other{w}}{(\eta_0+\other{w})^{5/6}\other{w}^{1/2}}\lesssim 1.
	\end{equation} 
	Under the assumption $\widehat{\sng} \in \mathcal{S}$, the integral $I(\mathcal{R}_{\widehat{\sng}}\cap\mathcal{S},\mathcal{R}_{\sng}\backslash\mathcal{S})$ is estimated analogously.
	
	If $\widehat{E} \notin \mathcal{S}$, then $\Delta_0 \gtrsim \eta_0$, and $\mathcal{R}_{\widehat{\sng}}\cap\mathcal{R}_{\mathrm{supp}}\cap\mathcal{S} = \varnothing$. It follows trivially from \eqref{eq:M_opp_sides} that
	\begin{equation}
		I(\mathcal{R}_{E}\cap\mathcal{S},\mathcal{R}_{\widehat{E}}\cap\mathcal{R}_{\mathrm{supp}}\backslash\mathcal{S}) \lesssim \eta_0^{1/2}\Delta_0^{-1/6}\int\limits_{0}^{1}\frac{\mathrm{d}\other{w}}{(\Delta_0+\other{w})^{5/6}  \other{w}^{1/2}} \lesssim 1, \quad E\in \{\sng, \widehat{\sng}\},
	\end{equation}  
	and
	\begin{equation}
		I(\mathcal{R}_{\sng}\cap\mathcal{R}_{\mathrm{supp}}\cap\mathcal{S},\mathcal{R}_{\widehat{\sng}}\cap\mathcal{R}_{\mathrm{gap}}\backslash\mathcal{S}) \lesssim \int\limits_{0}^{\eta_0}w^{-1/2}\Delta_0^{-1/6}\mathrm{d}w\int\limits_{0}^{\Delta/2}\mathrm{d}\other{w}\frac{(\Delta_0+\other{w})^{7/6}}{(\Delta-\other{w})^{2}  \other{w}^{1/2}} \lesssim 1.
	\end{equation} 
	Finally, we consider $x\in\mathcal{R}_{E}\cap\mathcal{R}_{\mathrm{gap}}\cap\mathcal{S}$, $y\in\mathcal{R}_{\widehat{E}}\cap\mathcal{R}_{\mathrm{gap}}\backslash\mathcal{S}$. Note that this the contribution of this regime is non-zero if $\eta_0 \lesssim \Delta \lesssim 1$. Furthermore, by \eqref{eq:better_dM_bound}, in this regime we have the estimate
	\begin{equation} \label{eq:M_gap_gap}
		M(x,y-x)^2 + \rho(x)^2 \lesssim |x-y|(\Delta_0 + \dis(x)+\dis(y))^{-1/3}.
	\end{equation}
	\begin{equation}
		I(\mathcal{R}_{E}\cap\mathcal{R}_{\mathrm{gap}}\cap\mathcal{S},\mathcal{R}_{\widehat{E}}\cap\mathcal{R}_{\mathrm{gap}}\backslash\mathcal{S}) \lesssim \int\limits_{0}^{\Delta/2}\int\limits_{0}^{\Delta/2}\frac{\mathds{1}_{w\le C \eta_0}\mathrm{d}\other{w}\mathrm{d}w}{w^{1/2}(\Delta-w-\other{w})  \other{w}^{1/2}} \lesssim \frac{\Delta^{1/6}}{\Delta_0^{1/6}}\min\{1, \eta_0/\Delta\}^{1/2},
	\end{equation}  
	which concludes the proof of Lemma \ref{lemma:domi}
\end{proof}

Using dominated convergence theorem and Lemma \ref{lemma:domi}, we can take the $\eta_* \to 0$ limit on both sides of \eqref{eq:V_eta*}, and conclude that
\begin{equation} \label{eq:V_limit}
	V(f) = \frac{1}{4\pi^2} \iint\limits_{\mathcal{R}^2} \bigl(f(y)-f(x)\bigr)^2\mathcal{K}_0(x,y)\mathrm{d}x\mathrm{d}y + \mathcal{O}(\eta_0^{1/3}+N^{-\varepsilon_0}),
\end{equation}
where $\mathcal{K}_0(x,y) := \lim_{\eta_*\to+0}\mathcal{K}_{\eta_*}(x,y)$.

\subsection{Computing the Limiting Kernel}

\begin{proof}[Proof of Lemma \ref{lemma:variance}]
	Observe that for all $x := \sng +w \in \mathcal{R}\backslash\supp{\rho}$, and all $y:=\sng+\other{w}\in\mathcal{R}\backslash\{x\}$, we have 
	\begin{equation} \label{eq:K_out}
		\mathcal{K}_0(x,y) = 0,
	\end{equation}
	Identity \eqref{eq:K_out} follows from \eqref{eq:K_eta} and the fact that the vector $\m(x\pm\I0)$ has real entries for $x\notin\supp{\rho}$.
	
	For all $z:= x+\I0$, with $x := \sng+w \in \mathcal{R}$, we have $\langle \vect{b}^{\ell}(z), \vect{b}(z) \rangle	 = \langle \vect{b}^{\ell}(\sng), \vect{b}(\sng) \rangle(1+ \mathcal{O}(|w|^{1/3}))$, as well as  $\lVert\vect{b}^\ell(z)-\vect{b}^\ell(\sng)\rVert_\infty \lesssim |w|^{1/3}$ by \eqref{dm_bound} and \eqref{Pi_pert}. Therefore, for all $z:=x+\I0, \zeta:=y+\I0$ with $x,y \in \mathcal{R}$, 
	\begin{equation} \label{eq:dMdiff_est}
		\dM(z,\zeta) = (\other{\dM}-\dM)\bigl(1+ \mathcal{O}(|w|^{1/3}+|\other{w}|^{1/3})\bigr), \quad \dM(z,\zeta) + 2\I\rho(x) = (\other{\dM}-\overline{\dM})\bigl(1+\mathcal{O}(|w|^{1/3}+|\other{w}|^{1/3})\bigr),
	\end{equation}
	where we additionally used \eqref{eq:dM+rho} to obtain the second estimate. Here $\dM := \dM(\sng, z)$, $\other{\dM} := \dM(\sng, \zeta)$.

	Let $\mathcal{R}_{\Delta} := \{y \in \mathcal{R} \cap \supp{\rho} : |y-\widehat{\sng}| \le \Delta_0^{4/3}\}$ then for all $z:= x+\I0$ with $x\in \supp{\rho}\cap\mathcal{R}\backslash\mathcal{R}_{\Delta}$, estimates \eqref{eq:eig_comp}, \eqref{eq:dM_approx}, and \eqref{eig_estimate} imply 
	\begin{equation} \label{eq:eigB_lower}
		\frac{1}{\eigB(z,z)} = \frac{-\langle\vect{f}^2\rangle}{2\sigma\sol(w) + 3\psi\sol(w)^2 }\bigl(1+\mathcal{O}(|w|^{1/3})\bigr).
	\end{equation}
	Therefore, combining estimates \eqref{eq:eigB}, \eqref{eq:eigBbar}, \eqref{eq:K_est}, \eqref{eq:dM_approx}, \eqref{eq:dMdiff_est}, and \eqref{eq:eigB_lower}, we obtain
	\begin{equation} \label{eq:K_formula}
		\mathcal{K}_0(x,y) = \other{\mathcal{K}}(x-\sng,y-\sng) 
		+\mathcal{O}\bigl(\mathcal{K}_{\mathrm{bound}}(x,y)(|x-\sng|^{1/3}+|y-\sng|^{1/3}) \bigr), \quad x,y \in \mathcal{R}\backslash\mathcal{R}_{\Delta},
	\end{equation}
	where the function $\other{\mathcal{K}}$ is given by \eqref{eq:K(h)_kernel}.
	Since $|w|,|\other{w}| \lesssim N^{-\varepsilon_0/3}$, it follows from \eqref{eq:Kdom_int}, that the contribution of the second term in \eqref{eq:K_formula} to the integral in \eqref{eq:V_limit} is bounded by $\mathcal{O}(N^{-\varepsilon_0/9})$. 
	
	Moreover, using arguments analogous to the proof of Lemma \ref{lemma:domi}, we deduce that
	\begin{equation}
		\iint\limits_{\mathcal{R}\times\mathcal{R}_{\Delta}}\bigl(f(x)-f(y)\bigr)^2\mathcal{K}_{\mathrm{bound}}(x,y)\mathrm{d}x\mathrm{d}y \lesssim \eta_0^{1/9}.
	\end{equation}
	Similarly, it can be checked using the explicit expressions for $\mathcal{K}_0(x,y)$ and $\sol(x)$ given in \eqref{eq:K(h)_kernel} and Lemma \ref{lemma:sol}, respectively, that the contribution of $\mathcal{R}_{\Delta}$ to the integral on the right-hand side of \eqref{eq:V_sol} is also bounded by $\mathcal{O}(\eta_0^{1/9})$. This concludes the proof of Lemma \ref{lemma:variance}.
\end{proof}

\section{Computation of the Bias} \label{sec:bias}
\begin{proof}[Proof of \eqref{eq:main2_bias} and \eqref{eq:Bias_converge} of Proposition \ref{prop:main2}]
	Let $\stab := \stab(z,z)$ be the stability operator defined in \eqref{stab_def}, and let $\m := \m(z)$.
	Applying the cumulant expansion formula to $-zm_j(z)\mathbb{E}[G_{jj}(z)]$ and using estimates \eqref{eq:new_term_est}, \eqref{eq:curlyTlaw} yields
	\begin{equation} \label{eq:ExpvG}
		\mathbb{E}\bigl[\Tr G\bigr] = N m + \Tr\biggl[ \frac{\m'}{\m}\frac{(1-\stab)^2}{\stab}\biggr] + \bigl\langle\overline{\m}^2,\mathcal{C}^{(4)}[\m\m']\bigr\rangle + \mathcal{O}\biggl(N \frac{\Psi^3 + ( \Delta_0 + \dis)^{1/3}\Theta^2}{|\eigB|}\biggr),
	\end{equation}
	uniformly for all $z \in \Omega_{1} := \{z\in\mathbb{C} : \eta_1\le|\im z| \le 1\}$, where $\eta_1 := \eta_{\mathfrak{f}}N^{\varepsilon_0/6}$. 
	Using the Helffer-Sj\"{o}strand formula, we assert that 
	\begin{equation} \label{eq:bias_int}
		\begin{split}
			\mathbb{E}\bigl[\Tr f(H)\bigr] =&~ \frac{1}{\pi}\int\limits_{\Omega_1} \frac{\partial\other{f}}{\partial 	\bar{z}}\biggl(N m +\Tr\biggl[ \frac{\m'}{\m}\frac{(1-\stab)^2}{\stab}\biggr] + \bigl\langle\overline{\m}^2,\,\mathcal{C}^{(4)}[\m\m']\bigr\rangle\biggr)(z)\mathrm{d}\bar z\mathrm{d}z
			+ \mathcal{O}\bigl(N^{-\varepsilon_0/6}\bigr),
		\end{split}
	\end{equation}
	where we used Lemma \ref{lemma:int} to estimate the integral of the error term in \eqref{eq:ExpvG}, and the fact that the ultra-local scales $|\eta| \le \eta_{1}$ contribute at most $N^{-\varepsilon_0/6}$ to $\Expv[\Tr f(H)]$.
	
	Observe that by Assumption \eqref{cond_B} the $\mathcal{C}^{(4)}$ term in the integrand of \eqref{eq:bias_int} is $\mathcal{O}(|\eigB|^{-1})$, hence by Lemma \ref{lemma:int}, its contribution to the integral on the right-hand side of \eqref{eq:bias_int} is bounded by $\mathcal{O}(\eta_0^{1/3})$.
	
	Next, we estimate the integral of the $N m(z)$ term.
	By using the estimates $\norm{f'}_\infty \lesssim \eta_0^{-1}$ and $\norm{\m'}_\infty \lesssim |\eigB|^{-1}$ after integrating by parts in $x$, we find that
	\begin{equation}
		\int\limits_{\mathbb{R}}\mathrm{d}x \int\limits_{\eta_*}^{\eta_{1}}\mathrm{d}\eta \frac{\partial\other{f}}{\partial \bar{z}}(x+\I \eta)\, N m(x+\I\eta) = \mathcal{O}(N^{-\varepsilon_0/6}),
	\end{equation}
	for all $0 < \eta_* \le \eta_{1}$, which implies that the ultra-local scales $|\eta| \in [\eta_*, \eta_{1}]$ can be reintroduced to the integral \eqref{eq:Exp_Nm_term} at the cost of $\mathcal{O}(N^{-\varepsilon_0/6})$. Hence, by choosing $\eta_* := N^{-100}$, applying Stokes' theorem, and the estimate \eqref{dm_bound}, we obtain
	\begin{equation} \label{eq:Exp_Nm_term}
		\frac{1}{\pi}\int\limits_{\Omega_1} \frac{\partial\other{f}}{\partial 	\bar{z}}\,N m(z)\mathrm{d}\bar z\mathrm{d}z = \int\limits_{\mathbb{R}} f(x)\rho(x)\mathrm{d}x + \mathcal{O}\bigl(N^{-\varepsilon_0/6}\bigr).
	\end{equation}
	
	We proceed to estimate the contribution of the second term in the integrand of \eqref{eq:bias_int}. First, we consider the case $N^{-\varepsilon_0/3}\eta_{\mathfrak{f}} < \Delta < \Delta_*$. It follows by Stokes' theorem that
	\begin{equation} \label{eq:Exp_Tr_term}
		\frac{1}{\pi}\int\limits_{\Omega_1} \frac{\partial\other{f}}{\partial 	\bar{z}}\Tr\biggl[\frac{\m'}{\m}\frac{(1-\stab)^2}{\stab}\biggr](z)\mathrm{d}\bar z\mathrm{d}z = \frac{1}{\pi}\int\limits_{\mathbb{R}} f(x)\im\Tr\biggl[ \frac{\m'}{\m}\frac{(1-\stab)^2}{\stab}\biggr](x+\I\eta_*)\mathrm{d}x + \mathcal{O}\bigl(N^{-\varepsilon_0/6}\bigr),
	\end{equation}
	where we used estimates \eqref{eq:eig_comp}, \eqref{K_term1} to deduce that the ultra-local scales  $|\eta| \in [\eta_*, \eta_{1}]$ with $0 < \eta_* \le \eta_{1}$ can be added back to the integral on the left-hand side of \eqref{eq:Exp_Tr_term} at the cost of $\mathcal{O}(N^{-\varepsilon_0/6})$ error.
	
	We split the integration domain $\mathbb{R}$ into two rays starting at the midpoint of the gap $(\sng + \widehat{\sng})/2$. We only present the full computation for the ray containing $\sng$, as the other case is analogous. Note that the ray containing $\widehat{\sng}$ needs to be considered only if it intersects the support of $f$, that is, if $\Delta\lesssim \eta_0$.
	It follows from the decomposition $\stab^{-1} = \eigB^{-1}\Pi + \stab^{-1}Q$, and estimates \eqref{eq:eig_comp}, \eqref{eq:inbstabQ}, \eqref{eig_estimate}, \eqref{K_term1} that 
	\begin{equation} \label{eq:bias_k_int}
		\frac{1}{\pi}\int\limits_{-\Delta/2}^{+\infty} f(x_{\sigma})\im\Tr\biggl[ \frac{\m'}{\m}\frac{(1-\stab)^2}{\stab}\biggr](x_{\sigma}+\I\eta_*)\mathrm{d}x = \int\limits_{-\Delta/2}^{+\infty} f(x_{\sigma}) \im \bigl[k(x_{\sigma}+\I\eta_*)\bigr]\mathrm{d}x  +\mathcal{O}\bigl(\eta_0^{1/3}\bigr),
	\end{equation}
	where $x_{\sigma}:= \sng +x \sign{\sigma}$, the function $k(z)$ is defined as
	\begin{equation}
		k(z) := \frac{\sigma
			+ 3\psi\dM(z)}{\bigl(2\sigma\dM(z) + 3\psi\dM(z)^2 \bigr)^2},
	\end{equation}
	and $\dM(z) := \dM(\sng,z)$ is defined in \eqref{eq:dM_def}.
	
	Using the cubic equation \eqref{dm_cubic}, we find
	\begin{equation} \label{eq:bias_k}
			k(z) = -\frac{1}{4}\frac{1}{(z-\sng)\mu_0(\sng,z)} - \frac{1}{4}\frac{1}{(z-\sng)\mu_0(\sng,z) + \pi\widehat{\Delta}\sign{\sigma}} + k_2(z),
	\end{equation}
	where the function $\mu_0(\sng,z)$ satisfies \eqref{eq:cubic_mus}, and the function $k_1(z)$ is defined as
	\begin{equation}
		k_1(z) := -\frac{3}{2}\frac{\sigma\psi + 3\psi^2\dM(z)}{\dM(z)\bigl(2\sigma+3\psi\dM(z)\bigr)\bigl((3\psi\dM(z)+\sigma)^2-4\sigma^2\bigr)}.
	\end{equation}
	Note that the imaginary parts of the first two terms on the right-hand side of \eqref{eq:bias_k} behave like approximate delta functions on the scale of $|\im z|$. Indeed, estimates \eqref{eq:cubic_mus} and \eqref{dM_bound} imply that
	\begin{equation} \label{eq:im_delta_func}
		\im \biggl[\frac{-1}{(z-\sng)\mu_0(\sng,z)}\biggr] = \frac{\im z}{\pi|z-\sng|^2} + \mathcal{O}\bigl(|z-\sng|^{-2/3}\bigr).
	\end{equation}
	Therefore, for all $\eta_* \le N^{-\varepsilon_0}\eta_{\mathfrak{f}}$, we obtain
	\begin{equation} \label{eq:bias_delta_part}
		\int\limits_{-\Delta/2}^{+\infty} f(x_{\sigma}) \im \biggl[-\frac{1}{4}\frac{1}{(x_{\sigma}+\I\eta_*-\sng)\mu_0(\sng,x_{\sigma}+\I\eta_*)}\biggr]\mathrm{d}x = \frac{f(\sng)}{4} +\mathcal{O}\bigl(N^{-\varepsilon_0/6} + \eta_0^{1/3}\bigr).
	\end{equation}
	Similarly, we show that for for all $\eta_* \le N^{-\varepsilon_0}\eta_{\mathfrak{f}}$, the second term in the integrand on the right-hand side of \eqref{eq:bias_k} contributes at most $\mathcal{O}(N^{\varepsilon_0/6}+\eta_0^{1/3})$ to the integral in \eqref{eq:bias_k_int}.
	
	We now analyze the imaginary part of the remaining $k_1(z)$ term on the right-hand side of \eqref{eq:bias_k}. Using the estimates \eqref{eq:eig_comp} and \eqref{dm_bound}, together with the cubic equation \eqref{dm_cubic}, and arguments analogous to Section \ref{sec:dom_conv}, allows us to take the $\eta_* \to 0$ limit, and obtain
	\begin{equation} \label{eq:bias_gap_part}
		\int\limits_{-\Delta/2}^{+\infty} f(x_{\sigma}) \im \bigl[k_1(x_{\sigma}+\I\eta_*)\bigr]\mathrm{d}x = -\frac{\sqrt{3}}{2\pi}\int\limits_{1}^{+\infty}\frac{g_{\widehat{\alpha}}(t)}{\sqrt{t^2-1}(4t^2-1)}\mathrm{d}t + \mathcal{O}(\eta_0^{1/3}),
	\end{equation}
	where we applied the estimates \eqref{eq:dM_approx}, the expression \eqref{eq:sol_gap}, equation \eqref{eq:gapsol_eq}, and a change of variable $\sign(\sigma)\re\gapsol(\sign(\sigma)+2\widehat{\Delta}^{-1}(x_\sigma-\sng)) \mapsto t$. Here $g_{\widehat{\alpha}}$ is defined in \eqref{main_gap}, and $\widehat{\alpha} = \tfrac{\widehat{\Delta}}{2\eta_0}$.
	
	Combining \eqref{eq:bias_delta_part}, \eqref{eq:bias_gap_part}, and their counterparts for the ray containing $\widehat{\sng}$, we deduce from \eqref{eq:Exp_Tr_term}, \eqref{eq:Exp_Tr_term} and \eqref{eq:bias_k_int} that
	\begin{equation} \label{eq:bias_gap}
		\mathbb{E}\bigl[\Tr f(H)\bigr] - \int\limits_{\mathbb{R}} f(x)\rho(x)\mathrm{d}x = \frac{f(\sng)}{4} +  \frac{f(\widehat{\sng})}{4} - \frac{\sqrt{3}}{2\pi}\int\limits_{|t|>1}\frac{g_{\widehat{\alpha}}(t)}{\sqrt{t^2-1}(4t^2-1)}\mathrm{d}t+  \mathcal{O}\bigl(N^{-\varepsilon_0/6}\bigr).
	\end{equation}
	It follows from the definition of $g_{\widehat{\alpha}}$ in \eqref{main_gap}, the estimate in \eqref{def:delta_hat}, and the continuity of $g$, that $f(\widehat{\sng}) = g_{\widehat{\alpha}}(-1) + \mathcal{O}(\eta_0^{1/3})$ and $f(\sng) =g(\widehat{\alpha}(1))$. Therefore, in the case $\eta_{\mathfrak{f}} \le \Delta< \Delta_*$, the desired estimate \eqref{eq:main2_bias} follows form \eqref{eq:bias_gap}.
	
	Next, we analyze the case $\Delta \le N^{-\varepsilon_0/3}\eta_{\mathfrak{f}}$. Under this assumption, for all $z := \sng + w +\I \eta_1$, we have $|z-\sng| \gtrsim \Delta N^{\varepsilon_0/2}$, hence by \eqref{eq:dM_lower_bound},  $|\dM(z)| \gtrsim N^{\varepsilon_0/6}|\sigma|$. In particular, the function $k(z)$ defined in \eqref{eq:bias_k} admits the estimate
	\begin{equation}
		k(z) = \frac{1+\mathcal{O}\bigl(\Delta_0^{1/3}|z-\sng|^{-1/3}\bigr)}{3\psi \dM(z)^3} = -\frac{1+\mathcal{O}\bigl(\Delta_0^{1/3}|z-\sng|^{-1/3}\bigr)}{3 (z-\sng)\mu_0(\sng,z)},
	\end{equation} 
	where we used the cubic equation \eqref{dm_cubic} to obtain the second estimate. Using \eqref{eq:Exp_Nm_term}, \eqref{eq:Exp_Tr_term}, \eqref{eq:bias_k_int} with $\eta_* := \eta_{1}$, and \eqref{eq:im_delta_func}, we deduce that 
	\begin{equation}\label{eq:bias_cusp}
		\mathbb{E}\bigl[\Tr f(H)\bigr] - \int\limits_{\mathbb{R}} f(x)\rho(x)\mathrm{d}x = \frac{f(\sng)}{3} + \mathcal{O}\bigl(N^{-\varepsilon_0/6}\bigr).
	\end{equation}
	
	Similarly, for $\Delta \ge \Delta_*$, we have for all $z := \sng+w+\I \eta$
	\begin{equation}
		k(z) = \frac{1+\mathcal{O}\bigl(|z-\sng|^{1/3}\bigr)}{4\sigma \dM(z)^2} = -\frac{1+\mathcal{O}\bigl(|z-\sng|^{1/3}\bigr)}{4 (z-\sng)\mu_0(\sng,z)},
	\end{equation} 
	which yields
	\begin{equation} \label{eq:bias_edge}
		\mathbb{E}\bigl[\Tr f(H)\bigr] - \int\limits_{\mathbb{R}} f(x)\rho(x)\mathrm{d}x = \frac{f(\sng)}{4} + \mathcal{O}\bigl(N^{-\varepsilon_0/6}\bigr).
	\end{equation}
	Observe that for any $g\in C^2_c$ supported inside an interval of order one length, we have
	\begin{equation} \label{eq:bias_cont}
		\frac{\sqrt{3}}{2\pi}\int\limits_{|t|>1}\frac{g_{\widehat{\alpha}}(t)}{\sqrt{t^2-1}(4t^2-1)}\mathrm{d}t = \frac{g(0)}{6} + \mathcal{O}\bigl(\widehat{\alpha}^{2/3}\bigr), \quad \frac{\sqrt{3}}{2\pi}\int\limits_{|t|>1}\frac{g_{\widehat{\alpha}}(t)}{\sqrt{t^2-1}(4t^2-1)}\mathrm{d}t = \mathcal{O}\bigl(\widehat{\alpha}^{-1/2}\bigr),
	\end{equation}
	which proves the continuity of the bias functional $\mathrm{Bias}_{\widehat{\alpha}}(g)$ in the parameter $\widehat{\alpha} \in [0, +\infty]$ in \eqref{eq:Bias_converge}.
	The estimate \eqref{eq:main2_bias} in the cases $\Delta< \eta_{\mathfrak{f}}$ and $\Delta \ge \Delta_*$ follows from \eqref{eq:bias_cusp}, \eqref{eq:bias_edge}, and \eqref{eq:bias_cont}. This concludes the proof of \eqref{eq:main2_bias} and \eqref{eq:Bias_converge}.
\end{proof}
\section{Small Local Minima} \label{sect:smallrho}
In this section, we briefly outline the key differences in the case of $\sng \in \mathcal{M}_{\rho_*}$ being a small local minimum of $\rho$ with $0 <\rho(\sng) < \rho_*$, compared to the case of $\sng \in \Sng$ presented in detail above.

Let $\rho_0:=\rho(\sng)$. First of all, the comparison relations for the quantities $\rho(z)$ and $\eigB(z)$ for $z$ in the vicinity of $\sng$ collected in Proposition \ref{prop_scaling} take the following form.
\begin{lemma} (Proposition 3.2 of \cite{Erdos2018CuspUF} and Lemma 7.14 of \cite{Alt2020energy})
	For all $z \in  \mathbb{D}_{E, c}$, the quantities $\rho:=\rho(z)$, $\eigB:=\eigB(z,z)$ satisfy	
	\begin{equation} \label{eq:comp_smallrho}
		\begin{split}
			\rho &\sim \rho_0 + \dis^{1/3},\\
			|\eigB| &\sim (\rho_0 + \dis^{1/3})^2,
		\end{split}
	\end{equation}
	where $\dis\equiv\dis(z) := |z-\sng|$. Furthermore, the quantity $\sigma(\sng)$ admits the upper bound`
	\begin{equation}
		|\sigma(\sng)| \lesssim \rho_0^2.
	\end{equation}
\end{lemma}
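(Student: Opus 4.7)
The plan is to derive the three estimates from the perturbative cubic equation framework established in Proposition \ref{prop_mpert}, combined with the general formula \eqref{beta_est} for $|\eigB(z,z)|$ and the local-minimum structure developed in \cite{Alt2020energy}. Throughout, let $\sigma_0 := \sigma(\sng)$, $\psi_0 := \psi(\sng)$, and note that by \eqref{eq:psi_sim_1}, whichever of $\sigma_0^2$ or $\psi_0$ is the smaller, both are controlled uniformly; in particular $\psi_0 \sim 1$ as soon as $|\sigma_0| \ll 1$.

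First, I would establish the bound $|\sigma_0| \lesssim \rho_0^2$. Evaluate the cubic equation \eqref{dm_cubic} along the real axis, writing $\dM = \dM(\sng,\sng + w)$ for $w \in \mathbb{R}$ near $0$. Since $\sng \in \mathrm{supp}\,\rho \setminus \partial\,\mathrm{supp}\,\rho$, the function $\dM$ takes non-real values on both sides of $w = 0$, and by Lemma \ref{lemma_m}(iii) the leading behaviour of $\im \dM$ reproduces $\rho(\sng + w) - \rho_0 \to 0$ as $w \to 0$. Expand the three roots of the cubic \eqref{dm_cubic} around the unique root of $\mu_3 X^3 + \mu_2 X^2 + \mu_1 X$ lying at $X = 0$: with $\mu_3 = \psi_0 + \mathcal{O}(\rho_0)$, $\mu_2 = \sigma_0 + \mathcal{O}(\rho_0)$, $\mu_1 = \mathcal{O}(\rho_0 \sigma_0 + \rho_0^2)$, the local-minimum condition $\rho'(\sng^\pm) = 0$ forces the $w$-linear coefficient of $\im \dM$ to vanish at $w = 0$. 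A direct expansion of the perturbed roots shows this vanishing is equivalent to $\sigma_0 \cdot \rho_0^{-2} = \mathcal{O}(1)$, precisely yielding $|\sigma_0| \lesssim \rho_0^2$. This is the cleanest way to recover Lemma 7.14 of \cite{Alt2020energy} in the present notation.

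Next, I would derive $\rho(z) \sim \rho_0 + \dis^{1/3}$. By Proposition \ref{prop_mpert}, $\rho(z) - \rho_0$ is comparable to $\im \bigl\langle \vect{b}, \dM(\sng,z)\bigr\rangle$ up to lower-order terms, and since $\vect{f} \sim 1$ the projection $\langle \vect{f}, \vect{b}\rangle \sim 1$, so $\rho(z) - \rho_0 \sim \im \dM(\sng,z)$. With $\mu_3 \sim 1$, $|\mu_2|, |\mu_1| \ll 1$ (using the just-proved bound on $\sigma_0$), the cubic \eqref{dm_cubic} gives $|\dM(\sng,z)| \sim \dis^{1/3}$ in the regime $\dis \gtrsim \rho_0^3$, and $|\dM(\sng,z)| \lesssim \rho_0$ in the complementary regime $\dis \lesssim \rho_0^3$. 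Combining the two cases and recalling that $\im \dM \gtrsim \im \dM$ on the correct root (the one selected by $\im \m(z) > 0$), the composite estimate $\rho(z) \sim \rho_0 + \dis^{1/3}$ follows.

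Finally, I would deduce $|\eigB(z,z)| \sim (\rho_0 + \dis^{1/3})^2$ from \eqref{beta_est}, which asserts $|\eigB(z,z)| \sim \rho^{-1}\eta + \rho(\rho + |\sigma(z)|)$. Step two gives $\rho \sim \rho_0 + \dis^{1/3}$, while the Hölder continuity of $\sigma$ (Lemma \ref{lemma_m}(iv)) combined with the bound $|\sigma_0| \lesssim \rho_0^2$ yields $|\sigma(z)| \lesssim \rho_0^2 + \dis^{1/3} \lesssim \rho^2$, so $\rho + |\sigma(z)| \sim \rho$, giving the main contribution $\rho^2 \sim (\rho_0 + \dis^{1/3})^2$. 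The remaining term $\rho^{-1}\eta$ is absorbed because $\eta \le \dis \lesssim \rho^3$, so $\rho^{-1}\eta \lesssim \rho^2$. The main obstacle in the entire argument is the first step: extracting the quantitative bound $|\sigma_0| \lesssim \rho_0^2$ from the purely qualitative property that $\sng$ is a local minimum requires cancelling the naively leading order-$\rho_0$ term in $\im \dM$, which is exactly where the detailed expansion of the cubic roots and the symmetric structure of $\mu_1, \mu_2$ in \eqref{eq:cubic_mus} are crucial.
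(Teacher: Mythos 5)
The paper does not prove this lemma — it is quoted directly from Proposition 3.2 of \cite{Erdos2018CuspUF} and Lemma 7.14 of \cite{Alt2020energy} without a proof — so there is no "paper's argument" to compare against. Your attempt to re-derive all three claims from the cubic equation \eqref{dm_cubic} has the right high-level shape, but several of the steps are stated incorrectly in ways that matter.

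The central gap is in the derivation of $|\sigma_0| \lesssim \rho_0^2$. You assert that the local-minimum condition $\rho'(\sng^{\pm}) = 0$ forces the $w$-linear coefficient of $\im\,\dM(\sng,\sng+w)$ to vanish, and that this vanishing is equivalent to $|\sigma_0| \lesssim \rho_0^2$. But if you literally set the $w$-linear coefficient of $\im\,\dM$ to zero you obtain $\sigma_0 = 0$, which is too strong: the first-order expansion $\dM \approx -w\mu_0/\mu_1$ with $\mu_1 = 2\I\rho_0\sigma_0 - 2\rho_0^2\psi_0 + \mathcal{O}(\rho_0^3)$ gives $\im\,\dM \sim w\sigma_0/(\rho_0(\sigma_0^2+\rho_0^2\psi_0^2))$, whose $w$-linear coefficient vanishes only when $\sigma_0 = 0$. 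What actually must vanish is the $w$-linear coefficient of $\rho(\sng+w) - \rho_0$, and by \eqref{eq:dM_dir}, \eqref{b_vect} this quantity equals $\im\,\dM \cdot \pi^{-1}\langle\re\,\vect b\rangle + \re\,\dM \cdot \pi^{-1}\langle\im\,\vect b\rangle$ plus lower-order errors; the second contribution carries an explicit factor $\rho_0$ from $\langle\im\,\vect b\rangle$. The cancellation between $\im\,\dM$ and this $\rho_0\re\,\dM$ cross-term is precisely what produces the $\rho_0^2$ threshold rather than the trivial $\sigma_0 = 0$, so omitting the $\re\,\dM$ contribution is not a minor imprecision — it is the step that makes the argument work. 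The same omission infects step two, where you claim $\rho(z) - \rho_0 \sim \im\,\dM(\sng,z)$; this fails near $w = 0$, where $\dM$ is approximately real of size $w/\rho_0^2$ and $\im\,\dM$ is much smaller than $|\dM|$.

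Step three also contains an incorrect inequality: you write $|\sigma(z)| \lesssim \rho_0^2 + \dis^{1/3} \lesssim \rho^2$. Since $\rho^2 \sim (\rho_0 + \dis^{1/3})^2 \sim \rho_0^2 + \rho_0\dis^{1/3} + \dis^{2/3}$ and $\dis^{1/3} > \dis^{2/3}$ whenever $\dis < 1$, the inequality $\dis^{1/3} \lesssim \rho^2$ is false in the regime $\rho_0 \ll 1$, $\dis \ll 1$ that is under consideration. The bound you actually need — and which is true — is the weaker $|\sigma(z)| \lesssim \rho$, since $(\rho_0^2 + \dis^{1/3})/(\rho_0 + \dis^{1/3}) \lesssim 1$; that already gives $\rho + |\sigma(z)| \sim \rho$, which is all that \eqref{beta_est} requires. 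So the final conclusion $|\eigB| \sim \rho^2$ still holds, but the inequality you invoke to reach it is wrong. (Also, the phrase ``$\im\,\dM \gtrsim \im\,\dM$'' in step two is a typo; you presumably meant $\im\,\dM \gtrsim |\dM|$, which only holds in the regime $\dis \gtrsim \rho_0^3$ where the cubic term dominates, not uniformly.)
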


It follows from the proof of Lemmas \ref{eq:new_term_est} and \eqref{eq:curlyT_error}, that the key probabilistic estimates \eqref{eq:new_term_est} and \eqref{eq:curlyTlaw} hold with $\Delta_0$ replaced by $\rho_0^3$.

The first line of \eqref{eq:comp_smallrho}, together with the definition \eqref{eq:etaf_supp}, imply that the local fluctuation scale around a non-zero local minimum is given by $\eta_{\mathfrak{f}}(\sng) = \min\{N^{-3/4}, \rho_0^{-1}N^{-1}\}$.

The second line of \eqref{eq:comp_smallrho} implies that the modulus of the destabilizing eigenvalue $|\eigB|$ is bounded from below by $\rho(\sng)^2$. This lower bound combined with the absence of the gap in the support close to $\sng$ makes the integral estimates presented in Section \ref{sec:variance_lemma_proof} significantly easier. On the other hand, the perturbative expansion of $\eigB(z,\zeta)$ (see Proposition \ref{prop_eigval}) has to be altered to consider the cases of $z$ and $\zeta$ lying in same or opposite upper and lower half-planes separately.
\begin{prop} \label{prop:eigB_smallrho}
	Let $z, \zeta \in \D\cap\mathbb{H}$ be two spectral parameters, then the eigenvalue of $\stab(z,\zeta)$ and $\stab(\bar z,\zeta)$ with the smallest modulus admits the following estimates
	\begin{equation}
		\begin{split}
			\eigB(z,\zeta) &= \eigB_0 - \frac{3\I\psi\rho_0}{\langle\vect{f}^2\rangle}\bigl(\dM+\other{\dM}\bigr) - \frac{\psi}{\langle\vect{f}^2\rangle} \bigl(\dM^2+\dM\other{\dM}+\other{\dM}^2\bigr)
			+ \mathcal{O}\bigl(\rho_0(|\dM|+|\other{\dM}|)(\rho_0+|\dM|+|\other{\dM}|) + \dis +\other{\dis}\bigr),\\
			\eigB(\bar z,\zeta) &= 
			- \frac{\I\psi\rho_0}{\langle\vect{f}^2\rangle}\bigl(\overline{\dM}-\other{\dM}\bigr) - \frac{\psi}{\langle\vect{f}^2\rangle}\bigl(\overline{\dM}^2+\overline{\dM}\other{\dM}+\other{\dM}^2\bigr) 
			+ \mathcal{O}\bigl(\rho_0(|\dM|+|\other{\dM}|)(\rho_0+|\dM|+|\other{\dM}|) + \dis +\other{\dis}\bigr),
		\end{split}
	\end{equation}
	where $\eigB_0 := \eigB(\sng,\sng)$, and the functions $\dM := \dM(\sng, z)$, $\other{\dM} := \dM(\sng,\zeta)$ are defined in \eqref{eq:dM_def}, the quantities $\psi$, $\sigma$ and the vector $\vect{f}$ are evaluated at $\sng$.
	
	Moreover, the estimates collected in Proposition \ref{prop:eigB} hold without change.
\end{prop}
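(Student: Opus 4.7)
\textbf{Proof proposal for Proposition \ref{prop:eigB_smallrho}.} The strategy is to follow the blueprint of Proposition \ref{prop_eigval} combined with the mixed base-point analysis of Proposition \ref{prop:eigB}, invoking the non-Hermitian perturbative expansion \eqref{eig_pert} from Lemma \ref{lemma_ncpt}. For the first estimate I take $z_0 = \zeta_0 = \sng + \I 0$, and for the second $z_0 = \sng - \I 0$, $\zeta_0 = \sng + \I 0$. Two structural features distinguish the local-minimum case from the singularity case already treated: first, $\eigB_0 := \eigB(\sng,\sng)$ no longer vanishes but satisfies $|\eigB_0| \sim \rho_0^2$ by \eqref{eq:comp_smallrho}, and must therefore be retained explicitly in the first expansion; second, the bound $|\sigma(\sng)| \lesssim \rho_0^2$ means that the real $\sigma$-coefficients dominating \eqref{eig_estimate} in the cusp case are now absorbed into the error, while the imaginary $\mathcal{O}(\rho_0)$ corrections to $\vect{b}(\sng)$ and $\vect{b}^\ell(\sng)$ in \eqref{b_vect} dictate the leading behavior.

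For $\eigB(z,\zeta)$, I would expand $\diff\m := \m(z)\m(\zeta) - \m(\sng)^2$ via \eqref{eq:dM_dir} exactly as in \eqref{d2m}, substitute into \eqref{eig_pert}, and re-evaluate the traces \eqref{sigma_tr}--\eqref{traces2} while carefully tracking the $\rho_0$-corrections coming from \eqref{b_vect}. The new imaginary part of $\Tr[(\vect{b}/\m_0)\Pi_0]$, the $\rho_0$-parts of the higher-order traces $\Tr[(\vect{r}/\m_0)\Pi_0]$ and $\Tr[(\vect{b}/\m_0)(Q_0/\stab_0)(1-\stab_0)(\vect{b}/\m_0)\Pi_0]$, together with the factor $(1-\eigB_0)$ in \eqref{eig_pert}, should combine into the claimed linear coefficient $3\I\psi\rho_0/\langle\vect{f}^2\rangle$; this is consistent with the leading imaginary part of $\mu_2 = 3\I\rho_0\psi + \mathcal{O}(\rho_0^2)$ in \eqref{eq:cubic_mus} and with the perturbative identity used in the proof of \eqref{eq:eigB}. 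The quadratic terms coincide with those in \eqref{eig_estimate} up to the $\sigma^2/\langle\vect{f}^2\rangle^2$ contribution, which is now of order $\rho_0^4$ and hence inside the stated remainder.

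For $\eigB(\bar z,\zeta)$, the base stability operator becomes $\stab_0 = 1 - \overline{\m(\sng)}\m(\sng) S = |\m_0|(1-F)|\m_0|^{-1}$, which, by Lemma \ref{lemma_F}, has a genuine zero eigenvalue at $\eta = 0$ with rank-one eigenprojector proportional to $|\m_0|\vv\vv^*|\m_0|^{-1}$; this explains the absence of a constant term in the stated formula. The increment splits as $\diff\m = (\m(\bar z) - \overline{\m_0})\m(\zeta) + \overline{\m_0}(\m(\zeta) - \m_0)$, producing linear contributions $\overline{\dM}\,\overline{\vect{b}}\,\m_0$ and $\other{\dM}\,\overline{\m_0}\,\vect{b}$. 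Because $\overline{\vect{b}(\sng)}$ and $\vect{b}(\sng)$ share the same real leading part $\langle\vv,\vect{f}\rangle|\m_0|\vv$, which, once the $\sigma$-order piece is discarded, is annihilated against the $\vv\vv^*$-structure of $\Pi_0$, it is their oppositely signed imaginary $\rho_0$-corrections that survive. This is the mechanism producing the $\overline{\dM} - \other{\dM}$ combination with coefficient $-\I\psi\rho_0/\langle\vect{f}^2\rangle$ (with factor one instead of three, since only a single imaginary contribution persists after the cancellation). The quadratic terms in $\overline{\dM}$, $\other{\dM}$ are computed exactly as in the proof of Proposition \ref{prop_eigval}.

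The main obstacle is the careful bookkeeping of the cancellation of real $\sigma$-type contributions and the isolation of the imaginary $\rho_0$-part, which requires redoing the trace computations \eqref{sigma_tr}--\eqref{traces2} with the full expansion of $\vect{b},\vect{b}^\ell$ from \eqref{b_vect}, using the $F$-eigenstructure-based projector $\Pi_0$ in the mixed half-plane case, and tracking errors consistently to $\mathcal{O}(\rho_0^2)$. Once this is done, the residual error bound $\mathcal{O}(\rho_0(|\dM|+|\other{\dM}|)(\rho_0+|\dM|+|\other{\dM}|)+\dis+\other{\dis})$ follows from the $\mathcal{O}(\norm{\diff\m}_\infty^3)$ tail of \eqref{eig_pert} together with the uniform bound $\norm{\diff\m}_\infty \lesssim \rho_0(|\dM|+|\other{\dM}|) + |\dM|^2 + |\other{\dM}|^2 + \dis+\other{\dis}$ obtained from \eqref{eq:dM_dir} and Lemma \ref{lemma_m}. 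The last statement of the proposition, asserting that the estimates in Proposition \ref{prop:eigB} carry over unchanged, follows because their proof in the cusp case never used the vanishing of $\rho_0$ in any essential step, only the general bounds \eqref{eq:dm_dM_bound}, \eqref{Pi_pert}, and \eqref{eq:dM+rho}, all of which remain valid at a local minimum.
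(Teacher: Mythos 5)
Your blueprint matches what the paper intends: since the paper's own proof is literally ``analogous to Proposition~\ref{prop_eigval}'', the whole content here is to redo the non-Hermitian perturbation expansion \eqref{eig_pert} around appropriate base points, keep the nonvanishing $\eigB_0 = \eigB(\sng,\sng) \sim \rho_0^2$, observe that $|\sigma(\sng)|\lesssim\rho_0^2$ pushes the former leading linear coefficient into the remainder, and extract the new $\I\rho_0$-terms from the corrections in \eqref{b_vect} (and, for the same-half-plane case, also from the $\mathcal{O}(\rho_0)$ corrections to $\Pi(\sng,\sng)$). You also correctly identify that $\stab(\sng-\I0,\sng+\I0)=|\m_0|(1-F)|\m_0|^{-1}$ has an exact zero eigenvalue at $\eta=0$ with projector proportional to $(|\m_0|\vv)(|\m_0|^{-1}\vv)^*$, which explains the absence of a constant term in the second formula, and that the error bound comes from the $\mathcal{O}(\norm{\diff\m}_\infty^3)$ tail of \eqref{eig_pert}. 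All of this is the right argument, and your base-point choices $(\sng+\I0,\sng+\I0)$ and $(\sng-\I0,\sng+\I0)$ are exactly the ones the ``analogous'' proof requires.

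The part that is asserted rather than derived is the pair of coefficients, $3\I\psi\rho_0/\langle\vect{f}^2\rangle$ for $\dM+\other{\dM}$ and $-\I\psi\rho_0/\langle\vect{f}^2\rangle$ for $\overline{\dM}-\other{\dM}$; the phrases ``should combine'' and ``factor one instead of three, since only a single imaginary contribution persists'' stand in for the actual bookkeeping. When you carry that bookkeeping out, note that the second line has a potential sign issue worth resolving: with the linear contribution to $\eigB(\bar z,\zeta)$ being $-\overline{\dM}\langle\vv^2,\overline{\vect{b}}/\overline{\m_0}\rangle-\other{\dM}\langle\vv^2,\vect{b}/\m_0\rangle$ and using $\vect{b}/\m_0 = \langle\vv,\vect{f}\rangle\vv\vect{p} + \I\rho_0\bigl(2\vect{p}(1-F)^{-1}(1-\vv\vv^*)[\vect{p}\vect{f}^2] - \langle\vv,\vect{f}\rangle\vv\vect{f}\bigr) + \mathcal{O}(\rho_0^2)$, the trace computations as in \eqref{sigma_tr}--\eqref{traces2} give $\langle\vv^2,\vect{b}/\m_0\rangle = (\sigma+\I\rho_0\psi)/\langle\vect{f}^2\rangle + \mathcal{O}(\rho_0^2)$ and its conjugate; after discarding the $\sigma$-term this yields $+\I\rho_0\psi(\overline{\dM}-\other{\dM})/\langle\vect{f}^2\rangle$, i.e., the opposite sign to the one stated. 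The same conclusion comes from comparing with \eqref{eq:eigBbar} at $\zeta=z$: that formula gives $\eigB(\bar z,z)\approx \pi\eta/(\langle\vect{f}^2\rangle\rho(z))>0$, consistent with $\eigB(\bar z, z)=1-\opnormtwo{F(z)}>0$ from \eqref{eigF_1}, whereas the stated $-\I\psi\rho_0(\overline{\dM}-\dM)/\langle\vect{f}^2\rangle$ with $\dM(\sng,\sng+\I\eta)\approx\I\pi\eta/(2\rho_0^2\psi)$ (hence $\im\dM>0$) is negative. Since your proposal reproduces the paper's stated sign without computation, you inherit this issue; whichever way it resolves, the mechanism you describe is the correct one, and the last paragraph about Proposition~\ref{prop:eigB} carrying over unchanged is sound.
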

\begin{proof} The proof of Proposition \ref{prop:eigB_smallrho} is analogous to that of Proposition \ref{prop_eigval}. \end{proof}

The final ingredient required to express the kernel $\mathcal{K}(z,\zeta)$ (defined in \eqref{K_def}) in terms of the function $\dM$, as in Lemma \ref{lemma:K_crude}, is the following equivalent of Lemma \ref{lemma_m'}.
\begin{lemma}  \label{lemma:m'_smallrho}
	Let $z$ be a spectral parameter in $\D\backslash\mathbb{R}$, and let $\beta(z)$ be as defined in \eqref{beta_est}, then 
	\begin{equation}
		\begin{split}
			\langle \vect{f}^2\rangle\eigB\frac{\m'}{\m} =& \vect{p}\vect{f}\bigl( \pi + 2(\dM+\I\rho_0)\gamma_1 
			\bigr\rangle \bigr)+ (\dM+\I\rho)\bigl(2\pi\vect{p} \frac{1-\vv\vv^*}{1-F}[\vect{f}^2\vect{p}] - \pi\vect{f}^2 \bigr) +\mathcal{O}\bigl( (\rho_0+ |\dM|)^2
			\bigr),
		\end{split}
	\end{equation}
	where $\eigB := \eigB(z,z)$, $\dM:=\dM(\sng,z)$, and the quantity $\gamma_1$ is independent of $z$ and satisfies $|\gamma_1| \lesssim 1$. Here $\vect{b},\vect{p},\vv, F$, and $\sigma$ are evaluated at $\sng$. 
\end{lemma}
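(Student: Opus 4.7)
The proof follows the same template as that of Lemma \ref{lemma_m'} (the singular case $\rho_0 = 0$), modified to account for the fact that $\m_0 := \m(\sng)$ now has non-trivial imaginary part $\im\m_0 = \rho_0|\m_0|\vect{f}$. Starting from the differentiated Dyson equation in the form $\m'(z) = \stab(z,z)^{-1}[\m(z)^2]$, one expands $\m(z)^2 = \m_0^2 + 2\dM\m_0\vect{b} + \mathcal{O}(|\dM|^2)$ using \eqref{eq:dM_dir} with $z_0 = \sng$, then decomposes $\stab(z,z)^{-1} = \eigB(z,z)^{-1}\Pi(z,z) + \stab(z,z)^{-1}Q(z,z)$ and invokes \eqref{Pi_pert} and \eqref{Q/B_pert} to replace $\Pi(z,z)$ and $\stab(z,z)^{-1}Q(z,z)$ by $\Pi_0 := \Pi(\sng,\sng)$ and $\stab_0^{-1}Q_0$ up to $\mathcal{O}(|\dM|)$ corrections.

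The key observation that explains the appearance of $\dM + \I\rho_0$ in the claimed formula is a coherent alignment at the level of $\m(z)^2$. At a non-zero local minimum, Lemma \ref{lemma_F} gives $\vv = \vect{f}/\|\vect{f}\|_2$ at $\sng$ (since $\eta = 0$ there), and combining with \eqref{b_vect} yields
\begin{equation*}
  \m_0 = \vect{p}|\m_0| + \I\rho_0|\m_0|\vect{f} + \mathcal{O}(\rho_0^2), \qquad \vect{b} = |\m_0|\vect{f} + 2\I\rho_0|\m_0|(1-F)^{-1}(1-\vv\vv^*)[\vect{p}\vect{f}^2] + \mathcal{O}(\rho_0^2).
\end{equation*}
Multiplying these expansions and combining with $2\dM\m_0\vect{b}$ gives
\begin{equation*}
  \m(z)^2 = |\m_0|^2 + 2(\dM+\I\rho_0)\vect{p}|\m_0|^2\vect{f} + \mathcal{O}\bigl((\rho_0+|\dM|)^2\bigr),
\end{equation*}
since the imaginary-part contribution $2\I\rho_0\vect{p}|\m_0|^2\vect{f}$ to $\m_0^2$ coincides in direction with the leading term $2\dM\vect{p}|\m_0|^2\vect{f}$ of $2\dM\m_0\vect{b}$.

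Given this compact form for $\m(z)^2$, the remaining work mirrors the cusp computation. One evaluates $\Pi_0[|\m_0|^2]$, $\Pi_0[\vect{p}|\m_0|^2\vect{f}]$, $\stab_0^{-1}Q_0[|\m_0|^2]$, and $\stab_0^{-1}Q_0[\vect{p}|\m_0|^2\vect{f}]$ using the representations $\Pi_0 = \vect{b}(\vect{b}^\ell)^*/\langle\vect{b}^\ell,\vect{b}\rangle$ and $\stab_0^{-1}Q_0 = |\m_0|(1-F)^{-1}(1-\vv\vv^*)|\m_0|^{-1}$, following the template of \eqref{Pim^2}--\eqref{Qm^2}. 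By \eqref{eq:comp_smallrho} one has $|\eigB(z,z)|\sim (\rho_0 + |\dM|)^2$, so the contribution $\eigB\,\stab_0^{-1}Q_0[\m(z)^2]$ is absorbed into the $\mathcal{O}((\rho_0+|\dM|)^2)$ remainder. Dividing by $\m(z) = \m_0 + \mathcal{O}(\rho_0 + |\dM|)$ and collecting leading terms produces the asserted identity, with $\gamma_1 := \langle|\m_0|, (1-\vv\vv^*)(1-F)^{-1}[\vect{f}^2\vect{p}]\rangle$ independent of $z$ and satisfying $|\gamma_1| \lesssim 1$.

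The main difficulty is bookkeeping: one must verify that the $\rho_0$-corrections to $\m_0$, $\vect{b}$, $\vect{b}^\ell$, and $\Pi_0$ arising throughout either combine coherently with the $\dM$-corrections into the single parameter $\dM + \I\rho_0$, or else fit into the $\mathcal{O}((\rho_0+|\dM|)^2)$ remainder. A subtlety is that $\eigB(\sng,\sng)$ is no longer zero but is of order $\rho_0^2$, so the spectral projection $\Pi_0$ is associated with a small but non-vanishing eigenvalue; this modifies the expansions only through terms of size $|\eigB(\sng,\sng)|\cdot|\dM|$, which are already within the claimed error.
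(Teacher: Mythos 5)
Your overall approach is the same as the paper's — the paper's own proof is literally one line: ``analogous to that of Lemma~\ref{lemma_m'}'' — so you have correctly identified the template: differentiate the Dyson equation to get $\m'=\stab^{-1}[\m^2]$, split $\stab^{-1}=\eigB^{-1}\Pi+\stab^{-1}Q$, note that $\eigB\,\stab^{-1}Q[\m^2]=\mathcal{O}(|\eigB|)\lesssim(\rho_0+|\dM|)^2$ is absorbed, expand $\Pi(z,z)$ around $\Pi_0:=\Pi(\sng,\sng)$ via \eqref{Pi_pert}, and divide by $\m(z)$.

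However, your ``key observation'' about the compact form $\m(z)^2=|\m_0|^2+2(\dM+\I\rho_0)\vect{p}|\m_0|^2\vect{f}+\mathcal{O}((\rho_0+|\dM|)^2)$, while true, does not actually drive the computation once you commit to expanding around $\stab_0=\stab(\sng,\sng)=1-\m_0^2 S$. The \eqref{Pi_pert} expansion uses $\diff\m=\m(z)^2-\m_0^2=2\dM\m_0\vect{b}+\mathcal{O}(\dM^2)$, which carries only a $\dM$ prefactor and no $\I\rho_0$; the compact form would only feed the perturbation linearly if you instead perturbed around the auxiliary reference $1-|\m_0|^2 S$ (which the algebraic perturbation theory of Lemma \ref{lemma_ncpt} does permit, but is not what you invoke). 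With the reference $\stab(\sng,\sng)$, the $\I\rho_0$ piece of the combined factor $(\dM+\I\rho_0)$ in front of $2\pi\vect{p}\tfrac{1-\vv\vv^*}{1-F}[\vect{f}^2\vect{p}]$ enters through a different channel: it comes from the $\rho_0$-correction to $\vect{b}$ inside $\Pi_0[\m_0^2]=\tfrac{\pi}{\langle\vect{f}^2\rangle}\vect{b}+\mathcal{O}(\rho_0)|\m_0|\vect{f}$, where $\vect{b}=|\m_0|\vect{f}+2\I\rho_0|\m_0|(1-F)^{-1}(1-\vv\vv^*)[\vect{p}\vect{f}^2]+\mathcal{O}(\rho_0^2)$ by \eqref{b_vect}, and not from the argument $\m(z)^2$. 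Relatedly, the list of quantities you propose to evaluate ($\Pi_0[|\m_0|^2]$, $\Pi_0[\vect{p}|\m_0|^2\vect{f}]$, $\stab_0^{-1}Q_0[\cdots]$) does not match what the template actually produces: following \eqref{m'_est}, the required objects are $\Pi_0[\m_0^2]$, $\Pi_0[\m_0\vect{b}]$, $\Pi_0\tfrac{\vect{b}}{\m_0}\tfrac{\stab_0-1}{\stab_0}Q_0[\m_0^2]$ and $\tfrac{Q_0}{\stab_0}\tfrac{\vect{b}}{\m_0}\Pi_0[\m_0^2]$, the latter two being the cross-terms from \eqref{Pi_pert}, which produce the $(1-F)^{-1}(1-\vv\vv^*)$ structure with coefficient $\dM$ (not $\dM+\I\rho_0$). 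If you omit those and keep only straight $\Pi_0$ and $\stab_0^{-1}Q_0$ applied to $\m(z)^2$, you lose the $\dM$-proportional part of the $\tfrac{1-\vv\vv^*}{1-F}$ term. None of this changes the conclusion or the structure of the argument — you do flag the bookkeeping subtlety at the end — but as stated, the explanation misattributes where the $\I\rho_0$ enters, and would lead a reader astray if they tried to fill in the computation from your outline.
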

\begin{proof}
	The proof of Lemma \ref{lemma:m'_smallrho} is analogous to that of Lemma \ref{lemma_m'}.
\end{proof}

Furthermore, the following lemma gives an explicit approximation to the function $\dM(\sng,\cdot)$.
\begin{lemma} (Proposition 7.10 in \cite{Alt2020energy})  \label{lemma:sol_smallrho}
	For any local minimum $\sng$ of $\rho$ satisfying $0<\rho(\sng)<\rho_*$, there exists a threshold $c_*\sim 1$, such that for all $w\in[-c_*,c_*]$, the function $\dM(\sng,\sng+w)$ admits the estimate
	\begin{equation}
		\dM(\sng, \sng+w) = \sol(w) + \mathcal{O}\bigl(\min\{|w|^{2/3},|w|\rho_0^{-1}\}\bigr)
	\end{equation}
	with the function $\sol(w)$ defined as
	\begin{equation}
		\sol(w) := \frac{\rho_0}{\sqrt{3}}\bigl(\mathfrak{q}\bigl(\sqrt{27}\pi w/(2\psi\rho_0^3)\bigr) - \I\sqrt{3} \bigr),
	\end{equation}
	where the function $\mathfrak{q}(\lambda)$ is given by
	\begin{equation}
		\mathfrak{q}(\lambda) := e^{\I\pi/3}\bigl(\sqrt{1+\lambda^2}+\lambda\bigr)^{1/3} +e^{2\I\pi/3}\bigl(\sqrt{1+\lambda^2}-\lambda\bigr)^{1/3}.
	\end{equation}
	Furthermore, the function $\mathfrak{q}(\lambda)$ satisfies the cubic equation
	\begin{equation}
		\mathfrak{q}(\lambda)^3 + 3\mathfrak{q}(\lambda) +2\lambda = 0.
	\end{equation}
\end{lemma}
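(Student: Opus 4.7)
My plan is to derive the approximation from the cubic equation \eqref{dm_cubic} of Proposition \ref{prop_mpert} together with the coefficient estimates \eqref{eq:cubic_mus} evaluated at $z=\sng$. Since $\sng$ is a local minimum with $\rho(\sng)=\rho_0>0$ and $\im\sng=0$, all $\rho^{-1}\eta$ corrections vanish; combined with the bound $|\sigma(\sng)|\lesssim \rho_0^2$ this gives $\mu_3(\sng) = \psi+\mathcal{O}(\rho_0)$, $\mu_2(\sng) = 3\I\rho_0\psi+\mathcal{O}(\rho_0^2)$, $\mu_1(\sng) = -2\rho_0^2\psi+\mathcal{O}(\rho_0^3)$, and $\mu_0(\sng,\sng+w) = \pi+\mathcal{O}(\rho_0+|\dM|)$. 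The cubic thus reads
\begin{equation*}
\psi\dM^3 + 3\I\rho_0\psi\dM^2 - 2\rho_0^2\psi\,\dM + \pi w = \mathcal{O}(\rho_0|\dM|^3 + \rho_0^2|\dM|^2 + \rho_0^3|\dM| + |w|(\rho_0+|\dM|)).
\end{equation*}

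The next step is to solve the leading cubic exactly and identify it with the stated $\sol(w)$. Substituting $\dM = \rho_0(y/\sqrt{3} - \I)$ into the left-hand side produces the depressed cubic $y^3 + 3y + 3\sqrt{3}\pi w/(\psi\rho_0^3) = 0$, which is precisely the defining equation of $\mathfrak{q}(\lambda)$ with $\lambda = \sqrt{27}\pi w/(2\psi\rho_0^3)$, verifying that $\sol(w) = (\rho_0/\sqrt{3})(\mathfrak{q}(\lambda) - \I\sqrt{3})$ is the unique root of the leading cubic in the correct (upper half-plane) branch, selected via the a priori bound \eqref{dM_bound} and the sign convention $\im\dM(\sng,\sng+w) \ge -\rho_0$ inherited from \eqref{eq:dM+rho}.

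To pass from the leading cubic to the true equation, I would linearize around $\sol$. Writing $\dM = \sol + \delta$ and subtracting, the derivative of the polynomial at $\sol$ equals $3\mu_3\sol^2 + 2\mu_2\sol + \mu_1 = \psi\rho_0^2(\mathfrak{q}(\lambda)^2+3) + \mathcal{O}(\rho_0^3+\rho_0|\sol|^2)$. Since $\mathfrak{q}(\lambda)^2+3$ never vanishes in $\overline{\mathbb{H}}$ and in fact satisfies $|\mathfrak{q}(\lambda)^2+3|\sim 1+|\lambda|^{2/3}$, the linearization is invertible with norm $\sim \rho_0^2 + |w|^{2/3}\psi^{1/3}$. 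Inverting and bounding the perturbative error by $\rho_0|\sol|^3 + |w|\rho_0 + |w||\dM|$ via a standard contraction/fixed-point argument on the annulus $\{|\delta| \le C\min\{|w|^{2/3},|w|\rho_0^{-1}\}\}$ yields the claimed bound after considering the two regimes $|w|\lesssim \rho_0^3$ (where $|\sol|\sim \rho_0$) and $|w|\gtrsim \rho_0^3$ (where $|\sol|\sim |w|^{1/3}$) separately.

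The main obstacle I anticipate is obtaining the optimal error $\min\{|w|^{2/3},|w|\rho_0^{-1}\}$ uniformly across the two regimes: in the regime $|w|\lesssim \rho_0^3$ the naive bound from $\rho_0|\sol|^3 \lesssim \rho_0^4$ only gives $|\delta| \lesssim \rho_0^2$, which matches $|w|/\rho_0$ only at the boundary $|w|\sim\rho_0^3$. Improving this requires exploiting the specific cancellation structure of the coefficient expansions \eqref{eq:cubic_mus}, namely that the $\mathcal{O}(\rho_0)$ correction in $\mu_3$ and the $\mathcal{O}(\rho_0^2)$ correction in $\mu_2$ enter only through the combination $\mu_3\dM^3+\mu_2\dM^2$, whose expansion at $\dM=-\I\rho_0$ cancels to higher order in $\rho_0$. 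Since this cancellation is precisely what is encoded in the detailed expansion of the solution $\m$ near $\sng$ in Chapter 7 of \cite{Alt2020energy}, I would import those refined coefficient identities rather than re-derive them, thereby reducing the proof to the linearized inversion above.
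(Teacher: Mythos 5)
The paper does not prove this lemma at all: it is stated as a direct citation of Proposition 7.10 in \cite{Alt2020energy}, and the ``proof'' is the reference. You are therefore attempting a derivation the paper does not provide, starting from Proposition \ref{prop_mpert} (itself imported from \cite{Alt2020energy}). That route is in principle sound and does reconstruct the statement, so the comparison is against \cite{Alt2020energy} rather than the paper. Your identification of the leading cubic is correct: substituting $\dM=\tfrac{\rho_0}{\sqrt{3}}(\mathfrak{q}-\I\sqrt{3})$ into $\psi\dM^3+3\I\rho_0\psi\dM^2-2\rho_0^2\psi\dM$ indeed produces $-\tfrac{2\psi\rho_0^3}{3\sqrt{3}}(\mathfrak{q}^3+3\mathfrak{q})/2$ and recovers $\mathfrak{q}^3+3\mathfrak{q}+2\lambda=0$ with $\lambda=\sqrt{27}\pi w/(2\psi\rho_0^3)$.

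However there is a concrete computational error in the Jacobian. Evaluating $3\mu_3\sol^2+2\mu_2\sol+\mu_1$ at leading order gives $\psi\rho_0^2(\mathfrak{q}(\lambda)^2+1)$, not $\psi\rho_0^2(\mathfrak{q}(\lambda)^2+3)$ as you wrote. The distinction matters: since $\mathfrak{q}(0)=\I\sqrt{3}$, your expression $\mathfrak{q}(0)^2+3=0$, so the linearization you propose would be degenerate at $w=0$ and the claimed lower bound $|\mathfrak{q}^2+3|\sim 1+|\lambda|^{2/3}$ is false at $\lambda=0$. With the correct $\mathfrak{q}^2+1$, one has $|\mathfrak{q}(0)^2+1|=2$, it never vanishes for real $\lambda$ (a double root of $\mathfrak{q}^3+3\mathfrak{q}+2\lambda$ would require $\mathfrak{q}=\pm\I$, forcing $\lambda=\mp\I\notin\mathbb{R}$), and the scaling $|\mathfrak{q}^2+1|\sim 1+|\lambda|^{2/3}$ does hold.

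Your ``anticipated obstacle'' in the regime $|w|\lesssim\rho_0^3$ is a false alarm created by an unnecessarily crude bound on $\sol$. In that regime $|\lambda|\lesssim 1$, and since $\mathfrak{q}'(0)=-2/(3\mathfrak{q}(0)^2+3)=1/3\neq 0$, one has $|\sol(w)|=\tfrac{\rho_0}{\sqrt{3}}|\mathfrak{q}(\lambda)-\mathfrak{q}(0)|\sim\rho_0|\lambda|\sim|w|/\rho_0^2$, which is much smaller than $\rho_0$ except at the boundary $|w|\sim\rho_0^3$. Substituting this into the perturbation gives $\rho_0|\sol|^3\lesssim|w|^3/\rho_0^5\lesssim|w|\rho_0$, $\rho_0^2|\sol|^2\lesssim|w|^2/\rho_0^2\lesssim|w|\rho_0$, and $\rho_0^3|\sol|\lesssim|w|\rho_0$; dividing by the Jacobian $\sim\psi\rho_0^2$ yields $|\delta|\lesssim|w|/\rho_0$ as required, with no hidden cancellation beyond what the coefficient estimates \eqref{eq:cubic_mus} already encode. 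So the deferral to ``refined coefficient identities'' from \cite{Alt2020energy} is unnecessary, and your proof can be completed by fixing the Jacobian and using the sharp bound on $|\sol(w)|$ in the small-$w$ regime. The branch selection (via $\dM(\sng,\sng)=0$ and continuity in $w$, together with the a priori bound \eqref{dM_bound}) is, as you sketch, routine.
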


Using Proposition \ref{prop:eigB_smallrho}, Lemma \ref{lemma:m'_smallrho}, and Lemma \ref{lemma:sol_smallrho}, and following the arguments laid out in Section \ref{sec:variance_lemma_proof}, we establish that an analog of Lemma \ref{lemma:variance} holds with the kernel $\other{\mathcal{K}}(w,\other{w})$ given by
\begin{equation}
	\other{\mathcal{K}}(w,\other{w}) =2\re\biggl[ \frac{\bigl(\mathfrak{q}^2 + 4\mathfrak{q}\other{\mathfrak{q}}+ \other{\mathfrak{q}}^2 - 3\bigr)(\mathfrak{q}-\other{\mathfrak{q}})^2}{(\mathfrak{q}^2+1)(\other{\mathfrak{q}}^2+1)(w-\other{w})^2} 
	- \frac{\bigl(\overline{\mathfrak{q}}^2 + 4\overline{\mathfrak{q}}\other{\mathfrak{q}}+ \other{\mathfrak{q}}^2 - 3\bigr)(\overline{\mathfrak{q}}-\other{\mathfrak{q}})^2}{(\overline{\mathfrak{q}}^2+1)(\other{\mathfrak{q}}^2+1)(w-\other{w})^2} \biggr],
\end{equation}
from which the claim of Theorem \ref{th:rho>0} follows analogously to the proof of Proposition \ref{prop:main2}. We leave the remaining details to the reader. 

\vspace{15pt}

\textbf{Acknowledgments.}
	I would like to express my gratitude to L\'aszl\'o Erd\H{o}s for his careful guidance and supervision of my work. I am also thankful to Jana Reker and Joscha Henheik for many helpful discussions.
	
\vspace{15pt}

\textbf{Funding.}
 The author was supported by the ERC Advanced Grant "RMTBeyond" No.~101020331.

\printbibliography
\end{document}